\newcommand*{\doi}{}
\newcommand{\doi@}[1]{\textsc{doi}:~\href{https://doi.org/#1}{\texttt{#1}}}
\DeclareRobustCommand{\doi}{\hyper@normalise\doi@}
\setlist[enumerate]{label=\textup{(\roman*)}}
\newtheorem*{rep@theorem}{\rep@title}
\newcommand{\newreptheorem}[2]{%
\newenvironment{rep#1}[1]{%
 \def\rep@title{#2 \ref{##1}}%
 \begin{rep@theorem}}%
 {\end{rep@theorem}}}
\newtheorem{theorem}{Theorem}[section]
\newcommand*{\newtheoremwithcounter}[2]{%
  \newaliascnt{#1}{theorem}%
  \newtheorem{#1}[#1]{#2}%
  \aliascntresetthe{#1}%
  \csdef{#1autorefname}{#2}%
}
\theoremstyle{remark}
\newtheorem*{remark}{Remark}
\theoremstyle{definition}
\newcounter{descriptcount}
\newlist{enumdescript}{description}{2}
\setlist[enumdescript]{%
  before={\setcounter{descriptcount}{0}%
          \renewcommand*\thedescriptcount{Case~(\roman{descriptcount})}}
  ,font=\normalfont\itshape\stepcounter{descriptcount}\thedescriptcount\enskip
}
\newcommand*{\qqquad}{\qquad\quad}
\newcommand*{\treeroot}{\varnothing}
\newcommand*{\setcard}{\abs}
\newcommand*{\setweight}{\norm}
\newcommand*{\setunion}{\cup}
\newcommand*{\setintersect}{\cap}
\newcommand*{\bigsetunion}{\bigcup}
\newcommand*{\bigsetintersect}{\bigcap}
\newcommand*{\setcomplement}[1]{#1^{c}}
\newcommand*{\indfunc}{\mathds{1}}
\newcommand*{\prob}{\mathbb{P}}
\newcommand*{\expe}{\mathbb{E}}
\DeclareMathOperator{\var}{Var}
\DeclareMathOperator{\cov}{Cov}
\newcommand*{\reals}{\mathbb{R}}
\newcommand*{\naturals}{\mathbb{N}}
\newcommand*{\integers}{\mathbb{Z}}
\newcommand*{\wasserstein}{\mathcal{W}}
\newcommand*{\placeholder}{\,\cdot\,}
\newcommand*{\eqdist}{\overset{\mathcal{D}}{=}}
\newcommand*{\pathbetw}{\leftrightsquigarrow}
\newcommand*{\sizebias}{\widehat}
\newcommand*{\imt}{\widetilde}
\DeclarePairedDelimiter{\abs}{\lvert}{\rvert}
\DeclarePairedDelimiter{\norm}{\lVert}{\rVert}
\newcommand{\downto}{\searrow}
\providecommand*{\notcong}{\ncong}
\newcommand*{\edge}[2]{\set{#1,#2}}
\newcommand*{\imin}{\wedge}
\newcommand*{\ulamharris}{\mathcal{U}}
\newcommand{\conn}[2]{X_{#1#2}}
\newcommand{\mathbox}[2][red]{%
  \tcboxmath[colframe=#1!20, colback=#1!20,
    boxsep=0pt,
    left=0pt,right=0pt,top=1pt,bottom=1pt]{#2}}
\DeclarePairedDelimiter{\set}{\lbrace}{\rbrace}
\providecommand{\given}[1][]{%
  \nonscript\:#1\vert\nonscript\:\mathopen{}\allowbreak}
\newcommand*{\setgiven}{\given}
\newcommand{\SetSymbol}[1][]{\nonscript:\nonscript\mathopen{}\allowbreak}
\DeclarePairedDelimiterX{\sset}[2]\{\}{%
  \renewcommand\setgiven{\SetSymbol[\delimsize]}
\newcommand{\intsep}{, }
\newcommand{\intol}{\lparen}
\newcommand{\intor}{\rparen}
\newcommand{\intcl}{\lbrack}
\newcommand{\intcr}{\rbrack}
\DeclarePairedDelimiterX{\intervalcc}[2]{\intcl}{\intcr}{#1\intsep#2}
\DeclarePairedDelimiterX{\intervalco}[2]{\intcl}{\intor}{#1\intsep#2}
\DeclarePairedDelimiterX{\intervaloc}[2]{\intol}{\intcr}{#1\intsep#2}
\DeclarePairedDelimiterX{\intervaloo}[2]{\intol}{\intor}{#1\intsep#2}
\let\save@mathaccent\mathaccent
\newcommand*\if@single[3]{%
  \setbox0\hbox{${\mathaccent"0362{#1}}^H$}%
  \setbox2\hbox{${\mathaccent"0362{\kern0pt#1}}^H$}%
  \ifdim\ht0=\ht2 #3\else #2\fi
  }
\newcommand*\rel@kern[1]{\kern#1\dimexpr\macc@kerna}
\newcommand*\widebar[1]{\@ifnextchar^{{\wide@bar{#1}{0}}}{\wide@bar{#1}{1}}}
\newcommand*\wide@bar[2]{\if@single{#1}{\wide@bar@{#1}{#2}{1}}{\wide@bar@{#1}{#2}{2}}}
\newcommand*\wide@bar@[3]{%
  \begingroup
  \def\mathaccent##1##2{%
    \let\mathaccent\save@mathaccent
    \if#32 \let\macc@nucleus\first@char \fi
    \setbox\z@\hbox{$\macc@style{\macc@nucleus}_{}$}%
    \setbox\tw@\hbox{$\macc@style{\macc@nucleus}{}_{}$}%
    \dimen@\wd\tw@
    \advance\dimen@-\wd\z@
    \divide\dimen@ 3
    \@tempdima\wd\tw@
    \advance\@tempdima-\scriptspace
    \divide\@tempdima 10
    \advance\dimen@-\@tempdima
    \ifdim\dimen@>\z@ \dimen@0pt\fi
    \rel@kern{0.6}\kern-\dimen@
    \if#31
      \overline{\rel@kern{-0.6}\kern\dimen@\macc@nucleus\rel@kern{0.4}\kern\dimen@}%
      \advance\dimen@0.4\dimexpr\macc@kerna
      \let\final@kern#2%
      \ifdim\dimen@<\z@ \let\final@kern1\fi
      \if\final@kern1 \kern-\dimen@\fi
    \else
      \overline{\rel@kern{-0.6}\kern\dimen@#1}%
    \fi
  }%
  \macc@depth\@ne
  \let\math@bgroup\@empty \let\math@egroup\macc@set@skewchar
  \mathsurround\z@ \frozen@everymath{\mathgroup\macc@group\relax}%
  \macc@set@skewchar\relax
  \let\mathaccentV\macc@nested@a
  \if#31
    \macc@nested@a\relax111{#1}%
  \else
    \def\gobble@till@marker##1\endmarker{}%
    \futurelet\first@char\gobble@till@marker#1\endmarker
    \ifcat\noexpand\first@char A\else
      \def\first@char{}%
    \fi
    \macc@nested@a\relax111{\first@char}%
  \fi
  \endgroup
}
\NewDocumentCommand{\vdegbound}{oO{v}}{%
  \IfNoValueTF{#1}
    {h(\setcard{D_{1}(#2)})}
    {h(\setcard{D_{1}^{#1}(#2)})}}
\appto\HyLang@english{%
}
\newcommand\blfootnote{\gdef\@thefnmark{}\@footnotetext}
\newcommand*{\email}[1]{\href{mailto:#1}{\texttt{#1}}}
\begin{document}
\date{}
\title{A Central Limit Theorem for Functions
  on Weighted Sparse Inhomogeneous Random Graphs}
\author{Anja Sturm\thanks{%
    Georg-August-Universität Göttingen,
    Institute for Mathematical Stochastics,
    Goldschmidtstr.~7,
    37077 Göttingen, Germany.
    \email{anja.sturm@mathematik.uni-goettingen.de}}
  \and Moritz Wemheuer\thanks{%
    Georg-August-Universität Göttingen,
    Institute for Mathematical Stochastics,
    Goldschmidtstr.~7,
    37077 Göttingen, Germany.
    \email{moritz.wemheuer@uni-goettingen.de}
    \orcidlink{0009-0008-0032-6015}}}
\hypersetup{
  pdftitle={A Central Limit Theorem for Functions
            on Weighted Sparse Inhomogeneous Random Graphs},
  pdfauthor={Anja Sturm and Moritz Wemheuer},
}
\maketitle
\blfootnote{%
  The authors gratefully acknowledge financial support by the
  Deutsche Forschungsgemeinschaft (DFG, German Research Foundation)
  via the Research Training Group (RTG) 2088.
}

\begin{abstract}
    \noindent
    We prove a central limit theorem for a certain class of functions on sparse rank-one inhomogeneous random graphs endowed with additional
    i.i.d.~edge and vertex weights.
    Our proof of the central limit theorem uses a perturbative form of Stein's method and relies on a careful analysis of the local structure of the underlying sparse inhomogeneous random graphs
    (as the number of vertices in the graph tends to infinity),
    which may be of independent interest, as well as a local approximation property of the function, which is satisfied for a number of combinatorial optimisation problems.
    These results extend recent work by
    Cao (2021) for Erdős--Rényi random graphs and additional i.i.d.~weights only on the edges.
\end{abstract}

\tableofcontents

\section{Introduction}

The main aim of this work
is to establish a central limit theorem
for functions on weighted sparse rank-one inhomogeneous random graphs.
This result extends recent work by \citet{cao}
for sparse Erdős--Rényi random graphs with weights only on edges.

In the classical Erdős--Rényi random graph model,
edges between pairs of vertices are present independently of each other
with the same fixed probability.
Many real-world graphs and networks are more accurately modelled with
random graphs in which the probability that an edge forms
between two vertices is not the same for all pairs of vertices
and may depend on additional properties of the vertices.
These models allow for a greater irregularity of the resulting
graph and are therefore called \emph{inhomogeneous random graphs}
\citep{soderberg,bollobas,vdh,vdh:2}.
Our work focuses on sparse rank-one inhomogeneous random graphs,
in which the edge probability is proportional
to the product of the \emph{connectivity weights}
of its end vertices divided by the total number of vertices.
This model includes sparse Erdős--Rényi random graphs
and variations of the Chung--Lu and Norros--Reittu models
\citep{chung-lu,norrosreittu}.
Additionally,
we will endow our random graph model with independent
edge and vertex weights.

While we formulate our central limit theorem
generally for functions on weighted graphs
satisfying a certain \emph{good local approximation property},
we will note that interesting applications of this theorem
will often be related to combinatorial optimisation
problems on weighted graphs (formalised as
real-valued functions).
Verification of the good local approximation property
will often involve considering the recursive structure
of the function involved.
The good local approximation property is related
to concepts that have been called \emph{long-range independence}
\citep{gamarnik}, \emph{endogeny} \citep{aldous:rtp}
and \emph{replica symmetry} \citep{waestlund}
in the literature.

In the Erdős--Rényi setting law of large numbers-like results
for certain quantities of the graph
and a number of combinatorial optimisation problems
have been well studied \citep{er:evo,karp,gamarnik,bayati}.
There are also results for much more general sparse graph
settings, but especially for combinatorial optimisation problems
those are not as abundant \citep{bollobas,bord}.
A number of results are also known for the mean field model,
where the underlying graph is a complete graph
with i.i.d.~edge weights,
which reduces to an Erdős--Rényi setting
for certain (relaxed) optimisation problems
\citep{waestlund,waestlund:tsp,aldous:zeta}.

For some of the graph quantities
central limit theorems have been established
in the sparse Erdős--Rényi random graphs
\citep{stepanov,pittel,pittelwormald,bollobasriordan,barraez}.
In that setting a central limit theorem
has also been shown for the maximal matching problem (without weights)
\citep{glasgow,kreacic,pittel,aronson}.
As far as we are aware Cao's work \cite{cao} provided the first results regarding central limit theorems for  general classes of functions in
the sparse Erdős--Rényi setting with additional edge weights.

Elsewhere, \citet{barbour} recently proved a general central limit theorem
for \emph{local} graph statistics in the configuration model.
The configuration model generates a random graph with a given
degree sequence.
In fact, conditional on its degrees the inhomogeneous
graph model we consider here (and more general inhomogeneous graph models)
have the same distribution as a configuration model conditioned
on producing no loops or multiple edges \citep[Thm.~7.18]{vdh}.
\citet{rucinski} established a central limit theorem for subgraph counts
in non-sparse Erdős--Rényi settings.
\citet{maugis} extended this to
a related class of inhomogeneous random graphs.

In \autoref{chap:result} we introduce some basic notions,
present the general setting in mathematical detail
and state and discuss our central limit theorem
(\autoref{thm:fgnconv}).
\autoref{chap:structure}
is dedicated to the analysis of the local structure of
sparse rank-one inhomogeneous random graphs.
These results
are required to establish the central
limit theorem in our setting,
but they may also be of independent interest.
In particular we investigate the sizes of the neighbourhoods of vertices
in some detail (see \autoref{sec:nbhdsize})
and show that the neighbourhoods are generally only
weakly correlated (see \autoref{sec:corr}).
We also establish explicit coupling results between
the local neighbourhood of a vertex in the graph
and a limiting Galton--Watson tree
(\autoref{prop:maincoup}).
In \autoref{chap:proof}
we will follow Cao's strategy and prove the central limit
theorem via the (generalised) perturbative Stein's method
introduced by \citet{chatterjee:new,chatterjee}.

\section{Setting and statement of the main results}\label{chap:result}

\subsection{Rank-one sparse inhomogeneous random graphs}\label{sec:setting}

We now  describe our setting in more detail.
First we define the rank-one inhomogeneous graph model
that we will focus on.
All of this happens on an
underlying probability space~\((\Omega,\mathcal{F},\prob)\).

\begin{definition}[Rank-one inhomogeneous random graph]\label{def:irg}
For~\(n \in \naturals_{+} = \set{1,2,\dots}\)
let~\(G_{n} = (V_{n},E_{n})\)
be a graph with vertex set~\(V_{n} = [n] = \set{1,\dots,n}\).
The edge set~\(E_{n}\) is generated as follows.

We assign a possibly random \emph{connectivity
weight}~\(W^{n}_{v} \in \intervaloo{0}{\infty}\)
to each vertex~\(v \in V_{n}\).
Given these connectivity weights
we realise independent edges
between all (unordered) pairs of vertices~\(u\) and~\(v\)
with probability
\begin{equation}\label{eq:puvdef}
  p^{n}_{uv} = \frac{W^{n}_{u} W^{n}_{v}}{n\vartheta} \imin 1,
\end{equation}
where~\(\Lambda_{n} = \sum_{u \in V_{n}} W^{n}_{u}\)
and we assume that~\(\vartheta \in \intervaloo{0}{\infty}\) satisfies
\begin{equation}\label{eq:thetaconv}
  \frac{1}{n}\Lambda_{n}
  = \frac{1}{n}\sum_{u' \in V_{n}} W ^{n}_{u'}
  \overset{\prob}{\to} \vartheta
  \quad\text{as~\(n \to \infty\)}
\end{equation}
with~\(\overset{\prob}{\to}\) denoting convergence in probability.
We will generally make stronger assumptions about the distribution
of the connectivity weights (which will be detailed in \autoref{ass:coupling}
in just a moment),
so we will not highlight this assumption here in more detail.

\begin{notation}
  Let~\(\mathbf{W}^{n} = (W^{n}_{v})_{v\in V_{n}}\)
  be the collection of all connectivity weights
  for vertices in~\(V_{n}\)
  and let~\(\mathcal{F}_{n} = \sigma(\mathbf{W}^{n})
  = \sigma((W^{n}_{v})_{v \in V_{n}})\)
  be the~\(\sigma\)-algebra generated by all connectivity weights
  for vertices in~\(V_{n}\).
  From now on we will write~\(
  \prob_{n}(\placeholder) = \prob(\;\placeholder \given \mathcal{F}_{n})
  \)
  and~\(
    \expe_{n}[\placeholder] = \expe[\;\placeholder \given \mathcal{F}_{n}]
  \)
  for the probability measure and expectation
  conditioned on the connectivity weights~\(W^{n}_{1},\dots,W^{n}_{n}\).

  We will also drop the superscript~\(n\)
  from~\(W^{n}_{v}\) and~\(p^{n}_{uv}\)
  to make formulas slightly easier on the eye.
  The~\(n\) will be clear from the context.
\end{notation}

Formally, let~\(V_{n}^{(2)} = \sset{\edge{u}{v}}{u,v \in V_{n}}\)
be the maximal set of edges that~\(G_{n}\)
could possibly have,
i.e.~the set of edges of the complete graph on~\(V_{n}\).
Conditional on~\(\mathcal{F}_{n}\)
let~\(X_{uv} \sim \mathrm{Bin}(1,p_{uv})\)
for~\(1 \leq u < v \leq n\) be independent indicator functions
(the \emph{edge indicators}).
We will write~\(X_{uv} = X_{vu}\)
whenever~\(u, v \in V_{n}\), ~\(u \neq v\),
and set~\(X_{vv}=0\) for all~\(v \in V_{n}\).
The set of edges of~\(G_{n}\) is then given by~\(
  E_{n}
  = \sset{\edge{u}{v} \in V_{n}^{(2)}}{X_{uv}=1}
\).
\end{definition}

\begin{remark}
  This model for~\(G_{n}\) is related~--~but in this formulation not exactly equal~--~to
  the Chung--Lu model \citep{chung-lu},
  where vertices are connected independently with probability~\(
    \frac{W_{u}W_{v}}{\Lambda_{n}} \imin 1
  \),
  and the Norros--Reittu model \citep{norrosreittu},
  where the edge probability is~\(
    1-\exp(-W_{u}W_{v}/\Lambda_{n})
  \).
  By the assumed convergence of~\(n^{-1} \Lambda_{n}\)
  to~\(\vartheta\) and since~\(1-\exp(-x) \approx x\)
  for~\(x \to 0\), however,
  the edge probabilities will be very similar for large~\(n\).

  The classical Erdős--Rényi model
  with~\(p_{n} = n^{-1} \lambda_{n}\)
  for a sequence~\((\lambda_{n}) \subseteq \intervaloo{0}{\infty}\)
  with~\(\lambda_{n} \to \lambda \in \intervaloo{0}{\infty}\)
  as~\(n\to\infty\)
  can
  be obtained by setting~\(W_{v} = (\lambda_{n}\lambda)^{1/2}\)
  for all~\(v \in V_{n}\),
  so that~\(\vartheta = \lambda\)
  and~\(p_{uv}=n^{-1}\lambda_{n} = p_{n}\)
  as desired.

In the framework of inhomogeneous sparse random graphs
by \citet{bollobas}
our graph is a so-called \emph{rank-one model},
since its connection kernel~\(\kappa(x,y) = xy/\vartheta\)
has a simple product form.
\end{remark}

In order to allow us to
identify a limiting object for the graph~\(G_{n}\)
we will have to impose some conditions
on the connectivity weight distribution.

\begin{assumption}\label{ass:coupling}
  Let~\((G_{n})_{n \in \naturals}\) be a sequence of graphs as defined above.

  Given~\(\mathcal{F}_{n}\) let
  \begin{equation}\label{eq:nunnunhat}
    \nu_{n}(\placeholder)
    = \frac{1}{n}\sum_{v \in V_{n}} \indfunc_{\set{W_{v} \in
        \placeholder}}
    \quad
    \text{and}
    \quad
    \sizebias{\nu}_{n}(\placeholder)
    = \frac{1}{\Lambda_{n}} \sum_{v \in V_{n}} W_{v}
    \indfunc_{\set{W_{v} \in \placeholder}}
  \end{equation}
  be the empirical distribution function
  of the connectivity weights and its
  size-biased version.

  Assume that there is a measure~\(\nu\) on~\(\intervaloo{0}{\infty}\)
  with mean in~\(\intervaloo{0}{\infty}\)
  that satisfies the following properties.
  \begin{enumerate}
  \item\label{eq:wnun} There exists a sequence~\((\alpha_{n})_{n \in \naturals}\)
    that converges to zero in probability such that~\(
    \wasserstein(\nu_{n},\nu) \leq \alpha_{n}\),
  where~\(\wasserstein(\mu,\nu)\) denotes the~\(1\)-Wasserstein distance between
  the measures~\(\mu\) and~\(\nu\) on~\(\reals\)
  \begin{gather*}
    \wasserstein(\mu,\nu)
    = \inf\sset[\big]{\expe_{n}[\abs{X-Y}]}
    {X\sim \mu, Y \sim \nu, \;\text{\(X, Y\)
        defined on~\((\Omega,\mathcal{F}_{n},\prob_{n})\)}}.
  \end{gather*}
  \item\label{itm:nuhat} Let~\(\sizebias{\nu}\) be the size-biased distribution
  of~\(W \sim \nu\) given by~\(
    \sizebias{\nu}(A)
    = (\expe[W])^{-1}
    \expe[W \indfunc_{\set{W \in A}}]
  \).
  Then with the same sequence~\((\alpha_{n})_{n \in \naturals}\) as
  in~\ref{eq:wnun} we also have~\(
    \wasserstein(\sizebias{\nu}_{n},\sizebias{\nu}) \leq \alpha_{n}
  \).
  \item\label{itm:thirdmoment} Furthermore,
   we assume that the third moment of~\(W^{(n)} \sim \nu_{n}\),
  i.e.~\(
    \expe_{n}[(W^{(n)})^{3}]
    = \frac{1}{n} \sum_{u \in W_{u}} W_{u}^{3}
  \),
  is bounded in probability.
  \end{enumerate}
\end{assumption}

Assumption \ref{eq:wnun}
is in particular satisfied with an~\(\alpha_{n}\)
of rate~\(n^{-1/2}\)
if the weights~\(W_{v}\) are drawn i.i.d.~from the distribution~\(\nu\)
assuming that~\(\nu\) has second moments
\citep[Thm.~1]{fournier}.
It is tempting to conjecture that the rate of convergence
for~\(\wasserstein(\sizebias{\nu}_{n},\sizebias{\nu})\)
should be similar under the same moment conditions
for~\(\sizebias{\nu}\),
which would translate into the existence of third
moments for~\(\nu\).
We do not attempt to address this question further,
we will just mention that \citet[in a slightly different setting in
proof of Lem.~4.8]{olveracravioto}
briefly argues that~\(\wasserstein(\nu_{n},\nu) \overset{\prob}{\to} 0\)
implies~\(\wasserstein(\sizebias{\nu}_{n},\sizebias{\nu})
  \overset{\prob}{\to} 0\)
without claims on the rate of the latter convergence.
Intuitively, this is true because size-biasing respects
convergence in distribution \citep[Thm.~2.3]{sizebias}
if the means converge as well
and we can then use Skorohod's representation theorem
to obtain coupled random variables with the desired distributions
(possibly on a new probability space).

\citet{olveracravioto} constructed the couplings between
the graph and the limiting object
assuming only existence of the first moments.
We decided to work under the stronger assumptions presented here
because they give us more explicit control over the rate of convergence
and make the construction of the coupling slightly more natural.

We will use the notation~\(W^{(n)} \sim \nu_{n}\)
and~\(W \sim \nu\)
to recall the definition of~\(\Lambda_{n}
= \sum_{u \in V_{n}}W_{u} = n\expe_{n}[W^{(n)}]
\).
If~\(W^{(n)}\) and~\(W\)
are constructed via the optimal coupling guaranteed
by the Wasserstein distance \citep[Thm.~1.7]{santambrogio},
we have~\(
  \abs{n^{-1}\Lambda_{n}-\expe_{n}[W]}
  \leq \expe_{n}[\abs{W^{(n)}-W}]
  \leq \alpha_{n}
\).
This implies that~\(n^{-1}\Lambda_{n}\) converges in probability
to~\(\expe_{n}[W]\),
so that we can set~\(\vartheta = \expe_{n}[W]\) for~\eqref{eq:puvdef}.

Fix~\(p \in \intervaloo{0}{\infty}\). Now define
\begin{equation}\label{eq:gamman}
  \Gamma_{p,n}
  = \frac{1}{n\vartheta} \sum_{u \in V_{n}} W_{u}^{p}
  = \frac{\expe_{n}[(W^{(n)})^{p}]}{\vartheta}
\end{equation}
for the average~\(p\)-th power of the connectivity weights
normalised with~\(\vartheta\)
and
\begin{equation}\label{eq:kappan}
  \kappa_{p,n}
  = \frac{1}{n\vartheta} \sum_{i = 1}^{n}
  W_{i}^{p}\indfunc_{\set{W_{i} > \sqrt{n\vartheta}}}
  = \frac{1}{\vartheta}\expe_{n}[(W^{(n)})^{p}\indfunc_{\set{W^{(n)} > \sqrt{n\vartheta}}}]
\end{equation}
for the average excess of~\(p\)-th power of the connectivity weights
above~\(\sqrt{n\vartheta}\) normalised with~\(\vartheta\).

By \ref{itm:thirdmoment} we immediately have that~\(\Gamma_{p,n}\)
is bounded in probability for all~\(p \in \intervalcc{0}{3}\).
For~\(\kappa_{p,n}\) observe that if~\(p \in \intervalco{0}{3}\)
by Hölder's and Markov's inequality~\(
  \kappa_{p,n}
  \leq \Gamma_{3,n} (n\vartheta)^{-(3-p)/2}
\).
This term goes to zero in probability as~\(n\to\infty\)
if~\(p \in \intervalco{0}{3}\)
since~\(\Gamma_{3,n}\) is bounded in probability.
Note that if~\(W_{u} \leq \sqrt{n\vartheta}\)
and~\(W_{v} \leq \sqrt{n\vartheta}\)
it follows that~\(W_{u}W_{v} \leq n\vartheta\).
This implies that the minimum with~\(1\) in the
definition~\eqref{eq:puvdef} of~\(p_{uv}\) is not needed
in this case.
Hence,~\(\kappa_{p,n}\) measures the~\(p\)-th moment of
the connectivity weight of the vertices
exceed this \enquote{safe} threshold.

Analogous to~\(\Gamma_{p,n}\) we also define~\(\Gamma_{p}\)
as the \enquote{normalised}~\(p\)-th moment of~\(\nu\).
Fix~\(p\), let~\(W \sim \nu\)
and set
\(
\Gamma_{p} = \vartheta^{-1}\expe[W^{p}]
= (\expe[W])^{-1}\expe[W^{p}]
\).

We now add vertex and edge weights to our graph model
to obtain a weighted graph.
For us a weighted graph is a graph with additional values
associated to vertices and edges.
\begin{definition}[Weighted graph]
  A \emph{weighted graph}~\(\mathbf{G}\) is an ordered
  tuple~\(\mathbf{G}=(V,E,\mathbf{w})\),
  where~\(G=(V,E)\) is a graph (the underlying graph)
  and~\(\mathbf{w} \colon V \setunion E \to \reals\)
  is a function that assigns a real-valued weight to
  each vertex~\(v \in V\) and each edge~\(e \in E\).
\end{definition}

In our model
these weights are added independently of the underlying graph
structure in an i.i.d.\ manner.

\begin{definition}[Weighted rank-one inhomogeneous
random graph]\label{def:wirg}
Fix a graph~\(G_{n}\) as defined in \autoref{def:irg}
and fix two distributions on the non-negative real numbers~\(\mu_{V,n}\)
and~\(\mu_{E,n}\).
Then assign i.i.d.\ vertex weights~\(w^{(n)}_{v} \sim \mu_{V,n}\)
to each~\(v \in V_{n}\)
and i.i.d.\ edge weights~\(w^{(n)}_{e} \sim \mu_{E,n}\)
to each~\(e \in V_{n}^{(2)}\).%
\footnote{Technically, we would only need to assign weights to
  edges~\(e \in E_{n}\) that are actually present in~\(G_{n}\),
  but it is more convenient to assign a weight to all \enquote{possible}
  edges~\(e \in V_{n}^{(2)}\) \enquote{just in case}.}
\end{definition}

The distributions of the vertex and edge weights
may depend on~\(n\),
so in order to identify a limiting object we will
assume that the weight distributions converge to a limiting
distribution.
In particular we will assume that~\(\mu_{V,n}\) converges to some~\(\mu_{V}\)
and~\(\mu_{E,n}\) to some~\(\mu_{E}\) in total variation distance,
i.e. that
\[
  d_{\mathrm{TV}}(\mu_{V,n},\mu_{V}) \to 0
  \quad\text{and}\quad
  d_{\mathrm{TV}}(\mu_{E,n}, \mu_{E}) \to 0,
\]
where the total variation distance for two probability
measures~\(\mu\) and~\(\lambda\)
on a measurable space~\((\Omega', \mathcal{F}')\) is given by~\(
  d_{\mathrm{TV}}(\mu, \nu) =
  \inf\sset{\prob(X \neq Y)}{X \sim \mu, Y \sim \nu}
\).

\subsection{Local structure and limit of inhomogeneous random graphs}
We now describe the local limiting behaviour of the rank-one
inhomogeneous graph model without weights.
This is done by showing that the local neighbourhood
of a vertex~\(v\) in~\(G_{n}\) can be coupled with
high probability to a \enquote{delayed} Galton--Watson tree.
Since the weights are added to the model independently of the underlying
structure the extension to the weighted case will be straightforward.

\begin{definition}[Local neighbourhood]\label{def:nbhd}
  Let~\(G = (V,E)\) be a graph.
  For a vertex~\(v \in  V\)
  and level~\(\ell \in \naturals\)
  denote by~\(B_{\ell}(v, G)\)
  the \emph{(local) neighbourhood of the vertex~\(v\)}
  up to level~\(\ell\)
  in the graph~\(G\).
  Formally, we define~\(B_{\ell}(v, G)\) as the subgraph of~\(G\)
  induced by the union of all paths starting in~\(v\)
  that are no longer than~\(\ell\) steps.
\end{definition}

This definition can be extended trivially to weighted graphs~\(\mathbf{G}\)
by including a restriction of the weight function~\(\mathbf{w}\)
to the relevant vertices and edges,
in which case we write~\(B_{\ell}(v, \mathbf{G})\).
When the context is clear, we
will sometimes drop the reference to the underlying graph~\(G\)
completely and will just write~\(B_{\ell}(v)\) instead of~\(B_{\ell}(v,G)\).

\begin{definition}[Isomorphism of rooted graphs]
  Let~\(G = (V,E)\)
  and~\(H = (U,F)\)
  be two weighted rooted graphs with root~\(v^{*} \in V\)
  and~\(u^{*} \in U\), respectively.
  We write~\(G \cong H\)
  if there exists a bijection~\(\varphi \colon V \to U\)
  that satisfies~\(\varphi(v^{*})=u^{*}\)
  and preserves edges,
  i.e.~\(\edge{\varphi(v_{1})}{\varphi(v_{2})} \in F\)
 if and only if~\(\edge{v_{1}}{v_{2}} \in E\).
\end{definition}
This notion naturally extends to weighted rooted
graphs~\(\mathbf{G} = (V,E,\mathbf{w})\)
and~\(\mathbf{H} = (U,F,\mathbf{x})\),
where we additionally require~\(\mathbf{x}(\varphi(v)) = \mathbf{w}(v)\)
for all~\(v \in V\)
and~\(\mathbf{x}(\varphi(e)) = \mathbf{w}(e)\)
for all~\(v \in E\).

For technical reasons we also need the concept of isomorphism
for pairs of graphs with the same vertex sets.
\begin{definition}[Isomorphism for pairs of rooted graphs]
  Let~\(G= (V,E)\) and~\(G' = (V', E')\) be two weighted rooted graphs
  with same root~\(v^{*} \in V \setintersect V'\).
  Let similarly~\(H = (U,F)\)
  and~\(H' = (U',F')\)
  be two further weighted rooted graphs
  with the same root~\(u^{*} \in U \setintersect U'\).
  We write~\((G,G') \cong (H,H')\)
  if there exists a bijection~\(\varphi \colon V \to U\)
  that satisfies~\(\varphi(v^{*})=u^{*}\)
  and preserves edges,
  i.e.~\(\edge{\varphi(v_{1})}{\varphi(v_{2})} \in F\)
  if and only if~\(\edge{v_{1}}{v_{2}} \in E\),
  and a bijection~\(\varphi' \colon V' \to U'\)
  that also maps~\(v^{*}\) to~\(u^{*}\)
  and preserves edges in the way just described.
\end{definition}
Again, the notion naturally extends to weighted graphs
by requiring that weights be preserved by~\(\varphi\)
and~\(\varphi'\).

\begin{definition}\label{def:limtree:noweight}
  For a probability measure~\(\nu\) on~\(\intervaloo{0}{\infty}\)
  and a connectivity weight~\(W \in \intervaloo{0}{\infty}\)
  let~\(T(W,\nu)\) be a Galton--Watson
  tree
  in which the root has~\(\mathrm{Poi}(W)\) children
  and all other levels have offspring
  distribution~\(\mathrm{MPoi}(\sizebias{\nu})\),
  where~\(\mathrm{MPoi}(\mu)\) denotes a mixed Poisson
  distribution with mixing distribution~\(\mu\)
  (where~\(X \sim \mathrm{MPoi}(\mu)\)
   means
   \(\prob(X=k) = \expe[\exp(-\Lambda)  \Lambda^{k}/k!]\)
   for all~\(k \in \naturals\)
   when~\(\Lambda \sim \mu\))
  and~\(\sizebias{\nu}\) denotes the size-biased
  distribution of~\(\nu\).

  For an integer~\(\ell \in\naturals\) let~\(T_{\ell}(W,\nu)\)
  be the subtree of~\(T(W,\nu)\) cut at height~\(\ell\)
  (or alternatively the~\(\ell\)-neighbourhood of the
   root~\(B_{\ell}(\treeroot, T(W,\nu))\)).
\end{definition}

\begin{remark}
  Note that~\(T(W,\nu)\)
  can be constructed
  by joining~\(N \sim \mathrm{Poi}(W)\)
  independent Galton--Watson trees~\(T^{(i)}\) for~\(i \in \set{1,\dots,n}\)
  with root~\(\treeroot_{i}\)
  and offspring distribution~\(\mathrm{MPoi}(\sizebias{\nu})\) for all levels
  together at a root~\(\treeroot\)
  with edges~\(\edge{\treeroot}{\treeroot_{1}},
  \dots,\edge{\treeroot}{\treeroot_{N}}\).
\end{remark}

\citet{olveracravioto} calls such a tree process \enquote{delayed},
because the root has a different offspring distribution
than all other individuals \citep[see also][]{esker}.

This limiting tree is closely connected to the local weak
limit of the graph~\(G_{n}\) \citep[Chap.~2]{vdh}.
The notion of local limits for graphs
was introduced by \citet{benjamini}
and later by \citet{aldoussteele},
who used it extensively to develop the so-called objective method,
in which the limiting properties of a sequence of finite problems
are analysed in terms of local properties of a new infinite
object.
In our treatment we keep the vertex whose neighbourhood we explore
fixed, whereas in the context of local weak limits
this vertex is chosen uniformly at random.
We can think of our setup as conditioning on the type of the
root vertex, so that the
usual local weak limit can then be recovered from our results
by averaging over all vertices (and possibly
adjusting the coupling of the root vertex).
Specifically, the resulting tree would
have a root with~\(\mathrm{MPoi}(\nu)\)
children, while all other individuals have offspring
distribution~\(\mathrm{MPoi}(\sizebias{\nu})\).
Such a tree process is called \emph{unimodular} \citep{vdh:2}.

One of the main results about the structure of the inhomogeneous
random graph considered here is
an explicit  coupling construction that yields:
\begin{prop}\label{prop:maincoup}
  Let~\((G_{n})_{n \in \naturals}\) be a sequence of
  rank-one inhomogeneous random graph
  that satisfies \autoref{ass:coupling}
  for some measure~\(\nu\) on~\(\intervaloo{0}{\infty}\).

  Let~\(\mathcal{V} \subseteq V_{n}\) be a set of vertices.
  Then for all~\(\ell \in \naturals\)
  the neighbourhoods around~\(v \in \mathcal{V}\)
  can be coupled to independent limiting
  trees~\(\mathcal{T}(v) \sim T(W_{v},\nu) \)
  such that for all~\(n \in \naturals\)
  \begin{align*}
    \prob_{n}\biggl(
    \bigsetunion_{v \in \mathcal{V}} \set[\big]{
    B_{\ell}(v) \ncong \mathcal{T}_{\ell}(v)
    }
    \biggr)
  &\leq
    \frac{\Gamma_{2,n}}{n\vartheta} \sum_{v\in \mathcal{V}} W_{v}^{2}
    +\Gamma_{1,n} \sum_{v \in \mathcal{V}} W_{v} \indfunc_{\set{W_{v} > \sqrt{n\vartheta}}}\\
  &\quad  +(\Gamma_{2,n}+1)^{\ell}
    \biggl(
      \frac{\Gamma_{3,n}}{n\vartheta}+\kappa_{1,n}+\kappa_{2,n}+
    \frac{2+\Gamma_{1,n}}{k_{n}}+\frac{k_{n}}{n\vartheta}\biggr)
    \sum_{v \in \mathcal{V}} W_{v}\\
  &\quad + \setcard{\mathcal{V}}\frac{1}{k_{n}}
    +\frac{k_{n}^{2}}{n\vartheta\Gamma_{1,n}}
    +\sum_{v\in \mathcal{V}} W_{v}\alpha_{n}
    \biggl(
      \frac{1}{\vartheta}
      + (\Gamma_{2}+1)^{\ell-1}
        \biggl(\frac{\Gamma_{2,n}}{\vartheta\Gamma_{1,n}}+1\biggr)
    \biggr),
  \end{align*}
  where~\((k_{n})_{n \in\naturals} \subseteq \intervaloo{0}{\infty}\)
  is an arbitrary sequence of positive real numbers.
\end{prop}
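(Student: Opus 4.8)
The plan is to construct the coupling by hand, running a simultaneous breadth-first exploration of all neighbourhoods $B_{\ell}(v)$, $v\in\mathcal{V}$, in parallel and level by level while building the limiting forest $(\mathcal{T}(v))_{v\in\mathcal{V}}$, $\mathcal{T}(v)\sim T(W_v,\nu)$, alongside; this refines the construction of \citet{olveracravioto} by tracking the Wasserstein rate $\alpha_n$ explicitly. We maintain a set $\mathcal{R}$ of already-discovered vertices, initialised to $\mathcal{V}$, and process the active vertices one at a time: when a vertex $w$ with connectivity weight $W_w$ (equal to $W_v$ when $w=v$ is a root) is processed we reveal the edge indicators $X_{wu}$ for $u\notin\mathcal{R}$ and add the $u$ with $X_{wu}=1$ to $\mathcal{R}$ as its children. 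On the tree side the root of $\mathcal{T}(v)$ receives $\mathrm{Poi}(W_v)$ children, and every other node first draws a type $\Lambda\sim\sizebias{\nu}$ and then $\mathrm{Poi}(\Lambda)$ children. The isomorphism $B_{\ell}(v)\cong\mathcal{T}_{\ell}(v)$ holds as long as, throughout this exploration down to depth $\ell$, the number of children produced in $G_n$ matches the number produced in $\mathcal{T}(v)$ at every processed vertex and no vertex of $V_n$ is ever discovered twice (a repeat discovery being exactly what creates a cycle inside one tree or glues two trees together), so it suffices to bound the probabilities of these two kinds of failure.

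First I would Poissonise the offspring counts. Conditionally on $\mathcal{F}_n$ the number of children of $w$ equals $\sum_{u\notin\mathcal{R}}X_{wu}$, a sum of independent indicators, so by Le Cam's Poisson approximation inequality its law lies within total variation distance $\sum_{u}p_{wu}^{2}$ of $\mathrm{Poi}(\lambda_w)$ with $\lambda_w=\sum_{u\notin\mathcal{R}}p_{wu}$. Using $p_{uv}=\tfrac{W_uW_v}{n\vartheta}\wedge1$ and $\sum_{u\in V_n}W_u=n\vartheta\Gamma_{1,n}$ one obtains $\sum_{u}p_{wu}^{2}\le\tfrac{W_w^{2}}{n\vartheta}\Gamma_{2,n}$, while $\lambda_w$ equals $\Gamma_{1,n}W_w$ up to the correction $\sum_{u\in\mathcal{R}}p_{wu}$ from the used vertices and a correction from the truncation $\wedge1$, which only bites when $W_w>\sqrt{n\vartheta}$ or some $W_u>\sqrt{n\vartheta}$ and is controlled by $\kappa_{1,n}$, $\kappa_{2,n}$, the indicators $\indfunc_{\set{W_v>\sqrt{n\vartheta}}}$ and a higher-order $\tfrac{\Gamma_{3,n}}{n\vartheta}$-term. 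Applied to the roots this already yields the first two summands of the bound. Finally $\mathrm{Poi}(\Gamma_{1,n}W_w)$ is coupled monotonically to $\mathrm{Poi}(W_w)$ at a cost $\abs{\Gamma_{1,n}-1}W_w\le\alpha_nW_w/\vartheta$, using $\vartheta=\expe_n[W]$ and property \ref{eq:wnun} of \autoref{ass:coupling}; summed over the roots this is the $\tfrac1\vartheta$ part of the $\alpha_n$-block.

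Next comes the type coupling for non-root vertices. Conditionally on $w\notin\mathcal{R}$ being discovered as a child of an already-processed vertex, $w$ is picked from $V_n\setminus\mathcal{R}$ with probability proportional to its connectivity weight by \eqref{eq:puvdef}, so $W_w$ has the size-biased empirical law $\sizebias{\nu}_n$ conditioned to avoid $\mathcal{R}$; since $\mathcal{R}$ stays small this is close to $\sizebias{\nu}_n$, which by property \ref{itm:nuhat} of \autoref{ass:coupling} is within Wasserstein distance $\alpha_n$ of $\sizebias{\nu}$. Chaining the Wasserstein transport with a further monotone Poisson coupling (as in the previous paragraph) couples the offspring of $w$ in $G_n$ to $\mathrm{MPoi}(\sizebias{\nu})$; the factor $\tfrac{\Gamma_{2,n}}{\vartheta\Gamma_{1,n}}+1$ multiplying $\alpha_n$ records that the size-biased transport is re-weighted by (roughly) the mean $\Gamma_{2,n}/\Gamma_{1,n}$ of $\sizebias{\nu}_n$ when pushed through the Poisson parameter, plus the direct transport cost.

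Finally I would control collisions and close the induction over the $\ell$ levels. Fix the free sequence $(k_n)$ and abort the whole exploration once $\abs{\mathcal{R}}>k_n$ or a collision occurs; on the abort event the isomorphism may simply fail, and Markov-type bounds on the coupled tree sizes together with a birthday-type estimate for coincidences among the at most $k_n$ size-biased discoveries produce the $\abs{\mathcal{V}}/k_n$, $\tfrac{2+\Gamma_{1,n}}{k_n}$ and $\tfrac{k_n^{2}}{n\vartheta\Gamma_{1,n}}$ contributions. Before an abort, the chance that the processed vertex $w$ sends an edge into $\mathcal{R}$ is at most $\sum_{u\in\mathcal{R}}p_{wu}\le\tfrac{k_n}{n\vartheta}W_w$ after bounding the weight accumulated in $\mathcal{R}$. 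Since the non-root mean offspring of $T(W,\nu)$ is the $\sizebias{\nu}$-mean $\Gamma_{2,n}$ (respectively $\Gamma_2$ once weights have been transported to $\nu$), the exploration of a single root $v$ processes, in expectation, of order $W_v(\Gamma_{2,n}+1)^{\ell}$ vertices, so summing the per-vertex errors (Poissonisation, truncation, the per-step collision cost $\tfrac{k_n}{n\vartheta}W_w$ and the $\alpha_n$-costs) over all processed vertices multiplies them by $(\Gamma_{2,n}+1)^{\ell}\sum_{v\in\mathcal{V}}W_v$ — with the transported size-bias term carrying $(\Gamma_2+1)^{\ell-1}$ instead, the root contributing no size-bias error. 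A union bound over $v\in\mathcal{V}$ and all exploration steps then assembles everything into the stated inequality. I expect the main obstacle to be exactly this bookkeeping: the conditional type of a freshly discovered vertex is distorted both by the used set $\mathcal{R}$ and by the conditioning on no earlier collision, and one must keep both distortions of the claimed size \emph{uniformly} along all $\lesssim(\Gamma_{2,n}+1)^{\ell}\sum_{v\in\mathcal{V}}W_v$ exploration steps while maintaining a consistent coupling of the whole forest across levels.
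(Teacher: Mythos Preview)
Your proposal is correct and uses the same ingredients as the paper: breadth-first exploration, a Bernoulli--Poisson coupling of the offspring counts (with the $\wedge 1$ truncation producing the $\kappa_{1,n}$, $\kappa_{2,n}$ and indicator terms), a $k_n$-truncation plus birthday estimate for collisions, and a Wasserstein transport of types from $\sizebias{\nu}_n$ to $\sizebias{\nu}$. The one organisational difference worth noting is that the paper factors the construction through an explicit \emph{intermediate} multi-type tree $\imt{T}(v)$ whose non-root types are drawn i.i.d.\ from the full empirical $\sizebias{\nu}_n$ (not restricted to unused vertices): first it couples $B_{\ell}(v)$ to $\imt{T}_{\ell}(v)$ (\autoref{prop:blttl}, \autoref{prop:manycouplings}), then separately transports $\imt{T}_{\ell}(v)$ to $\mathcal{T}_{\ell}(v)$ by coupling each type and Poisson parameter (\autoref{lem:treecoup}). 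This modular split cleanly resolves exactly the obstacle you flag at the end --- the distortion of the conditional type law by the used set $\mathcal{R}$ and by the no-collision conditioning --- because in the intermediate tree the types are drawn \emph{unconditionally} from $\sizebias{\nu}_n$ and the discrepancy with the graph exploration is absorbed entirely into the collision/repeat-type events, so the Wasserstein step never sees the conditioning at all.
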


By \autoref{ass:coupling}~\(\Gamma_{1,n}\), \(\Gamma_{2,n}\)
and~\(\Gamma_{3,n}\) are bounded in probability
and~\(\alpha_{n}\), \(\kappa_{1,n}\) and~\(\kappa_{2,n}\) converge to zero
in probability.
Additionally,~\(\sum_{v \in \mathcal{V}} W_{v}\indfunc_{\set{W_{v} > \sqrt{n\vartheta}}}\)
is zero if~\(n\) is large enough for any finite set~\(\mathcal{V}\).
Then the probability that the coupling does not
hold goes to zero in probability if~\((k_{n})_{n \in \naturals}\)
is chosen appropriately.
In particular the sequence needs to satisfy~\(k_{n} \to \infty\)
as well as~\(k_{n}^{2}/n \to 0\).
The choice~\(k_{n} \approx n^{1/3}\)
balances the rate of~\(1/k_{n}\) and~\(k_{n}^{2}/n\)
so that both are of order~\(n^{-1/3}\).

This lemma can be used to obtain a coupling
for neighbourhoods in the Erdős--Rényi model
to a Galton--Watson tree with simple Poisson offspring distribution.
\begin{example}
  Consider the Erdős--Rényi model with
  edge probability~\(p_{n} = \lambda_{n}/n\)
  (and~\(\lambda_{n} \to \lambda\)).
  Recall that we had to set~\(W_{v} = (\lambda_{n}\lambda)^{1/2}\)
  to obtain the desired edge probabilities.

  Then~\(\nu_{n} = \delta_{(\lambda_{n}\lambda)^{1/2}}\)
  and~\(\nu = \delta_{\lambda}\)
  and by basic properties of the Wasserstein distance
  and the square root function
  \[
    \wasserstein(\nu_{n},\nu) = \abs{\lambda_{n}^{1/2}\lambda^{1/2}-\lambda}
    = \lambda^{1/2} \abs{\lambda_{n}^{1/2}-\lambda^{1/2}}
    \leq \abs{\lambda_{n}-\lambda}
  \]
  The size-biased measures coincide
  with the original measures so that also
  \[
    \wasserstein(\sizebias{\nu}_{n},\sizebias{\nu})
    = \wasserstein(\nu_{n},\nu)
    = \abs{\lambda_{n}-\lambda}.
  \]
  We thus set~\(\alpha_{n} = \abs{\lambda_{n}-\lambda}\).
  Furthermore, we may assume that~\(\kappa_{p,n} = 0\)
  and~\(\sum_{v \in \mathcal{V}} W_{v} \indfunc_{\set{W_{v} > \sqrt{n\vartheta}}} = 0\),
  because~\(W_{v} > \sqrt{n\vartheta}\)
  if and only if~\(\lambda_{n} > n\)
  which is not the case for~\(n\)
  large enough as~\(\lambda_{n} \leq 2\lambda < n\)
  for all~\(n\) large enough.
  Finally~\(\Gamma_{p,n} = \lambda_{n}^{1/2p}\lambda^{1/2p-1}\)
  and~\(\Gamma_{p} = \lambda^{p-1}\)
  in particular~\(\Gamma_{2,n}=\lambda_{n}\) and~\(\Gamma_{2}=\lambda\).

  The trees~\(\mathcal{T}(v)\) are just independent
  Galton--Watson trees with
  \(\mathrm{Poi}(\lambda_{n}^{1/2}\lambda^{1/2})\) children at the root
  and offspring distribution~\(\mathrm{Poi}(\lambda)\)
  for all other individuals.
  For~\(\mathcal{V} = \set{v,u}\) \autoref{prop:maincoup}
  then reduces to
  \begin{align*}
    &\prob_{n}(
      \set{B_{\ell}(v) \neq \mathcal{T}_{\ell}(v)}
      \setunion
      \set{B_{\ell}(u) \neq \mathcal{T}_{\ell}(u)}
    \\
    &\quad\leq 2\frac{\lambda_{n}^{2}}{n}
      + 2\lambda_{n}^{1/2}\lambda^{1/2}
      (\lambda_{n}+1)^{\ell}
      \biggl(\frac{\lambda_{n}^{3/2}}{n\lambda^{1/2}}
      +\frac{2+\lambda_{n}^{1/2}/\lambda^{1/2}}{k_{n}}+
    \frac{k_{n}}{n\lambda}\biggr)\\
    &\qquad+2\frac{1}{k_{n}}+2\frac{k_{n}^{2}}{n\lambda_{n}^{1/2}\lambda^{1/2}}
      +2\lambda_{n}^{1/2}\lambda^{1/2}\abs{\lambda_{n}-\lambda}
      \biggl(
      \frac{1}{\lambda}
      +(\lambda+1)^{\ell-1}\biggl(\frac{\lambda_{n}^{1/2}}{\lambda^{1/2}}+1\biggr)
      \biggr)
  \end{align*}
  for~\(n\) large enough.
  Choose~\(k_{n} = n^{1/3}\),
  then this bound can be estimated by
  \[
    \prob_{n}(
      \set{B_{\ell}(v) \neq \mathcal{T}_{\ell}(v)}
      \setunion
      \set{B_{\ell}(u) \neq \mathcal{T}_{\ell}(u)}
      )
    \leq C \frac{(\lambda_{n}+1)^{\ell+2}}{\min\set{1,\lambda}n^{1/3}}
    +C \frac{(\lambda+1)^{\ell}}{\min\set{1,\lambda}}
    \abs{\lambda_{n}-\lambda},
  \]
  which is of the same order as the coupling
  probability that \citeauthor{cao}
  established for Erdős--Rényi random graphs \citep[Lem.~6.1]{cao}.

  Note that we coupled the neighbourhoods to Galton--Watson trees
  whose offspring distribution~\(\mathrm{Poi}((\lambda_{n}\lambda)^{1/2})\)
  at the root
  differs from the offspring distribution~\(\mathrm{Poi}(\lambda)\)
  of all other individuals.
  The classical coupling for Erdős--Rényi random graphs
  that \citeauthor{cao} established
  couples the neighbourhood to Galton--Watson trees
  with offspring distribution~\(\mathrm{Poi}(\lambda)\) for all individuals.
  If we wanted to obtain this classical coupling,
  we would have to modify (or re-couple) the offspring distribution
  of the root at a cost
  of~\(\wasserstein(\delta_{\lambda_{n}\lambda},\delta_{\lambda})
  \leq \abs{\lambda_{n}-\lambda}\).
  This additional cost does not change the rate estimate.
\end{example}

In order to describe the local limit of
the weighted graph
we just need to add vertex and edge weight
to the limiting object identified for \autoref{prop:maincoup}.
The limiting object will be the same as in \autoref{def:limtree:noweight}
just with added weights.
\begin{definition}\label{def:limtree:weights}
  Given two weight distributions~\(\mu_{E}\)
  and~\(\mu_{V}\) let~\(\mathbf{T}(W,\nu,\mu_{E},\mu_{V})\)
  be the Galton--Watson tree~\(T(W,\nu)\)
  endowed with i.i.d.\ edge weights drawn from~\(\mu_{E}\)
  and i.i.d\ vertex weights drawn from~\(\mu_{V}\).
  For~\(\ell \in \naturals\)
  let~\(\mathbf{T}_{\ell}(W,\nu,\mu_{E},\mu_{V})\) denote the~\(\ell\)-level
  subtree of~\(\mathbf{T}(W,\nu,\mu_{E},\mu_{V})\).
\end{definition}

The following object arises from the limiting
object by conditioning on the presence of a certain edge.
\begin{definition}\label{def:ttilde}
  Let~\(\nu\) be a probability measure on~\(\intervaloo{0}{\infty}\)
  and~\(W, W' \in \intervaloo{0}{\infty}\) two connectivity weights.
  Fix an edge weight distribution~\(\mu_{E}\)
  and a vertex weight distribution~\(\mu_{V}\).

  Let~\(\mathbf{T}\sim\mathbf{T}(W,\nu,\mu_{E},\mu_{V})\)
  with root~\(\treeroot\)
  and~\(\mathbf{T}'\sim\mathbf{T}(W',\nu,\mu_{E},\mu_{V})\)
  with root~\(\treeroot'\)
  be independent.
  Construct~\(\mathbf{\tilde{T}}(W,W',\nu,\mu_{E},\mu_{V})\)
  by grafting~\(\mathbf{T}'\) onto~\(\mathbf{T}\)
  via an edge
  of weight~\(w \sim \mu_{E}\) (independent of everything else)
  between~\(\treeroot\) and~\(\treeroot'\).
  (In particular~\(\treeroot\) is the root
  of~\(\mathbf{\tilde{T}}(W,W',\nu,\mu_{E},\mu_{V})\).)
  Let~\(\mathbf{\tilde{T}}_{\ell}(W,W',\nu,\mu_{E},\mu_{V})\) be the
  depth-\(\ell\)
  subtree of~\(\mathbf{\tilde{T}}(W,W',\nu,\mu_{E},\mu_{V})\).

  Alternatively,~\(\mathbf{\tilde{T}}_{\ell} \sim
  \mathbf{\tilde{T}}_{\ell}(W,W',\nu,\mu_{E},\mu_{V})\) can directly be
  constructed
  from independent
  trees~\(\mathbf{T}_{\ell} \sim \mathbf{T}_{\ell}(W,\nu,\mu_{E},\mu_{V})\)
  and~\(\mathbf{T}'_{\ell-1} \sim \mathbf{T}_{\ell-1}(W',\nu,\mu_{E},\mu_{V})\)
  with roots~\(\treeroot\) and~\(\treeroot'\), respectively,
  by
  grafting~\(\mathbf{T}'_{\ell-1}\) onto~\(\mathbf{T}_{\ell}\)
  via an edge between~\(\treeroot'\) and~\(\treeroot\)
  of weight~\(w \sim \mu_{E}\)
  (independent of everything else).
  Whenever~\(\mathbf{\tilde{T}}_{\ell}\) is defined via this procedure,
  we say it is
  \emph{constructed
  from~\((\mathbf{T}_{\ell},\mathbf{T}'_{\ell-1},\treeroot,\treeroot',w)\)}.
\end{definition}
As alluded to above, this object can be thought of as
the limiting object of the neighbourhood of~\(v\)
if we condition on the presence of an edge between~\(v\) and~\(u\).

We will not state and discuss the coupling results in the weighted setting here,
because they are structurally similar
to \autoref{prop:maincoup}.
The intuition should be that in a first step the underlying
graph structure is coupled as in the unweighted case
and then edge and vertex weights are added.
Since the weight distributions converge in total variation distance,
the weights can be coupled so that they are equal with high
probability, and because the number of vertices and edges in
the neighbourhood can be estimated,
the probability that the coupled weights are different
can be controlled.
We refer the reader to \autoref{sec:compcoup} for more details.

\subsection{Statement of the Central Limit Theorem}

The proof of our central limit theorem relies on
the analysis of
the effect of a small perturbation to the weighted graph~\(\mathbf{G}_{n}\)
on a function~\(f\).
We introduce some notation to refer to the effect of this perturbation.

\begin{notation}
Given~\(\mathcal{F}_{n}\) the entire structure of~\(\mathbf{G}_{n}\)
defined in \autoref{def:wirg}
can be encoded in the following sequences of
independent random variables
\[
  (\mathbf{X}^{(n)},\mathbf{w}^{(n)})
  = ((X^{(n)}_{e})_{e \in V_{n}^{(2)}}, (w^{(n)}_{x})_{x \in V_{n} \setunion V_{n}^{(2)}}),
\]
where~\(X^{(n)}_{\edge{u}{v}} \sim \mathrm{Bin}(1,p_{uv})\)
for~\(\edge{u}{v} \in V_{n}^{(2)}\),
\(w^{(n)}_{v} \sim \mu_{V,n}\) for~\(v \in V_{n}\)
and~\(w^{(n)}_{e} \sim \mu_{E,n}\) for~\(e \in V_{n}^{(2)}\)
are all independent random variables.
We will usually drop the superscript~\((n)\) for all these objects.
Additionally,
we will use the notational convention that~\(X_{uv} = X_{vu} = X_{\edge{u}{v}}\)
and~\(w_{uv} = w_{vu} = w_{\edge{u}{v}}\) for all~\(u \neq v\).
\end{notation}

Let~\(\mathbf{X}'\) be an independent copy of~\(\mathbf{X}\)
and likewise~\(\mathbf{w}'\) be an independent~copy of~\(\mathbf{w}\).
Let~\(F\) be a subset of~\(V_{n} \setunion V_{n}^{(2)}\),
i.e.~sets of vertices and edges alike.

Let~\(\mathbf{G}_{n}^{F}\) be the weighted graph
obtained from~\(\mathbf{G}_{n}\) by replacing
\begin{itemize}
\item \(X_{e}\) with~\(X'_{e}\)
  whenever~\(e \in F\) and
\item \(w_{z}\)
  with~\(w'_{z}\)
  whenever~\(z \in F\).
\end{itemize}
For singleton sets~\(A\) we often omit the curly brackets
and simply write~\(\mathbf{G}_{n}^{e}\)
for~\(\mathbf{G}_{n}^{\set{e}}\)
and~\(\mathbf{G}_{n}^{v}\)
for~\(\mathbf{G}_{n}^{\set{v}}\).
We abuse notation even further to write~\(\mathbf{G}_{n}^{F \setunion
e}\)
for~\(\mathbf{G}_{n}^{F \setunion \set{e}}\)
and~\(\mathbf{G}_{n}^{F \setunion v}\)
for~\(\mathbf{G}_{n}^{F \setunion \set{v}}\).
For brevity we write
\[
  X^{F}_{\edge{u}{v}}
  = \begin{cases*}
    X'_{\edge{u}{v}} & if~\(\edge{u}{v} \in F\),\\
    X_{\edge{u}{v}} & if~\(\edge{u}{v} \notin F\),
  \end{cases*}
  \quad\text{and}\quad
  w^{A}_{z}
  = \begin{cases*}
    w'_{z} & if~\(z \in F\),\\
    w_{z} &  if~\(z \notin F\),
  \end{cases*}
\]
whenever~\(u,v \in V_{n}\)
and~\(z \in V_{n}  \setunion V_{n}^{(2)}\).
With this notation~\(\mathbf{G}_{n}^{F}\)
is the weighted graph based on the sequences~\((X^{F}_{e})_{e \in V_{n}^{(2)}}\)
and~\((w^{F}_{z})_{z \in V_{n} \setunion V_{n}^{(2)}}\)
instead of~\((X_{e})_{e \in V_{n}^{(2)}}\)
and~\((w_{z})_{z \in V_{n} \setunion V_{n}^{(2)}}\).
Note that by construction~\(X^{\emptyset}_{e}=X_{e}\) for~\(e \in V_{n}^{(2)}\)
and~\(w^{\emptyset}_{z}=w_{z}\) for~\(z \in V_{n} \setunion V_{n}^{(2)}\).
It follows that~\(\mathbf{G}_{n}^{\emptyset} = \mathbf{G}_{n}\).

\begin{definition}
  Let~\(\mathbf{G}_{n}\) be a weighted graph and let~\(f\)
  be a function on weighted graphs.
  Recall the definition of the perturbed graph~\(\mathbf{G}_{n}^{e}\)
  and~\(\mathbf{G}_{n}^{v}\) for an edge~\(e \in V_{n}^{(2)}\)
  and a vertex~\(v \in V_{n}\), respectively.
  Then define
  \[
  \Delta_{e}f
  = f(\mathbf{G}_{n})-f(\mathbf{G}_{n}^{e})
  \quad
  \text{and}
  \quad
  \Delta_{v}f
  = f(\mathbf{G}_{n})-f(\mathbf{G}_{n}^{v}).
  \]
\end{definition}

The main assumption of the theorem is that it is possible to
approximate the effect of resampling perturbations
on the function~\(f\) by considering local neighbourhoods
around the perturbed site,
i.e.~that we can find a \emph{good local approximation}
for the effects of the perturbation on~\(f\).
\begin{assumption}[Property~GLA]\label{def:gla}
  Let~\(f\) be a function on weighted graphs
  and let~\((\mathbf{G}_{n})_{n \in \naturals}\) be a sequence of weighted
  inhomogeneous random graphs.
  Then the pair~\((f, (\mathbf{G}_{n})_{n\in\naturals})\) has
  \emph{property~GLA} for~\(\nu\),~\(\mu_{E}\) and~\(\mu_{V}\)
  if
  \begingroup
  \raggedright
  \begin{enumerate}
  \item  the underlying unweighted graph sequence~\((G_{n})_{n\in\naturals}\)
    satisfies \autoref{ass:coupling},
  \item the weight distributions satisfy
  \(d_{\mathrm{TV}}(\mu_{E,n},\mu_{E}) \to 0\)
  and~\(d_{\mathrm{TV}}(\mu_{V,n},\mu_{V}) \to 0\) as~\(n \to \infty\) and
\item the effects of perturbations of~\(\mathbf{G}_{n}\) on the function~\(f\)
  can be approximated locally in the following sense.

  For all~\(k \in \naturals\) there exist
  functions~\(\mathrm{LA}^{E,L}_{k}\),~\(\mathrm{LA}^{E,U}_{k}\),
  \(\mathrm{LA}^{V,L}_{k}\) and~\(\mathrm{LA}^{V,U}_{k}\)
  from pairs of rooted weighted trees to the real numbers
  and
  furthermore there exist two sequences
  of functions~\(m^{E}_{n} \colon V_{n}^{2} \to \reals\)
  and~\(m^{V}_{n} \colon V_{n} \to \reals\)
  such that
  \[
    (M^{E}_{n})_{n\in\naturals}
    = \Bigl(n^{-2}\sum_{v,u \in V_{n}} m^{E}_{n}(v,u)\Bigr)_{n\in\naturals}
    \quad\text{and}\quad
    (M^{V}_{n})_{n\in\naturals}
    =\Bigl(n^{-1}\sum_{v\in V_{n}} m^{V}_{n}(v)\Bigr)_{n\in\naturals}
  \]
  are bounded in probability
  and two
  sequences~\((\delta^{E}_{k})_{k \in \naturals}, (\delta_{k}^{V})_{k \in \naturals}\)
  with~\(\delta^{E}_{k} \to 0\) and~\(\delta^{V}_{k} \to 0\) as~\(k\to \infty\)
  such that the following conditions hold for any~\(k\in\naturals\).
  \end{enumerate}

    \begin{enumerate}[label=(GLA~\arabic*), leftmargin=*]
    \item \label{itm:gla:e:delta}
      For any edge~\(e=\edge{u}{v} \in V_{n}^{(2)}\),
      if~\(B_{k} = B_{k}(v,\mathbf{G}_{n})\)
    and~\(B^{e}_{k} = B_{k}(v,\mathbf{G}_{n}^{e})\) are trees, then
    \[
    \mathrm{LA}^{E,L}_{k}(B_{k},B_{k}^{e})
    \leq \Delta_{e}f
    \leq \mathrm{LA}^{E,U}_{k}(B_{k},B_{k}^{e}).
    \]
    \item \label{itm:gla:e:tree}
      For any edge~\(e=\edge{u}{v} \in V_{n}^{(2)}\),
      if~\((\mathbf{T},\mathbf{T}^{e})\) is a pair of trees
    satisfying~\((B_{k},B_{k}^{e})
    = (B_{k}(v,\mathbf{G}_{n}), B_{k}(v, \mathbf{G}_{n}^{e}))
    \cong (\mathbf{T},\mathbf{T}')\), then
    \begin{equation*}
    \mathrm{LA}^{E,L}_{k}(\mathbf{T},\mathbf{T}')
      = \mathrm{LA}^{E,L}_{k}(B_{k},B_{k}^{e})
      \quad\text{and}\quad
    \mathrm{LA}^{E,U}_{k}(\mathbf{T},\mathbf{T}')
    = \mathrm{LA}^{E,U}_{k}(B_{k},B_{k}^{e}).
    \end{equation*}
    \item \label{itm:gla:e:conv}
      For any two vertices~\(v,u \in V_{n}\)
      let~\(\mathbf{\tilde{T}}_{k}(v,u) \sim
      \mathbf{\tilde{T}}_{k}(W_{v},W_{u},\nu,\mu_{E},\mu_{V})\) be constructed
      from~\((\mathbf{T}_{k}(v),\mathbf{T}_{k-1}(u), \treeroot, \treeroot',
    w)\) (cf.~\autoref{def:ttilde}).
    Then
    \[
    \begin{split}
    &\max\set{
      \expe_{n}[
      (\mathrm{LA}^{E,U}_{k}(\mathbf{\tilde{T}}_{k}(v,u),\mathbf{T}_{k}(v))
      -\mathrm{LA}^{E,L}_{k}(\mathbf{\tilde{T}}_{k}(v,u),\mathbf{T}_{k}(v)))^{2}
      ],\\
      &\qquad\quad\expe_{n}[
      (\mathrm{LA}^{E,U}_{k}(\mathbf{T}_{k}(v),\mathbf{\tilde{T}}_{k}(v,u))
      -\mathrm{LA}^{E,L}_{k}(\mathbf{T}_{k}(v),\mathbf{\tilde{T}}_{k}(v,u)))^{2}
      ]
    }\\
    &\quad\leq m^{E}_{n}(v,u)\delta^{E}_{k}.
    \end{split}
    \]
    \item \label{itm:gla:v:delta}
      For any vertex~\(v \in V_{n}\),
      if~\(B_{k}=B_{k}(v,\mathbf{G}_{n})\)
      and~\(B^{v}_{k}=B_{k}(v,\mathbf{G}_{n}^{v})\) are trees, then
    \[
    \mathrm{LA}^{V,L}_{k}(B_{k},B_{k}^{v})
    \leq \Delta_{v}f
    \leq \mathrm{LA}^{V,U}_{k}(B_{k},B_{k}^{v}).
    \]
    \item \label{itm:gla:v:tree}
    For any vertex~\(v \in V_{n}\),
    if~\((\mathbf{T},\mathbf{T}')\) are a pair of trees that
    satisfy~\((B_{k},B_{k}^{v})
    = (B_{k}(v,\mathbf{G}_{n}), B_{k}(v, \mathbf{G}_{n}^{v}))
    \cong (\mathbf{T},\mathbf{T}')\), then
    \begin{equation*}
    \mathrm{LA}^{V,L}_{k}(\mathbf{T},\mathbf{T}')
      = \mathrm{LA}^{V,L}_{k}(B_{k},B_{k}^{v})
    \quad\text{and}\quad
    \mathrm{LA}^{V,U}_{k}(\mathbf{T},\mathbf{T}')
    = \mathrm{LA}^{V,U}_{k}(B_{k},B_{k}^{v}).
    \end{equation*}
    \item \label{itm:gla:v:conv}
      For vertex~\(v \in V_{n}\)
      let~\(\mathbf{\bar{T}}_{k}(v)\) be the weighted tree obtained
    from~\(\mathbf{T}_{k}(v) \sim \mathbf{T}_{k}(W_{v},\nu,\mu_{E},\mu_{V})\)
    by resampling the weight of the root.
    Then
    \[
    \expe_{n}[
    (\mathrm{LA}^{V,U}_{k}(\mathbf{T}_{k}(v),\mathbf{\bar{T}}_{k}(v))
    -\mathrm{LA}^{V,L}_{k}(\mathbf{T}_{k}(v),\mathbf{\bar{T}}_{k}(v)))^{2}
    ] \leq m_{n}^{V}(v)\delta^{V}_{k}.
    \]
  \end{enumerate}
  \endgroup
\end{assumption}

As discussed in the previous sections
\autoref{ass:coupling} and convergence of the weight
distributions guarantee that
the local neighbourhoods in~\(\mathbf{G}_{n}\)
can be coupled to limiting Galton--Watson trees.

The three assumptions~\ref{itm:gla:v:delta}, \ref{itm:gla:v:tree}
\ref{itm:gla:v:conv} for resampling at a vertex (which involves only
resampling the weight at the vertex)
are structurally analogous to
\ref{itm:gla:e:delta},
\ref{itm:gla:e:tree} and~\ref{itm:gla:e:conv}
for resampling at an edge (which involves resampling
the edge indicator and its weight).
It would have been possible to collect
the conditions for edge and vertex resampling
in a combined condition (even though a combination
of \ref{itm:gla:v:conv} and \ref{itm:gla:e:conv}
would be even more complex)
but since it is more intuitive to think
about the effect of resampling separately,
we decided to present the conditions in this way.
In the discussion of the interpretation of the conditions we will focus
mainly on the first three properties,
since the interpretation of the other three is analogous.

\ref{itm:gla:e:delta} implies
that the effect of the perturbation of~\(\mathbf{G}_{n}\) on~\(f\)
can be approximated by the local quantity~\(\mathrm{LA}_{k}^{E,L}\)
that only takes into account a~\(k\)-neighbourhood
of the perturbed site.
The error of this approximation
is bounded by~\(\mathrm{LA}_{k}^{E,U}-\mathrm{LA}_{k}^{V,L}\).
\ref{itm:gla:e:tree} implies that the values of~\(\mathrm{LA}_{k}^{E,L}\)
and~\(\mathrm{LA}_{k}^{E,U}\) only depend
on properties that are preserved under graph isomorphisms,
which means that we can substitute
the limiting Galton--Watson trees for the local neighbourhoods
in order to analyse the approximation error.
\ref{itm:gla:e:conv} ensures that the approximation error
goes to zero as the level~\(k\) of the considered neighbourhood
increases.

Note that property~GLA does not need the function~\(f\)
to be local in the sense that~\(\Delta_{e}f\) and~\(\Delta_{v}f\)
only depend on a fixed neighbourhood~\(B_{k}(v,\mathbf{G}_{n})\).
All that is required is that there be local approximations
and that the approximation improves as~\(k\) gets large.
We will see the difference in \autoref{sec:appl},
where we present a local function to get started and then
a function for which~\(\Delta_{e}f\) can only be approximated locally.

\ref{itm:gla:e:conv} and \ref{itm:gla:v:conv} might look a bit
daunting at first.
Our proof relies on coupling the neighbourhood of fixed vertices,
which as we remarked when we discussed
the limiting object and the coupling from \autoref{prop:maincoup}
makes for a slightly more complex situation at the root.
The conditions state that the effect of the root
can be separated from the approximation error
that is due to the remaining tree structure.
Essentially we can think of the averaging we apply
as choosing the root uniformly,
which transfers our setup
to the \emph{unimodular} setting.
The boundedness assumptions then guarantee that even in this setting
the approximation error goes to zero.
We will show that the effect of the root can be separated out
in a concrete example in \autoref{sec:appl}.

In applications the function~\(f\) will often be related
to a combinatorial optimisation problem that
has certain recursive properties
so that the local approximation
functions~\(\mathrm{LA}_{k}^{\ast,\ast}\)
can be defined via a recursion on the graph
by essentially cutting off everything that is not in the
local neighbourhood of level~\(k\),
imposing an arbitrary starting value for those vertices
and then passing it down recursively towards the root vertex
of the neighbourhood.
Depending on the properties of the recursion in question
natural lower and upper bounds may be found by selecting
certain extremal values for the vertices that are cut
or by exploiting that the recursion values oscillate for even and odd
levels.

If applied to a Galton--Watson tree
the recursive nature of~\(f\)
gives rise to a \emph{recursive tree process} \citep{aldous:rtp}.
Briefly, a recursive tree process (RTP) is a Galton--Watson tree
in which each individual~\(\mathbf{i}\) has an associated
value~\(X_{\mathbf{i}}\)
that is calculated by applying a function~\(g\)
to
all the values~\(X_{\mathbf{i}1},\dots,X_{\mathbf{i}N_{\mathbf{i}}}\)
associated with the~\(N_{\mathbf{i}}\)
children~\(\mathbf{i}1,\dots,\mathbf{i}N_{\mathbf{i}}\)
of~\(\mathbf{i}\)
and an independent noise~\(\xi_{\mathbf{i}}\)
at~\(\mathbf{i}\).
An RTP is called \emph{endogenous} if the value at the root~\(X_{\treeroot}\)
is measurable with respect to the~\(\sigma\)-algebra
generated by the noise~\(\xi_{\mathbf{i}}\) and number of children~\(N_{\mathbf{i}}\)
for each individual in the tree.
If the local approximations are defined
via the recursion associated with~\(f\),
\ref{itm:gla:e:conv} is closely related to the question
of endogeny of the recursive tree process.

Properties related to \ref{itm:gla:e:conv} have also been called
\emph{long-range independence} by \citet{gamarnik}
and \emph{replica symmetry} by \citet{waestlund}
and have been used to calculate limiting constants for
the behaviour of some combinatorial optimisation problems.

In our statement of the main theorem we will encounter the following
two sequences.
\begin{definition}\label{def:epsrho}
  Let~\((k_{n})_{n \in \naturals} \subseteq \intervaloo{0}{\infty}\)
  be any sequence.

  Recall the definitions of~\(\alpha_{n}\)
  from \autoref{ass:coupling}
  and the definition of~\(\Gamma_{p,n}\)
  and~\(\kappa_{p,n}\)
  from~\eqref{eq:gamman} and~\eqref{eq:kappan}.
  For~\(n, \ell \in \naturals\)
  let
  \[
    \begin{split}
    \varepsilon_{n,\ell}
    &= \frac{\Gamma_{2,n}^{2}}{n}
    +\vartheta \kappa_{1,n} \Gamma_{1,n}
  +\Gamma_{1,n}\vartheta(\Gamma_{2,n}+1)^{\ell}
    \biggl(
      \frac{\Gamma_{3,n}}{n\vartheta}+\kappa_{1,n}+\kappa_{2,n}+
      \frac{2+\Gamma_{1,n}}{k_{n}}+\frac{k_{n}}{n\vartheta}\biggr)\\
  &\quad + \frac{1}{k_{n}}
    +\frac{k_{n}^{2}}{n\vartheta\Gamma_{1,n}}
    +\alpha_{n}
    (
      \Gamma_{1,n}
      + (\Gamma_{2}+1)^{\ell-1}
        (\Gamma_{2,n}+\vartheta\Gamma_{1,n})
    )\\
    &\quad+
       (1+\Gamma_{1,n}\vartheta(\Gamma_{2}+1)^{\ell})
    (d_{\mathrm{TV}}(\mu_{E,n},\mu_{E})
    + d_{\mathrm{TV}}(\mu_{V,n},\mu_{V})
      )
    \end{split}
  \]
  and
  \[
    \rho_{n,\ell}  = \min\set[\bigg]{
        \frac{\vartheta\Gamma_{2,n}+\vartheta\Gamma_{1,n}+1}{n\vartheta}
      (\Gamma_{1,n}+1)^{2}(\Gamma_{2,n}+C)^{2\ell+1}(\Gamma_{3,n}+1)^{2},1
    }.
  \]
\end{definition}
The sequence~\(\varepsilon_{n,\ell}\) arises from the coupling
probability (cf.~\autoref{prop:maincoup}).
The sequence~\(\rho_{n,\ell}\) absorbs the correlation
between neighbourhoods of a fixed collection of vertices
and bounds the probability of certain other
desirable events in our proofs.

As in the discussion of the convergence rate of \autoref{prop:maincoup},
the terms~\(\Gamma_{1,n}\), \(\Gamma_{2,n}\) and~\(\Gamma_{3,n}\)
are bounded in probability
and~\(\alpha_{n}\), \(\kappa_{1,n}\) and~\(\kappa_{2,n}\)
converge to zero in probability.
Hence both~\(\varepsilon_{n,\ell}\) and~\(\rho_{n,\ell}\)
converge to~\(0\)
in probability as~\(n \to \infty\)
for all~\(\ell \in \naturals\)
if~\(k_{n}\) is chosen appropriately, e.g.~\(k_{n} = n^{1/3}\).

In addition to the local approximation in property~GLA
we will assume a much simpler bound for
the effect of the local perturbation.

\begin{assumption}\label{ass:simpbounds}
  Assume that there are real-valued
  functions~\(H_{E} \colon \intervalco{0}{\infty}^{4} \to \reals\)
  and~\(H_{V} \colon \intervalco{0}{\infty}^{2} \to \reals\)
  that satisfy
  \begin{align}
  \label{eq:jbound:e}
  J_{E}
  &= \max\set[\Big]{1, \sup_{n\in\naturals}
    \expe[H_{E}(w_{e},w'_{e},w_{v},w_{u})^{6}]}
  < \infty,\\
  \shortintertext{and}
  \label{eq:jbound:v}
  J_{V}
  &= \max\set[\Big]{1, \sup_{n\in\naturals}
    \expe[H_{V}(w_{v},w'_{v})^{6}]}
  < \infty.
  \end{align}
  Let~\(J = J_{E}+J_{V}+J_{E}J_{V}\).

  Assume further that there exists a non-decreasing
  function~\(h \colon \intervalco {0}{\infty} \to \reals\)
  such that for
  \begin{equation}
    \label{eq:nubound}
    \chi_{n} = \frac{1}{n}\sum_{v \in V_{n}} \zeta_{n}(v)
    \quad\text{with}\quad
    \zeta_{n}(v) = \expe_{n}[h(\setcard{D_{1}(v)}+4)^{4}] < \infty
  \end{equation}
  we have that~\(\chi_{n}\) is bounded in probability.

  Finally, assume that
  \begin{equation}\label{eq:deltahbound:e}
  \abs{\Delta_{e}f}
  \leq \indfunc_{\set{\max\set{X_{e},X'_{e}} = 1}}
  H_{E}(w_{e},w'_{e},w_{v},w_{u})
  \end{equation}
  and
  \begin{equation}\label{eq:deltahbound:v}
  \abs{\Delta_{v}f}
  \leq \vdegbound H_{V}(w_{v},w'_{v}).
  \end{equation}
\end{assumption}

These bounds will allow us to make generous use of the Cauchy--Schwarz
inequality in proofs
especially when we are not on the event where the coupling holds.

Note that the dependence on~\(n\) is only implicit in the terms inside the
expectation in~\eqref{eq:jbound:e} and~\eqref{eq:jbound:v}
because we have~--~as usual~--~dropped the superscript~\((n)\)
for the weights~\(w_{e}\) and~\(w_{v}\).

Property~GLA and the simpler integrability bounds
of \autoref{ass:simpbounds} now finally yield explicit bounds
for the Kolmogorov distance of~\(f(\mathbf{G}_{n})\)
to a normal distribution.
\begin{theorem}\label{thm:fgnconv}
  Suppose~\((f,(\mathbf{G}_{n})_{n\in\naturals})\) satisfies
  property~GLA (\autoref{def:gla})
  for~\(\nu\), \(\mu_{E}\) and~\(\mu_{V}\).
  Assume that \autoref{ass:simpbounds} holds with~\(J\)
  and~\(\chi_{n}\) as defined there.
  Let~\(\sigma_{n}^{2} = \var_{n}(f(\mathbf{G}_{n}))\),
  set
  \[
    Z_{n}
    = \frac{f(\mathbf{G}_{n})-\expe_{n}[f(\mathbf{G}_{n})]}{\sigma_{n}}
  \]
  and let~\(\Phi\) be the cumulative distribution function
  of the standard normal distribution.
  Then we have for all~\(k,n \in \naturals\)
  \begin{equation}\label{eq:clt}
  \begin{split}
  &\sup_{t \in \reals} \abs{\prob_{n}(Z_{n} \leq t)-\Phi(t)}\\
  &\quad\leq C_{0} J^{1/4}
  \biggl[
  \biggl(\frac{n}{\sigma_{n}^{2}}\biggr)^{\!\!1/2}
  (\vartheta^{1/2}+\Gamma_{2,n}+\chi_{n}^{1/2})^{2}
  (
  (M_{n}^{E}\delta_{k})^{1/8}+(M_{n}^{V}\delta^{V}_{k})^{1/8}+\varepsilon_{n,k}^{1/16}+\rho_{n,k}^{1/16}
  )\\
  &\qquad\qqquad\quad+ \biggl(\frac{n}{\sigma_{n}^{2}}\biggr)^{\!3/4}
  \frac{\vartheta \Gamma_{1,n}+\chi_{n}^{1/2}}{n^{1/4}}
  \biggr].
  \end{split}
\end{equation}
\end{theorem}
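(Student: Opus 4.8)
The plan is to apply the generalised perturbative Stein's method of \citet{chatterjee:new,chatterjee} to the normalised statistic $Z_n$, conditionally on $\mathcal{F}_n$. That method gives an abstract bound on $\sup_t\abs{\prob_n(Z_n\le t)-\Phi(t)}$ in terms of quantities built from the resampling perturbations $\Delta_e f$ and $\Delta_v f$: roughly, the interchangeable-pair/variance term measuring how much the ``discrete gradient'' $(\Delta_e f,\Delta_v f)$ changes when an independent coordinate is resampled, plus a third-moment term controlling $\sum_e\expe_n\abs{\Delta_e f}^3$ and $\sum_v\expe_n\abs{\Delta_v f}^3$. Because every coordinate (edge indicator, edge weight, vertex weight) is independent given $\mathcal{F}_n$, the abstract machinery applies directly; the real work is estimating the two resulting terms in our random-graph setting, which is where Property~GLA, \autoref{ass:simpbounds} and the coupling \autoref{prop:maincoup} enter.

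\textbf{Step 1: set up the Stein bound.} First I would invoke Chatterjee's theorem to reduce $\sup_t\abs{\prob_n(Z_n\le t)-\Phi(t)}$ to (a constant times) a power of $\sigma_n^{-2}$ multiplied by two ingredients: (i) a ``stability'' term of the form $\sum_{c,c'}\expe_n[\abs{\Delta_c f}\,\abs{\Delta_c f-\Delta_c f^{(c')}}\cdot\text{(something)}]$, where $c,c'$ range over all edges and vertices and $f^{(c')}$ denotes $f$ evaluated after resampling coordinate $c'$; and (ii) a third-moment term $\sum_c\expe_n\abs{\Delta_c f}^3$. The normalisation produces the $(n/\sigma_n^2)^{1/2}$ and $(n/\sigma_n^2)^{3/4}$ prefactors in \eqref{eq:clt}.

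\textbf{Step 2: control the third-moment term.} This is the easier piece. Using \eqref{eq:deltahbound:e}--\eqref{eq:deltahbound:v}, $\abs{\Delta_e f}^3$ is bounded by $\indfunc_{\set{\max\set{X_e,X'_e}=1}}H_E(\cdots)^3$ and $\abs{\Delta_v f}^3$ by $h(\setcard{D_1(v)}+4)^3 H_V(\cdots)^3$. Taking $\expe_n$ and summing over edges/vertices: for edges the indicator contributes $p_{uv}\le W_uW_v/(n\vartheta)$, so $\sum_{u,v}p_{uv}$ telescopes to something of order $n\vartheta\,\Gamma_{1,n}^2$; Cauchy--Schwarz against the sixth-moment bounds $J_E,J_V$ and the $h$-moment bound $\chi_n$ yields a term of order $n^{-1/4}(\vartheta\Gamma_{1,n}+\chi_n^{1/2})$ after the $(n/\sigma_n^2)^{3/4}$ prefactor, matching the second bracket in \eqref{eq:clt}.

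\textbf{Step 3: control the stability term — the main obstacle.} This is the crux. For a pair of coordinates $c,c'$, the difference $\Delta_c f-\Delta_c f^{(c')}$ is small precisely when the local neighbourhoods around the two perturbation sites do not interact. I would split into the ``good'' event $\mathcal{G}$ on which: the $k$-neighbourhoods $B_k(v,\mathbf{G}_n)$ (and their perturbed versions) are trees, are successfully coupled to the limiting trees $\mathcal{T}_k(\cdot)$ via \autoref{prop:maincoup}, and the (at most two) neighbourhoods involved are vertex-disjoint; and the complementary ``bad'' event $\mathcal{G}^c$. On $\mathcal{G}$, Property~GLA replaces $\Delta_c f$ by the local approximations $\mathrm{LA}^{*,*}_k$, and \ref{itm:gla:e:tree}/\ref{itm:gla:v:tree} let me pass to the coupled limiting trees; then the key point is that resampling $c'$ affects only the tree attached at $c'$, so $\Delta_c f-\Delta_c f^{(c')}$ is sandwiched between $\mathrm{LA}^{U}_k-\mathrm{LA}^{L}_k$ evaluated on $(\mathbf{\tilde T}_k,\mathbf{T}_k)$-type pairs, whose conditional second moment is bounded by $m_n^{E}(v,u)\delta^E_k$ or $m_n^V(v)\delta^V_k$ via \ref{itm:gla:e:conv}/\ref{itm:gla:v:conv} — here I also need to absorb the slightly awkward root via the ``unimodular''/averaging argument sketched after \autoref{def:gla}. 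Summing these over all $c,c'$ and dividing by $n^2$ turns $\sum m_n^E$, $\sum m_n^V$ into $M_n^E,M_n^V$, producing the $(M_n^E\delta_k)^{1/8}$ and $(M_n^V\delta_k^V)^{1/8}$ contributions (the fractional powers coming from repeated Cauchy--Schwarz needed to separate the coupling-failure indicator from the GLA bound). On $\mathcal{G}^c$, whose probability is controlled by $\varepsilon_{n,\ell}$ (coupling failure, from \autoref{prop:maincoup}) and $\rho_{n,\ell}$ (neighbourhood overlap / correlation, from the results of \autoref{sec:corr}), I use the crude bounds \eqref{eq:deltahbound:e}--\eqref{eq:deltahbound:v} together with the sixth-moment hypotheses $J_E,J_V$ and Cauchy--Schwarz (hence the $J^{1/4}$ and the $\varepsilon_{n,k}^{1/16},\rho_{n,k}^{1/16}$ with their small exponents). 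The factor $(\vartheta^{1/2}+\Gamma_{2,n}+\chi_n^{1/2})^2$ collects the degree-moment bounds generated when summing $\expe_n\abs{\Delta_c f}^2$-type quantities over all coordinates. The main difficulty, and where I expect to spend the most care, is the bookkeeping in this step: correctly identifying which pairs $(c,c')$ can possibly have interacting neighbourhoods, handling the three perturbation ``types'' (edge-edge, edge-vertex, vertex-vertex) uniformly, keeping track of the root-offspring discrepancy in the limiting trees when applying the GLA convergence conditions, and choosing the exponents in the iterated Cauchy--Schwarz so that every error term is simultaneously summable and converges to zero; assembling all pieces and optimising over the free sequence $(k_n)$ then gives \eqref{eq:clt}.
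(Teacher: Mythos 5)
Your plan follows essentially the same route as the paper: Chatterjee's perturbative Stein bound (Cor.~3.2 form) splits the error into a third-moment term, handled exactly as you describe, and a covariance term, which the paper also treats by decomposing each $\Delta_x f$ into a truncated local approximation plus a remainder, bounding remainder-containing terms via \ref{itm:gla:e:conv}/\ref{itm:gla:v:conv} and the tree couplings, bounding the pure local-approximation covariance via the neighbourhood-decorrelation results of \autoref{sec:corr}, and absorbing everything off the good event into $\varepsilon_{n,k}$ and $\rho_{n,k}$ by Cauchy--Schwarz. One caveat on your Step~1/3: the quantity that must actually be bounded is $\cov_n(\Delta_x f\,\Delta_x f^{F},\Delta_{x'} f\,\Delta_{x'} f^{F'})$ \emph{uniformly over arbitrary resampling sets} $F,F'$, not a single-coordinate stability difference $\Delta_c f-\Delta_c f^{(c')}$; this is why the paper's local approximations and couplings are all set up for the perturbed graphs $\mathbf{G}_n^{F}$ as well, and it is precisely the bookkeeping you flag as the main difficulty.
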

Under the assumptions of the theorem~\(\Gamma_{1,n}\), \(\Gamma_{2,n}\)
\(\chi_{n}\), \(M^{E}_{n}\) and~\(M^{V}_{n}\) are bounded in probability.
Furthermore~\(\varepsilon_{n,k}\) and~\(\rho_{n,k}\)
converge to zero in probability as~\(n \to \infty\)
for all~\(k \in \naturals\).

If~\(n \sigma_{n}^{-2}\) is bounded in probability we can
make all terms on the right hand-side of~\eqref{eq:clt} arbitrarily small
as follows.
First choose~\(k\) large enough so
that the terms involving the bounded terms
(whose bound is independent of~\(n\))
and~\(\delta^{E}_{k}\) and~\(\delta^{V}_{k}\)
are as small as desired.
Then choose~\(n\) large enough that for this~\(k\)
the terms~\(\varepsilon_{n,k}\) and~\(\rho_{n,k}\)
are as small as desired.
This behaviour of the variance is in general not a given
and will need to be verified separately in applications.

The following corollary replaces property~GLA
with a slightly simpler condition
that is particularly suitable if~\(f\)
has a recursive structure.
\begin{corollary}\label{cor:fgnconv}
  Let~\((\mathbf{G}_{n})_{n\in\naturals}\) be a sequence of weighted
  inhomogeneous graphs and let~\(f\) be a function defined on weighted graphs.
  Suppose that
  \begingroup
  \raggedright
  \begin{enumerate}
  \item the underlying graph sequence~\((G_{n})_{n\in\naturals}\)
    satisfies \autoref{ass:coupling},
  \item there are two probability measures~\(\mu_{E}\) and~\(\mu_{V}\)
    on~\(\intervaloo{0}{\infty}\)
    with~\(d_{\mathrm{TV}}(\mu_{E,n},\mu_{E}) \to 0\)
    and~\(d_{\mathrm{TV}}(\mu_{V,n},\mu_{V}) \to 0\) as~\(n\to\infty\),
  \item \autoref{ass:simpbounds} holds with~\(J\) and~\(\chi_{n}\)
    as defined there and
  \item the effects of perturbations of~\(\mathbf{G}_{n}\) on~\(f\)
    can be approximated locally in the following sense.

    There exist
    functions~\(g^{L}_{k}\) and~\(g^{U}_{k}\) defined on weighted
    rooted graphs for any~\(k \in \naturals\)
    and there exist
    two sequences of
    functions~\(m_{n} \colon V_{n} \to \reals\)
    and~\(\tilde{m}_{n} \colon V_{n}^{2} \to \reals\)
    such that
    \[
      (M_{n})_{n\in\naturals}
      = \Bigl(n^{-1}\sum_{v \in V_{n}} m_{n}(v)\Bigr)_{n\in\naturals}
      \quad\text{and}\quad
      (\tilde{M}_{n})_{n\in\naturals}
      =\Bigl(n^{-2}\sum_{v, u\in V_{n}} \tilde{m}_{n}(v,u)\Bigr)_{n\in\naturals}
    \]
    are bounded in probability
    and furthermore two
    sequences~\((\delta^{E}_{k})_{k \in \naturals}, (\delta_{k}^{V})_{k \in \naturals}\)
    with~\(\delta^{E}_{k} \to 0\) and~\(\delta^{V}_{k} \to 0\) as~\(k\to \infty\)
    such that for all~\(k\in\naturals\) the following conditions
    are satisfied.
  \end{enumerate}

  \begin{enumerate}[label=\textup{(\(\text{GLA}'\)~\arabic*)}, leftmargin=*]
  \item\label{eq:fgmv} For any~\(v\in V_{n}\),
    whenever~\(B_{k}(v,\mathbf{G}_{n})\) is a tree, then
    \begin{equation*}
      g^{L}_{k}(B_{k}(v,\mathbf{G}_{n}))
      \leq f(\mathbf{G}_{n})-f(\mathbf{G}_{n}-v)
      \leq g^{U}_{k}(B_{k}(v,\mathbf{G}_{n})).
    \end{equation*}
  \item\label{eq:gtree} For any~\(v \in V_{n}\)
    if~\(B_{k}(v,\mathbf{G}_{n}) \cong \mathbf{T}\) for some rooted weighted
    tree~\(\mathbf{T}\),
    then
    \begin{equation*}
      g^{L}_{k}(\mathbf{T})
      =g^{L}_{k}(B_{k}(v,\mathbf{G}_{n}))
      \quad\text{and}\quad
      g^{U}_{k}(\mathbf{T})
      =g^{U}_{k}(B_{k}(v,\mathbf{G}_{n})).
    \end{equation*}
  \item\label{eq:gult}
    For any~\(v,u \in V_{n}\)
    if we have~\(\mathbf{T}_{k}(v) \sim
  \mathbf{T}_{k}(W_{v},\nu,\lambda,\mu_{E},\mu_{V})\)
  and~\(\mathbf{\tilde{T}}_{k}(v,u) \sim
  \mathbf{\tilde{T}}(W_{v},W_{u},\nu,\mu_{E},\mu_{V})\),
  then
  \begin{align*}
    \expe_{n}[
      (g^{U}_{k}(\mathbf{T}_{k}(v))
      -g^{L}_{k}(\mathbf{T}_{k}(v)))^{2}
    ]
    \leq m(v)\delta_{k}
  \shortintertext{and}
    \expe_{n}[
      (g^{U}_{k}(\mathbf{\tilde{T}}_{k}(v,u))
      -g^{L}_{k}(\mathbf{\tilde{T}}_{k}(v,u)))^{2}
    ]
    \leq \tilde{m}(v,u) \tilde{\delta}_{k}.
  \end{align*}
  \end{enumerate}
  \endgroup
  Let~\(\sigma_{n}^{2} = \var_{n}(f(\mathbf{G}_{n}))\),
  set
  \[
  Z_{n}
  = \frac{f(\mathbf{G}_{n})-\expe_{n}[f(\mathbf{G}_{n})]}{\sigma_{n}}
  \]
  and let~\(\Phi\) be the cumulative distribution function
  of the standard normal distribution.
  Then we have for all~\(k,n \in \naturals\)
  \[
  \begin{split}
  &\sup_{t \in \reals} \abs{\prob_{n}(Z_{n} \leq t)-\Phi(t)}\\
  &\quad\leq C_{0} J^{1/4}
  \biggl[
  \biggl(\frac{n}{\sigma_{n}^{2}}\biggr)^{\!\!1/2}
  (\vartheta^{1/2}+\Gamma_{2,n}+\chi_{n}^{1/2})^{2}
  (
  (M_{n}\delta_{k})^{1/8}+(\tilde{M}_{n}\tilde{\delta}_{k})^{1/8}
  +\varepsilon_{n,k}^{1/16}+\rho_{n,k}^{1/16}
  )\\
  &\qquad\qqquad\quad+ \biggl(\frac{n}{\sigma_{n}^{2}}\biggr)^{\!3/4}
  \frac{\vartheta \Gamma_{1,n}+\chi_{n}^{1/2}}{n^{1/4}}
  \biggr].
  \end{split}
  \]
\end{corollary}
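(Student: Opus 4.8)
The plan is to deduce Corollary~\ref{cor:fgnconv} from Theorem~\ref{thm:fgnconv} by showing that conditions \ref{eq:fgmv}--\ref{eq:gult} are a special case of property~GLA in which the vertex-resampling perturbation is replaced by outright vertex deletion and in which the local approximations do not depend on the perturbed copy of the neighbourhood. The key observation is that $f(\mathbf{G}_n) - f(\mathbf{G}_n - v)$ controls both $\Delta_v f$ and all the edge-differences $\Delta_e f$ for edges $e$ incident to $v$, because deleting a vertex is, up to sign and up to the weight at the root, the ``worst case'' of resampling an incident edge or the vertex weight. More precisely, first I would record the elementary identity/telescoping that expresses $\Delta_e f$ and $\Delta_v f$ in terms of differences of the form $f(\mathbf{G}) - f(\mathbf{G}-v)$ evaluated on $\mathbf{G}_n$ and on the once-perturbed graph $\mathbf{G}_n^e$ or $\mathbf{G}_n^v$; this is where condition \ref{eq:fgmv} gets used twice (once for the unperturbed and once for the perturbed graph), and it is also what forces the appearance of the \emph{pairs} of trees $(B_k, B_k^e)$ in property~GLA even though $g^L_k, g^U_k$ are functions of a single tree.

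Next I would define the GLA data in terms of the corollary's data. For the edge conditions, set
\[
  \mathrm{LA}^{E,L}_k(\mathbf{T},\mathbf{T}') = g^L_{k}(\mathbf{T}) - g^U_{k}(\mathbf{T}') \wedge (\text{the analogous bound from }\mathbf{T}\text{ with }e\text{ deleted}),
\]
and symmetrically for $\mathrm{LA}^{E,U}_k$, chasing through which of the two trees in the pair carries the deleted edge; for the vertex conditions take $\mathrm{LA}^{V,L}_k(\mathbf{T},\mathbf{T}') = g^L_k(\mathbf{T}) - g^U_k(\mathbf{\bar T})$ and $\mathrm{LA}^{V,U}_k(\mathbf{T},\mathbf{T}') = g^U_k(\mathbf{T}) - g^L_k(\mathbf{\bar T})$ where $\mathbf{\bar T}$ is $\mathbf{T}$ with the root weight resampled (as in \ref{itm:gla:v:conv}). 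Condition \ref{eq:gtree} immediately gives \ref{itm:gla:e:tree} and \ref{itm:gla:v:tree} since $g^L_k, g^U_k$ depend only on the isomorphism type. For \ref{itm:gla:e:conv} and \ref{itm:gla:v:conv} I would bound the squared gap of the constructed $\mathrm{LA}$'s by a constant times the sum of the squared gaps $g^U_k - g^L_k$ evaluated on $\mathbf{T}_k(v)$, on $\mathbf{T}_k(u)$, and on $\mathbf{\tilde T}_k(v,u)$ (and on $\mathbf{\bar T}_k(v)$ for the vertex case), using $(a+b)^2 \leq 2a^2+2b^2$ and the fact that $\mathbf{\tilde T}_k$ is built from independent $\mathbf{T}_k$ and $\mathbf{T}_{k-1}$; then \ref{eq:gult} supplies the requisite decay, and one sets $m^E_n(v,u)$ to be a constant combination of $m_n(v), m_n(u), \tilde m_n(v,u)$ and $m^V_n(v)$ a constant combination of $m_n(v)$, whence $M^E_n, M^V_n$ are bounded in probability because $M_n, \tilde M_n$ are. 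Finally I would invoke Theorem~\ref{thm:fgnconv}; the right-hand sides match after relabelling $\delta^E_k \leftrightarrow \delta_k$, $\delta^V_k \leftrightarrow \tilde\delta_k$ and absorbing the universal multiplicative constants into $C_0$.

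The main obstacle is the bookkeeping in \ref{itm:gla:e:delta}: verifying that $g^L_k(B_k) - g^U_k(B_k^e) \leq \Delta_e f \leq g^U_k(B_k) - g^L_k(B_k^e)$ (or the correctly signed version) genuinely holds whenever \emph{both} $B_k(v,\mathbf{G}_n)$ and $B_k(v,\mathbf{G}_n^e)$ are trees. One has to be careful that resampling the edge $e=\edge uv$ to $X'_e$ either adds or removes the link between the two components containing $u$ and $v$ inside the respective neighbourhoods, and that $f(\mathbf{G}_n) - f(\mathbf{G}_n^e)$ telescopes through $f$ of the graph with $v$ (or $u$) deleted in a way compatible with the tree hypothesis; the edge cases where $e$ is already absent, where $X_e = X'_e$ (so $\Delta_e f = 0$ and the bounds must straddle $0$), and where the perturbation disconnects part of the $k$-neighbourhood all need to be handled, and it is here that the two one-sided functions $g^L_k, g^U_k$ (rather than a single approximation) are essential. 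Everything else is a routine translation, so I expect the write-up to be short once this compatibility check is carried out cleanly.
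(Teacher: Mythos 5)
Your proposal is correct and follows essentially the same route as the paper: define \(\mathrm{LA}^{E,L}_{k}(\mathbf{T},\mathbf{T}') = g^{L}_{k}(\mathbf{T})-g^{U}_{k}(\mathbf{T}')\), \(\mathrm{LA}^{E,U}_{k}(\mathbf{T},\mathbf{T}') = g^{U}_{k}(\mathbf{T})-g^{L}_{k}(\mathbf{T}')\) (and analogously for the vertex case), verify property~GLA from \ref{eq:fgmv}--\ref{eq:gult} with \(m^{E}_{n}(v,u) = 2(m_{n}(v)+\tilde{m}_{n}(v,u))\) and \(\delta^{E}_{k}=\delta_{k}+\tilde{\delta}_{k}\) via \((x+y)^{2}\leq 2x^{2}+2y^{2}\), and apply \autoref{thm:fgnconv}. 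The ``main obstacle'' you flag dissolves once you note the exact identity \(\mathbf{G}_{n}^{e}-v=\mathbf{G}_{n}-v\) (deleting \(v\) removes every edge incident to \(v\), so resampling at \(e=\edge{u}{v}\) has no effect afterwards), whence \(\Delta_{e}f = (f(\mathbf{G}_{n})-f(\mathbf{G}_{n}-v))-(f(\mathbf{G}_{n}^{e})-f(\mathbf{G}_{n}^{e}-v))\) holds exactly and \ref{eq:fgmv} applied to both graphs gives \ref{itm:gla:e:delta} with no case analysis.
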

The three conditions \ref{eq:fgmv} to \ref{eq:gult}
together imply property~GLA,
so we may informally refer to them as \emph{property~\(\text{GLA}'\)}.
Again, the intuition is that \ref{eq:fgmv}
can be used to approximate the effect of the perturbation on~\(f\)
locally with~\(g_{k}^{L}\) with an approximation error at
most~\(g^{U}_{k}-g^{L}_{k}\).
Then \ref{eq:gtree} allows us to estimate this approximation
error on the limiting Galton--Watson tree,
where \ref{eq:gult} ensures that the approximation error
goes to~\(0\) as~\(k \to \infty\).

We will prove \autoref{thm:fgnconv} and \autoref{cor:fgnconv} in \autoref{chap:proof}.

\subsection{Related Work}

These results extend the central limit theorem shown by \citet{cao}.
We were able to include weights on the vertices
and could prove the result in the more general setting
of rank-one inhomogeneous random graphs.
As far as we are aware \citeauthor{cao}'s result is the only general
central limit theorem for combinatorial optimisation problems
in a sparse Erdős--Rényi random graph setting,
but for specific functions central limit theorems
have been established.

The properties of the size of the giant component in (supercritical)
sparse Erdős--Rényi graphs are well studied and
central limit results were obtained by
\citet{stepanov,pittel,pittelwormald,bollobasriordan,barraez}.

For the maximal matching (without weights)
a central limit theorem can be shown for the Erdős--Rényi model with edge
probability~\(p_{n} = \lambda/n\)
\citetext{\citealp{glasgow} \citealp[and earlier][for~\(\lambda > e\)]{kreacic}
  \citealp[for~\(\lambda < 1\)]{pittel},
\citealp[see also][]{aronson,glasgow2023exact}}.
These results are closely related to a central limit theorem
for the value produced by a greedy algorithm
to compute the maximum matching \citep{dyer}.

\citet{barbour} recently proved a general central limit theorem
for \emph{local} graph statistics in the configuration model.
The configuration model generates a random graph with a given
degree sequence.
In fact, conditional on its degrees the inhomogeneous
graph model we considered here (and more general inhomogeneous graph models)
have the same distribution as a configuration model conditioned
on producing no loops or multiple edges \citep[Thm.~7.18]{vdh}.

\citet{rucinski} established a CLT for subgraph counts
in non-sparse Erdős--Rényi settings
(where at least~\(np^{m}_{n}\to\infty\) with~\(m \geq 1\) depending on
the subgraphs).
\citet{maugis} extended this to the analogous setting in
inhomogeneous random graphs with edge probabilities
\[
  p^{(n)}_{uv}
  = \rho_{n} \kappa(W_{u},W_{v})
\]
where~\(W_{v} \overset{\text{i.i.d.}}{\sim} \mathcal{U}\intervalcc{0}{1}\),
\(\rho_{n} \subseteq \intervaloo{0}{1}\)
(with~\(n\rho_{n}^{m} \to \infty\) with~\(m\geq 1\) depending on the subgraphs)
and~\(\kappa\) a bounded kernel.

The first-order (law of large numbers-like) behaviour
of a number of combinatorial optimisation problems, on the contrary,
has been studied extensively in the sparse Erdős--Rényi graph setting
\citep[e.g.][]{karp,gamarnik,bayati}.
Some of the methods that were used to obtain limiting constants in this setting
can in fact be used to verify property~GLA,
so that a first-order result together with our central limit theorem
framework
immediately also proves the second-order behaviour.
Results for more general sparse graphs
do not appear to be as abundant \citep{bord}.

\subsection{Applications}\label{sec:appl}
In this section we will briefly present two applications of
the central limit theorem.
The first is a simple example in which we consider the total sum of (artificial)
edge weights based on the weights at the end vertices.
Contrary to the setup in which we assign edge weights as usual in our
weighted graph model,
this results in edge weights that are not independent,
so that a standard central limit is not immediately applicable.
As a second example we consider maximum weight matching.

\subsubsection{Total sum of dependent edge weights}

In order to whet our appetite here is a simple application
of \autoref{thm:fgnconv}.
Let~\(\mathbf{G}_{n}\) be a sequence of weighted inhomogeneous
random graphs satisfying the assumptions of \autoref{thm:fgnconv}.
We will assume that the connectivity weights are such
that~\(\expe_{n}[(W^{(n)})^{4}]\) converges in probability
to a constant and that~\(\nu\) has fourth moments.
In this example we
do not place independent weights on the edges with distribution~\(\mu_{E}\),
which we ignore from now on.
Instead we will use the weights we put on the vertices
with~\(\mu_{V,n}=\mu_{V}\) to induce artificial
weights on the edges by adding up the weights of
their endpoints.
We will assume that~\(\mu_{V}\) has at least sixth moments.

With the usual notation of~\(\mathbf{X}\) and~\(\mathbf{w}\)
we are interested in the quantity
\[
  N(\mathbf{G}_{n})
  = \sum_{u,v \in V_{n}} (w_{v}+w_{u})X_{vu}
  = \sum_{e = \edge{u}{v} \in V_{n}^{(2)}} (w_{v}+w_{u})X_{e},
\]
i.e.~in twice the total sum of these artificial edge weights.
Since the same~\(w_{v}\) will appear for different
edges~\(e \in V_{n}^{(2)}\),
this is not a sum of independent random variables.
Observe that~\(N(\mathbf{G}_{n})\) can be rewritten as
\[
  N(\mathbf{G}_{n})
  = \sum_{v \in V_{n}} \setcard{D_{1}(v)}w_{v},
\]
but again that this is \emph{not} a sum of independent random variables,
since~\(\setcard{D_{1}(v)}\) is not independent for different~\(v\)
(take the simple example in which we consider a graph with just two
 vertices, if the degree of one of the vertices is~\(1\),
 we know that the degree of the other must also be~\(1\)).
Since we cannot easily rewrite~\(N(\mathbf{G}_{n})\)
as a sum of independent random variables,
we cannot easily apply one of the standard central limit theorems.
Hence, we will appeal to our central limit theorem \autoref{thm:fgnconv}.

We will first identify suitable local approximations for property~GLA.
Since the problem is \enquote{truly local} in the sense that
the effect of a local change can be fully estimated
with local information, this is straightforward.

We observe that for any edge~\(e = \edge{u}{v} \in V_{n}^{(2)}\)
\begin{equation}\label{eq:ne}
  \Delta_{e}N
  = N(\mathbf{G}_{n})-N(\mathbf{G}_{n}^{e})
  = (w_{v}+w_{u})(X_{e}-X'_{e})
\end{equation}
and any vertex~\(v \in V_{n}\)
\begin{equation}\label{eq:nv}
  \Delta_{v} N
  = N(\mathbf{G}_{n})-N(\mathbf{G}_{n}^{v})
  = \setcard{D_{1}(v)}(w_{v}-w'_{v}).
\end{equation}
To shorten notation from now on write~\(B_{k} = B_{k}(v,\mathbf{G}_{n})\),
\(B^{e}_{k}=B_{k}(v,\mathbf{G}^{e}_{n})\)
and~\(B^{v}_{k}=B_{k}(v,\mathbf{G}^{v}_{n})\).
For~\(k \geq 1\) we can let
\[
  \mathrm{LA}^{E,L}(B_{k},B^{e}_{k})
  = \mathrm{LA}^{E,U}(B_{k},B^{e}_{k})
  = (w_{v}+w_{u})(X_{e}-X'_{e})
\]
and
\[
  \mathrm{LA}^{V,L}(B_{k},B^{v}_{k})
  = \mathrm{LA}^{V,U}(B_{k},B^{v}_{k})
  = \setcard{D_{1}(v)}(w_{v}-w'_{v}).
\]
Then
\[
  \mathrm{LA}^{E,L}(B_{k},B^{e}_{k})
  = \Delta_{e}N
  = \mathrm{LA}^{E,U}(B_{k},B^{e}_{k})
\]
and
\[
  \mathrm{LA}^{V,L}(B_{k},B^{v}_{k})
  = \Delta_{v}N
  = \mathrm{LA}^{V,U}(B_{k},B^{v}_{k}),
\]
which immediately verifies \ref{itm:gla:e:delta} and \ref{itm:gla:v:delta}.
The construction of these functions relies only on properties
that are preserved under isomorphisms for weighted graphs,
so \ref{itm:gla:e:tree} and \ref{itm:gla:v:tree} are also satisfied.
Since the upper and lower bound coincide,
\ref{itm:gla:e:conv} and \ref{itm:gla:v:conv}
are trivially satisfied.
Hence, property~GLA (\autoref{def:gla}) holds in our problem.

We turn to the simpler integrability bounds in \autoref{ass:simpbounds}.
From~\eqref{eq:ne} and~\eqref{eq:nv} we obtain
\[
  \abs{\Delta_{e}N}
  \leq \indfunc_{\set{\max\set{X_{e},X'_{e}}=1}}(w_{v}+w_{u})
  \quad
  \text{and}
  \quad
  \abs{\Delta_{v} N}
  \leq \setcard{D_{1}(v)}(w_{v}+w'_{v}).
\]
The sixth moments of the weights are bounded by assumption.
Hence we can choose
\[
  H_{E}(w_{v},w_{u})=w_{v}+w_{u}
  \quad\text{and}\quad
  H_{V}(w_{v},w'_{v}) = w_{v}+w'_{v}
\]
to satisfy~\eqref{eq:jbound:e} and~\eqref{eq:jbound:v}
of \autoref{ass:simpbounds}.
For~\eqref{eq:nubound} set~\(h(x)=x\)
so that we need a bound on the fourth moment of~\(\setcard{D_{1}(v)}\),
which can be found in \autoref{lem:higher:deg}.
We then have that
\[
  \zeta_{n}(v)
  = \expe_{n}[(\setcard{D_{1}(v)}+4)^{4}]
  \leq C (W_{v}+1)^{4}(\Gamma_{2,n}+1)^{4} < \infty
\]
and that
\[
  \chi_{n}
  = \frac{1}{n} \sum_{v \in V_{n}} \zeta_{n}(v)
  \leq \frac{1}{n} \sum_{v \in V} C(W_{v}+1)^{4}(\Gamma_{2,n}+1)^{4}
  = C(\Gamma_{2,n}+1)^{4}\expe_{n}[(W^{(n)}+1)^{4}]
\]
is bounded in probability because we assumed the existence of
fourth moments for~\(W^{(n)}\).

With all assumptions verified we can now apply \autoref{thm:fgnconv}.
As discussed in the remarks after \autoref{thm:fgnconv}
the bound for the Kolmogorov distance of the distribution
of~\(\sigma_{n}^{-1}(N(\mathbf{G}_{n})-\expe_{n}[N(\mathbf{G}_{n})])\)
to a standard normal distribution goes to zero
in probability if~\(n \sigma_{n}^{-2}\) is bounded in probability.
Hence, in order to conclude convergence to a standard normal,
we have to verify that the variance of~\(N(\mathbf{G}_{n})\)
is of sufficiently high order.
A straightforward but tedious calculation, which we will not show here,
verifies that
indeed~\(\var_{n}(N(\mathbf{G}_{n}))\) is of order~\(n\)
so that~\(n \sigma_{n}^{-2}\) is bounded in probability.
This then allows us to conclude the desired convergence.
The convergence rate depends on the rate
of convergence of~\(\nu_{n}\) to~\(\nu\)
and other properties of~\(\nu_{n}\) and~\(\nu\).

\subsubsection{Maximum weight matching}

We will now apply \autoref{cor:fgnconv}
to the maximum weight matching problem
on an inhomogeneous random graph satisfying \autoref{ass:coupling}
for some measure~\(\nu\)
and with~\(\mathrm{Exp}(1)\) edge weights
and no vertex weights.
We will assume that~\(\nu\) has third moments,
so that~\(\sizebias{\nu}\) has second moments.

Furthermore, we will assume that the following
technical condition holds.
\begin{assumption}
Define an operator~\(T\) on the space of
probability distributions on~\(\reals\) by
letting~\(T(\mu)\) be the distribution of
\[
  \max_{i \in [N]}\set{0,\xi_{i}-X_{i}},
\]
where~\(N \sim \mathrm{MPoi}(\sizebias{\nu})\),
\(\xi_{1},\dots,\xi_{N} \overset{\text{i.i.d.}}{\sim} \mathrm{Exp}(1)\)
and~\(X_{i},\dots,X_{N} \overset{\text{i.i.d.}}{\sim} \mu\)
are independent.
We assume that the iterated operator~\(T^{2}\)
(defined as~\(T^{2}(\mu) = T(T(\mu))\))
has a unique fixed point.
\end{assumption}
This condition is in essence what \citet{gamarnik} call \enquote{long-range
  independence}
and verify
in the case~\(N \sim \mathrm{Poi}(c)\)
corresponding to~\(\nu = \delta_{c}\)
in our setting
\citep[Thm.~3]{gamarnik}.

\begin{definition}[Maximum weight matching]
  Consider an edge-weighted graph~\(\mathbf{G}\)
  with vertex set~\(V\) and edge set~\(E\).

  A matching on~\(\mathbf{G}\)
  is a subset of edges~\(M \subseteq E\)
  in which no two edges have a vertex in common.
  In other words for all~\(v \in V\)
  there is at most one~\(u \in V\)
  such that~\(\edge{v}{u} \in E\)
  (we call~\(u\) the vertex matched to~\(v\)).

  The maximum weight matching is a matching~\(M\)
  that maximises the sum of edge weights~\(\sum_{e \in M} w_{e}\).
\end{definition}

Note that a maximum weight matching need not match every vertex
to another vertex.

We follow the strategy used by \citet{cao}
to tackle this problem.
We want to apply \autoref{cor:fgnconv}
and so need to identify suitable functions~\(g_{k}^{L}\)
and~\(g_{k}^{U}\).

As alluded to before,
recursive properties are usually
a very good starting point to verify property GLA or its slightly
simplified cousin in \autoref{cor:fgnconv}.
Let~\(M(\mathbf{G})\) be the weight of the maximum weight matching
on a graph~\(\mathbf{G}\)
with vertex set~\(V\) and edge set~\(E\).
Then for any vertex~\(v \in V\) the weight of the maximum
weight matching~\(M(\mathbf{G})\) satisfies the recursion
\[
  M(\mathbf{G})
  = \max\set[\Big]{M(\mathbf{G}-v),
    \max_{u: \edge{v}{u} \in E} w_{\edge{v}{u}}+M(\mathbf{G}-\set{v,u})}.
\]
Essentially this formula says that
we need to decide between not matching~\(v\)
to any partner vertex, so that the matching is in effect a matching
on~\(\mathbf{G}-v\),
or matching~\(v\) to any of its neighbours~\(u\),
upon which the weight of the matching increases by
the weight of the edge between~\(v\) and~\(u\)
and the remainder of the matching happens on
the graph~\(\mathbf{G}-\set{v,u}\).
Now define
\[
  h(\mathbf{G},v) = M(\mathbf{G})-M(\mathbf{G}-v)
\]
and note that
\[
  h(\mathbf{G},v)
  = \max\set[\Big]{0,
    \max_{u: \edge{v}{u} \in E} w_{\edge{v}{u}}-h(\mathbf{G}-v,u)
  }.
\]
Intuitively, this expression quantifies how
much better it is to match~\(v\)
to one of its neighbours
rather than to leave it unmatched.
If~\(h(\mathbf{G},v) = 0\),
then the weight of the maximum weight matching on~\(\mathbf{G}\)
and~\(\mathbf{G}-v\) are the same, which means that~\(v\)
can remain unmatched in~\(\mathbf{G}\) and we still
attain the maximum possible weight.
If~\(h(\mathbf{G},v)>0\),
then matching~\(v\) to one of its neighbours means that the
matching performs better than the matching that does not match~\(v\).
Hence,~\(v\) should be matched in the maximum weight matching on~\(v\).

Let~\(\mathbf{T}\) be a weighted tree of height at most~\(k\).
Let~\(\treeroot\) be the root of~\(\mathbf{T}\).
Let~\(\mathcal{C}(u)\) be the set of children of the
vertex~\(u\) in~\(\mathbf{T}\).
Denote the edge weights of~\(\mathbf{T}\) by~\(w_{e}\).
Define~\(h_{k}(\placeholder, \mathbf{T}) \colon \mathbf{T} \to \reals\)
by setting~\(h_{k}(u,\mathbf{T})=0\) for all leaves~\(u\) of~\(\mathbf{T}\)
and by the recursion
\[
  h_{k}(u,\mathbf{T})
  = \max\set[\Big]{0,
    \max_{u' \in \mathcal{C}(u)} w_{\edge{u}{u'}}-h_{k}(u',\mathbf{T})
  }
\]
for all non-leaf vertices~\(u\) of~\(\mathbf{T}\).
Note that by this recursion the value~\(h_{k}(u,\mathbf{T})\)
only ever depends on the subtree in~\(\mathbf{T}\)
that is induced by the descendants of~\(u\).

A short induction argument shows that
under the assumption that~\(B_{k}(v,\mathbf{G}_{n})\)
is a tree
\[
  h_{k}(v, B_{k}(v,\mathbf{G}_{n}))
  \leq
  h(\mathbf{G}_{n},v)
  \quad\text{if~\(k\) is even}
\]
and
\[
  h(\mathbf{G}_{n},v)
  \leq h_{k}(v, B_{k}(v,\mathbf{G}_{n}))
  \quad\text{if~\(k\) is odd}.
\]
Thus, if~\(B_{2k+1}(v, \mathbf{G}_{n})\)
is a tree for~\(k \in \naturals\)
(which naturally implies that~\(B_{2k}(v,\mathbf{G}_{n})\) is a tree),
we have
\[
  h_{2k}(v, B_{2k}(v,\mathbf{G}_{n}))
  \leq
  h(\mathbf{G}_{n},v)
  \leq h_{2k+1}(v, B_{2k+1}(v,\mathbf{G}_{n})).
\]
This suggests the following definition for~\(g_{k}^{L}\)
and~\(g_{k}^{U}\):
Let~\(k_{U}\) be the largest odd number less than or equal to~\(k\)
and~\(k_{L} = k_{U}-1\).
Then~\(k_{L} \leq k\) and~\(k_{U} \leq k\)
so that if~\(B_{k}(v,\mathbf{G}_{n})\)
is a tree, we can set
\begin{equation*}
  g^{L}_{k}(B_{k}(v,\mathbf{G}_{n}))
  = h_{k_{L}}(v, B_{k_{L}}(v,\mathbf{G}_{n}))
  \quad\text{and}\quad
  g^{U}_{k}(B_{k}(v,\mathbf{G}_{n}))
  = h_{k_{U}}(v, B_{k_{U}}(v,\mathbf{G}_{n})).
\end{equation*}

\paragraph{Property~\(\text{GLA}'\)}
Immediately this construction ensures
\[
  g^{L}_{k}(B_{k}(v,\mathbf{G}_{n}))
  \leq
    h(\mathbf{G}_{n},v)
  \leq
  g^{U}_{k}(B_{k}(v,\mathbf{G}_{n})),
\]
which verifies \ref{eq:fgmv} in \autoref{cor:fgnconv}.

The construction of~\(g^{L}_{k}\) and~\(g^{U}_{k}\)
also ensures \ref{eq:gtree} of \autoref{cor:fgnconv}
because the definition relies only on
structure that is preserved under isomorphisms
on weighted rooted graphs,
namely edges and weights.

The next step is to verify \ref{eq:gult}.
For the first part
let~\(\mathbf{T}(v) \sim \mathbf{T}(W_{v}, \nu,\mathrm{Exp}(1), \placeholder)\)
and~\(\mathbf{T}_{k}(v)\) be its level-\(k\)~subtree.
Recall that~\(\mathbf{T}(v)\) is a delayed weighted Galton--Watson
tree that can be constructed by joining
together~\(N \sim \mathrm{Poi}(W_{v})\)
independent weighted Galton--Watson trees~\(\mathbf{T}^{(i)}\)
for~\(i\in \set{1,\dots,n}\) with offspring
distribution~\(\mathrm{MPoi}(\sizebias{\nu})\)
via edges~\(\edge{\treeroot}{\treeroot_{i}}\)
with independent edge weights according to~\(\mathrm{Exp}(1)\).
The depth-\(k\) subtree~\(\mathbf{T}_{k}\) of~\(\mathbf{T}\)
is then made up of the~\(N\) subtrees~\(\mathbf{T}^{(i)}_{k-1}\)
of~\(\mathbf{T}^{(i)}\)
joined at the root~\(\treeroot\).
By the construction of~\(g_{k}^{L}\) and~\(g_{k}^{U}\)
we need to analyse
\begin{equation*}
  \expe_{n}[(h_{2k+1}(\treeroot, \mathbf{T}_{2k+1}(v))-
  h_{2k}(\treeroot, \mathbf{T}_{2k}(v)))^{2}]
\end{equation*}
to verify condition~\ref{eq:gult} of \autoref{cor:fgnconv}.
We evaluate one recursion step
in order to separate out the effect of the
different offspring distribution at the root.
Recall that~\(h_{k}(u,\mathbf{T})\)
only depends on the subtree of~\(\mathbf{T}\)
induced by the descendants of~\(u\).
If~\(h_{k}(\placeholder, \mathbf{T}_{k}(v))\)
assigns value~\(0\) to the leaves of~\(\mathbf{T}_{k}(v)\),
then~\(h_{k-1}(\placeholder, \mathbf{T}^{(i)}_{k-1})\)
does the same for the subtree induced by the descendants of~\(\treeroot_{i}\).
In particular,
\begin{align*}
  &\abs{h_{2k+1}(\treeroot, \mathbf{T}_{2k+1}(v))-
  h_{2k}(\treeroot, \mathbf{T}_{2k}(v))}^{2}\\
  &\quad=\abs[\Big]{
  \max\set[\Big]{0,
    \max_{1\leq i \leq N} w_{\edge{\treeroot}{\treeroot_{i}}}
    -h_{2k+1}(\treeroot_{i},\mathbf{T}_{2k+1}(v))
    }
    -
  \max\set[\Big]{0,
    \max_{1\leq i \leq N} w_{\edge{\treeroot}{\treeroot_{i}}}
    -h_{2k}(\treeroot_{i},\mathbf{T}_{2k}(v))
    }}^{2}.\\
  &\quad=\abs[\Big]{
  \max\set[\Big]{0,
    \max_{1\leq i \leq N} w_{\edge{\treeroot}{\treeroot_{i}}}
    -h_{2k}(\treeroot_{i},\mathbf{T}^{(i)}_{2k})
    }
    -
  \max\set[\Big]{0,
    \max_{1\leq i \leq N} w_{\edge{\treeroot}{\treeroot_{i}}}
    -h_{2k-1}(\treeroot_{i},\mathbf{T}^{(i)}_{2k-1})
    }}^{2}.\\
  &\quad\leq \max_{1 \leq i \leq N}\abs{h_{2k}(\treeroot_{i},\mathbf{T}^{(i)}_{2k})-
    h_{2k-1}(\treeroot_{i},\mathbf{T}^{(i)}_{2k-1})}^{2}\\
  &\quad\leq \sum_{i=1}^{N} \abs{h_{2k}(\treeroot_{i},\mathbf{T}^{(i)}_{2k})-
    h_{2k-1}(\treeroot_{i},\mathbf{T}^{(i)}_{2k-1})}^{2}.
\end{align*}
Since all~\(\mathbf{T}^{(i)}\) are independent Galton--Watson trees
with the same offspring distribution that are independent
of~\(N \sim \mathrm{Poi}(W_{v})\),
we have
\begin{align}
  \expe_{n}[(h_{2k+1}(\treeroot, \mathbf{T}_{2k+1})-
  h_{2k}(\treeroot, \mathbf{T}_{2k}))^{2}]
  &\leq \expe_{n}\biggl[
      \sum_{i=1}^{N} \abs{h_{2k}(\treeroot_{i},\mathbf{T}^{(i)}_{2k})-
      h_{2k-1}(\treeroot_{i},\mathbf{T}^{(i)}_{2k-1})}^{2}
    \biggr]\notag\\
  &\leq \expe_{n}[N]
    \expe_{n}[\abs{h_{2k}(\treeroot_{1},\mathbf{T}^{(1)}_{2k})-
    h_{2k-1}(\treeroot_{1},\mathbf{T}^{(1)}_{2k-1})}^{2}]\notag\\
  &\leq W_{v}\expe[\abs{h_{2k}(\treeroot_{1},\mathbf{T}^{(1)}_{2k})-
    h_{2k-1}(\treeroot_{1},\mathbf{T}^{(1)}_{2k-1})}^{2}].
    \label{eq:appl:etaw}
\end{align}
We dropped the conditioning on~\(\mathcal{F}_{n}\)
in the last expectation, because the random
variables inside the expectation do not depend on~\(\mathcal{F}_{n}\)
in any way.

Shorten~\(h_{k}(\treeroot_{1}, \mathbf{T}^{(1)}_{k})\)
to~\(h_{k}(\treeroot_{1})\).
With this notation it is enough to verify that
\begin{equation}\label{eq:hk}
  \delta_{k}
  = \expe[(h_{2k}(\treeroot_{1})-h_{2k-1}(\treeroot_{1}))^{2}]
  \to 0
  \quad\text{as~\(k \to \infty\)},
\end{equation}
because then together with~\(m_{n}(v) = W_{v}\),
for which we have
that~\(M_{n} = n^{-1}\sum_{v\in V_{n}} W_{v} = \vartheta\Gamma_{1}\)
is bounded in probability,
and~\eqref{eq:appl:etaw} we would have
\begin{equation}\label{eq:firstpart}
  \expe_{n}[(h_{2k+1}(\treeroot, \mathbf{T}_{2k+1}(v))-
  h_{2k}(\treeroot, \mathbf{T}_{2k}(v)))^{2}]
  \leq m_{n}(v)\delta_{k}
\end{equation}
as required for the first part of \ref{eq:gult}.

In order to verify~\eqref{eq:hk},
note first that
as before a short induction argument
shows that~\(h_{2k}(\treeroot_{1}) \leq h_{2k-1}(\treeroot_{1})\)
for all~\(k \in \naturals_{+}\).
Furthermore,~\(h_{2k-1}(\treeroot_{1})\) is non-increasing in~\(k\)
and~\(h_{2k}(\treeroot_{1})\) is non-decreasing in~\(k\).
Set
\[
  h^{L} = \lim_{k\to\infty} h_{2k}(\treeroot_{1})
  \quad\text{and}\quad
  h^{U} = \lim_{k\to\infty} h_{2k-1}(\treeroot_{1}).
\]
Then by the monotonicity of the sequences
\[
  h_{2k-1}(\treeroot_{1})-h_{2k}(\treeroot_{1})
  \downto h^{U}-h^{L}
\]
and
\begin{equation*}
  0
  \leq
  h_{2k-1}(\treeroot_{1}) - h_{2k}(\treeroot_{1})
  \leq
  h_{2k-1}(\treeroot_{1})
  \leq
  h_{1}(\treeroot_{1})
  \leq
  \max_{u \in \mathcal{C}(\treeroot_{1})} w_{\edge{\treeroot_{1}}{u}}.
\end{equation*}
Since the right-hand side has finite second moment
(because we assumed that~\(\sizebias{\nu}\) has finite second moment
and the~\(w_{e}\) are exponentially distributed),
we can apply Lebesgue's dominated convergence theorem
and obtain the desired convergence for~\eqref{eq:hk}
if~\(h^{U}-h^{L} = 0\) almost surely.
By definition we have~\(h^{L} \leq h^{U}\),
so the almost sure equality can be concluded
from equality of the expectations.

Hence, \eqref{eq:hk} and with it the first part of~\ref{eq:gult}
follow from the following claim.
\begin{claim}
  We have~\(\expe[h^{L}] = \expe[h^{U}]\).
\end{claim}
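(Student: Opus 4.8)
The plan is to realise $h^{L}$ and $h^{U}$ simultaneously on the whole infinite Galton--Watson tree and to identify the laws of $h^{L}$ and $h^{U}$ as fixed points of $T^{2}$, so that the standing uniqueness hypothesis forces $h^{L}\eqdist h^{U}$. Work on the infinite tree $\mathbf{T}^{(1)}$ with offspring law $\mathrm{MPoi}(\sizebias{\nu})$ and i.i.d.\ $\mathrm{Exp}(1)$ edge weights, and for every vertex $u$ let $h_{k}(u)$ be the value produced by the recursion run on the subtree of descendants of $u$ truncated at depth $k$ (assigning $0$ at depth $k$); this is well defined since that truncated subtree is a.s.\ finite, and $h_{k}(u)$ agrees with $h_{k}(u,\mathbf{T})$ because the latter only sees the depth-$k$ descendants of $u$. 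The induction already performed for $\treeroot_{1}$ applies at every $u$, so $h_{2k}(u)$ is non-decreasing and $h_{2k-1}(u)$ is non-increasing in $k$; set $h^{L}(u)=\lim_{k}h_{2k}(u)$ and $h^{U}(u)=\lim_{k}h_{2k-1}(u)$, so that $h^{L}=h^{L}(\treeroot_{1})$ and $h^{U}=h^{U}(\treeroot_{1})$. The bound $0\le h_{2k-1}(u)\le\max_{u'\in\mathcal{C}(u)}w_{\edge{u}{u'}}$, whose right-hand side has finite second (hence first) moment, shows $h^{L}(u)$ and $h^{U}(u)$ are finite and integrable.

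The next step is to pass to the limit in one recursion step. For each $u$ one has $h_{2k+1}(u)=\max\{0,\max_{u'\in\mathcal{C}(u)}(w_{\edge{u}{u'}}-h_{2k}(u'))\}$; since $u$ has finitely many children a.s., letting $k\to\infty$ and using $h_{2k}(u')\uparrow h^{L}(u')$ gives $h^{U}(u)=\max\{0,\max_{u'\in\mathcal{C}(u)}(w_{\edge{u}{u'}}-h^{L}(u'))\}$, and symmetrically $h^{L}(u)=\max\{0,\max_{u'\in\mathcal{C}(u)}(w_{\edge{u}{u'}}-h^{U}(u'))\}$. In $\mathbf{T}^{(1)}$ the subtrees rooted at the children $u'$ of $\treeroot_{1}$ are i.i.d.\ and independent of $N=\setcard{\mathcal{C}(\treeroot_{1})}$ and of the edge weights $w_{\edge{\treeroot_{1}}{u'}}$; since $h^{L}(u')$ (resp.\ $h^{U}(u')$) is the same measurable functional of the respective subtree, the family $(h^{L}(u'))_{u'}$ (resp.\ $(h^{U}(u'))_{u'}$) is i.i.d.\ and independent of $\bigl(N,(w_{\edge{\treeroot_{1}}{u'}})_{u'}\bigr)$, with $N\sim\mathrm{MPoi}(\sizebias{\nu})$ and $w_{\edge{\treeroot_{1}}{u'}}\sim\mathrm{Exp}(1)$. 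Comparing with the definition of $T$, this is exactly $\mathcal{L}(h^{U})=T(\mathcal{L}(h^{L}))$ and $\mathcal{L}(h^{L})=T(\mathcal{L}(h^{U}))$, whence $\mathcal{L}(h^{L})=T^{2}(\mathcal{L}(h^{L}))$ and $\mathcal{L}(h^{U})=T^{2}(\mathcal{L}(h^{U}))$: both are fixed points of $T^{2}$.

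By assumption $T^{2}$ has a unique fixed point, so $\mathcal{L}(h^{L})=\mathcal{L}(h^{U})$, i.e.\ $h^{L}\eqdist h^{U}$; since both are integrable this yields $\expe[h^{L}]=\expe[h^{U}]$, which is the claim, and together with $h^{L}\le h^{U}$ it even gives $h^{L}=h^{U}$ a.s. The main obstacle is the bookkeeping in this limit exchange: one must check that the maximum over the a.s.\ finite child set commutes with the monotone limit in $k$ (routine given finiteness of $N$ and monotonicity, but it has to be invoked at the right vertex), and that the a.s.\ limits $h^{L}(u')$, $h^{U}(u')$ genuinely inherit the i.i.d.\ and independence structure needed to read the two distributional identities as applications of the operator $T$ with the correct mixing law $\mathrm{MPoi}(\sizebias{\nu})$; everything downstream is then immediate from the uniqueness hypothesis.
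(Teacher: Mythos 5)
Your proof is correct and follows essentially the same route as the paper: the paper simply delegates the argument to \citet{gamarnik} (their Proposition~1 and Theorem~3), whose content is precisely your observation that the monotone limits satisfy \(\mathcal{L}(h^{U})=T(\mathcal{L}(h^{L}))\) and \(\mathcal{L}(h^{L})=T(\mathcal{L}(h^{U}))\), so both are fixed points of \(T^{2}\) and the uniqueness hypothesis forces \(h^{L}\eqdist h^{U}\). Writing out the limit exchange and the i.i.d.\ structure explicitly, as you do, is a faithful expansion of that citation rather than a different argument.
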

\begin{proof}
  Under the technical condition that the
  distributional operator~\(T^{2}\)
  has a unique fixed point,
  the arguments used by \citet{gamarnik}
  to prove their Proposition~1 and Theorem~3
  also apply in our setting,
  which implies that~\(h_{k}(\treeroot)\)
  converges in distribution to some limit~\(H_{\infty}\).
  But this implies that~\(h^{L}\) and~\(h^{U}\) have
  the same distribution,
  namely~\(H_{\infty}\).
  Then~\(\expe[h^{L}]=\expe[h^{U}]\) as claimed.
\end{proof}

This shows the first part of condition \ref{eq:gult} in \autoref{cor:fgnconv}.
For the second part we need to consider
\begin{equation*}
  \expe_{n}[(h_{2k+1}(\treeroot, \mathbf{\tilde{T}}_{2k+1}(v,u))-
  h_{2k}(\treeroot, \mathbf{\tilde{T}}_{2k}(v,u)))^{2}].
\end{equation*}
As explained in \autoref{def:ttilde}
we may assume that~\(\mathbf{\tilde{T}}_{k}(v,u) \sim
\mathbf{\tilde{T}}(W_{v},W_{u},\nu,\mu_{E},\mu_{V})\)
is constructed
from~\((\mathbf{T}_{k}(v),\mathbf{T}_{k-1}(u), \treeroot, \treeroot', w)\).
Again the idea is to unwrap what the recursion implies
for the different subtrees.
We only need to focus on the \enquote{artifical}
edge~\(\edge{\treeroot}{\treeroot'}\) of weight~\(w\)
between~\(\mathbf{T}_{k}(v)\) and~\(\mathbf{T}_{k-1}(u)\).
By the recursive structure of~\(h_{k}\)
\begin{align*}
  h_{2k+1}(\treeroot, \mathbf{\tilde{T}}_{2k+1}(v,u))
  &= \max\set{ h_{2k+1}(\treeroot, \mathbf{T}_{2k+1}(v)) , w-h_{2k}(\treeroot',
  \mathbf{T}_{2k}(u)) }
  \shortintertext{and}
  h_{2k}(\treeroot, \mathbf{\tilde{T}}_{2k}(v,u))
  &= \max\set{ h_{2k}(\treeroot, \mathbf{T}_{2k+1}(v)) , w-h_{2k-1}(\treeroot',
  \mathbf{T}_{2k-1}(u)) }.
\end{align*}
Let
\begin{equation*}
Y_{k} = h_{2k+1}(\treeroot, \mathbf{T}_{2k+1}(v))
-h_{2k}(\treeroot, \mathbf{T}_{2k+1}(v))
  \quad\text{and}\quad
  Y'_{k} = h_{2k-1}(\treeroot',
  \mathbf{T}_{2k-1}(u))-h_{2k}(\treeroot',
  \mathbf{T}_{2k}(u)).
\end{equation*}
Then
\begin{equation*}
  \abs{h_{2k+1}(\treeroot, \mathbf{\tilde{T}}_{2k+1}(v,u))-
  h_{2k}(\treeroot, \mathbf{\tilde{T}}_{2k}(v,u))}^{2}
  \leq \max\set{\abs{Y_{k}}^{2},\abs{Y'_{k}}^{2}}.
\end{equation*}
When we verified the first part of \ref{eq:gult},
we already showed that~\(\expe_{n}[\abs{Y_{k}}^{2}]\)
satisfies~\eqref{eq:firstpart},
i.e.~\(\expe_{n}[\abs{Y_{k}}] \leq W_{v}\delta_{k}\).
The exact same reasoning can be used to show
that~\(\expe_{n}[\abs{Y'_{k}}] \leq W_{u}\delta_{k}\).
Together this shows
\[
  \expe_{n}[\abs{h_{2k+1}(\treeroot, \mathbf{\tilde{T}}_{2k+1}(v,u))-
    h_{2k}(\treeroot, \mathbf{\tilde{T}}_{2k}(v,u))}^{2}]
  \leq (W_{v}+W_{u})\delta_{k},
\]
so that the second part of \ref{eq:gult}
is satisfied with~\(\tilde{m}_{n}(v,u)=W_{v}+W_{u}\)
for which~\(\tilde{M}_{n} = n^{-2}\sum_{v,u \in V_{n}} W_{u}+W_{v}
= 2\vartheta\Gamma_{1,n}\) is bounded in probability.

This verifies the simplified version of property~GLA
from \autoref{cor:fgnconv}.
Hence, we can apply \autoref{cor:fgnconv}
once we have verified \autoref{ass:simpbounds}.

\paragraph{Bounds for \autoref{ass:simpbounds}}
Because there are no vertex weights,
we only need to consider~\eqref{eq:jbound:e}
and~\eqref{eq:deltahbound:e}.
Indeed, for~\eqref{eq:deltahbound:e} we only need to find a bound of the form
\[
  \abs{M(\mathbf{G}_{n})-M(\mathbf{G}_{n}^{e})}
  \leq \indfunc_{\set{\max\set{X_{e},X'_{e}} = 1}} H_{E}(w_{e},w_{e'}).
\]
We identify a suitable bound by considering the cases separately.
First we consider the case that perturbing the edge removes it from the graph,
i.e.~\(X_{e}=1\), but~\(X'_{e}=0\).
If~\(e\) with weight~\(w_{e}\) is part of the maximum weight matching, removing
it from the graph (and therefore from the matching)
can cost the maximum weight matching at most~\(w_{e}\),
because the removal of~\(e\) allows other previously blocked edges
to participate again.
If~\(e\) is not part of the matching, removing it does not change the
weight of the maximum weight matching at all.
In case the perturbation adds a previously nonexistent edge,
i.e.~\(X_{e}=0\), \(X'_{e}=1\),
the weight of the maximum weight matching can increase at most
by the weight~\(w'_{e}\) of this edge.
If the edge is present in both the unperturbed and perturbed
graph and only changes its weight,
the weight of the maximum weight matching changes at most
by the difference of the old and new weight.
In any case the difference is bounded by the maximum of
the old weight~\(w_{e}\) and the new weight~\(w'_{e}\).
In other words we can choose~\(H_{E}(w_{e},w'_{e})=\max\set{w_{e},w'_{e}}\).

Since~\(w_{e},w'_{e} \sim \mathrm{Exp}(1)\),
we immediately have~\(\expe[H_{E}(w_{e},w'_{e})^{6}]\),
which implies~\eqref{eq:jbound:e},
since our weight distribution is the same for all~\(n\).

\paragraph{Variance bound}
As in the previous example, an application of \autoref{cor:fgnconv}
now provides an estimate for the Kolmogorov distance
of the distribution
of~\(\sigma_{n}^{-1} (N(\mathbf{G}_{n})-\expe_{n}[N(\mathbf{G}_{n})])\)
to a standard normal distribution.
In order to obtain a convergence we have
to verify that the variance is at least of order~\(n\).
This again a technical calculations.
For the case~\(\nu = \delta_{c}\) we refer to
the calculations done by
\citet[Lem.~3.2]{cao},
whose approach carries over to~\(\nu \neq \delta_{c}\)
under our technical condition.

\subsection{Outlook}
The main contribution of this thesis was to show that
the framework used by \citet{cao}
to establish the central limit theorem
for the (homogeneous) edge-weighted Erdős--Rényi model
can be extended to more inhomogeneous
graph models and to models with weights on edges and vertices.

The setting we investigate in this thesis still exhibits
a fair amount of uniformity in the limit.
Yet still, the methods used in the proof for the Erdős--Rényi model
had to be adapted not inconsiderably to apply to this case as well.
It would be interesting to investigate which level
of inhomogeneity~--~either in the graphs or the limiting objects~--~these
methods can still support and at which point other methods
need to be considered.

The inhomogeneous random graph models we investigate here
do not exhibit a spatial structure,
but many interesting real-world networks have
inherent spatial and geometric properties that influence the graph structure.
The local limiting behaviour of spatial inhomogeneous random graphs
is known \citep{spatial}
and may differ significantly from the local behaviour
of sparse inhomogeneous random graphs.
It is therefore doubtful that the sparsity/tree-based approach pursued
here is directly suitable to these graphs.
Nevertheless, the methods used here may be applicable to a subclass
of spatial random graphs whose local properties are sufficiently
similar to the sparse inhomogeneous graphs we considered.
Moreover, the general approach of the perturbative Stein's method
and local approximation has been used successfully
in a spatial setting \citep{chatterjee-sen}.

A final direction for future research would be
to consider dynamic versions of the underlying random graph
\citep[see e.g.][]{zhang,dynamic:er}.
If it is possible to show a central limit theorem
for each time-point of the evolving graph,
one might hope for a functional central limit theorem
for the entire process.

\section{Analysis of the structure
  of inhomogeneous random graphs}
\label{chap:structure}

One of the fundamental results in the analysis of
the behaviour of Erdős--Rényi random graphs is
that their connected components can be described with
branching processes \citep[see, e.g.][Chap.~4]{vdh}.
Indeed, the entire local neighbourhood structure of a sparse Erdős--Rényi
random graph can be related to a Galton--Watson tree
with Poisson offspring distribution.
In the realm of inhomogeneous random graphs similar branching
process results have been known since the introduction of the model
by \citet{soderberg} and extensive
study by \citet{bollobas}.
Specifically, the local structure of a wide class of sparse
inhomogeneous random graphs
can be related to a class of multi-type branching processes
\citetext{\citealp{bollobas}[Chap.~3], \citealp{vdh:2}}.
In our setting the limiting object will essentially turn out
to be a single-type branching process with
Poisson offspring distribution.

In this chapter we will analyse the local structure of sparse inhomogeneous
random graphs of the form introduced in \autoref{sec:setting}.
We will start by showing relatively simple
results about the sizes of the neighbourhoods of a fixed vertex
and about the probability that a fixed vertex or edge
is part of the neighbourhood of another vertex in \autoref{sec:nbhdsize}.
In a second step we will find explicit bounds for the correlation
between neighbourhoods of different vertices in \autoref{sec:corr}.

We will then use the graph exploration procedure introduced in
\autoref{sec:graphexpl}
to explicitly couple neighbourhoods in the inhomogeneous
graph to Galton--Watson trees in \autoref{sec:gimt}.
Finally, in \autoref{sec:compcoup} we present several coupling results that are
at first glance more complex, but follow directly from the coupling
established in \autoref{sec:gimt}.

\subsection{Neighbourhood size and path probabilities}
\label{sec:nbhdsize}
In this section we will briefly establish a few results about the size
of the neighbourhood of a vertex and for path probabilities.
Since the strategy of coupling the cluster (i.e~the neighbourhood of a vertex)
to a branching process is well established \citep{bollobas},
these results are by no means surprising and are in principle
known in a much more general setting.
We were not able to locate all of the precise results we need
in the literature, though, so we state all of them
here in a consistent notation.

The estimates in this section only depend on the underlying graph
structure and not on the additional vertex and edge weights
in the weighted graph~\(\mathbf{G}_{n}\).
Hence, all results in this section will be shown for~\(G_{n} = (V_{n},E_{n})\).
Still, all results presented here will still hold if~\(G_{n}\)
is replaced with~\(\mathbf{G}_{n}\).
Again,~\(V_{n}^{(2)} = \sset{\edge{u}{v}}{u,v \in V_{n}}\)
is the set of possible edges.

Recall the definition of the local neighbourhood~\(B_{\ell}(v, G_{n})\)
of a vertex~\(v\) in~\(G_{n}\) up to level~\(\ell \in \naturals\)
(cf.~\autoref{def:nbhd}).
For the rest of this section we will drop the reference to~\(G_{n}\)
and will just write~\(B_{\ell}(v)\) for~\(B_{\ell}(v,G_{n})\).

In a slight abuse of notation we will write
both~\(u \in B_{\ell}(v)\) for a vertex~\(u \in V_{n}\)
to mean that~\(u\) is contained in the vertex set of~\(B_{\ell}(v)\),
which means that there must be a path from~\(v\)
to~\(u\) of length no more than~\(\ell\),
and~\(e \in B_{\ell}(v)\)
to mean that the possible edge~\(e \in V_{n}^{(2)}\)
is contained in the edge set of~\(B_{\ell}(v)\),
which means that~\(e\) is part of a path of length at most~\(\ell\)
from~\(v\) to an arbitrary vertex.

It is convenient to have a notation for the set of vertices
in a local neighbourhood~\(B_{\ell}(v, G)\) of a vertex~\(v\)
and for the set of vertices at a certain level.
\begin{definition}\label{def:dl}
  Let~\(G=(V,E)\) be a graph.
  For a level~\(\ell \in \naturals\) and a vertex~\(v \in V\)
  let~\(S_{\ell}(v,G)\) be the set of vertices in~\(B_{\ell}(v,G)\).
  Furthermore,  set~\(S_{-1}(v,G) = \emptyset\).

  Then let
  \[
    D_{\ell}(v,G) = S_{\ell}(v,G) \setminus S_{\ell-1}(v,G)
    \quad\text{for~\(\ell \in \naturals\)}
  \]
  be the vertices of~\(B_{\ell}(v,G)\) at level~\(\ell\).
\end{definition}

The local neighbourhood can be generalised from the neighbourhood
of a single vertex to the neighbourhood of a set of vertices.
\begin{definition}
  Let~\(G = (V,E)\) be a graph
  and let~\(\mathcal{V} \subseteq V\) be a set of vertices.
  Then we let~\(B_{\ell}(\mathcal{V},G)\)
  be the subgraph of~\(G\)
  that is induced by the union of all paths from~\(B_{\ell}(v,G)\)
  for~\(v \in \mathcal{V}\).
  The set of vertices in~\(B_{\ell}(\mathcal{V},G)\) is given by
  \[
    S_{\ell}(\mathcal{V})
    = \bigsetunion_{v \in \mathcal{V}} S_{\ell}(v,G)
  \]
  and we set
  \[
    D_{\ell}(\mathcal{V},G)
    = S_{\ell}(\mathcal{V},G) \setminus S_{\ell-1}(\mathcal{V},G)
  \]
  (with the convention~\(S_{-1}(\mathcal{V}) = \emptyset\)).
\end{definition}
By this construction we only
have~\(D_{\ell}(\mathcal{V},G) \subseteq \bigsetunion_{v \in \mathcal{V}} D_{\ell}(v,G)\).
Equality is only guaranteed if the~\(S_{\ell-1}(v,G)\) are disjoint.

It will be useful to have an estimate
of the expected number of vertices in a neighbourhood as well
as of their \enquote{total connectivity weight}.
Amongst other things these quantities can be used to estimate the correlation
between neighbourhoods of different vertices.

\begin{definition}[Total~\(p\)-connectivity weight]
  Fix~\(p \geq 0\).
  For any set of vertices~\(\mathcal{U} \subseteq V_{n}\)
  let
  \[
    \setweight{\mathcal{U}}_{p}
    = \sum_{u \in \mathcal{U}} W_{u}^{p}
  \]
  denote the total sum of the~\(p\)-th power of the connectivity weights
  of the vertices in~\(\mathcal{U}\).
  We also say that~\(\setweight{\mathcal{U}}_{p}\)
  is the \emph{total~\(p\)-connectivity weight of~\(\mathcal{U}\)}.
\end{definition}
We sometimes suppress~\(p\) if it is equal to~\(1\)
and write~\(\setweight{\mathcal{U}} = \setweight{\mathcal{U}}_{1}\).
Note also that
the cardinality of a set can be written as its total~\(0\)-weight,
that is to say~\(\setcard{\mathcal{U}} = \setweight{\mathcal{U}}_{0}\).

In a first step we calculate the expectation of the
\enquote{total~\(p\)-connectivity weight}
of the explored graph up to level~\(\ell\).
\begin{lemma}\label{lem:bl:weight:raw}
  Let~\(p \geq 0\).
  For any level~\(\ell \in \naturals\) and vertex~\(v \in V_{n}\)
  we have
  \[
    \expe_{n}[\setweight{S_{\ell}(v)}_{p}]
    \leq  W_{v}^{p} + W_{v}(\Gamma_{2,n}+1)^{\ell-1}\Gamma_{p+1,n}.
  \]
  For~\(p=0\) this bound becomes
  \[
    \expe_{n}[\setcard{S_{\ell}(v)}]
    \leq  1+ W_{v} \Gamma_{1,n} (\Gamma_{2,n}+1)^{\ell-1}.
  \]
  In case~\(p=1\) we also have
  \[
    \expe_{n}[\setweight{S_{\ell}(v)}]
    \leq  W_{v}(\Gamma_{2,n}+1)^{\ell}.
  \]
\end{lemma}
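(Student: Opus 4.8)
The plan is to compute $\expe_{n}[\setweight{S_{\ell}(v)}_{p}]$ from the identity $\expe_{n}[\setweight{S_{\ell}(v)}_{p}] = \sum_{w\in V_{n}} W_{w}^{p}\,\prob_{n}(w \in S_{\ell}(v))$ and to control each probability by a union bound over paths. Since $w\in S_{\ell}(v)$ holds precisely when $\mathrm{dist}_{G_{n}}(v,w)\le\ell$, i.e.\ when there is a self-avoiding path from $v$ to $w$ of length at most $\ell$, and since $w=v$ always lies in $S_{\ell}(v)$ and contributes the single term $W_{v}^{p}$, it remains to treat $w\ne v$. Given $\mathcal{F}_{n}$ the edge indicators are independent, so the probability that a fixed path $\gamma=(v=u_{0},u_{1},\dots,u_{k}=w)$ is entirely present equals $\prod_{i=1}^{k}p_{u_{i-1}u_{i}}$, and a union bound gives $\prob_{n}(w\in S_{\ell}(v)) \le \sum_{k=1}^{\ell}\sum_{\gamma}\prod_{i=1}^{k}p_{u_{i-1}u_{i}}$, the inner sum running over self-avoiding $v$--$w$ paths of length $k$.

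Next I would estimate this sum using $p_{uw}\le W_{u}W_{w}/(n\vartheta)$ from \eqref{eq:puvdef}: a length-$k$ path contributes at most $(n\vartheta)^{-k}W_{v}W_{w}\prod_{i=1}^{k-1}W_{u_{i}}^{2}$, and dropping the self-avoidance constraint so that each internal vertex $u_{1},\dots,u_{k-1}$ ranges freely over $V_{n}$ (which only enlarges the bound) turns the sum over paths into $(n\vartheta)^{-k}W_{v}W_{w}\bigl(\sum_{u\in V_{n}}W_{u}^{2}\bigr)^{k-1} = (n\vartheta)^{-1}W_{v}W_{w}\,\Gamma_{2,n}^{k-1}$, using $\sum_{u}W_{u}^{2} = n\vartheta\,\Gamma_{2,n}$ from \eqref{eq:gamman}. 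Summing over $k\in\{1,\dots,\ell\}$, then over $w$ weighted by $W_{w}^{p}$, and using $\sum_{w}W_{w}^{p+1} = n\vartheta\,\Gamma_{p+1,n}$, yields
\[
  \expe_{n}[\setweight{S_{\ell}(v)}_{p}]
  \le W_{v}^{p} + W_{v}\,\Gamma_{p+1,n}\sum_{j=0}^{\ell-1}\Gamma_{2,n}^{j}.
\]

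Finally I would bound the remaining sum by the binomial inequality $\sum_{j=0}^{\ell-1}\Gamma_{2,n}^{j}\le\sum_{j=0}^{\ell-1}\binom{\ell-1}{j}\Gamma_{2,n}^{j} = (\Gamma_{2,n}+1)^{\ell-1}$, which is the first displayed bound of the lemma. Specialising to $p=0$ (so $W_{v}^{0}=1$ and $\Gamma_{1,n}$ appears) gives the second bound directly. For $p=1$ one instead keeps the geometric sum and notes that $W_{v} + W_{v}\Gamma_{2,n}\sum_{j=0}^{\ell-1}\Gamma_{2,n}^{j} = W_{v}\sum_{j=0}^{\ell}\Gamma_{2,n}^{j} \le W_{v}(\Gamma_{2,n}+1)^{\ell}$, which is the third bound.

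Every step is elementary; the only point needing a little care is the union bound itself — checking that $\{w\in S_{\ell}(v)\}$ is contained in the union, over self-avoiding $v$--$w$ paths $\gamma$ of length $\le\ell$, of the events that all edges of $\gamma$ are present, and that $\prob_{n}$ of the latter factorises by the conditional independence of the edge indicators. (An essentially equivalent route is a breadth-first exploration giving the one-step estimate $\expe_{n}[\setweight{D_{\ell}(v)}_{p}\mid B_{\ell-1}(v)] \le \Gamma_{p+1,n}\,\setweight{D_{\ell-1}(v)}$ and iterating with $p=1$; the path-counting argument has the advantage of not requiring the exploration filtration to be set up carefully, which is deferred to \autoref{sec:graphexpl} in any case.)
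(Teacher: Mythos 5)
Your proof is correct and follows essentially the same route as the paper's: a union bound over (self-avoiding) paths, factorisation of the edge probabilities given $\mathcal{F}_{n}$, dropping the distinctness constraint on internal vertices to obtain the factor $\Gamma_{2,n}^{k-1}$, and the elementary bound $\sum_{j=0}^{\ell-1}\Gamma_{2,n}^{j}\le(\Gamma_{2,n}+1)^{\ell-1}$. The only (immaterial) difference is that the paper organises the sum by levels $D_{r}(v)$ before summing over paths, whereas you organise it by target vertex $w$; the $p=0$ and $p=1$ specialisations are handled identically.
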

\begin{proof}
  By construction and the fact that the~\(D_{r}(v)\) are disjoint
  \[
    \setweight{S_{\ell}(v)}_{p} = \sum_{r=0}^{\ell} \setweight{D_{r}(v)}_{p}.
  \]
  Recall that~\(D_{0}(v) = \set{v}\)
  so that~\(\setweight{D_{0}(v)}_{p} = W_{v}^{p}\).
  For~\(r \geq 1\) we have that
  \begin{align}
    \expe_{n}[\setweight{D_{r}(v)}_{p}]
    &\leq
      \sum_{\substack{u_{1},\dots,u_{r} \in
      V_{n}\setminus\set{v}\\\text{pairwise different}}}
    \expe_{n}[
    \conn{v}{u_{1}}
    \conn{u_{1}}{u_{2}}
    \dotsm
    \conn{u_{r-1}}{u_{r}}
    W_{u_{r}}^{p}]\notag\\
    &\leq \sum_{u_{1} \in V_{n}}  \frac{W_{v}W_{u_{1}}}{n \vartheta}
      \sum_{u_{2} \in V_{n}}  \frac{W_{u_{1}}W_{u_{2}}}{n \vartheta}
      \dotsm
      \sum_{u_{r} \in V_{n}}  \frac{W_{u_{r-1}}W_{u_{r}}}{n \vartheta}
      W_{u_{r}}^{p}\notag\\
    &\leq W_{v}
      \Bigl(\frac{1}{n\vartheta} \sum_{u \in V_{n}} W_{u}^{2}\Bigr)^{\!\!r-1}
    \Bigl(\frac{1}{n\vartheta} \sum_{u \in V_{n}} W_{u}^{p+1}\Bigr)\notag\\
    &= W_{v}
      \Gamma_{2,n}^{r-1}\Gamma_{p+1,n}.\label{eq:drv}
  \end{align}
  Now sum over~\(r\) to obtain
  \begin{equation}
    \expe_{n}[\setweight{S_{\ell}(v)}_{p}]
    \leq W_{v}^{p} + W_{v}\Gamma_{p+1,n} (\Gamma_{2,n}+1)^{\ell-1}.\label{eq:dvsum}
  \end{equation}
  This proves the first part of the claim.

  The second part of the claim then immediately follows by taking~\(p=0\).

  For the third claim note that in case~\(p=1\) the bound in~\eqref{eq:drv}
  becomes
  \begin{equation*}
    \expe_{n}[\setweight{D_{r}(v)}]
    \leq W_{v} \Gamma_{2,n}^{r}.
  \end{equation*}
  But this bound also holds for~\(r=0\),
  so that the summation in~\eqref{eq:dvsum} becomes
  \[
    \expe_{n}[\setweight{S_{\ell}(v)}]
    \leq W_{v} \sum_{r=0}^{\ell} \Gamma_{2,n}^{r}
    \leq W_{v} (\Gamma_{2,n}+1)^{\ell},
  \]
  as claimed.
\end{proof}
Summing the result gives:
\begin{corollary}\label{cor:blv:weight:raw}
  Let~\(p \geq 0\).
  For any level~\(\ell \in \naturals\)
  and vertex set~\(\mathcal{V} \subseteq V_{n}\)
  we have
  \[
    \expe_{n}[\setweight{S_{\ell}(\mathcal{V})}_{p}]
    \leq  \setweight{\mathcal{V}}_{p} +
    \setweight{\mathcal{V}}(\Gamma_{2,n}+1)^{\ell-1}\Gamma_{p+1,n}
  \]
  and thus also
  \[
    \expe_{n}[\setweight{S_{\ell}(\mathcal{V})}]
    \leq  \setweight{\mathcal{V}}(\Gamma_{2,n}+1)^{\ell}.
  \]
\end{corollary}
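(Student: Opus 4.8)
The plan is to reduce everything to \autoref{lem:bl:weight:raw} by a union bound. The only structural input needed is the defining identity \(S_{\ell}(\mathcal{V}) = \bigsetunion_{v \in \mathcal{V}} S_{\ell}(v)\). Since all connectivity weights are positive, summing \(W_{u}^{p}\) over the union is at most summing it over the disjoint union, i.e.
\[
  \setweight{S_{\ell}(\mathcal{V})}_{p}
  = \sum_{u \in S_{\ell}(\mathcal{V})} W_{u}^{p}
  \leq \sum_{v \in \mathcal{V}} \sum_{u \in S_{\ell}(v)} W_{u}^{p}
  = \sum_{v \in \mathcal{V}} \setweight{S_{\ell}(v)}_{p},
\]
where the middle inequality is just the observation that every vertex lying in the union \(\bigsetunion_{v} S_{\ell}(v)\) contributes at least once on the right. (Equality would require the \(S_{\ell}(v)\) to be pairwise disjoint, which we neither have nor need.)

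Next I would take conditional expectation \(\expe_{n}[\,\cdot\,]\) on both sides, use linearity, and apply \autoref{lem:bl:weight:raw} to each term:
\[
  \expe_{n}[\setweight{S_{\ell}(\mathcal{V})}_{p}]
  \leq \sum_{v \in \mathcal{V}} \expe_{n}[\setweight{S_{\ell}(v)}_{p}]
  \leq \sum_{v \in \mathcal{V}} \bigl( W_{v}^{p} + W_{v}(\Gamma_{2,n}+1)^{\ell-1}\Gamma_{p+1,n} \bigr).
\]
Now split the sum: \(\sum_{v \in \mathcal{V}} W_{v}^{p} = \setweight{\mathcal{V}}_{p}\) by definition of the total \(p\)-connectivity weight, and \(\sum_{v \in \mathcal{V}} W_{v} = \setweight{\mathcal{V}}_{1} = \setweight{\mathcal{V}}\), while the factor \((\Gamma_{2,n}+1)^{\ell-1}\Gamma_{p+1,n}\) does not depend on \(v\) and pulls out. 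This gives exactly the first displayed bound of the corollary.

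For the second display I would repeat the same two steps but invoke the sharper \(p=1\) estimate from \autoref{lem:bl:weight:raw}, namely \(\expe_{n}[\setweight{S_{\ell}(v)}] \leq W_{v}(\Gamma_{2,n}+1)^{\ell}\), so that \(\expe_{n}[\setweight{S_{\ell}(\mathcal{V})}] \leq \sum_{v \in \mathcal{V}} W_{v}(\Gamma_{2,n}+1)^{\ell} = \setweight{\mathcal{V}}(\Gamma_{2,n}+1)^{\ell}\). There is no real obstacle here: the argument is a plain union bound followed by linearity of expectation, and the only point to be mildly careful about is that overlapping neighbourhoods force an inequality (not an identity) at the first step, which is harmless because the summands are non-negative.
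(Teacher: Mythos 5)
Your proposal is correct and is essentially the paper's own argument: the corollary is stated there with the one-line justification \enquote{summing the result gives}, which is exactly the union bound over \(v \in \mathcal{V}\) followed by linearity of \(\expe_{n}\) and the single-vertex bounds of \autoref{lem:bl:weight:raw} that you spell out. Nothing further is needed.
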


We will need a similar result for the excess connectivity weight
of vertices
\begin{definition}[Excess connectivity weight]
  Fix~\(n \in \naturals\).
  Let~\(\mathcal{U} \subseteq V_{n}\)
  a set of vertices in~\(G_{n}\).
  Then we say that
  \[
    \setweight{\mathcal{U}}_{+}
    = \sum_{u \in \mathcal{U}} W_{u} \indfunc_{\set{W_{u} > \sqrt{n\vartheta}}}
  \]
  is the \emph{total sum of excess connectivity weights of~\(\mathcal{U}\)}.
\end{definition}
Replicating the exact same arguments as in \autoref{lem:bl:weight:raw}
we can show a structurally similar bound for~\(\setweight{S_{\ell}(v)}_{+}\)
\citep[for details see][Lem.~3.1.7]{thesis}.

\begin{lemma}\label{lem:bl:excessweight:raw}
  For any level~\(\ell \in \naturals\) and vertex~\(v \in V_{n}\)
  we have
  \[
    \expe_{n}[\setweight{S_{\ell}(v)}_{+}]
    \leq  W_{v}\indfunc_{\set{W_{v} > \sqrt{n\vartheta}}}
    + W_{v}(\Gamma_{2,n}+1)^{\ell-1}\kappa_{2,n}.
  \]
\end{lemma}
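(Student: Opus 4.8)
The plan is to follow the proof of \autoref{lem:bl:weight:raw} almost verbatim, tracking only the contribution of vertices whose connectivity weight exceeds the threshold~\(\sqrt{n\vartheta}\). First I would decompose the neighbourhood level by level: since the sets~\(D_{r}(v)\) for~\(0 \leq r \leq \ell\) are disjoint and partition~\(S_{\ell}(v)\),
\[
  \setweight{S_{\ell}(v)}_{+} = \sum_{r=0}^{\ell} \setweight{D_{r}(v)}_{+}.
\]
The level-\(0\) term is deterministic, \(\setweight{D_{0}(v)}_{+} = W_{v}\indfunc_{\set{W_{v} > \sqrt{n\vartheta}}}\), because~\(D_{0}(v) = \set{v}\).

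For~\(r \geq 1\) I would bound~\(\expe_{n}[\setweight{D_{r}(v)}_{+}]\) by the same self-avoiding-path expansion used for~\eqref{eq:drv}: every vertex of~\(D_{r}(v)\) is the endpoint of at least one fully present path~\(v = u_{0}, u_{1}, \dots, u_{r}\) of pairwise distinct vertices, so
\[
  \setweight{D_{r}(v)}_{+}
  \leq \sum_{\substack{u_{1},\dots,u_{r} \in V_{n}\setminus\set{v}\\\text{pairwise different}}}
  \conn{v}{u_{1}} \conn{u_{1}}{u_{2}} \dotsm \conn{u_{r-1}}{u_{r}}\, W_{u_{r}}\indfunc_{\set{W_{u_{r}} > \sqrt{n\vartheta}}}.
\]
Taking~\(\expe_{n}\), using that the edge indicators are independent given~\(\mathcal{F}_{n}\) with~\(\expe_{n}[\conn{u}{u'}] = p_{uu'} \leq W_{u}W_{u'}/(n\vartheta)\), and dropping the distinctness constraint, the sum factorises exactly as in the proof of \autoref{lem:bl:weight:raw}. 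The only change is that the final factor is~\(\frac{1}{n\vartheta}\sum_{u \in V_{n}} W_{u}^{2}\indfunc_{\set{W_{u} > \sqrt{n\vartheta}}} = \kappa_{2,n}\) (cf.~\eqref{eq:kappan}) rather than~\(\Gamma_{2,n}\), so that~\(\expe_{n}[\setweight{D_{r}(v)}_{+}] \leq W_{v}\Gamma_{2,n}^{r-1}\kappa_{2,n}\).

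Summing over~\(r\) I would then obtain
\[
  \expe_{n}[\setweight{S_{\ell}(v)}_{+}]
  \leq W_{v}\indfunc_{\set{W_{v} > \sqrt{n\vartheta}}}
  + W_{v}\kappa_{2,n}\sum_{r=1}^{\ell}\Gamma_{2,n}^{r-1}
  \leq W_{v}\indfunc_{\set{W_{v} > \sqrt{n\vartheta}}} + W_{v}(\Gamma_{2,n}+1)^{\ell-1}\kappa_{2,n},
\]
which is the claimed bound. There is essentially no obstacle here beyond bookkeeping. The one point that needs a moment's care is that the truncation indicator sits only on the endpoint~\(u_{r}\), so that it combines with the weight~\(W_{u_{r}}\) already produced by the last edge factor~\(\conn{u_{r-1}}{u_{r}}\) to give~\(W_{u_{r}}^{2}\indfunc_{\set{W_{u_{r}} > \sqrt{n\vartheta}}}\); this is precisely what turns the last factor into~\(\kappa_{2,n}\), with the exponent shift inherited from summing the geometric-type series exactly as in \autoref{lem:bl:weight:raw}. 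The~\(\imin 1\) in the definition~\eqref{eq:puvdef} of~\(p_{uv}\) is harmless throughout, since only the upper bound~\(p_{uv} \leq W_{u}W_{v}/(n\vartheta)\) is needed.
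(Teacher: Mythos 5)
Your proposal is correct and is exactly the argument the paper intends: the paper itself omits the proof, stating only that one replicates the argument of \autoref{lem:bl:weight:raw} (citing the thesis for details), and your level-by-level path expansion with the truncation indicator attached to the endpoint, turning the final factor into~\(\kappa_{2,n}\), is precisely that replication.
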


For estimates involving the Cauchy--Schwarz inequality
it will also be useful to have a bound on the second moment
of~\(\setweight{S_{\ell}(v)}\).
\begin{lemma}\label{lem:bl:weight:squared:raw}
  For any level~\(\ell \in \naturals\) and vertex~\(v \in V_{n}\)
  we have
  \[
    \begin{split}
    &\expe_{n}[\setweight{S_{\ell}(v)}^{2}_{p}]\\
    &\quad\leq
    W_{v}^{2p} + 2W_{v}^{p+1}\Gamma_{p+1,n}(\Gamma_{2,n}+1)^{\ell-1}\\
    &\qquad+C (W_{v}+1)^{2}(\Gamma_{2,n}+2)^{2\ell-2}
    (\Gamma_{3,n}+1)(\Gamma_{p+1,n}+1)^{2}(\Gamma_{p+2,n}+1)(\Gamma_{2p+1,n}+1).
    \end{split}
  \]
  For~\(p=0\) this estimate can be slightly simplified further to
  \[
    \expe_{n}[\setcard{S_{\ell}(v)}^{2}]
    =\expe_{n}[\setweight{S_{\ell}(v)}^{2}_{0}]
    \leq C(W_{v}+1)^{2} (\Gamma_{1,n}+1)^{2}(\Gamma_{2,n}+2)^{2\ell}(\Gamma_{3,n}+1)
  \]
  and for~\(p=1\) to
  \[
    \expe_{n}[\setweight{S_{\ell}(v)}^{2}]
    \leq C(W_{v}+1)^{2} (\Gamma_{2,n}+2)^{2\ell}(\Gamma_{3,n}+1).
  \]
\end{lemma}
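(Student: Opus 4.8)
The plan is to expand the squared total weight over pairs of levels and then, exactly as in the proof of \autoref{lem:bl:weight:raw}, to count pairs of exploration paths out of~\(v\). Since the \(D_{r}(v)\) partition \(S_{\ell}(v)\),
\[
  \setweight{S_{\ell}(v)}_{p}^{2}
  = \sum_{r=0}^{\ell}\sum_{s=0}^{\ell}
  \setweight{D_{r}(v)}_{p}\setweight{D_{s}(v)}_{p}.
\]
The terms with \(r=0\) or \(s=0\) are immediate from \(D_{0}(v)=\set{v}\), i.e.\ \(\setweight{D_{0}(v)}_{p}=W_{v}^{p}\), together with the estimate \(\sum_{s=1}^{\ell}\expe_{n}[\setweight{D_{s}(v)}_{p}]\le W_{v}\Gamma_{p+1,n}(\Gamma_{2,n}+1)^{\ell-1}\) obtained in the proof of \autoref{lem:bl:weight:raw}; these yield the two leading terms \(W_{v}^{2p}+2W_{v}^{p+1}\Gamma_{p+1,n}(\Gamma_{2,n}+1)^{\ell-1}\) of the asserted bound.

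For \(r,s\ge 1\) I would bound \(\setweight{D_{r}(v)}_{p}\setweight{D_{s}(v)}_{p}\) by the double sum, over ordered tuples \((u_{1},\dots,u_{r})\) and \((u_{1}',\dots,u_{s}')\) of pairwise distinct vertices different from~\(v\) forming paths out of~\(v\), of \(X_{vu_{1}}\dotsm X_{u_{r-1}u_{r}}X_{vu_{1}'}\dotsm X_{u_{s-1}'u_{s}'}W_{u_{r}}^{p}W_{u_{s}'}^{p}\). Taking \(\expe_{n}\), the edge indicators are independent given~\(\mathcal{F}_{n}\), so the expectation of the indicator product is \(\prod_{e}p_{e}\le\prod_{e}W_{a}W_{b}/(n\vartheta)\) with the product over the \emph{distinct} edges of the union graph~\(H\) of the two paths. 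Summing over all embeddings of a fixed combinatorial type of \((H,v,u_{r},u_{s}')\) into~\(V_{n}\) then gives
\[
  W_{v}^{\deg_{H}(v)}\,(n\vartheta)^{-\mathrm{cyc}(H)}
  \prod_{x\in V(H)\setminus\set{v}}
  \Gamma_{\deg_{H}(x)+p\,\indfunc_{\set{x\in\set{u_{r},u_{s}'}}},\,n},
\]
where \(\mathrm{cyc}(H)=\setcard{E(H)}-\setcard{V(H)}+1\) is the cyclomatic number of~\(H\).

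The case analysis is the real work. If the two paths meet only in~\(v\), then \(H\) is a tree, \(v\) has degree~\(2\), and the sum over \(r,s\ge1\) factorises into a product of two level-sums as in \autoref{lem:bl:weight:raw}, contributing at most \(W_{v}^{2}\Gamma_{p+1,n}^{2}(\Gamma_{2,n}+1)^{2\ell-2}\). Otherwise the two paths share a vertex other than~\(v\); then their common part is a sub-path, and one checks that \(H\) is either (i) a tree whose unique degree-\(3\) vertex is the point where the two branches split — the sole source of the factor \(\Gamma_{3,n}\) — while all other vertices have degree \(\le 2\) and the endpoints \(u_{r},u_{s}'\) have degree \(1\) or~\(2\) and may coincide, which accounts for \((\Gamma_{p+1,n}+1)^{2}\), \((\Gamma_{p+2,n}+1)\) and \((\Gamma_{2p+1,n}+1)\); or (ii) \(\mathrm{cyc}(H)\ge 1\), in which case the spare factors \((n\vartheta)^{-1}\) absorb any vertex of degree \(\ge 4\) and any repeated occurrence of \(\Gamma_{3,n}\), using \(W_{u}\le\Lambda_{n}=n\vartheta\) for all~\(u\) and the third-moment bound of \autoref{ass:coupling}~\ref{itm:thirdmoment} (for instance \((n\vartheta)^{-1}\Gamma_{4,n}\le(n\vartheta)^{-1/2}\Gamma_{3,n}+\kappa_{3,n}\le C\Gamma_{3,n}\)), and here \(v\) may carry degree~\(2\), which is why \((W_{v}+1)^{2}\) rather than \(W_{v}\) appears. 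In every case at most \(r+s-2\le 2\ell-2\) vertices have degree~\(2\), so summing the resulting geometric series over \(r,s\le\ell\) and over the finitely many combinatorial types of a union of two paths of each size collects everything into \(C(W_{v}+1)^{2}(\Gamma_{2,n}+2)^{2\ell-2}(\Gamma_{3,n}+1)(\Gamma_{p+1,n}+1)^{2}(\Gamma_{p+2,n}+1)(\Gamma_{2p+1,n}+1)\).

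Finally, the cases \(p=0\) and \(p=1\) follow by substitution, bounding \(W_{v}^{2p}\) and \(W_{v}^{p+1}\) by \((W_{v}+1)^{2}\) and merging the lower powers of \(\Gamma_{2,n}\) and \(\Gamma_{3,n}\); for \(p=1\) one additionally uses the sharper estimate \(\expe_{n}[\setweight{S_{\ell}(v)}]\le W_{v}(\Gamma_{2,n}+1)^{\ell}\) from \autoref{lem:bl:weight:raw} in the disjoint case. I expect the bookkeeping in case~(ii) to be the only genuinely delicate point: one must confirm that any configuration that would naively produce a moment \(\Gamma_{q,n}\) with \(q>3\), or a second power of \(\Gamma_{3,n}\), necessarily creates a cycle and therefore carries a compensating factor \((n\vartheta)^{-1}\), so that only the single \(\Gamma_{3,n}\) from the branch point survives; everything else is a lengthy but routine summation.
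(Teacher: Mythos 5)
Your overall skeleton (expand over levels, reduce to counting pairs of paths out of~\(v\), read off a \(\Gamma\)-factor per vertex according to its degree in the union graph) matches the paper's, and your tree case reproduces the paper's terms exactly: the disjoint case gives \(W_{v}^{2}\Gamma_{2,n}^{r+s-2}\Gamma_{p+1,n}^{2}\), the branch point gives the single \(\Gamma_{3,n}\), the case where one endpoint sits on the other path gives \(\Gamma_{p+2,n}\), and the coinciding-endpoint case gives \(\Gamma_{2p+1,n}\). Where you diverge is that you sum over \emph{all} pairs of self-avoiding walks and then fight the cyclic unions with the \((n\vartheta)^{-\mathrm{cyc}(H)}\) factor. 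The paper never meets case~(ii): since \(u\in D_{r}(v)\) means the \emph{shortest} path to \(u\) has length \(r\), one can always choose the two witnessing paths so that they agree up to their last common vertex \(z\) (which sits at the same distance from \(v\) on both shortest paths) and are vertex-disjoint afterwards. Hence the joint event is covered by ``eventually bifurcating'' pairs whose union is always a tree, the maximal degree is \(3\), and no moment beyond \(\Gamma_{3,n}\) (resp.\ \(\Gamma_{p+2,n}\), \(\Gamma_{2p+1,n}\)) ever appears. This is the observation your proposal is missing, and it is what makes the proof short.

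The soft spot you flagged in case~(ii) is a genuine one. The inequality \((n\vartheta)^{-1}\Gamma_{4,n}\le(n\vartheta)^{-1/2}\Gamma_{3,n}+\kappa_{3,n}\) does not follow from \autoref{ass:coupling}: for the tail part you would need \(W_{u}\le n\vartheta\) for every \(u\), which is not assumed (only \(W_{u}\le\Lambda_{n}=n\vartheta\Gamma_{1,n}\)), and using that instead yields \((n\vartheta)^{-1}\Gamma_{4,n}\le\Gamma_{1,n}\Gamma_{3,n}\). That is still bounded in probability, but it injects a factor \(\Gamma_{1,n}\) (or, via \(\max_{u}W_{u}\le(n\vartheta\Gamma_{3,n})^{1/3}\), a power \(\Gamma_{3,n}^{4/3}\)) that is not present in the product of \(\Gamma\)'s the lemma asserts. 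So as sketched, your route proves a statement of the same flavour but not the stated bound; closing it would require either carrying the extra factor through (harmless for the applications, but then the lemma must be restated) or reorganising the cyclic case more carefully. The paper's shortest-path reduction avoids having to do either.
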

\begin{proof}
  Recall that~\(D_{0}(v) = \set{v}\)
  so that~\(\setweight{D_{0}(v)}_{p} = W_{v}^{p}\)
  and that by construction~\(S_{\ell}(v) = \bigsetunion_{r=0}^{\ell} D_{r}(v)\).
  Then
  \begin{equation}
    \setweight{S_{\ell}(v)}^{2}_{p}
    = W_{v}^{2p} + \sum_{r=1}^{\ell} \setweight{D_{r}(v)}^{2}_{p}
      + 2W_{v}^{p}\sum_{r=1}^{\ell} \setweight{D_{r}(v)}_{p}
      + \sum_{\substack{r,s=1\\r \neq s}}^{\ell} \setweight{D_{r}(v)}_{p}\setweight{D_{s}(v)}_{p}.
      \label{eq:sksquared:tod}
  \end{equation}
  A bound for the first sum
  was already established in~\eqref{eq:dvsum} in the
  proof of \autoref{lem:bl:weight:raw}.
  It thus remains to bound the second and third sum.

  The elements of~\(D_{r}(v)\) are exactly those vertices~\(u\),
  for which there exists a path of length exactly~\(r\)
  from~\(v\) to~\(u\) and no such path of length~\(s< r\).
  Write~\(v \to_{r} u\) if this is the case.
  Then
  \begin{equation}\label{eq:dk:square:sep}
    \setweight{D_{r}(v)}^{2}_{p}
    = \sum_{u} \indfunc_{\set{v \to_{r} u}} W_{u}^{2p}
    + \sum_{u \neq u'}
    \indfunc_{\set{v \to_{r} u}} \indfunc_{\set{v \to_{r} u'}} W_{u}^{p}
    W_{u'}^{p}
  \end{equation}
  and
  \begin{equation}\label{eq:drds:square+mixed}
    \setweight{D_{r}(v)}_{p} \setweight{D_{s}(v)}_{p}
    = \sum_{u} \indfunc_{\set{v \to_{r} u}} \indfunc_{\set{v \to_{s} u}}
    W_{u}^{2p}
    + \sum_{u \neq u'} \indfunc_{\set{v \to_{r} u}} \indfunc_{\set{v
        \to_{s} u'}}
    W_{u}^{p}W_{u'}^{p}.
  \end{equation}

  The first sum in~\eqref{eq:dk:square:sep}
  involves only paths to a single vertex and
  its expectation is easily estimated as follows
  \begin{equation}
    \expe_{n}\Bigl[\sum_{u} \indfunc_{\set{v \to_{r} u}} W_{u}^{2p}\Bigr]
    \leq \sum_{\substack{u_{1},u_{2},\dots,u_{r}\\\text{pairw.~diff.}}}
    \frac{W_{v}W_{u_{1}}}{n\vartheta}
    \frac{W_{u_{1}}W_{u_{2}}}{n\vartheta}
    \dots
    \frac{W_{u_{r-1}}W_{u_{r}}}{n\vartheta}W_{u_{r}}^{2p}
    \leq W_{v}
      \Gamma_{2,n}^{r-1}
      \Gamma_{2p+1,n}.
      \label{eq:dk:square:same}
  \end{equation}

  The first sum in~\eqref{eq:drds:square+mixed} is zero for~\(r \neq s\),
  since the shortest path from~\(v\) to~\(u\)
  cannot have length both~\(r\) and~\(s\).

  The paths from~\(v\) to~\(u\) and~\(u'\) in the second sums
  in~\eqref{eq:dk:square:sep} and~\eqref{eq:drds:square+mixed}
  may share edges and thus
  are not necessarily independent.

  If the shortest paths from~\(v\) to~\(u\) and~\(u'\),
  respectively, are of length~\(r\) and~\(s\) and
  share a vertex,
  then the last common vertex of the two paths must be at the same distance
  from~\(v\) in both paths and we may assume that the two paths
  agree up to that point and do not have any edge in common
  after that point.

  Hence, the event that~\(u\) and~\(u'\)
  (with~\(u \neq u'\))
  can be reached from~\(v\)
  in~\(r\) and~\(s\) steps, respectively, can be estimated by counting
  \enquote{eventually bifurcating paths} that agree from~\(v\)
  up to a last common vertex~\(z\) and have no further common vertices after that.
  We will develop the argument for the case~\(r \neq s\),
  where we may assume~\(r < s\)
  without loss of generality.
  The argument for~\(r=s\) is similar,
  so we will not discuss it in more detail here
  \citep[details can be found in][Lem.~3.1.8]{thesis}.

  In case~\(r<s\)
  the paths either bifurcate immediately at~\(v\)
  or they split at a later step~\(t \in \set{1,\dots,r}\).
  Note that if the paths split at~\(r\),
  the path from~\(v\) to~\(u'\)
  includes the complete path from~\(v\) to~\(u\).
  If we write~\(u_{r} = u\) and~\(u'_{s} = u'\)
  we thus have
  \begin{align*}
    &\expe_{n}\Bigl[\sum_{u \neq u'}
      \indfunc_{\set{v \to_{r} u}} \indfunc_{\set{v \to_{s} u'}} W_{u}^{p}
      W_{u'}^{p}\Bigr]\notag\\
    &\quad\leq
      \sum_{\substack{u_{1}, \dots, u_{r},\\
    u'_{1}, \dots, u'_{s}\\
    \text{pairw.~diff.}}}
    \frac{W_{v}W_{u_{1}}}{n\vartheta}
    \dotsm
    \frac{W_{u_{r-1}}W_{u_{r}}}{n\vartheta}
    W_{u_{r}}^{p}
    \frac{W_{v}W_{u'_{1}}}{n\vartheta}
    \dotsm
    \frac{W_{u'_{s-1}}W_{u'_{s}}}{n\vartheta}
    W_{u'_{s}}^{p}\notag\\
    &\qquad + \sum_{t=1}^{r}
      \;\sum_{\substack{u_{1}, u_{2}, \dots, u_{t},\\
    u_{t+1}, \dots, u_{r},\\
    u'_{t+1}, \dots, u'_{s}\\
    \text{pairw.~diff.}}}
    \frac{W_{v} W_{u_{1}}}{n\vartheta}
    \frac{W_{u_{1}} W_{u_{2}}}{n\vartheta}
    \dotsm
    \frac{W_{u_{t-1}} W_{u_{t}}}{n\vartheta}\notag\\
    &\qqquad
      \frac{W_{u_{t}} W_{u_{t+1}}}{n\vartheta}
      \dotsm
      \frac{W_{u_{r-1}} W_{u_{r}}}{n\vartheta}
      W_{u_{r}}^{p}
      \frac{W_{u_{t}} W_{u'_{t+1}}}{n\vartheta}
      \dotsm      \frac{W_{u'_{s-1}} W_{u'_{s}}}{n\vartheta}
      W_{u'_{s}}^{p}.
  \end{align*}
  Separate the sums over the~\(u\)s and~\(u'\)s,
  count the multiplicity of the respective~\(W_{u}\)s
  and use the definition of~\(\Gamma_{q,n}\)
  to obtain
  the bound
  \begin{align}
    &W_{v}^{2} \Gamma_{2,n}^{r+s-2}
    \Gamma_{p+1,n}^{2}
    +\sum_{t=1}^{r-1} W_{v}
    \Gamma_{2,n}^{r+s-t-3}
    \Gamma_{3,n}
      \Gamma_{p+1,n}^{2}
    +W_{v} \Gamma_{2,n}^{s-2}
      \Gamma_{p+1,n}
      \Gamma_{p+2,n}.\notag
  \intertext{The summation of~\(r+s-t-3\) from~\(t=1\)
  to~\(t=r-1\) can be rewritten as a summation of~\(t\)
  from~\(t= s-2\) to~\(t=r+s-4\), which in turn
  is a summation of~\(s-2+t\) from~\(t=0\) to~\(r-2\).}
    &\quad\leq
      W_{v}^{2} \Gamma_{2,n}^{r+s-2}\Gamma_{p+1,n}^{2}
    +W_{v} \Gamma_{3,n} \Gamma_{p+1,n}^{2}
      \Gamma_{2,n}^{s-2}(\Gamma_{2,n}+1)^{r-2}
      +W_{v} \Gamma_{2,n}^{s-2} \Gamma_{p+1,n}\Gamma_{p+2,n}.\notag
  \intertext{In order to simplify this expression, we
      estimate very generously to factor out common terms}
  &\quad\leq
    W_{v}^{2} \Gamma_{2,n}^{r+s-2}\Gamma_{p+1,n}^{2}
   + W_{v} (\Gamma_{2,n}+1)^{r+s-2}(\Gamma_{3,n}+1)
    (\Gamma_{p+1,n}+1)^{2}(\Gamma_{p+2,n}+1)
    .
    \label{eq:dk:square:diff}
\end{align}
Note that the generous estimate can be slightly simplified in
case~\(p=1\), because then~\(\Gamma_{3,n}\) and~\(\Gamma_{p+2,n}\)
coincide so that after factoring only one of the two terms
needs to be part of the product.

The case~\(r=s\)
can be treated similarly,
but the sum does not include a term for~\(t=r\),
so that
\begin{equation}
  \expe_{n}\Bigl[\sum_{u \neq u'}
  \indfunc_{\set{v \to_{r} u}} \indfunc_{\set{v \to_{r} u'}} W_{u}^{p}
  W_{u'}^{p}\Bigr]
  \leq
    W_{v}^{2} \Gamma_{2,n}^{2r-2}\Gamma_{p+1,n}^{2}
   + W_{v} (\Gamma_{2,n}+1)^{2r-2}(\Gamma_{3,n}+1)
    (\Gamma_{p+1,n}+1)^{2}
    .
    \label{eq:dk:square:diff:rr}
\end{equation}

Now~\eqref{eq:dk:square:sep}
together with~\eqref{eq:dk:square:same}, \eqref{eq:dk:square:diff:rr}
and generous estimates of the involved terms
imply
\begin{equation}
  \expe_{n}[\setweight{D_{r}(v)}_{p}^{2}]
  \leq W_{v}^{2}\Gamma_{2,n}^{2r-2}\Gamma_{p+1,n}^{2}
    + W_{v} (\Gamma_{2,n}+1)^{2r-2}
    (\Gamma_{3,n}+1)(\Gamma_{p+1,n}+1)^{2}(\Gamma_{2p+1,n}+1)
    \label{eq:dksquare}.
\end{equation}
This estimate can be simplified if~\(2p+1\)
should happen to be equal to~\(3\) or \(p+1\).
In those cases the~\((\Gamma_{2p+1,n}+1)\) can be replaced by a constant.

Analogously~\eqref{eq:drds:square+mixed} and~\eqref{eq:dk:square:diff}
as well as similarly generous estimates
imply
\begin{equation}\label{eq:dkdl}
  \expe_{n}[\setweight{D_{r}(v)}_{p} \setweight{D_{s}(v)}_{p}]
  \leq     W_{v}^{2} \Gamma_{2,n}^{r+s-2}\Gamma_{p+1,n}^{2}
   + W_{v} (\Gamma_{2,n}+1)^{r+s-2}(\Gamma_{3,n}+1)
    (\Gamma_{p+1,n}+1)^{2}(\Gamma_{p+2,n}+1)
    .
\end{equation}
As remarked after~\eqref{eq:dk:square:diff}
the term~\((\Gamma_{p+2,n}+1)\) may be dropped for~\(p=1\).

Briefly write
\begin{equation*}
  x_{1} = (\Gamma_{3,n}+1)(\Gamma_{p+1,n}+1)^{2}(\Gamma_{2p+1,n}+1)
  \quad\text{and}\quad
  x_{2} = (\Gamma_{3,n}+1)(\Gamma_{p+1,n}+1)^{2}(\Gamma_{p+2,n}+1)
\end{equation*}
to simplify~\eqref{eq:dksquare} and~\eqref{eq:dkdl}, respectively.
Then
\[
  x_{1}+x_{2}
  \leq C(\Gamma_{3,n}+1)(\Gamma_{p+1,n}+1)^{2}(\Gamma_{p+2,n}+1)(\Gamma_{2p+1,n}+1).
\]
By the comments after~\eqref{eq:dksquare} and~\eqref{eq:dk:square:diff},
the~\((\Gamma_{p+2,n}+1)\) may be dropped if~\(p=1\)
and the~\((\Gamma_{2p+1}+1)\)
may be replaced by a constant if~\(2p+1 \in \set{3,p+1}\).

\begingroup
\allowdisplaybreaks
Now~\eqref{eq:sksquared:tod}, \eqref{eq:dvsum}, \eqref{eq:dksquare},
\eqref{eq:dkdl} and similarly rough estimates of the terms involved
imply
\begin{align*}
  &\expe_{n}[\setweight{S_{\ell}(v)}_{p}^{2}]\\
  &\quad\leq
    W_{v}^{2p} + 2W_{v}^{p+1}\Gamma_{p+1,n}(\Gamma_{2,n}+1)^{\ell-1}\\
  &\qquad + C (W_{v}+1)^{2}(\Gamma_{2,n}+2)^{2\ell-2}
  (\Gamma_{3,n}+1)(\Gamma_{p+1,n}+1)^{2}(\Gamma_{p+2,n}+1)(\Gamma_{2p+1,n}+1)
    .
\end{align*}
This shows the claim for general~\(p\).

Again,
for~\(p=1\) the term~\((\Gamma_{p+2,n}+1)\) may be dropped
and the~\((\Gamma_{2p+1}+1)\)
may be replaced by a constant if~\(2p+1 \in \set{3,p+1}\).
Furthermore, for~\(p = 0\) and~\(p=1\)
the first terms can be absorbed into the last term
by increasing~\(C\) appropriately.
This then proves the slightly simplified results for~\(p=0\) and~\(p=1\).
\endgroup
\end{proof}

We can sum the previous bound to obtain results for~\(S_{\ell}(\mathcal{V})\),
where~\(\mathcal{V}\) is a set of vertices.
In the following we will only need the results for~\(p=0\)
and~\(p=1\), so we only work with the simplified results from
\autoref{lem:bl:weight:squared:raw}
\begin{corollary}\label{cor:bvl:weight:squared:raw}
  For any vertex set~\(\mathcal{V} \subseteq V_{n}\)
  and level~\(\ell \in \naturals\)
  we have
  \[
    \expe_{n}[\setcard{S_{\ell}(\mathcal{V})}^{2}]
    =\expe_{n}[\setweight{S_{\ell}(\mathcal{V})}^{2}_{0}]
    \leq C(\setweight{\mathcal{V}}+\setcard{\mathcal{V}})^{2} (\Gamma_{1,n}+1)^{2}(\Gamma_{2,n}+2)^{2\ell}(\Gamma_{3,n}+1)
  \]
  and
  \[
    \expe_{n}[\setweight{S_{\ell}(\mathcal{V})}^{2}]
    \leq C(\setweight{\mathcal{V}}+\setcard{\mathcal{V}})^{2} (\Gamma_{2,n}+2)^{2\ell}(\Gamma_{3,n}+1).
  \]
\end{corollary}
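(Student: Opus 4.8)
The plan is to deduce the corollary from \autoref{lem:bl:weight:squared:raw} by a union bound followed by a termwise application of the Cauchy--Schwarz inequality, so essentially no new idea is needed beyond bookkeeping.

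First I would note that since $S_{\ell}(\mathcal{V}) = \bigsetunion_{v \in \mathcal{V}} S_{\ell}(v)$, every vertex contributing to the total $p$-connectivity weight of $S_{\ell}(\mathcal{V})$ lies in at least one $S_{\ell}(v)$, so
\[
  \setweight{S_{\ell}(\mathcal{V})}_{p}
  \leq \sum_{v \in \mathcal{V}} \setweight{S_{\ell}(v)}_{p}
\]
for every $p \geq 0$, in particular for $p = 0$ and $p = 1$. Squaring, expanding the square of the sum, taking $\expe_{n}$ and applying Cauchy--Schwarz to each term gives
\[
  \expe_{n}[\setweight{S_{\ell}(\mathcal{V})}_{p}^{2}]
  \leq \sum_{v, v' \in \mathcal{V}}
  \expe_{n}[\setweight{S_{\ell}(v)}_{p}^{2}]^{1/2}
  \expe_{n}[\setweight{S_{\ell}(v')}_{p}^{2}]^{1/2}
  = \Bigl(\sum_{v \in \mathcal{V}} \expe_{n}[\setweight{S_{\ell}(v)}_{p}^{2}]^{1/2}\Bigr)^{\!2}.
\]

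Next I would substitute the single-vertex bounds from \autoref{lem:bl:weight:squared:raw}. For $p = 0$ this yields $\expe_{n}[\setcard{S_{\ell}(v)}^{2}]^{1/2} \leq C^{1/2}(W_{v}+1)(\Gamma_{1,n}+1)(\Gamma_{2,n}+2)^{\ell}(\Gamma_{3,n}+1)^{1/2}$, and for $p = 1$ the analogous estimate without the factor $(\Gamma_{1,n}+1)$. Since the factors depending only on $\Gamma_{\cdot,n}$ and $\ell$ are the same for every $v$, they pull out of the sum, leaving $\sum_{v \in \mathcal{V}}(W_{v}+1) = \setweight{\mathcal{V}} + \setcard{\mathcal{V}}$. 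Squaring this produces precisely the prefactor $(\setweight{\mathcal{V}}+\setcard{\mathcal{V}})^{2}$ in the statement, and after absorbing $C$ into the constant both claimed inequalities follow.

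There is essentially no obstacle beyond the bookkeeping; the only point requiring a moment of care is the first inequality, where one must observe that overlaps between the sets $S_{\ell}(v)$ can only decrease $\setweight{S_{\ell}(\mathcal{V})}_{p}$ relative to $\sum_{v}\setweight{S_{\ell}(v)}_{p}$, so that the union bound remains valid even though the inclusion $D_{\ell}(\mathcal{V}) \subseteq \bigsetunion_{v \in \mathcal{V}} D_{\ell}(v)$ may be strict.
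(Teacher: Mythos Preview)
Your proposal is correct and follows essentially the same route as the paper: bound $\setweight{S_{\ell}(\mathcal{V})}_{p}$ by $\sum_{v\in\mathcal{V}}\setweight{S_{\ell}(v)}_{p}$, expand the square, apply Cauchy--Schwarz to the cross terms, and insert the single-vertex bounds from \autoref{lem:bl:weight:squared:raw}. The paper separates the diagonal and off-diagonal sums explicitly while you collapse everything into $\bigl(\sum_{v}\expe_{n}[\setweight{S_{\ell}(v)}_{p}^{2}]^{1/2}\bigr)^{2}$, but this is purely cosmetic.
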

\begin{proof}
  To simplify notation we will assume that for~\(p=0\)
  and~\(p=1\)
  \autoref{lem:bl:weight:squared:raw}
  gives a bound of the form
  \[
    \expe_{n}[\setweight{S_{\ell}(v)}_{p}^{2}]
    \leq C (W_{v}+1)^{2} x_{n,p},
  \]
  where~\(x_{n,p}\) consists of terms involving~\(\Gamma_{2,n}\), \(\Gamma_{3,n}\),
  \(\Gamma_{p+1,n}\), \(\Gamma_{p+2,n}\) and~\(\Gamma_{2p+1,n}\).
  Note that
  \begin{equation}
    \expe_{n}[\setweight{S_{\ell}(\mathcal{V})}_{p}^{2}]
    \leq \sum_{v \in \mathcal{V}} \expe_{n}[\setweight{S_{\ell}(v)}_{p}^{2}]
    + \sum_{\substack{v,v' \in \mathcal{V}\\v \neq v'}}
    \expe_{n}[\setweight{S_{\ell}(v)}_{p} \setweight{S_{\ell}(v')}_{p}]
    \label{eq:slvsetsquared}.
  \end{equation}
  For the first sum we can apply \autoref{lem:bl:weight:squared:raw}
  and obtain
  \begin{equation*}
    \sum_{v \in \mathcal{V}} \expe_{n}[\setweight{S_{\ell}(v)}_{p}^{2}]
    \leq C x_{p,n}
      \sum_{v\in \mathcal{V}} (W_{v}+1)^{2}
    \leq C  x_{p,n}
      (\setweight{\mathcal{V}}+\setcard{\mathcal{V}})^{2}.
  \end{equation*}
  The terms in the second sum can be estimated using Cauchy--Schwarz
  and \autoref{lem:bl:weight:squared:raw}
  \begin{equation*}
    \expe_{n}[\setweight{S_{\ell}(v)} \setweight{S_{\ell}(v')}]
    \leq Cx_{p,n}(W_{v}+1)(W_{v'}+1).
  \end{equation*}
  Summing over these terms we obtain
  \begin{equation*}
    \sum_{\substack{v,v' \in \mathcal{V}\\v \neq v'}}
    \expe_{n}[\setweight{S_{\ell}(v)} \setweight{S_{\ell}(v')}]
    \leq Cx_{p,n}
      \sum_{\substack{v,v' \in \mathcal{V}\\v \neq v'}}
    (W_{v}+1)(W_{v'}+1)
    \leq Cx_{p,n}
      (\setweight{\mathcal{V}}+\setcard{\mathcal{V}})^{2}.
  \end{equation*}
  Hence, both terms on the right-hand side of~\eqref{eq:slvsetsquared}
  can be estimated with the same bound.
  This finishes the proof.
\end{proof}

In fact the approach from~\eqref{eq:dk:square:sep}
can be generalised to bound (higher) moments of the degree
distribution of a given vertex~\(v\) in~\(G_{n}\).
Recall that with the notation from \autoref{def:dl}
the degree of the vertex~\(v\) in~\(G_{n}\)
can be written as~\(\setcard{D_{1}(v)}\).

In the calculations for higher moments of the degree we will encounter
the Stirling numbers of the second kind
\citep[see e.g.][\S\,24.1.4]{abramowitz-stegun}.
\begin{definition}[{Stirling numbers of the second kind}]
  Let~\(k \in \naturals\) and~\(j \in \set{1,\dots,k}\).
  Then the number of ways of partitioning a set of~\(k\) elements
  into~\(j\) non-empty subsets
  is denoted by~\(\mathcal{S}_{k}^{(j)}\)
  and called the Stirling number of the second kind for~\(k,j\).
\end{definition}

\begin{lemma}\label{lem:higher:deg}
  Fix~\(v \in V_{n}\) and~\(k \in \naturals\).
  Then
  \[
    \expe_{n}[\setcard{D_{1}(v)}^{k}]
    \leq  \sum_{j=1}^{k} \mathcal{S}_{k}^{(j)}
      W_{v}^{j} \Gamma_{1,n}^{j}
    \leq (W_{v}+1)^{k}(\Gamma_{1,n}+k)^{k}.
  \]
\end{lemma}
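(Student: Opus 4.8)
The plan is to mimic the expansion carried out for $\setweight{D_{r}(v)}^{2}_{p}$ in \eqref{eq:dk:square:sep}, but now at a single level. Recall from \autoref{def:dl} that $D_{1}(v)$ is just the set of neighbours of $v$ in $G_{n}$, so $\setcard{D_{1}(v)} = \sum_{u \in V_{n}\setminus\set{v}} X_{vu}$, where, conditionally on $\mathcal{F}_{n}$, the $X_{vu}$ are independent $\set{0,1}$-valued with $\expe_{n}[X_{vu}] = p_{vu} \leq W_{v}W_{u}/(n\vartheta)$. I would expand
\[
  \setcard{D_{1}(v)}^{k}
  = \sum_{(u_{1},\dots,u_{k}) \in (V_{n}\setminus\set{v})^{k}}
    X_{vu_{1}} \dotsm X_{vu_{k}},
\]
and use that each $X_{vu}$ is an indicator, so that a product $X_{vu_{1}}\dotsm X_{vu_{k}}$ equals $X_{vx_{1}}\dotsm X_{vx_{j}}$, where $x_{1},\dots,x_{j}$ are the distinct values appearing among $u_{1},\dots,u_{k}$; by conditional independence its $\expe_{n}$-expectation is $p_{vx_{1}}\dotsm p_{vx_{j}}$.

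Next I would group the tuples by the set partition of $[k]$ they induce via the equivalence $i \sim i'$ precisely when $u_{i} = u_{i'}$. A tuple whose kernel partition $\pi$ has $j = \abs{\pi}$ blocks corresponds exactly to an injective assignment of the blocks of $\pi$ to distinct vertices of $V_{n}\setminus\set{v}$, and contributes the product of the corresponding $p_{v,\cdot}$. Summing over such assignments and then discarding the injectivity constraint,
\[
  \sum_{\substack{x_{1},\dots,x_{j} \in V_{n}\setminus\set{v}\\\text{pairwise different}}}
  p_{vx_{1}}\dotsm p_{vx_{j}}
  \leq \Bigl(\sum_{x \in V_{n}} \frac{W_{v}W_{x}}{n\vartheta}\Bigr)^{\!j}
  = \Bigl(\frac{W_{v}\Lambda_{n}}{n\vartheta}\Bigr)^{\!j}
  = W_{v}^{j}\Gamma_{1,n}^{j}.
\]
Since there are exactly $\mathcal{S}_{k}^{(j)}$ partitions of $[k]$ into $j$ non-empty blocks, summing over $j$ yields the first claimed inequality $\expe_{n}[\setcard{D_{1}(v)}^{k}] \leq \sum_{j=1}^{k} \mathcal{S}_{k}^{(j)} W_{v}^{j}\Gamma_{1,n}^{j}$.

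For the second inequality I would bound $W_{v}^{j} \leq (W_{v}+1)^{k}$ for every $j \leq k$ (as $W_{v}+1 \geq 1$), factor this out, and reduce to showing $\sum_{j=1}^{k} \mathcal{S}_{k}^{(j)}\Gamma_{1,n}^{j} \leq (\Gamma_{1,n}+k)^{k}$. This follows from the crude estimate $\mathcal{S}_{k}^{(j)} \leq \binom{k}{j} j^{k-j}$ --- a partition into $j$ blocks is determined by its $j$-set of block minima (a subset of $[k]$, at most $\binom{k}{j}$ choices) together with an assignment of each of the remaining $k-j$ elements to one of at most $j$ already-initialised blocks --- combined with $j^{k-j} \leq k^{k-j}$ and the binomial theorem:
\[
  \sum_{j=1}^{k} \mathcal{S}_{k}^{(j)} \Gamma_{1,n}^{j}
  \leq \sum_{j=0}^{k} \binom{k}{j} k^{k-j} \Gamma_{1,n}^{j}
  = (\Gamma_{1,n}+k)^{k}.
\]

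The only parts that are not purely mechanical are the combinatorial bookkeeping in the second step --- checking that each kernel partition is counted with multiplicity exactly one, so that $\mathcal{S}_{k}^{(j)}$ appears with the correct coefficient --- and the rough bound on the Stirling numbers; both are elementary, and everything else is a direct computation paralleling \eqref{eq:dk:square:sep}. I therefore do not expect a genuine obstacle here.
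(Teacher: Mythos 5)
Your proposal is correct and follows essentially the same route as the paper: expand the $k$-th power, collapse repeated indicators via set partitions to bring in $\mathcal{S}_{k}^{(j)}$, bound the resulting sums by $W_{v}^{j}\Gamma_{1,n}^{j}$, and finish with $\mathcal{S}_{k}^{(j)}\leq\binom{k}{j}j^{k-j}\leq\binom{k}{j}k^{k-j}$ and the binomial theorem. The only cosmetic differences are that the paper cites Rennie--Dobson for the Stirling bound (with an extra factor $\tfrac12$, which forces it to treat $k=1$ separately) where you prove it directly, and that the paper applies the binomial theorem to $(W_{v}\Gamma_{1,n}+k)^{k}$ before splitting off $(W_{v}+1)^{k}$ rather than after.
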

\begin{proof}

  Note first that
  \[
    \setcard{D_{1}(v)}^{k}
    = \Bigl(\sum_{u \in V_{n}} X_{vu}\Bigr)^{\!\!k}
    = \sum_{u_{1},\dots,u_{k}} X_{vu_{1}} \dotsm X_{vu_{k}}.
  \]
  Since the~\(X_{vu_{\ell}}\) are indicator functions,
    any product of~\(X_{vu_{\ell}}\) with identical~\(u_{\ell}\)s collapses
    to one such indicator.
    We can therefore rewrite the sum over all possible combinations of~\(k\)
    vertices
    into a sum over~\(j\) pairwise different vertices to obtain
  \begin{equation*}
    \setcard{D_{1}(v)}^{k}
    = \sum_{j=1}^{k} \mathcal{S}_{k}^{(j)}
      \sum_{\substack{u_{1},\dots,u_{j}\\\text{pairw.~diff.}}} X_{vu_{1}}\dotsm X_{vu_{j}},
  \end{equation*}
  because each product~\(X_{vu_{1}} \dotsm X_{vu_{j}}\)
  with pairwise different~\(u_{\ell}\)
  is realised by exactly~\(\mathcal{S}_{k}^{(j)}\) products of the
  form~\(X_{vu_{1}} \dotsm X_{vu_{k}}\),
  where some of the~\(u_{\ell}\) may be equal.
  To see this in more detail,
  partition the~\(k\) vertices~\(u_{1},\dots,u_{k}\)
  into subsets such that~\(u_{r}\) and~\(u_{s}\)
  are in the same subset of they are the same.
  The number of nonempty subsets of vertices that this procedure produces is
  exactly equal to the number vertices we need
  to write~\(X_{vu_{1}}\dots X_{vu_{k}}\) as a product
  over pairwise different vertices.
  In particular there are exactly~\(\mathcal{S}_{k}^{(j)}\)
  to obtain a product of~\(j\) pairwise different vertices.

  Taking expectations and using that
  the~\(X_{vu_{\ell}}\) in the sums are independent
  since the~\(u_{\ell}\) are pairwise different, we obtain
  \begin{equation*}
    \expe_{n}[\setcard{D_{1}(v)}^{k}]
    = \sum_{j=1}^{k} \mathcal{S}_{k}^{(j)}
      \sum_{\substack{u_{1},\dots,u_{j}\\\text{pairw.~diff.}}}
    \expe_{n}[X_{vu_{1}}]\dotsm \expe_{n}[X_{vu_{j}}]
    = \sum_{j=1}^{k} \mathcal{S}_{k}^{(j)}
      W_{v}^{j}\Gamma_{1,n}^{j},
  \end{equation*}
  which proves the first part of the claim.

  For the second part note that for~\(k \geq 2\) the Stirling numbers
  of the second kind can be bounded above as follows
  \citep[Thm.~3]{rennie}
  \[
    \mathcal{S}_{k}^{(j)}
    \leq \frac{1}{2} \binom{k}{j} j^{k-j}
    \leq \frac{1}{2} \binom{k}{j} k^{k-j},
  \]
  so that
  \begin{equation*}
    \expe_{n}[\setcard{D_{1}(v)}^{k}]
    \leq \sum_{j=1}^{k} \mathcal{S}_{k}^{(j)} W_{v}^{j}\Gamma_{1,n}^{j}
    \leq \frac{1}{2} \sum_{j=1}^{k} \binom{k}{j} (W_{v}\Gamma_{1,n})^{j} k^{k-j}
    = \frac{1}{2} (W_{v}\Gamma_{1,n}+k)^{k}.
  \end{equation*}
  For~\(k=1\) we
  have~\(\expe_{n}[\setcard{D_{1}(v)}] \leq W_{v}\Gamma_{1,n}
    \leq W_{v}\Gamma_{1,n}+1
  \).
  Thus we get for all~\(k \in \naturals\)
  \begin{equation*}
    \expe_{n}[\setcard{D_{1}(v)}^{k}]
    \leq (W_{v}\Gamma_{1,n}+k)^{k}
    \leq (W_{v}+1)^{k}(\Gamma_{1,n}+k)^{k}
  \end{equation*}
  as claimed.
\end{proof}

We will also need to bound the probability
that there exists a path of given length between two subsets of vertices.
To simplify notation we introduce a shorthand for this event.
\begin{definition}
  Let~\(\mathcal{U}\) and~\(\mathcal{V}\) be two sets of vertices.
  Let~\(\mathcal{R}\) be a third set of vertices that is disjoint
  from~\(\mathcal{U}\) and~\(\mathcal{V}\)

  \begin{itemize}
  \item Write~\(\mathcal{U} \pathbetw_{\ell} \mathcal{V}\)
  if there exists a path of length~\(\ell\)
  between a vertex~\(u \in \mathcal{U}\)
  and a vertex~\(v \in \mathcal{V}\).

  In some cases it might be useful to restrict this event
  to paths that avoid a set of vertices~\(\mathcal{R}\),
  so we write~\(\mathcal{U} \pathbetw_{(\mathcal{R})\ell} \mathcal{V}\)
  if there is a path of length~\(\ell\) between a vertex~\(u \in \mathcal{U}\)
  and a vertex~\(v \in \mathcal{V}\)
  that does not use any vertices from~\(\mathcal{R}\).

  \item Write~\(\mathcal{U} \pathbetw_{\leq \ell} \mathcal{V}\)
  if there exists a path of length at most~\(\ell\)
  between a vertex~\(u \in \mathcal{U}\)
  and a vertex~\(v \in \mathcal{V}\).

  As above we
  write~\(\mathcal{U} \pathbetw_{(\mathcal{R}) \leq \ell} \mathcal{V}\)
  if there is a path between~\(\mathcal{U}\) and~\(\mathcal{V}\)
  that does not use any vertices from~\(\mathcal{R}\).
  \end{itemize}
\end{definition}
As we do so often, we drop the curly brackets if~\(\mathcal{V}\)
or~\(\mathcal{U}\) is a set with a single element and write
for example~\(v \pathbetw_{\ell} u\) for~\(\set{v} \pathbetw_{\ell} \set{u}\).

\begin{lemma}\label{lem:abpath}
  Fix~\(\ell \in \naturals\), \(\ell \geq 1\).
  Let~\(\mathcal{U}, \mathcal{V} \subseteq V_{n}\)
  be two disjoint sets of vertices
  then
  \[
    \prob_{n}(\mathcal{U} \pathbetw_{\ell} \mathcal{V})
      \leq \frac{\setweight{\mathcal{U}}\setweight{\mathcal{V}}}{n \vartheta}
      \Gamma_{2,n}^{\ell-1}.
  \]
\end{lemma}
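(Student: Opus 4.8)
The plan is to run a first-moment / union-bound argument over all candidate paths, exploiting the conditional independence of the edge indicators given~\(\mathcal{F}_{n}\). A path of length~\(\ell\) realising the event~\(\mathcal{U} \pathbetw_{\ell} \mathcal{V}\) is a sequence of pairwise distinct vertices~\(u = z_{0}, z_{1}, \dots, z_{\ell-1}, z_{\ell} = v\) with~\(u \in \mathcal{U}\), \(v \in \mathcal{V}\), all of whose edges~\(\edge{z_{i-1}}{z_{i}}\) are present in~\(G_{n}\). Since the~\(z_{i}\) are pairwise distinct, so are these~\(\ell\) edges, hence the indicators~\(X_{z_{0}z_{1}},\dots,X_{z_{\ell-1}z_{\ell}}\) are independent under~\(\prob_{n}\), and by~\eqref{eq:puvdef} each of these edges is present with probability~\(p_{z_{i-1}z_{i}} \leq W_{z_{i-1}}W_{z_{i}}/(n\vartheta)\). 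A union bound over~\(u\), \(v\) and the interior vertices~\(z_{1},\dots,z_{\ell-1}\)~--~relaxing the pairwise-distinctness constraint on the~\(z_{i}\) to a free sum over~\(V_{n}\), which only enlarges the right-hand side~--~then gives
\[
  \prob_{n}(\mathcal{U} \pathbetw_{\ell} \mathcal{V})
  \leq \sum_{u \in \mathcal{U}} \sum_{v \in \mathcal{V}}
  \sum_{z_{1},\dots,z_{\ell-1} \in V_{n}}
  \prod_{i=1}^{\ell} \frac{W_{z_{i-1}} W_{z_{i}}}{n\vartheta}.
\]

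Next I would evaluate the inner sum exactly as in the proof of \autoref{lem:bl:weight:raw} (cf.~\eqref{eq:drv}), the only difference being that the path now terminates at a prescribed vertex~\(v\) rather than at an arbitrary one. In the product~\(W_{u}\) and~\(W_{v}\) each occur to the first power, every interior weight~\(W_{z_{i}}\) occurs to the second power, and there are exactly~\(\ell\) factors~\((n\vartheta)^{-1}\), so the sum factorises, and using~\(\sum_{z \in V_{n}} W_{z}^{2} = n\vartheta\,\Gamma_{2,n}\) from~\eqref{eq:gamman},
\[
  \sum_{z_{1},\dots,z_{\ell-1} \in V_{n}} \prod_{i=1}^{\ell} \frac{W_{z_{i-1}} W_{z_{i}}}{n\vartheta}
  = \frac{W_{u} W_{v}}{(n\vartheta)^{\ell}} \Bigl( \sum_{z \in V_{n}} W_{z}^{2} \Bigr)^{\ell-1}
  = \frac{W_{u} W_{v}}{n\vartheta}\,\Gamma_{2,n}^{\ell-1}.
\]
Summing over~\(u \in \mathcal{U}\) and~\(v \in \mathcal{V}\) replaces~\(\sum_{u} W_{u}\) and~\(\sum_{v} W_{v}\) by~\(\setweight{\mathcal{U}}\) and~\(\setweight{\mathcal{V}}\) respectively, which is precisely the claimed bound. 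The case~\(\ell = 1\) is the degenerate one with no interior vertices, where the estimate collapses to~\(\sum_{u \in \mathcal{U}, v \in \mathcal{V}} p_{uv} \leq \setweight{\mathcal{U}}\setweight{\mathcal{V}}/(n\vartheta) = \setweight{\mathcal{U}}\setweight{\mathcal{V}}\,\Gamma_{2,n}^{0}/(n\vartheta)\).

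There is no real obstacle here; this is a routine path-counting estimate. The only points that need a little care are: (i) observing that the vertices~--~and hence the edges~--~of a genuine path are pairwise distinct, which is what makes the product bound for the joint presence probability legitimate; (ii) that enlarging the index set of the interior-vertex sum to all of~\(V_{n}\) preserves the inequality; and (iii) the bookkeeping of how many times each connectivity weight appears in the telescoping product. All three are already dealt with in the proof of \autoref{lem:bl:weight:raw}, so the argument is essentially a specialisation of that computation to paths ending in the prescribed set~\(\mathcal{V}\).
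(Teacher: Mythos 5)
Your proof is correct and follows essentially the same route as the paper's: a union bound over candidate paths, independence of the (distinct) edge indicators along a path given~\(\mathcal{F}_{n}\), the bound~\(p_{uv} \leq W_{u}W_{v}/(n\vartheta)\), and factorisation of the telescoping sum using~\(\sum_{z} W_{z}^{2} = n\vartheta\,\Gamma_{2,n}\). The paper's version is just a more compressed write-up of the identical computation.
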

\begin{proof}
  Recall that in a path no vertex can appear multiple times.
  This ensures that the edges of a path are all distinct and hence independent,
  so that
  \begin{align*}
    \prob_{n}(\mathcal{U} \pathbetw_{\ell} \mathcal{V})
    &\leq \sum_{\substack{u_{0} \in \mathcal{U}\\u_{1},\dots,u_{\ell-1}\\
    u_{\ell} \in \mathcal{V}}}
      \expe_{n}[X_{u_{0}u_{1}} X_{u_{1}u_{2}}
      \dotsm X_{u_{\ell-1}u_{\ell}}]\\
    &\quad\leq \frac{1}{n\vartheta} \sum_{u_{0} \in \mathcal{U}} W_{u_{0}}
      \Bigl(\frac{1}{n\vartheta} \sum_{v} W_{v}^{2}\Bigr)^{\!\!\ell-1}
      \sum_{u_{\ell} \in \mathcal{V}} W_{u_{\ell}}\\
    &\quad\leq \frac{\setweight{\mathcal{U}}\setweight{\mathcal{V}}}{n \vartheta}
      \Gamma_{2,n}^{\ell-1}.
  \end{align*}
  This finishes the proof.
\end{proof}

In particular the probability that there is an edge between two sets
of vertices~\(\mathcal{U}\) and~\(\mathcal{V}\) is bounded above
by~\((n\vartheta)^{-1}\setweight{\mathcal{U}}\setweight{\mathcal{V}}\).
\begin{corollary}\label{cor:abpathupto}
  Fix~\(\ell \in \naturals\), \(\ell \geq 1\).
  Let~\(\mathcal{U}, \mathcal{V} \subseteq V_{n}\)
  be two disjoint sets of vertices,
  then
  \[
      \prob_{n}(\mathcal{U} \pathbetw_{\leq \ell} \mathcal{V})
      \leq \frac{\setweight{\mathcal{U}}\setweight{\mathcal{V}}}{n \vartheta}
        (1+\Gamma_{2,n})^{\ell-1}.
  \]
\end{corollary}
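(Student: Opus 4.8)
The plan is to obtain this as an immediate consequence of \autoref{lem:abpath} by a union bound over the possible path lengths. First I would observe that if there is a path of length at most~\(\ell\) between~\(\mathcal{U}\) and~\(\mathcal{V}\), then, since~\(\mathcal{U}\) and~\(\mathcal{V}\) are disjoint, any such path has length at least~\(1\), so it has some exact length~\(r \in \set{1,\dots,\ell}\). Hence
\[
  \set{\mathcal{U} \pathbetw_{\leq \ell} \mathcal{V}}
  = \bigsetunion_{r=1}^{\ell} \set{\mathcal{U} \pathbetw_{r} \mathcal{V}}.
\]
As~\(\mathcal{U}\) and~\(\mathcal{V}\) are disjoint, \autoref{lem:abpath} applies for each~\(r \in \set{1,\dots,\ell}\), and subadditivity of~\(\prob_{n}\) gives
\[
  \prob_{n}(\mathcal{U} \pathbetw_{\leq \ell} \mathcal{V})
  \leq \sum_{r=1}^{\ell} \prob_{n}(\mathcal{U} \pathbetw_{r} \mathcal{V})
  \leq \frac{\setweight{\mathcal{U}}\setweight{\mathcal{V}}}{n \vartheta}
    \sum_{r=1}^{\ell} \Gamma_{2,n}^{r-1}.
\]

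It then remains to bound the geometric-type sum by the claimed power. Reindexing, \(\sum_{r=1}^{\ell}\Gamma_{2,n}^{r-1} = \sum_{j=0}^{\ell-1}\Gamma_{2,n}^{j}\), and since~\(\Gamma_{2,n} \geq 0\) and every binomial coefficient~\(\binom{\ell-1}{j}\) with~\(0 \leq j \leq \ell-1\) is at least one, the binomial theorem yields
\[
  \sum_{j=0}^{\ell-1}\Gamma_{2,n}^{j}
  \leq \sum_{j=0}^{\ell-1}\binom{\ell-1}{j}\Gamma_{2,n}^{j}
  = (1+\Gamma_{2,n})^{\ell-1}.
\]
Combining the two displays gives the statement.

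There is essentially no obstacle here; the corollary is a repackaging of \autoref{lem:abpath}. The only points worth double-checking are that the disjointness hypothesis on~\(\mathcal{U}\) and~\(\mathcal{V}\) is precisely what licenses the application of \autoref{lem:abpath} for each individual length~\(r\), and the base case~\(\ell = 1\), where the sum collapses to the single term~\(1 = (1+\Gamma_{2,n})^{0}\), recovering the remark preceding the statement that the probability of an edge between~\(\mathcal{U}\) and~\(\mathcal{V}\) is at most~\((n\vartheta)^{-1}\setweight{\mathcal{U}}\setweight{\mathcal{V}}\).
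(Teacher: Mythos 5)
Your proof is correct and follows exactly the paper's argument: a union bound over the exact path lengths \(r \in \set{1,\dots,\ell}\), an application of \autoref{lem:abpath} to each term, and the elementary estimate \(\sum_{j=0}^{\ell-1}\Gamma_{2,n}^{j} \leq (1+\Gamma_{2,n})^{\ell-1}\). The paper leaves the last step implicit; your binomial-theorem justification of it is a fine way to make it explicit.
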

\begin{proof}
  By \autoref{lem:abpath} we have
  \begin{equation*}
    \prob_{n}(\mathcal{U} \pathbetw_{\leq \ell} \mathcal{V})
    \leq \sum_{r=1}^{\ell}
      \prob_{n}(\mathcal{U} \pathbetw_{r} \mathcal{V})
    \leq \sum_{r=1}^{\ell} \frac{\setweight{\mathcal{U}}\setweight{\mathcal{V}}}{n \vartheta}
      \Gamma_{2,n}^{r-1}
    \leq \frac{\setweight{\mathcal{U}}\setweight{\mathcal{V}}}{n \vartheta}
      (1+\Gamma_{2,n})^{\ell-1}
  \end{equation*}
  as claimed.
\end{proof}

The following special case of \autoref{cor:abpathupto} is of particular
interest to us.
\begin{corollary}\label{lem:uinbl}
  For any level~\(\ell \in \naturals\)
  and vertices~\(v,u \in V_{n}\) with~\(u \neq v\)
  we have
  \[
    \prob_{n}(u \in B_{\ell}(v))
    \leq
    \frac{W_{u}W_{v}}{n\vartheta} (\Gamma_{2,n}+1)^{\ell-1}.
  \]
\end{corollary}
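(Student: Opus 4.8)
This is precisely the special case of \autoref{cor:abpathupto} in which both vertex sets are singletons, so the plan is simply to unwind the definitions and invoke that corollary.

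First I would recall that, by \autoref{def:nbhd} together with the conventions fixed at the beginning of this section, the event $\set{u \in B_{\ell}(v)}$ says exactly that $u$ lies in the vertex set of $B_{\ell}(v)$, i.e.\ that there is a path of length at most $\ell$ from $v$ to $u$ (any longer path witnessing this can be truncated at its first visit to $u$). Since $u \neq v$, in the path-event notation this is nothing but $\set{v} \pathbetw_{\leq \ell} \set{u}$, and the singletons $\set{v}$, $\set{u}$ are disjoint. Hence, for $\ell \geq 1$, \autoref{cor:abpathupto} applies with $\mathcal{U} = \set{v}$ and $\mathcal{V} = \set{u}$ and yields
\[
  \prob_{n}(u \in B_{\ell}(v))
  = \prob_{n}(\set{v} \pathbetw_{\leq \ell} \set{u})
  \leq \frac{\setweight{\set{v}}\setweight{\set{u}}}{n\vartheta}(1+\Gamma_{2,n})^{\ell-1}
  = \frac{W_{v} W_{u}}{n\vartheta}(\Gamma_{2,n}+1)^{\ell-1},
\]
where the last equality uses that the total $1$-connectivity weight of a singleton is just the connectivity weight of its element, $\setweight{\set{v}} = W_{v}$ and $\setweight{\set{u}} = W_{u}$.

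The only point not directly covered by \autoref{cor:abpathupto} is the degenerate level $\ell = 0$: there $B_{0}(v) = (\set{v},\emptyset)$, so for $u \neq v$ the event $\set{u \in B_{0}(v)}$ is empty and the asserted inequality holds trivially (its right-hand side $(1+\Gamma_{2,n})^{-1}$ being strictly positive). I do not expect any obstacle here; all the substance is already contained in \autoref{lem:abpath} and \autoref{cor:abpathupto}, and this corollary merely records the form of the bound that will be most convenient in the sequel, e.g.\ when estimating correlations between neighbourhoods of distinct vertices.
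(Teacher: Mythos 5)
Your proof is correct and follows exactly the paper's own argument: identify the event \(\set{u \in B_{\ell}(v)}\) with \(v \pathbetw_{\leq \ell} u\) and apply \autoref{cor:abpathupto} to the singletons \(\set{v}\) and \(\set{u}\). Your extra remark about the degenerate case \(\ell = 0\) is a harmless and correct addition not spelled out in the paper.
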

\begin{proof}
  The vertex~\(u\) is contained in
  the~\(\ell\)-neighbourhood~\(B_{\ell}(v)\) of~\(v\)
  if and only if there is a path of length at most~\(\ell\)
  from~\(v\) to~\(u\).
  Hence, we apply \autoref{cor:abpathupto} with~\(\mathcal{V}=\set{v}\)
  and~\(\mathcal{U}=\set{u}\)
  to conclude the claim.
\end{proof}

This result can be used to bound the probability that an edge~\(e\)
is contained in~\(B_{\ell}(v)\).
\begin{corollary}\label{lem:einbl}
  For any level~\(\ell \in \naturals\),
  vertex~\(v \in V_{n}\)
  and edge~\(e = \edge{u}{u'}\) whose endpoints are not equal to~\(v\)
  we have
  \[
    \prob_{n}(e \in B_{\ell}(v))
    \leq \frac{W_{v}(W_{u}+W_{u'})}{n\vartheta} (\Gamma_{2,n}+1)^{\ell-1}.
  \]
\end{corollary}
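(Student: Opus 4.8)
The plan is to deduce the bound directly from \autoref{lem:uinbl} by observing that an edge can only be contained in a local neighbourhood if both of its endpoints are, and then applying a union bound.

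First I would unwind the definition of $e = \edge{u}{u'} \in B_{\ell}(v)$ from \autoref{def:nbhd}: this means that $e$ lies on some path $v = x_{0}, x_{1}, \dots, x_{k}$ with $k \leq \ell$, say $e = \edge{x_{i-1}}{x_{i}}$ for some $1 \leq i \leq k$. The initial segments $x_{0},\dots,x_{i-1}$ and $x_{0},\dots,x_{i}$ are themselves paths starting at $v$ of length at most $\ell$, so both $x_{i-1}$ and $x_{i}$ — i.e.\ both $u$ and $u'$ — lie in the vertex set $S_{\ell}(v)$ of $B_{\ell}(v)$. Hence
\[
  \set{e \in B_{\ell}(v)} \subseteq \set{u \in B_{\ell}(v)} \setunion \set{u' \in B_{\ell}(v)}.
\]

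Next I would simply take a union bound and apply \autoref{lem:uinbl} to each of the two events; this is legitimate because the hypothesis guarantees $u \neq v$ and $u' \neq v$. This yields
\[
  \prob_{n}(e \in B_{\ell}(v))
  \leq \prob_{n}(u \in B_{\ell}(v)) + \prob_{n}(u' \in B_{\ell}(v))
  \leq \frac{W_{u}W_{v}}{n\vartheta}(\Gamma_{2,n}+1)^{\ell-1} + \frac{W_{u'}W_{v}}{n\vartheta}(\Gamma_{2,n}+1)^{\ell-1},
\]
which is exactly the claimed estimate after collecting the common factor $W_{v}(\Gamma_{2,n}+1)^{\ell-1}/(n\vartheta)$.

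There is essentially no obstacle here: the only point requiring a line of care is the set containment, namely reading off from \autoref{def:nbhd} that every edge of $B_{\ell}(v)$ has both endpoints among the explored vertices $S_{\ell}(v)$ (and in particular that one could even replace $W_{u}+W_{u'}$ by $2\min\set{W_{u},W_{u'}}$ by using a single endpoint, though the symmetric form is the convenient one for later use). The case $\ell = 0$ is vacuous since $e$ has no endpoint equal to $v$, and the displayed argument holds uniformly in $\ell \in \naturals$ because \autoref{lem:uinbl} does.
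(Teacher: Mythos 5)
Your proof is correct and follows essentially the same route as the paper: a union bound over the two endpoints of $e$ followed by an application of \autoref{lem:uinbl}. The only (immaterial) difference is that the paper observes an endpoint of $e$ must already lie in $B_{\ell-1}(v)$, yielding the slightly sharper exponent $\ell-2$, whereas you work with $B_{\ell}(v)$ and land exactly on the stated exponent $\ell-1$.
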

\begin{proof}
  The edge~\(e = \edge{u}{u'}\)
  can only be contained in the~\(\ell\)-neighbourhood
  of~\(v\) if one of its endpoints~\(u\) or~\(u'\)
  can be reached in at most~\(\ell-1\)
  steps from~\(v\).
  Hence
  \begin{equation*}
    \prob_{n}(e \in B_{\ell}(v))
    \leq \prob_{n}(u \in B_{\ell-1}(v)) + \prob_{n}(u' \in B_{\ell-1}(v)).
  \end{equation*}
  The claim now follows from \autoref{lem:uinbl}.
\end{proof}

\autoref{cor:abpathupto} still holds conditional
on knowing the edges emanating from a fixed set of vertices.
In order to formulate this in more detail,
we need some more notation.

\begin{definition}
  For any vertex~\(v \in V_{n}\) let
  \[
    \tilde{S}_{v} = (X_{\edge{v}{u}})_{u \in V_{n}}
  \]
  be the collection of all indicator functions of possible
  edges emanating from~\(v\).
\end{definition}

For an edge~\(e=\edge{u}{v}\) define
\[
  \tilde{S}_{e} = (S_{v},S_{u})
\]
so that~\(\tilde{S}_{e}\)
contains information about all edges incident to~\(e\).

For the proof of the result it is convenient to estimate the set of neighbours
of a vertex set in a way that makes this set independent of
the neighbours of another vertex set.
\begin{definition}\label{def:dignore}
  Let~\(\mathcal{V}, \mathcal{U} \subseteq V_{n}\) be two
  disjoint subsets of vertices.

  Set~\(S^{(\mathcal{U})}_{1}(\mathcal{V}) = S_{1}(\mathcal{V})
  \setminus \mathcal{U}\)
  and~\(D^{(\mathcal{U})}_{1}(\mathcal{V}) = D_{1}(\mathcal{V})
  \setminus \mathcal{U}\)
  so that
  \begin{align*}
    S^{(\mathcal{U})}_{1}(\mathcal{V})
    &= \mathcal{V} \setunion \sset{x \in V_{n} \setminus \mathcal{U}}
    {\text{\(X_{\edge{v}{x}}=1\) for some~\(v \in \mathcal{V}\)}}\\
    D^{(\mathcal{U})}_{1}(\mathcal{V})
    &= \sset{x \in V_{n} \setminus \mathcal{U}}
    {\text{\(X_{\edge{v}{x}}=1\) for some~\(v \in \mathcal{V}\)}}
  \end{align*}
  are independent of~\(X_{vu}\) for all~\(v \in \mathcal{V}\)
  and~\(u \in \mathcal{U}\).
\end{definition}
In particular then~\(D_{1}^{(\mathcal{U})}(\mathcal{V})\)
and~\(D_{1}^{(\mathcal{V})}(\mathcal{U})\) are independent.
The same holds for~\(S_{1}^{(\mathcal{U})}(\mathcal{V})\)
and~\(S_{1}^{(\mathcal{V})}(\mathcal{U})\).

Clearly we also have
\begin{equation*}
  \setcard{S_{1}(v)}
  \leq \setcard{S^{(\mathcal{U})}_{1}(v)} +\setcard{\mathcal{U}}
  \quad\text{and}\quad
  \setweight{S_{1}(v)}
  \leq \setweight{S_{1}^{(\mathcal{U})}(v)} + \setweight{\mathcal{U}}.
\end{equation*}
as well as
\begin{equation*}
  \setcard{D_{1}(v)}
  \leq \setcard{D^{(\mathcal{U})}_{1}(v)} +\setcard{\mathcal{U}}
  \quad\text{and}\quad
  \setweight{D_{1}(v)}
  \leq \setweight{D_{1}^{(\mathcal{U})}(v)} + \setweight{\mathcal{U}}.
\end{equation*}
Given a fixed number of vertices~\(u_{1},\dots,u_{m} \in V_{n}\)
we write~\(D_{1}^{(u_{1},\dots,u_{m})}(v)\) for~\(D_{1}^{(\set{u_{1},\dots,u_{m}})}(v)\);
we also write~\(S_{1}^{(u_{1},\dots,u_{m})}(v)\) for~\(S_{1}^{(\set{u_{1},\dots,u_{m}})}(v)\).

\begin{lemma}\label{lem:nbhdcontains}
Fix three disjoint sets of vertices~\(\mathcal{U}\),
\(\mathcal{V}\) and~\(\mathcal{R}\)
(the sets may be empty).
Set~\(S = (\widetilde{S}_{w})_{w \in \mathcal{U} \setunion \mathcal{V}
  \setunion \mathcal{R}}\)
and~\(\tilde{S} = (\widetilde{S}_{w})_{w \in \mathcal{U} \setunion \mathcal{V}}\)
(note the absence of vertices from~\(\mathcal{R}\) in~\(\tilde{S}\)).

Then there exists a function~\(\xi_{\ell}(\mathcal{U},\mathcal{V},\mathcal{R})\)
that is~\(\sigma(\tilde{S},\mathcal{F}_{n})\)-measurable
and independent of
all edges of the form~\(\edge{u}{u'}\) for~\(u,u' \in \mathcal{U}\),
\(\edge{v}{v'}\) for~\(v,v' \in \mathcal{V}\).
This function satisfies
\[
  \prob_{n}(\mathcal{U} \pathbetw_{(\mathcal{R}) \leq \ell}
    \mathcal{V} \given \tilde{S})
  \leq \xi_{\ell}(\mathcal{U},\mathcal{V},\mathcal{R}),
\]
and
\begin{equation*}
  \expe_{n}[\xi_{\ell}(\mathcal{U},\mathcal{V},\mathcal{R})]
  \leq \min\set[\bigg]{
      \frac{\setweight{\mathcal{U}}\setcard{\mathcal{V}}}{n\vartheta}
    (\Gamma_{2,n}+1)^{\ell-1},1}
\end{equation*}
as well as
\begin{equation*}
  \expe_{n}[\xi_{\ell}(\mathcal{U},\mathcal{V},\mathcal{R})
  f(\setcard{D_{1}(\mathcal{U})}, \setcard{D_{1}(\mathcal{V})} )]
  \leq C \beta_{n}(\mathcal{U},\mathcal{V})
  \min\set[\bigg]{
    \frac{\setweight{\mathcal{U}}\setweight{\mathcal{V}}}{n\vartheta}
    (\Gamma_{2,n}+1)^{\ell-1},1}
\end{equation*}
for all functions~\(f\) that are non-decreasing in both arguments such that
\[
  \expe_{n}[
  f(
    \setcard{D_{1}(\mathcal{U})}+\setcard{\mathcal{V}}+1,
    \setcard{D_{1}(\mathcal{V})}+\setcard{\mathcal{U}}+1
  )]
  \leq \beta_{n}(\mathcal{U},\mathcal{V}).
\]

\end{lemma}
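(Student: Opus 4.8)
The plan is to build $\xi_{\ell}$ by ``peeling off'' one edge at each end of a hypothetical short path from $\mathcal{U}$ to $\mathcal{V}$, so that the middle stretch — the part of the path not determined by $\tilde{S}$ — lies entirely inside $V_{n}\setminus\mathcal{W}$, where $\mathcal{W}:=\mathcal{U}\setunion\mathcal{V}\setunion\mathcal{R}$, and then to verify the four asserted bounds term by term. (If $\mathcal{U}$ or $\mathcal{V}$ is empty everything is trivial, so assume both nonempty.) First I would record the combinatorial reduction that whenever $\mathcal{U}\pathbetw_{(\mathcal{R})\leq\ell}\mathcal{V}$ there is such a path all of whose \emph{interior} vertices avoid $\mathcal{W}$ (pass from any witnessing path to the sub-path running from its last vertex in $\mathcal{U}$ to the first subsequent vertex in $\mathcal{V}$). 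For a reduced path of length $k$: if $k=1$ it is a single edge between $\mathcal{U}$ and $\mathcal{V}$; if $k\geq 2$ then its second vertex lies in $A$, its second-to-last vertex lies in $B$, and the segment between them is a path of length $k-2\leq\ell-2$ inside $V_{n}\setminus\mathcal{W}$ — here $A$ (resp.\ $B$) denotes the set of vertices of $V_{n}\setminus\mathcal{W}$ having a neighbour in $\mathcal{U}$ (resp.\ in $\mathcal{V}$). Hence
\[
  \set{\mathcal{U}\pathbetw_{(\mathcal{R})\leq\ell}\mathcal{V}}
  \subseteq \set{\mathcal{U}\pathbetw_{1}\mathcal{V}}
  \setunion \set{A\pathbetw_{(\mathcal{W})\leq\ell-2}B}.
\]
The point of the reduction is that $A$ and $B$ are $\sigma(\tilde{S},\mathcal{F}_{n})$-measurable, $A$ is built only from $\mathcal{U}$-to-$(V_{n}\setminus\mathcal{W})$ edges (hence does not depend on edges inside $\mathcal{U}$) and $B$ only from $\mathcal{V}$-to-$(V_{n}\setminus\mathcal{W})$ edges, and — given $A$ and $B$ — the event $\set{A\pathbetw_{(\mathcal{W})\leq\ell-2}B}$ depends only on the edges inside $V_{n}\setminus\mathcal{W}$, which are (conditionally on $\mathcal{F}_{n}$) independent of $\tilde{S}$ and of all within-$\mathcal{U}$ and within-$\mathcal{V}$ edges.

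By the usual freezing property, $\prob_{n}\bigl(\set{A\pathbetw_{(\mathcal{W})\leq\ell-2}B}\mid\tilde{S}\bigr)=g(A,B)$ with $g(\mathcal{A},\mathcal{B}):=\prob_{n}(\mathcal{A}\pathbetw_{(\mathcal{W})\leq\ell-2}\mathcal{B})$, and summing \autoref{lem:abpath} over path lengths (together with the length-$0$ case) gives $g(\mathcal{A},\mathcal{B})\leq\indfunc_{\set{\mathcal{A}\setintersect\mathcal{B}\neq\emptyset}}+\tfrac{(\Gamma_{2,n}+1)^{\ell-3}}{n\vartheta}\setweight{\mathcal{A}}\setweight{\mathcal{B}}$. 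I would therefore define, for $\ell\geq 3$,
\[
  \xi_{\ell}(\mathcal{U},\mathcal{V},\mathcal{R})
  := \min\Bigl\{
     \indfunc_{\set{\mathcal{U}\pathbetw_{1}\mathcal{V}}}
     +\indfunc_{\set{A\setintersect B\neq\emptyset}}
     +\tfrac{(\Gamma_{2,n}+1)^{\ell-3}}{n\vartheta}\,\setweight{A}\,\setweight{B},\;1\Bigr\},
\]
deleting the last summand when $\ell\leq 2$ and setting $\xi_{0}:=0$. By the above, $\xi_{\ell}$ is $\sigma(\tilde{S},\mathcal{F}_{n})$-measurable, independent of all edges inside $\mathcal{U}$ and inside $\mathcal{V}$, and dominates $\prob_{n}(\mathcal{U}\pathbetw_{(\mathcal{R})\leq\ell}\mathcal{V}\mid\tilde{S})$, which settles the first two claims. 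For $\expe_{n}[\xi_{\ell}]$ I would estimate the three summands separately: $\expe_{n}[\indfunc_{\set{\mathcal{U}\pathbetw_{1}\mathcal{V}}}]\leq\sum_{u\in\mathcal{U},\,v\in\mathcal{V}}p_{uv}\leq\tfrac{\setweight{\mathcal{U}}\setweight{\mathcal{V}}}{n\vartheta}$; $\expe_{n}[\indfunc_{\set{A\setintersect B\neq\emptyset}}]\leq\sum_{x\notin\mathcal{W}}\prob_{n}(x\sim\mathcal{U})\prob_{n}(x\sim\mathcal{V})\leq\tfrac{\setweight{\mathcal{U}}\setweight{\mathcal{V}}}{n\vartheta}\Gamma_{2,n}$, using independence of the events $\set{x\sim\mathcal{U}}$ and $\set{x\sim\mathcal{V}}$; and, using independence of $A$ and $B$ and $\expe_{n}[\setweight{A}]\leq\setweight{\mathcal{U}}\Gamma_{2,n}$, $\expe_{n}[\setweight{A}\setweight{B}]\leq\setweight{\mathcal{U}}\setweight{\mathcal{V}}\Gamma_{2,n}^{2}$. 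Adding these and using the elementary inequality $1+x+x^{2}(1+x)^{\ell-3}\leq(1+x)^{\ell-1}$ gives $\expe_{n}[\xi_{\ell}]\leq\tfrac{\setweight{\mathcal{U}}\setweight{\mathcal{V}}}{n\vartheta}(\Gamma_{2,n}+1)^{\ell-1}$, while $\xi_{\ell}\leq 1$ by construction.

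The fourth claim is the substantive one, since $\xi_{\ell}$ is genuinely correlated with $\setcard{D_{1}(\mathcal{U})}$ and $\setcard{D_{1}(\mathcal{V})}$. I would split $\xi_{\ell}$ into its three pieces — a sum over the single edge $\edge{u}{v}$, a sum over single vertices $x\notin\mathcal{W}$, and the double sum $\setweight{A}\setweight{B}=\sum_{x,y\notin\mathcal{W}}W_{x}W_{y}\indfunc_{\set{x\sim\mathcal{U}}}\indfunc_{\set{y\sim\mathcal{V}}}$ — and for each resulting summand condition on \emph{all edges except} the one or two edges/vertices indexing it. On that conditioning the indexing edges become independent of everything else (so e.g.\ $\prob_{n}(x\sim\mathcal{U})\leq\tfrac{W_{x}\setweight{\mathcal{U}}}{n\vartheta}$ can be pulled out), while monotonicity of $f$ lets me replace $\setcard{D_{1}(\mathcal{U})}$ and $\setcard{D_{1}(\mathcal{V})}$ by the conditionally frozen dominating quantities $\setcard{D_{1}(\mathcal{U})\setminus F}+\setcard{F}$ and $\setcard{D_{1}(\mathcal{V})\setminus F}+\setcard{F}$ with $\setcard{F}\leq 2$; these are in turn at most $\setcard{D_{1}(\mathcal{U})}+\setcard{\mathcal{V}}+1$ and $\setcard{D_{1}(\mathcal{V})}+\setcard{\mathcal{U}}+1$ (using $\mathcal{U},\mathcal{V}\neq\emptyset$), so their conditional expectation is $\leq\beta_{n}(\mathcal{U},\mathcal{V})$. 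Summing the three contributions — using $\sum_{x,y\notin\mathcal{W}}W_{x}^{2}W_{y}^{2}=\bigl(\sum_{x\notin\mathcal{W}}W_{x}^{2}\bigr)^{2}\leq(n\vartheta\Gamma_{2,n})^{2}$ for the double sum — reproduces $\beta_{n}(\mathcal{U},\mathcal{V})\,\tfrac{\setweight{\mathcal{U}}\setweight{\mathcal{V}}}{n\vartheta}(\Gamma_{2,n}+1)^{\ell-1}$; combined with the crude estimate $\expe_{n}[\xi_{\ell}f]\leq\expe_{n}[f(\setcard{D_{1}(\mathcal{U})},\setcard{D_{1}(\mathcal{V})})]\leq\beta_{n}(\mathcal{U},\mathcal{V})$ (valid as $\xi_{\ell}\leq 1$) this gives the stated $\min$.

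The step I expect to be the main obstacle is precisely this last one: one must decorrelate $\xi_{\ell}$ — which is built from the first-neighbourhood weights of $\mathcal{U}$ and $\mathcal{V}$ — from $\setcard{D_{1}(\mathcal{U})}$ and $\setcard{D_{1}(\mathcal{V})}$ without losing a power, so Cauchy--Schwarz is too lossy (the hypothesis controls only $\expe_{n}[f(\,\cdot\,,\,\cdot\,)]$, not $\expe_{n}[f(\,\cdot\,,\,\cdot\,)^{2}]$). The edge-by-edge conditioning above does this, and the slack terms $\setcard{\mathcal{V}}+1$ and $\setcard{\mathcal{U}}+1$ in the hypothesis on $f$ are exactly what is needed to absorb the (at most two) vertices one adds back after conditioning. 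Everything else — the reduction step and the three expectation estimates — is routine bookkeeping of which family of edges each random quantity depends on.
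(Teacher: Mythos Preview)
Your construction is correct and matches the paper's approach exactly: the paper does not prove this lemma in the text but only sketches that one should ``isolate the edges emanating from $\mathcal{U}$, $\mathcal{V}$ and $\mathcal{R}$ and treat the remaining graph that is independent of these edges as in the proof of \autoref{lem:abpath}'' (deferring the details to the thesis), and your peeling reduction to the sets $A,B\subseteq V_{n}\setminus\mathcal{W}$ together with the edge-by-edge conditioning for the $f$-weighted bound is precisely this. One remark: your argument yields $\expe_{n}[\xi_{\ell}]\leq\tfrac{\setweight{\mathcal{U}}\setweight{\mathcal{V}}}{n\vartheta}(\Gamma_{2,n}+1)^{\ell-1}$ rather than the stated $\tfrac{\setweight{\mathcal{U}}\setcard{\mathcal{V}}}{n\vartheta}(\Gamma_{2,n}+1)^{\ell-1}$, but the $\setcard{\mathcal{V}}$ is almost certainly a typo in the paper --- the applications (e.g.\ in \autoref{lem:covxexv} and \autoref{lem:exp:xexv}) invoke the bound with $\setweight{\mathcal{V}}$, which is what your construction delivers.
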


We will not present the proof of this result here.
The rough idea of the calculation is
to isolate the edges emanating from~\(\mathcal{U}\),
\(\mathcal{V}\) and~\(\mathcal{R}\)
and treat the remaining graph that is independent of these
edges as in the proof of \autoref{lem:abpath}
\citep[a full proof of the result can be found in][Lem.~3.1.19]{thesis}.

These path and inclusion probabilities can now be used to bound the probability
that the neighbourhood of a vertex is not tree shaped.
Like the following section this proof is an extension
of an idea by \citet[Lem.~6.7]{cao}.

\begin{lemma}\label{lem:probnottree}
  For every vertex~\(v \in V_{n}\) and level~\(\ell \in \naturals\)
  \[
    \prob_{n}(\text{\(B_{\ell}(v)\) is not a tree})
    \leq C(1+\Gamma_{2,n})^{2\ell+1}(\Gamma_{3,n}+1)\frac{(W_{v}+1)^{2}}{n\vartheta}.
  \]
\end{lemma}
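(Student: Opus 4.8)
The plan is to characterise the event that $B_{\ell}(v)$ is not a tree through a short cycle reachable from $v$, and then to bound the probability of such a configuration by a union bound, using the path–probability estimate of \autoref{lem:abpath} (and the summation pattern in its proof) together with the elementary moment sums $\sum_{u}W_{u}^{2}=n\vartheta\Gamma_{2,n}$ and $\sum_{u}W_{u}^{3}=n\vartheta\Gamma_{3,n}$. Fix a breadth–first exploration of $B_{\ell}(v)$ from $v$. Its spanning tree has exactly $\setcard{S_{\ell}(v)}-1$ edges, so $B_{\ell}(v)$ fails to be a tree precisely when the exploration produces an \emph{extra edge}, i.e.\ an edge $e=\edge{x}{y}\in E_{n}$ of $B_{\ell}(v)$ that is not a spanning–tree edge. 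Such an edge satisfies $\min(d(v,x),d(v,y))\le\ell-1$ (this is exactly the condition for $e$ to lie in $B_{\ell}(v)$), and one checks that both endpoints are then joined to $v$ by a \emph{shortest} path of length $\le\ell$ that avoids $e$: for the closer endpoint $x$ because a shortest $v$–$x$ path using $e$ would be a shortest $v$–$x$ path through $y$, forcing $d(v,y)<d(v,x)$; and for $y$ because its spanning–tree parent is some $x'\ne x$, so the tree path $v\to x'\to y$ avoids the non–tree edge $e$. Letting $z$ be the deepest vertex common to these two shortest paths, the three segments $v\pathbetw z$ (length $i\ge0$), $z\pathbetw x$ (length $a\ge1$) and $z\pathbetw y$ (length $b\ge1$) are pairwise vertex–disjoint apart from the shared endpoint $z$, they satisfy $i+a\le\ell$, $i+b\le\ell$, $i+\min(a,b)\le\ell-1$, and none of them uses the edge $e$. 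Hence the edge indicators of the three segments and of $e$ are mutually independent, and
\[
\prob_{n}\bigl(B_{\ell}(v)\text{ is not a tree}\bigr)
\le\sum_{i,a,b}\ \prob_{n}\Bigl(\exists\,z,x,y\text{ realising this configuration with these lengths}\Bigr),
\]
the sum running over the admissible triples $(i,a,b)$.

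For a fixed triple I would bound the inner probability by summing over the choices of $z,x,y$ and of the three segments. Since all the edges involved are distinct, the probability that they are simultaneously present factorises into a product of edge probabilities $p_{uu'}\le W_{u}W_{u'}/(n\vartheta)$, and summing such a product over all vertex sequences for a path of length $m$ between fixed endpoints $s,t$ contributes at most $\tfrac{W_{s}W_{t}}{n\vartheta}\Gamma_{2,n}^{m-1}$, exactly as in the proof of \autoref{lem:abpath}. Carrying out the sums over the two ``tips'' $x$ and $y$ (each being the endpoint of one segment and of $e$, giving $\sum_{x}W_{x}^{2}=\sum_{y}W_{y}^{2}=n\vartheta\Gamma_{2,n}$) and over the ``hub'' $z$ (which appears in all three segments), one obtains, when $i\ge1$, the bound $\tfrac{W_{v}}{n\vartheta}\,\Gamma_{3,n}\,\Gamma_{2,n}^{\,i+a+b-1}$ from $\sum_{z}W_{z}^{3}=n\vartheta\Gamma_{3,n}$, while the boundary case $i=0$ (where $z=v$ is not summed) gives instead $\tfrac{W_{v}^{2}}{n\vartheta}\,\Gamma_{2,n}^{\,a+b}$.

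Finally I would sum over $(i,a,b)$. For the $i=0$ part, $\sum_{a,b\ge1,\,a,b\le\ell}\Gamma_{2,n}^{a+b}\le\bigl(\sum_{a=1}^{\ell}\Gamma_{2,n}^{a}\bigr)^{2}\le(1+\Gamma_{2,n})^{2\ell}$, which handles the $W_{v}^{2}$ contribution. For the $i\ge1$ part, writing $\Gamma_{2,n}^{\,i+a+b-1}=\Gamma_{2,n}^{\,i-1}\,\Gamma_{2,n}^{a}\,\Gamma_{2,n}^{b}$ and using $\sum_{a=1}^{\ell-i}\Gamma_{2,n}^{a}\le(1+\Gamma_{2,n})^{\ell-i}$ and likewise for $b$, the sum over $i$ collapses to $(1+\Gamma_{2,n})^{2\ell-2}\sum_{i\ge1}\bigl(\tfrac{\Gamma_{2,n}}{(1+\Gamma_{2,n})^{2}}\bigr)^{i-1}\le\tfrac43(1+\Gamma_{2,n})^{2\ell-2}$, since $(1+\Gamma_{2,n})^{2}\ge4\Gamma_{2,n}$. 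Adding the two parts and using $1+\Gamma_{2,n}\ge1$ yields $\prob_{n}(B_{\ell}(v)\text{ is not a tree})\le C(1+\Gamma_{2,n})^{2\ell+1}(\Gamma_{3,n}+1)\tfrac{(W_{v}+1)^{2}}{n\vartheta}$.

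The step I expect to require the most care is the first one — extracting, from a non–tree $B_{\ell}(v)$, the clean configuration of three pairwise vertex–disjoint segments together with an independent closing edge, since it is precisely this disjointness that makes the first–moment estimate tight; one must be careful to pick shortest paths that \emph{avoid} the extra edge and to identify the hub $z$ as their deepest common vertex so that the two outgoing segments do not re–merge. The only other delicate point is the bookkeeping of the geometric sums: a crude count of the $O(\ell^{3})$ admissible length triples would introduce a spurious polynomial factor, so one must instead exploit the length budgets $i+a,\,i+b\le\ell$ together with the bound $\Gamma_{2,n}/(1+\Gamma_{2,n})^{2}\le\tfrac14$ to keep the exponent of $1+\Gamma_{2,n}$ linear in $\ell$ and to avoid any negative power of $\Gamma_{2,n}$. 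Alternatively one can sidestep the hand–crafted disjointness by revealing the edges incident to $x$ and $y$ and invoking \autoref{lem:nbhdcontains} to control the remaining conditional path probabilities.
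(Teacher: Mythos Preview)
Your proof is correct, but it follows a genuinely different route from the paper's.

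The paper argues level by level: conditioning on $B_{\ell-1}(v)$ being a tree, it bounds the probability that $B_{\ell}(v)$ acquires a cycle by observing that this requires either an edge between two vertices of $D_{\ell-1}(v)$ or a new vertex joined to two vertices of $D_{\ell-1}(v)$. This yields
\[
  \prob_n(A_{\ell-1}\cap A_\ell^c)\;\le\;(1+\Gamma_{2,n})\,\frac{\expe_n[\setweight{D_{\ell-1}(v)}^2]}{n\vartheta},
\]
and summing over levels reduces the problem to bounding $\expe_n[\setweight{S_{\ell-1}(v)}^2]$, which is supplied by \autoref{lem:bl:weight:squared:raw} (itself proved via a bifurcating-paths argument). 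Your approach instead extracts directly from any non-tree $B_\ell(v)$ a ``hub-and-tips'' configuration $(z;x,y;e)$ and runs a first-moment count on such configurations. In effect you are inlining the bifurcating-paths computation that the paper factored out into the second-moment lemma: your hub $z$ and the two outgoing arms correspond precisely to the eventually bifurcating paths in that lemma's proof, and your sums over $z,x,y$ with weight exponents $3,2,2$ are the same ones appearing there. The paper's route is more modular (the second-moment bound is reused several times elsewhere); yours is self-contained and makes the cycle geometry explicit, and your handling of the geometric series via $\Gamma_{2,n}/(1+\Gamma_{2,n})^2\le\tfrac14$ is a nice touch that keeps the exponent of $(1+\Gamma_{2,n})$ clean.

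The one point I would spell out more fully is the verification that $a,b\ge1$ and that the three segments are genuinely vertex-disjoint. This hinges on the fact that both chosen $v$--$x$ and $v$--$y$ paths are \emph{shortest}, so any common vertex lies at the same depth on both; hence after the deepest common vertex $z$ the two tails cannot re-merge, and $z\in\{x,y\}$ would force $x$ (resp.\ $y$) to lie on the other shortest path, which a short case check on $d(v,x)$ versus $d(v,y)$ and the fact that both paths avoid $e$ rules out. You state this correctly but tersely; making it explicit would remove the only place a reader might stumble.
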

\begin{proof}
  Let~\(A_{\ell}\) be the event that~\(B_{\ell}(v)\) is a tree.
  If~\(B_{\ell-1}(v)\) is a tree,~\(B_{\ell}(v)\) can only fail
  to be a tree
  if there is an edge between two vertices in~\(D_{\ell-1}(v)\)
  or if there is a vertex not in~\(S_{\ell-1}(v)\)
  that is connected to two vertices in~\(D_{\ell-1}(v)\)
  (see \autoref{fig:notree}).
  \begin{figure}[htbp]
    \centering
    \begin{tikzpicture}[x=1.5cm,y=1.5cm,
      mynode/.style={draw, circle, inner sep=2pt, minimum size=1.2em}]

      \path [rounded corners=10pt, fill=green!20]
      (0,.5)--(-1.75,-.75)--(-2.2,-2.4)--(2,-2.4)--(1.75,-.75)
      --cycle;

      \path [rounded corners=10pt, fill=blue!20,]
      (-2,-1.75)--(-2,-2.3)--(1.8,-2.3)--(1.8,-1.75)
      --cycle;

      \node[mynode] (v) at (0,0)  {};

      \node[mynode] (v1) at (-1.5,-1) {};
      \node[mynode] (v2) at (0,-1)  {};
      \node[mynode] (v3) at (1.5,-1)  {};

      \node[mynode] (v11) at (-1.8,-2) {};
      \node[mynode] (v12) at (-1.2,-2)  {};

      \node[mynode] (v111) at (-2.2,-3) {};
      \node[mynode] (v112) at (-1.6,-3) {};

      \node[mynode] (v21) at (-.3,-2) {};
      \node[mynode] (v22) at (.3,-2)  {};

      \node[mynode] (v211) at (-.3,-3) {};

      \node[mynode] (v31) at (1.5,-2)  {};

      \node[mynode, fill=red!20] (evil) at (1,-3)  {};

      \draw (v)--(v1)--(v11)--(v111);
      \draw (v11)--(v112);
      \draw (v1)--(v12);
      \draw (v)--(v2)--(v21)--(v211);
      \draw (v2)--(v22);
      \draw (v)--(v3)--(v31);
      \draw[blue, ultra thick] (v11)--(v12);
      \draw[red, ultra thick] (v22)--(evil)--(v31);
    \end{tikzpicture}
    \caption{There are two ways~\(B_{3}(v)\) can fail to be a tree
      if~\(\mathbox[green]{B_{2}(v)}\) (shown in green) is a tree.
      Either two vertices in~\(\mathbox[blue]{D_{2}(v)}\)
      are connected via an edge (shown in blue)
      or two vertices from~\(\mathbox[blue]{D_{2}(v)}\)
      have an edge each (shown in red) to a vertex not in~\(S_{2}(v)\).}
    \label{fig:notree}
  \end{figure}
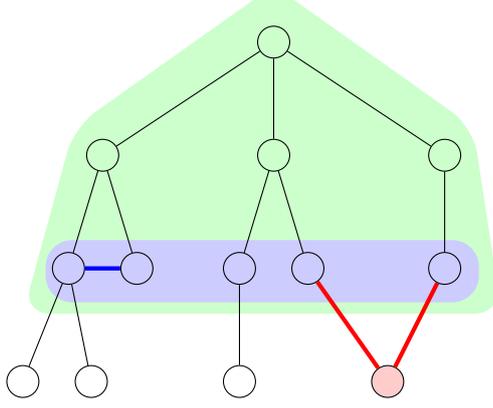
  Thus
  \begin{align*}
  \indfunc_{A_{\ell-1}} \prob_{n}(\setcomplement{A_{\ell}} \given B_{\ell-1}(v))
  &\leq \indfunc_{A_{\ell-1}}
  \sum_{u,u' \in D_{\ell-1}(v)} \Bigl(\prob_{n}(\conn{u}{u'}=1) +
  \sum_{x \notin S_{\ell-1}(v)} \prob_{n}(\conn{u}{x}=1) \prob_{n}(\conn{x}{u'}=1)
  \Bigr)
  \\
  &\leq \indfunc_{A_{\ell-1}}
  \sum_{u,u' \in D_{\ell-1}(v)} \Bigl( \frac{W_{u}W_{u'}}{n\vartheta} +
  \sum_{x \notin S_{\ell-1}(v)} \frac{W_{u}W_{x}}{n\vartheta}
  \frac{W_{x}W_{u'}}{n\vartheta}
  \Bigr)
  \\
  &\leq \indfunc_{A_{\ell-1}} (1+\Gamma_{2,n})
  \frac{\setweight{D_{\ell-1}(v)}^{2}}{n\vartheta}.
  \end{align*}
  Take expectations and conclude that
  \begin{equation*}
  \prob_{n}(A_{\ell-1} \setintersect \setcomplement{A_{\ell}})
  \leq (1+\Gamma_{2,n})
  \frac{\expe_{n}[\setweight{D_{\ell-1}(v)}^{2}]}{n\vartheta}.
  \end{equation*}
  Iteratively, this implies
  \[
  \prob_{n}(\setcomplement{A_{\ell}})
  \leq  \prob_{n}(\setcomplement{A_{\ell}} \setintersect A_{\ell-1})
  +\prob_{n}(\setcomplement{A_{\ell-1}})
  \leq \dots
    \leq (1+\Gamma_{2,n})
    \sum_{r=1}^{\ell-1}
    \frac{\expe[\setweight{D_{r-1}(v)}^{2}]}{n\vartheta}.
  \]
  Since the~\(D_{r-1}(v)\) are disjoint, and their union is
  contained in~\(S_{\ell-1}(v)\)
  the square of the sum of the weights of the~\(D_{r-1}(v)\)
  can be bounded by the square of the sum of weights in~\(S_{\ell-1}(v)\),
  which in turn can be bounded by \autoref{lem:bl:weight:squared:raw}.
  \[
    \prob_{n}(\setcomplement{A_{\ell}})
    \leq (1+\Gamma_{2,n})
    \frac{\expe_{n}[\setweight{S_{\ell-1}(v)}^{2}]}{n\vartheta}\\
    \leq C(1+\Gamma_{2,n})^{2\ell+1}(\Gamma_{3,n}+1)\frac{(W_{v}+1)^{2}}{n\vartheta}.
  \]
  This concludes the proof.
\end{proof}

We obtain a similar bound when we additionally condition
on information about a particular edge in the graph.
\begin{lemma}\label{lem:probnottree:eep}
  For every four distinct vertices~\(v,u,v',u' \in V_{n}\)
  and level~\(\ell \in \naturals\) we have
  \[
    \begin{split}
    \prob_{n}(\text{\(B_{\ell}(v)\) is not a tree}
    \given X_{\edge{v}{u}},X_{\edge{v'}{u'}})
  &\leq C \frac{(W_{v}+1)^{2}}{n\vartheta}
      (\Gamma_{3,n}+1)(\Gamma_{2,n}+1)^{2\ell+1}\\
    &\quad + C \frac{W_{u}W_{v}}{n\vartheta} (\Gamma_{2,n}+1)^{2\ell}
    +C\frac{W_{v}(W_{u'}+W_{v'})}{n\vartheta}
      (\Gamma_{2,n}+1)^{\ell{}}
    \end{split}
  \]
  as well as
    \[
    \prob_{n}(\text{\(B_{\ell}(v)\) is not a tree}
    \given X_{\edge{v}{u}})
  \leq C \frac{(W_{v}+1)^{2}}{n\vartheta}
  (\Gamma_{3,n}+1)(\Gamma_{2,n}+1)^{2\ell+1}
  +C \frac{W_{u}W_{v}}{n\vartheta} (\Gamma_{2,n}+1)^{2\ell}
  \]
  and
  \[
    \prob_{n}(\text{\(B_{\ell}(v)\) is not a tree}
    \given X_{\edge{v'}{u'}})
  \leq C \frac{(W_{v}+1)^{2}}{n\vartheta}
  (\Gamma_{3,n}+1)(\Gamma_{2,n}+1)^{2\ell+1}
  +C \frac{W_{v}(W_{u'}+W_{v'})}{n\vartheta} (\Gamma_{2,n}+1)^{\ell}.
  \]
\end{lemma}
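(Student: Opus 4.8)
The plan is to transplant the iterative cycle‑counting argument behind \autoref{lem:probnottree} into the conditioned setting. First note that, given $\mathcal{F}_{n}$, the edge indicators are independent and the event that $B_{\ell}(v)$ fails to be a tree is increasing in them; hence conditioning an indicator to equal $0$ can only decrease the probability of this event, so it suffices to bound the conditional probability on the event that every conditioned edge is present. Assume therefore $X_{\edge{v}{u}}=1$ (and, for the first inequality, also $X_{\edge{v'}{u'}}=1$), and let $G'$ be the graph obtained from $G_{n}$ by deleting the conditioned edges, so that the edges of $G'$ are conditionally independent of $X_{\edge{v}{u}}$ and $X_{\edge{v'}{u'}}$. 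With the conditioned edges forced present, $B_{\ell}(v)$ is contained in the union of: the radius‑$\ell$ ball around $v$ in $G'$; the radius‑$(\ell-1)$ ball around $u$ in $G'$ (reached through the forced edge $\edge{v}{u}$); the radius‑$(\ell-1)$ ball in $G'$ around whichever of $v',u'$ lies closer to $v$, which only enters if $\edge{v'}{u'}$ actually reaches $B_{\ell}(v)$; together with the forced edges. Consequently, if $B_{\ell}(v)$ is a tree then each of these $G'$‑balls is a tree, no two of them share a vertex, and no edge of $G'$ joins two of them; so the event that $B_{\ell}(v)$ is not a tree is contained in the union of the corresponding failure events.

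Each failure event is then estimated with the results already in hand. The probability that the radius‑$\ell$ ball around $v$ in $G'$ is not a tree is at most the bound of \autoref{lem:probnottree} --- deleting edges only lowers the non‑tree probability --- which is the first term; the forced radius‑$(\ell-1)$ ball around $u$ is handled in exactly the same way, contributing a quantity of the same type (with $W_{u}$ in place of $W_{v}$). The interaction modes for the forced root edge $\edge{v}{u}$ --- that $u$ is reachable from $v$ by a $G'$‑path of length at most $\ell$ (closing a cycle with $\edge{v}{u}$), that the balls around $v$ and $u$ share a vertex, or that an edge of $G'$ joins them --- are bounded using the path and inclusion estimates of \autoref{lem:abpath}, \autoref{cor:abpathupto} and \autoref{lem:uinbl} together with the second‑moment bounds for ball weights from \autoref{lem:bl:weight:squared:raw}; after the usual generous simplification these collapse into the term $C\,W_{u}W_{v}(\Gamma_{2,n}+1)^{2\ell}/(n\vartheta)$. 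Finally, the forced edge $\edge{v'}{u'}$ is irrelevant to $B_{\ell}(v)$ unless one of $v',u'$ lies within distance $\ell-1$ of $v$ in $G'$; this has probability at most $W_{v}(W_{v'}+W_{u'})(\Gamma_{2,n}+1)^{\ell-1}/(n\vartheta)$ by \autoref{lem:uinbl} (equivalently \autoref{lem:einbl}), and off this event the conditioning does not affect $B_{\ell}(v)$ so the core bound applies, while on it we bound the non‑tree probability crudely by the probability of the event itself, giving the last term. The two single‑edge inequalities follow by the same argument, dropping whichever family of terms belongs to the edge that is not conditioned on.

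The technical heart --- and the main obstacle --- is the book‑keeping required to make the above rigorous: each $G'$‑ball must be explored while ignoring the vertices of the other balls and of the conditioned edges, so that the sub‑explorations are genuinely independent and the path/inclusion lemmas apply; this is precisely the ``ignore a vertex set'' device of \autoref{def:dignore} and the conditional path bound of \autoref{lem:nbhdcontains}. The collision and connecting‑edge estimates between the ball around $u$ and the ball around $v$ then have to be carried out without double counting, keeping the powers of $(\Gamma_{2,n}+1)$ and the weight prefactors under control. This is a somewhat lengthy but entirely routine computation of the same flavour as the proofs of \autoref{lem:probnottree} and \autoref{lem:nbhdcontains}; I would present only the genuinely new ingredients and refer to the thesis for the remaining estimates, exactly as is done for those two lemmas.
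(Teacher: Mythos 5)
Your proposal is correct and follows essentially the same route as the paper: decompose \(B_{\ell}(v)\) into the ball around \(v\) and the ball of radius \(\ell-1\) around \(u\) in the graph with the conditioned edges removed, bound each non-tree probability via \autoref{lem:probnottree}, bound their intersection via \autoref{lem:uinbl}, and charge the relevance of \(\edge{v'}{u'}\) to the event that \(v'\) or \(u'\) is reachable from \(v\), which produces exactly the three claimed terms. The only (harmless) difference is at the start: you remove the conditioning by a monotonicity argument, whereas the paper rerandomises the conditioned edges and bounds the probability that this changes the neighbourhood; both devices cost the same error terms.
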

\begin{proof}
  We will only prove the first claim.
  The other two claims follow by a similar, but slightly simpler
  argument.

  Set~\(e = \edge{v}{u}\) and~\(e' = \edge{v'}{u'}\).
  If~\(B_{\ell}(v) = B_{\ell}(v,\mathbf{G}_{n})\) is not a tree,
  then~\(B_{\ell}(v, \mathbf{G}_{n}-e)\) is not a tree
  or~\(B_{\ell-1}(u, \mathbf{G}_{n}-e)\) is not a tree
  or the two (trees)~\(B_{\ell}(v, \mathbf{G}_{n}-e)\)
  and~\(B_{\ell-1}(u, \mathbf{G}_{n}-e)\) intersect
  (see \autoref{fig:notrees}).
  Therefore we have
  \begin{align*}
    \prob_{n}(\text{\(B_{\ell}\) is not a tree} \given X_{e},X_{e'})
    &\leq \prob_{n}(\text{\(B_{\ell}(v,\mathbf{G}_{n}-\set{e})\) is not a tree}
    \given X_{e},X_{e'})\\
    &\quad+\prob_{n}(\text{\(B_{\ell-1}(u,\mathbf{G}_{n}-\set{e})\) is not a tree}
    \given X_{e},X_{e'})\\
    &\quad+\prob_{n}(\text{\(B_{\ell}(v,\mathbf{G}_{n}-\set{e})\)
      and~\(B_{\ell-1}(u,\mathbf{G}_{n}-\set{e})\) intersect}  \given X_{e},X_{e'}).
  \end{align*}

  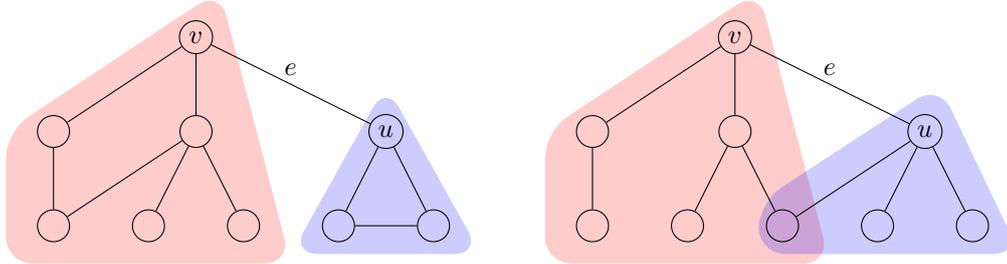
\begin{figure}[htbp]
    \centering
    \subcaptionbox{\(B_{2}(v,\mathbf{G}-e)\)
      and~\(B_{1}(u,\mathbf{G}-e)\)
      contain cycles.\label{sfig:cycle}}[0.45\linewidth]{%
      \centering
    \begin{tikzpicture}[x=1.25cm,y=1.25cm,
      mynode/.style={draw, circle, inner sep=2pt, minimum size=1.2em}]

      \path [rounded corners=10pt, fill=red!20]
      (.25,.5)--(-2,-1)--(-2,-2.4)--(1,-2.4)
      --cycle;
      \path [rounded corners=10pt, fill=blue!20]
      (2,-.5)--(1,-2.3)--(3,-2.3)
      --cycle;

      \node[mynode] (v) at (0,0)  {\(v\)};
      \node[mynode] (v1) at (-1.5,-1)    {};
      \node[mynode] (v2) at (0,-1)  {};
      \node[mynode] (u) at (2,-1)  {\(u\)};
      \node[mynode] (v11) at (-1.5,-2) {};
      \node[mynode] (v21) at (-.5,-2) {};
      \node[mynode] (v22) at ( .5,-2) {};
      \node[mynode] (u1) at (1.5,-2) {};
      \node[mynode] (u2) at (2.5,-2) {};

      \draw (v)--(v1)--(v11);
      \draw (v)--(v2)--(v21);
      \draw (v2)--(v22);
      \draw (v)--node[above]{$e$}(u)--(u1);
      \draw (u)--(u2);
      \draw (u1)--(u2);
      \draw (v11)--(v2);
    \end{tikzpicture}}
    \subcaptionbox{\(B_{2}(v,\mathbf{G}-e)\) and~\(B_{1}(u,\mathbf{G}-e)\)
      intersect.\label{sfig:inter}}[0.45\linewidth]{%
      \centering
      \begin{tikzpicture}[x=1.25cm,y=1.25cm,
        mynode/.style={draw, circle, inner sep=2pt, minimum size=1.2em}]

        \path [rounded corners=10pt, fill=red, fill opacity=0.2]
        (.25,.5)--(-2,-1)--(-2,-2.4)--(1,-2.4)
        --cycle;
        \path [rounded corners=10pt, fill=blue, fill opacity=.2]
        (2.15,-.5)--(.25,-1.75)--(.25,-2.3)--(3,-2.3)
        --cycle;

        \node[mynode] (v) at (0,0)  {\(v\)};
        \node[mynode] (v1) at (-1.5,-1)    {};
        \node[mynode] (v2) at (0,-1)  {};
        \node[mynode] (u) at (2,-1)  {\(u\)};
        \node[mynode] (v11) at (-1.5,-2) {};
        \node[mynode] (v21) at (-.5,-2) {};
        \node[mynode] (v22) at ( .5,-2) {};
        \node[mynode] (u1) at (1.5,-2) {};
        \node[mynode] (u2) at (2.5,-2) {};

        \draw (v)--(v1)--(v11);
        \draw (v)--(v2)--(v21);
        \draw (v2)--(v22);
        \draw (v)--node[above]{$e$}(u)--(u1);
        \draw (u)--(u2);
        \draw (v22)--(u);
      \end{tikzpicture}}
    \caption{Examples of the three ways~\(B_{2}(v,\mathbf{G})\)
      can fail to be a tree.
      Either there is a cycle in~\(\mathbox{B_{2}(v,\mathbf{G}-e)}\),
      there is a cycle in~\(\mathbox[blue]{B_{1}(u,\mathbf{G}-e)}\)
      \subref{sfig:cycle}
      or~\(\mathbox{B_{2}(v,\mathbf{G}-e)}\)
      and~\(\mathbox[blue]{B_{1}(u,\mathbf{G}-e)}\)
      overlap \subref{sfig:inter}.}
    \label{fig:notrees}
  \end{figure}

  The conditioning on~\(X_{e},X_{e'}\) can be removed
  since~\(B_{\ell}(v,\mathbf{G}_{n}-e)\) is isomorphic
  to~\(B_{\ell}(v,\mathbf{G}_{n}-\set{e,e'})\)
  and~\(B_{\ell-1}(u,\mathbf{G}_{n}-e)\)
  is isomorphic to~\(B_{\ell-1}(u,\mathbf{G}_{n}-\set{e,e'})\)
  with high probability
  even conditionally on~\(X_{e},X'_{e}\).
  The events involving only~\(\mathbf{G}_{n}-\set{e,e'}\)
  are then independent of~\(X_{e},X_{e'}\)
  so that
  \begin{equation*}
    \begin{split}
      \prob_{n}(\text{\(B_{\ell}\) is not a tree}\given X_{e},X_{e'})
      &\leq \prob_{n}(\text{\(B_{\ell}(v,\mathbf{G}-\set{e,e'})\) is not a tree})\\
    &\quad+\prob_{n}(\text{\(B_{\ell-1}(u,\mathbf{G}-\set{e,e'})\) is not a tree})\\
    &\quad+\prob_{n}(\text{\(B_{\ell}(v,\mathbf{G}-\set{e,e'})\)
      and~\(B_{\ell-1}(u,\mathbf{G}-\set{e,e'})\) intersect})\\
    &\quad + 2\prob_{n}(B_{\ell}(v,\mathbf{G}_{n}-e) \ncong B_{\ell}(v,\mathbf{G}_{n}-\set{e,e'})
    \given X_{e}, X_{e'})\\
    &\quad + 2\prob_{n}(B_{\ell-1}(u,\mathbf{G}_{n}-e) \ncong B_{\ell-1}(u,\mathbf{G}_{n}-\set{e,e'})
                \given X_{e}, X_{e'})
    \end{split}
  \end{equation*}
  \begingroup
  \allowdisplaybreaks
  For the last two terms
  note
  that~\(B_{\ell}(v,\mathbf{G}_{n}-e) \ncong B_{\ell}(v,\mathbf{G}_{n}-\set{e,e'})\),
  implies~\(e' \in B_{\ell}(v,\mathbf{G}_{n}-e)\),
  since otherwise all paths in~\(B_{\ell}(v,\mathbf{G}_{n}-e)\)
  would avoid~\(e'\)  and would thus already be
  in~\(B_{\ell}(v,\mathbf{G}_{n}-\set{e,e'})\).
    The edge~\(e'\) can only be
      present in~\(B_{\ell}(v,\mathbf{G}_{n}-e)\)
      if at least one of its endpoints~\(v'\) or~\(u'\) can be reached
    in~\(B_{\ell}(v,\mathbf{G}_{n}-\set{e,e'})\), which is independent
    of~\(X_{e}\) and~\(X_{e'}\).
    Together with
    \autoref{lem:uinbl} we obtain
  \begin{align*}
    &\prob_{n}(B_{\ell}(v,\mathbf{G}_{n}-e) \neq B_{\ell}(v,\mathbf{G}_{n}-\set{e,e'})
                \given X_{e}, X_{e'})\\
    &\quad\leq \prob_{n}(e' \in B_{\ell}(v,\mathbf{G}_{n}-e)
      \given X_{e}, X_{e'})\\
    &\quad\leq \prob_{n}(u' \in B_{\ell}(v,\mathbf{G}_{n}-\set{e,e'}))
      + \prob_{n}(v' \in B_{\ell}(v,\mathbf{G}_{n}-\set{e,e'}))\\
    &\quad \leq \frac{W_{v}(W_{u'}+W_{v'})}{n\vartheta} (\Gamma_{2,n}+1)^{\ell}.
  \end{align*}
  The analogous result applies to~\(B_{\ell-1}(u,\mathbf{G}_{n}-e)\)
  and~\(B_{\ell-1}(u,\mathbf{G}_{n}-\set{e,e'})\).
  \endgroup

  Now apply \autoref{lem:probnottree} to bound the (unconditional) probability
  that the neighbourhoods~\(B_{\ell}(v,\mathbf{G}_{n}-\set{e,e'})\)
  and~\(B_{\ell-1}(u,\mathbf{G}_{n}-\set{e,e'})\) are not trees
  and \autoref{lem:uinbl}
  to bound the probability that they intersect.
  Then
  \begin{align*}
    \prob_{n}(\text{\(B_{\ell}\) is not a tree}\given X_{e},X_{e'})
    &\leq C \frac{(W_{v}+1)^{2}}{n\vartheta}
      (\Gamma_{3,n}+1)(\Gamma_{2,n}+1)^{2\ell+1}\notag\\
    &\quad + C \frac{W_{u}W_{v}}{n\vartheta} (\Gamma_{2,n}+1)^{2\ell}
    +C\frac{W_{v}(W_{u'}+W_{v'})}{n\vartheta}
      (\Gamma_{2,n}+1)^{\ell}.
  \end{align*}
  This shows the claim.
\end{proof}

\subsection{Correlation between neighbourhoods}
\label{sec:corr}
In this section we investigate the correlation between different
neighbourhoods in the graph~\(\mathbf{G}_{n}\) more closely.
Before we get into the formal argument, we will briefly recall
\autoref{cor:abpathupto}, which bounds the probability that there is a
path of length at most~\(\ell\)
between two disjoint sets of vertices~\(\mathcal{U}\) and~\(\mathcal{V}\)
by~\((n\vartheta)^{-1}
\setweight{\mathcal{U}}\setweight{\mathcal{V}}(1+\Gamma_{2,n})^{\ell-1}\).
This implies that the probability that
the~\(\ell\)-neighbourhoods~\(B_{\ell}(\mathcal{V}, G_{n})\)
and~\(B_{\ell}(\mathcal{U}, G_{n})\)
(here in the unweighted graph, but the argument is the same for the weighted
graph)
share a vertex is bounded by
\[
  \prob_{n}(\text{\(B_{\ell}(\mathcal{V}, G_{n})\)
    and~\(B_{\ell}(\mathcal{U}, G_{n})\) share a vertex})
  \leq  \frac{\setweight{\mathcal{U}}\setweight{\mathcal{V}}}{n\vartheta}
    (1+\Gamma_{2,n})^{2\ell-2}.
\]
Intuitively, if the neighbourhoods do not share a vertex,
their structure is determined by independent random variables,
which would mean that they are independent.
That argument is made more rigorous in the remainder of this section.
As in the previous section, the results we show here are
by no means surprising,
but results for the exact setup we needed were not readily available
in the literature.

We show that the~\(\ell\)-neighbourhoods
of two disjoint sets of root vertices
are relatively weakly correlated
by constructing slightly altered
independent versions of the~\(\ell+1\)-neighbourhoods
conditionally on the~\(\ell\)-neighbourhoods.
An iterative argument that relates the correlation
of the~\(\ell+1\)-neighbourhoods
to the correlation of the~\(\ell\)-neighbourhoods
then finishes the argument.

The discussion here extends \citeauthor{cao}'s \citep{cao} approach
for edge-weighted Erdős--Rényi graphs to inhomogeneous
random graphs with additional weights on edges and vertices.
Our construction of the altered neighbourhoods
needs to take into account both edge and vertex weights.
We will do that in two separate steps.
In a first step we ignore the vertex weights at level~\(\ell+1\)
(formally, we do this by applying a function~\(\tau_{\ell+1}\)
that removes these weights).

Given the~\(\ell\)-neighbourhoods, the~\(\ell+1\)-neighbourhoods
without vertex weights at level~\(\ell+1\)
can be constructed by adding edges emanating from level~\(\ell\) vertices.
If the edges that are added to the neighbourhoods
are distinct,
the added randomness is independent.
If edges have to be used for neighbourhoods
emanating from both root vertex sets,
they can be
replaced by independent copies
for one of the two sets
to make the added randomness
independent.
Provided that not too many edges have to be rerandomised
in this way,
the resulting objects are close enough to the
original~\(\ell+1\)-neighbourhoods.

\begin{lemma}\label{lem:covcouplmorecomplextilde}
  Fix~\(m, m' \in \naturals\)
  and~\(m+m'\) distinct vertices~\(v_{1},\dots,v_{m}\)
  and~\(v'_{1},\dots,v'_{m'}\).
  Let~\(E_{n} = \sset{(i,j)}{1 \leq i < j \leq n}\).
  Let~\(F \subseteq V_{n} \setunion E_{n} \)
  and~\(F' \subseteq V_{n} \setunion E_{n} \).
  For~\(r \in \naturals\) let
  \begin{align*}
  \mathbf{B}_{r}
  &= (
  B_{r}(v_{1},\mathbf{G}_{n}),
  B_{r}(v_{1},\mathbf{G}_{n}^{F}),
  \dots,
  B_{r}(v_{m},\mathbf{G}_{n}),
  B_{r}(v_{m},\mathbf{G}_{n}^{F})
  ), \\
  \mathbf{B}'_{r}
  &= (
  B_{r}(v'_{1},\mathbf{G}_{n}),
  B_{r}(v'_{1},\mathbf{G}_{n}^{F'}),
  \dots,
  B_{r}(v'_{m'},\mathbf{G}_{n}),
  B_{r}(v'_{m'},\mathbf{G}_{n}^{F'})
  ).
  \end{align*}
  Let~\(\mathbf{S}_{r}\) be the set of vertices
  in~\(\mathbf{B}_{r}\)
  and similarly let~\(\mathbf{S}'_{r}\) be
  the set of vertices in~\(\mathbf{B}'_{r}\).
  Then~\(\mathbf{D}_{r} = \mathbf{S}_{r} \setminus
  \mathbf{S}_{r-1}\)
  and~\(\mathbf{D}'_{r} = \mathbf{S}'_{r} \setminus
  \mathbf{S}'_{r-1}\)
  are the level~\(r\)-vertices of~\(\mathbf{B}_{r}\)
  and~\(\mathbf{B}'_{r}\), respectively.

  Let~\(\mathbf{I}_{r}\) be the event
  that the~\(\mathbf{S}_{r}\) and~\(\mathbf{S}'_{r}\)
  do not intersect.
  Let~\(\tau_{r}\) be the function that takes~\(m+m'\) weighted
  rooted graphs as input and removes the weight of the
  vertices at level~\(r\).

  Fix any level~\(\ell \in \naturals\),
  then
  there is a coupling of~\(\tau_{\ell+1}(\mathbf{B}_{\ell+1})\)
  to~\(\mathbf{\tilde{B}}_{\ell+1}\)
  and of~\(\tau_{\ell+1}(\mathbf{B}_{\ell+1}')\)
  to~\(\mathbf{\tilde{B}}'_{\ell+1}\)
  such that~\(\mathbf{\tilde{B}}_{\ell+1}\)
  and~\(\mathbf{\tilde{B}}'_{\ell+1}\)
  are conditionally independent on~\(\mathbf{I}_{\ell}\)
  given~\(\mathbf{B}_{\ell},\mathbf{B}_{\ell}'\).
  Furthermore,
  on~\(\mathbf{I}_{\ell}\)
  the law of~\(\mathbf{\tilde{B}}_{\ell+1}\)
  given~\(\mathbf{B}_{\ell},\mathbf{B}'_{\ell}\)
  is equal to the law of~\(\tau_{\ell+1}(\mathbf{B}_{\ell+1})\)
  given~\(\mathbf{B}_{\ell}\)
  and the law of~\(\mathbf{\tilde{B}}_{\ell+1}'\),
  given~\(\mathbf{B}_{\ell},\mathbf{B}'_{\ell}\)
  is equal to the law of~\(\tau_{\ell+1}(\mathbf{B}'_{\ell+1})\)
  given~\(\mathbf{B}'_{\ell}\).

  In formulas, for all bounded measurable functions~\(g\) and~\(g'\)
  we have (almost surely)
  \begin{gather*}
  \indfunc_{\mathbf{I}_{\ell}}
  \cov_{n}(g(\mathbf{\tilde{B}}_{\ell+1}),g'(\mathbf{\tilde{B}}'_{\ell+1})
  \given \mathbf{B}_{\ell}, \mathbf{B}'_{\ell}) = 0,\\
  \indfunc_{\mathbf{I}_{\ell}}
  \expe_{n}[g(\mathbf{\tilde{B}}_{\ell+1})
  \given \mathbf{B}_{\ell}, \mathbf{B}'_{\ell}]
  = \indfunc_{\mathbf{I}_{\ell}}
  \expe_{n}[g(\tau_{\ell+1}(\mathbf{B}_{\ell+1})) \given \mathbf{B}_{\ell}],\\
  \indfunc_{\mathbf{I}_{\ell}}
  \expe_{n}[g'(\mathbf{\tilde{B}}'_{\ell+1})
  \given \mathbf{B}_{\ell}, \mathbf{B}'_{\ell}]
  = \indfunc_{\mathbf{I}_{\ell}}
  \expe_{n}[g'(\tau_{\ell+1}(\mathbf{B}'_{\ell+1})) \given \mathbf{B}'_{\ell}].
  \end{gather*}

  Moreover,
  \begin{align}
  \indfunc_{\mathbf{I}_{\ell}}
  \prob_{n}(\mathbf{\tilde{B}}_{\ell+1} \neq \tau_{\ell+1}(\mathbf{B}_{\ell+1})
  \given \mathbf{B}_{\ell}, \mathbf{B}'_{\ell})
  &\leq \indfunc_{\mathbf{I}_{\ell}} C\frac{\setweight{\mathbf{S}_{\ell}}
    \setweight{\mathbf{S}'_{\ell}}}{n\vartheta},
  \label{eq:bvteqbv}\\
  \indfunc_{\mathbf{I}_{\ell}}
  \prob_{n}(\mathbf{\tilde{B}}'_{\ell+1} \neq \tau_{\ell+1}(\mathbf{B}'_{\ell+1})
  \given \mathbf{B}_{\ell}, \mathbf{B}'_{\ell})
  &\leq \indfunc_{\mathbf{I}_{\ell}} C\frac{\setweight{\mathbf{S}_{\ell}}
    \setweight{\mathbf{S}'_{\ell}}}{n\vartheta}.
  \label{eq:buteqbu}
  \end{align}
\end{lemma}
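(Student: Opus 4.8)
The plan is to obtain $\tilde{\mathbf{B}}_{\ell+1}$ and $\tilde{\mathbf{B}}'_{\ell+1}$ by running one more step of the graph exploration procedure of \autoref{sec:graphexpl}, conditionally on $\mathbf{B}_{\ell}$ and $\mathbf{B}'_{\ell}$, but feeding that step an independent fresh copy of the edge data on the (few) boundary edges that the two explorations would otherwise be forced to share. Recall from \autoref{sec:graphexpl} that, given $\mathbf{B}_{\ell}$, the object $\tau_{\ell+1}(\mathbf{B}_{\ell+1})$ is a deterministic function $\Phi(\mathbf{B}_{\ell},\mathcal{D})$ of $\mathbf{B}_{\ell}$ and of the \emph{edge data} — the tuples $(X_{e},X'_{e},w_{e},w'_{e})$ — on the edges $e$ incident to $\mathbf{D}_{\ell}$ that are not already determined by $\mathbf{B}_{\ell}$; moreover, given $\mathbf{B}_{\ell}$, this family $\mathcal{D}$ is independent across $e$ with each tuple distributed as in the unconditioned model. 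The vertex weights at level $\ell+1$ are discarded by $\tau_{\ell+1}$ precisely so that a vertex $y\in\mathbf{S}'_{\ell}$ that happens to land at level $\ell+1$ of the $\mathbf{B}$-exploration carries no further information; this is the mechanism that makes the decoupling possible.

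On $\mathbf{I}_{\ell}$ the vertex sets $\mathbf{S}_{\ell}$ and $\mathbf{S}'_{\ell}$ are disjoint, so $\mathbf{B}'_{\ell}$ — being measurable with respect to $\mathcal{F}_{n}$ and the edge data on edges incident to $\mathbf{S}'_{\ell}$ — constrains none of the edges $\edge{u}{y}$ with $u\in\mathbf{D}_{\ell}$ and $y\notin\mathbf{S}'_{\ell}$, and the two explorations share edge data only on the edges joining $\mathbf{D}_{\ell}$ to $\mathbf{S}'_{\ell}$ (for the $\mathbf{B}$-side) and $\mathbf{D}'_{\ell}$ to $\mathbf{S}_{\ell}$ (for the $\mathbf{B}'$-side). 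I would therefore set $\tilde{\mathbf{B}}_{\ell+1}=\Phi(\mathbf{B}_{\ell},\tilde{\mathcal{D}})$, where $\tilde{\mathcal{D}}$ coincides with $\mathcal{D}$ on all edges not touching $\mathbf{S}'_{\ell}$ and uses an independent fresh copy of the edge datum for each edge $\edge{u}{y}$ with $u\in\mathbf{D}_{\ell}$, $y\in\mathbf{S}'_{\ell}$; symmetrically I would define $\tilde{\mathbf{B}}'_{\ell+1}$ by replacing the edge data on $\edge{u'}{y}$ with $u'\in\mathbf{D}'_{\ell}$, $y\in\mathbf{S}_{\ell}$.

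It then remains to check three things, all on $\mathbf{I}_{\ell}$ and conditionally on $(\mathbf{B}_{\ell},\mathbf{B}'_{\ell})$. First, conditional independence: using only $\mathbf{S}_{\ell}\cap\mathbf{S}'_{\ell}=\emptyset$ one verifies that the edge data actually consumed by $\tilde{\mathbf{B}}_{\ell+1}$ and by $\tilde{\mathbf{B}}'_{\ell+1}$ are indexed by disjoint sets of edges and that all the fresh copies involved are mutually independent, whence $\tilde{\mathbf{B}}_{\ell+1}$ and $\tilde{\mathbf{B}}'_{\ell+1}$ are conditionally independent given $(\mathbf{B}_{\ell},\mathbf{B}'_{\ell})$ on $\mathbf{I}_{\ell}$; this is the covariance identity. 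Second, the marginal laws: since the edges not touching $\mathbf{S}'_{\ell}$ still carry, given $(\mathbf{B}_{\ell},\mathbf{B}'_{\ell})$ on $\mathbf{I}_{\ell}$, i.i.d.\ unconditioned edge data, and the remaining ones have been replaced by fresh copies of the correct law, the conditional law of $\tilde{\mathbf{B}}_{\ell+1}=\Phi(\mathbf{B}_{\ell},\tilde{\mathcal{D}})$ equals that of $\Phi(\mathbf{B}_{\ell},\mathcal{D})$ given $\mathbf{B}_{\ell}$, i.e.\ the conditional law of $\tau_{\ell+1}(\mathbf{B}_{\ell+1})$ given $\mathbf{B}_{\ell}$ — and symmetrically for the primed object — which yields the two conditional-expectation identities. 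Third, the coupling cost \eqref{eq:bvteqbv}: on $\mathbf{I}_{\ell}$ the event $\tilde{\mathbf{B}}_{\ell+1}\ne\tau_{\ell+1}(\mathbf{B}_{\ell+1})$ forces at least one edge $\edge{u}{y}$ with $u\in\mathbf{D}_{\ell}$, $y\in\mathbf{S}'_{\ell}$ to be present in one of its four layers (original or resampled, in $\mathbf{G}_{n}$ or in $\mathbf{G}_{n}^{F}$); a union bound together with $p_{uy}\le W_{u}W_{y}/(n\vartheta)$ from \eqref{eq:puvdef} bounds the conditional probability by $4(n\vartheta)^{-1}\sum_{u\in\mathbf{D}_{\ell}}W_{u}\sum_{y\in\mathbf{S}'_{\ell}}W_{y}\le 4(n\vartheta)^{-1}\setweight{\mathbf{S}_{\ell}}\setweight{\mathbf{S}'_{\ell}}$, and \eqref{eq:buteqbu} follows in the same way by symmetry.

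The substance of the proof is bookkeeping rather than a new idea: one must identify exactly which edge data one exploration step of \autoref{sec:graphexpl} consumes, check that conditioning on $\mathbf{B}_{\ell}$ (resp.\ $\mathbf{B}'_{\ell}$) only constrains edges incident to $\mathbf{S}_{\ell}$ (resp.\ $\mathbf{S}'_{\ell}$), and verify that re-randomising every boundary edge landing in the \emph{other} explored set simultaneously decouples the two explorations, preserves both marginal laws, and costs only $C\setweight{\mathbf{S}_{\ell}}\setweight{\mathbf{S}'_{\ell}}/(n\vartheta)$ — the discarding of the level-$(\ell+1)$ vertex weights by $\tau_{\ell+1}$ being exactly what reconciles decoupling with law preservation. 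These routine measurability and law-preservation verifications are carried out within the exploration framework of \autoref{sec:graphexpl}; see also \citep{thesis}.
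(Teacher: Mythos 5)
Your proposal is correct and follows essentially the same route as the paper: express the one-step extension as a deterministic function of $\mathbf{B}_{\ell}$ (resp.\ $\mathbf{B}'_{\ell}$) and the edge data on edges leaving $\mathbf{D}_{\ell}$ (resp.\ $\mathbf{D}'_{\ell}$), re-randomise the edge data landing in the other explored set, and bound the coupling cost by the probability that any such boundary edge is relevant, via \autoref{lem:abpath}. The only (immaterial) difference is that you resample the $\mathbf{D}_{\ell}$--$\mathbf{D}'_{\ell}$ edges on both sides with independent fresh copies, whereas the paper keeps the original data $X_{3}$ on the unprimed side and resamples it only on the primed side; both choices preserve the marginal laws and yield the same bound up to the constant $C$.
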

\begin{proof}
  For~\(\mathbf{T}_{1}, \mathbf{T}_{2} \subseteq V_{n}\) denote with
  \[
  X(\mathbf{T}_{1},\mathbf{T}_{2})
  = \sset{(e,X_{e},X'_{e},w_{e},w'_{e})}
  {e=\edge{v}{u}, v \in \mathbf{T}_{1}, u \in \mathbf{T}_{2}}
  \]
  the set of edges connecting~\(\mathbf{T}_{1}\) and~\(\mathbf{T}_{2}\)
  and all the information potentially associated to those edges
  in~\(\mathbf{G}_{n}\), \(\mathbf{G}_{n}^{F}\) and~\(\mathbf{G}_{n}^{F'}\).

  An edge that is new in~\(\mathbf{B}_{\ell+1}\),
  i.e.~an edge that is present in~\(\mathbf{B}_{\ell+1}\),
  but not in~\(\mathbf{B}_{\ell}\),
  must connect a vertex in~\(\mathbf{D}_{\ell}\)
  to a vertex in~\(V_{n} \setminus \mathbf{S}_{\ell-1}\).
  Hence,
  there exists a function~\(\Psi\)
  depending on~\(F\)
  such that
  \begin{equation*}
  \tau_{\ell+1}(\mathbf{B}_{\ell+1})
  = \Psi(\mathbf{B}_{\ell},
  X(\mathbf{D}_{\ell},V_{n} \setminus \mathbf{S}_{\ell-1})).
  \end{equation*}
  Intuitively, this function just identifies the edges that are part
  of~\(\tau_{\ell+1}(\mathbf{B}_{\ell+1})\), but not of~\(\mathbf{B}_{\ell}\)
  and adds them to~\(\mathbf{B}_{\ell}\).
  Note that for an edge~\(e\) between~\(\mathbf{D}_{\ell}\)
  and~\(V_{n} \setminus \mathbf{S}_{\ell-1}\)
  with~\(X_{e}=X'_{e}=0\) the values of~\(w_{e}\) and~\(w'_{e}\)
  do not matter for~\(\Psi\),
  because such an edge cannot be part of~\(\mathbf{B}_{\ell+1}\).
  In other words, the values~\(w_{e}\) and~\(w'_{e}\) influence~\(\Psi\)
  only if~\(X_{e}+X'_{e} \geq 1\).
  We call such an edge with~\(X_{e}+X'_{e} \geq 1\) \emph{relevant}
  for~\(\Psi\).
  There is a similar function~\(\Psi'\) depending on~\(F'\)
  such that
  \begin{equation*}
    \tau_{\ell+1}(\mathbf{B}'_{\ell+1})
    = \Psi'(\mathbf{B}'_{\ell},
    X(\mathbf{D}'_{\ell},V_{n} \setminus \mathbf{S}'_{\ell-1})).
  \end{equation*}

  Define
  \begin{equation*}
    X_{1} = X(\mathbf{D}_{\ell},
              V_{n} \setminus (\mathbf{S}_{\ell-1} \setunion \mathbf{S}'_{\ell})
            ),\quad
    X_{2} = X(\mathbf{D}'_{\ell},
              V_{n} \setminus (\mathbf{S}'_{\ell-1} \setunion \mathbf{S}_{\ell})
    )
    \quad\text{and}\quad
    X_{3} = X(\mathbf{D}_{\ell},\mathbf{D}'_{\ell}).
  \end{equation*}
  Then~\(X_{1}\) contains the information on all potential new edges
  of~\(\mathbf{B}_{\ell+1}\) that do not have an endpoint
  in~\(\mathbf{B}'_{\ell}\).
  Analogously,~\(X_{2}\) contains the information on all potential new edges
  of~\(\mathbf{B}'_{\ell+1}\) that do not have an endpoint
  in~\(\mathbf{B}_{\ell}\).
  Finally,~\(X_{3}\) contains the information on all potential new
  edges of~\(\mathbf{B}_{\ell+1}\) and~\(\mathbf{B}'_{\ell+1}\)
  that connect~\(\mathbf{B}_{\ell}\) and~\(\mathbf{B}'_{\ell}\).
  By construction
  \begin{equation*}
    X_{1} \setunion X_{3}
    = X(\mathbf{D}_{\ell},
      V_{n} \setminus (\mathbf{S}_{\ell-1} \setunion \mathbf{S}'_{\ell-1}))
      \quad\text{and}\quad
      X_{2} \setunion X_{3}
    = X(\mathbf{D}'_{\ell},
      V_{n} \setminus (\mathbf{S}'_{\ell-1} \setunion \mathbf{S}_{\ell-1})).
  \end{equation*}
  On the event~\(\mathbf{I}_{\ell}\)
  the set of edges~\(X(\mathbf{D}_{\ell},V_{n} \setminus
  \mathbf{S}_{\ell-1})\)
  coincides with~\(X_{1} \setunion X_{3}\),
  since there can be no edges between~\(\mathbf{D}_{\ell}\)
  and~\(\mathbf{S}'_{\ell-1}\), because
  that would imply
  that~\(\mathbf{D}_{\ell} \subseteq \mathbf{S}_{\ell}\)
  and~\(\mathbf{S}'_{\ell}\)
  have a nonempty intersection.
  An analogous consideration holds for~\(X_{2} \setunion X_{3}\).
  Hence, on~\(\mathbf{I}_{\ell}\) we have
  \[
  \tau_{\ell+1}(\mathbf{B}_{\ell+1}) = \Psi(\mathbf{B}_{\ell}, X_{1} \setunion
  X_{3})
  \quad\text{and}\quad
  \tau_{\ell+1}(\mathbf{B}'_{\ell+1}) = \Psi'(\mathbf{B}'_{\ell}, X_{2}
  \setunion X_{3}).
  \]

  On~\(\mathbf{I}_{\ell}\)
  the edge collections~\(X_{1}\) and~\(X_{3}\) are conditionally
  independent given~\(\mathbf{B}_{\ell},\mathbf{B}'_{\ell}\).
  Let~\(\widetilde{X}_{1}\) be an independent copy
  of~\(X(\mathbf{D}_{\ell}, \mathbf{S}'_{\ell-1})\)
  and~\(\widetilde{X}_{2}\) an independent copy
  of~\(X(\mathbf{D}'_{\ell}, \mathbf{S}_{\ell-1})\).
  Finally, let~\(\widetilde{X}_{3}\) be an independent copy
  of~\(X_{3}\).
  Then define
  \[
  \mathbf{\tilde{B}}_{\ell+1}
  = \Psi(\mathbf{B}_{\ell}, X_{1} \setunion \widetilde{X}_{1} \setunion
  X_{3})
  \quad\text{and}\quad
  \mathbf{\tilde{B}}'_{\ell+1}
  = \Psi'(\mathbf{B}'_{\ell}, X_{2} \setunion \widetilde{X}_{2}
  \setunion \widetilde{X}_{3}).
  \]
  The thus constructed~\(\mathbf{\tilde{B}}_{\ell+1}\)
  and~\(\mathbf{\tilde{B}}'_{\ell+1}\) are conditionally independent
  on~\(\mathbf{I}_{\ell}\) given~\(\mathbf{B}_{\ell}, \mathbf{B}'_{\ell}\).
  Furthermore, on~\(\mathbf{I}_{\ell}\)
  and conditionally on~\(\mathbf{B}_{\ell},\mathbf{B}'_{\ell}\)
  the law of~\(X_{1} \setunion \widetilde{X}_{1} \setunion X_{3}\)
  is equal to the law of~\(X(\mathbf{D}_{\ell},V_{n} \setminus
  \mathbf{S}_{\ell-1})\)
  given only~\(\mathbf{B}_{\ell}\),
  because~\(\widetilde{X}_{1}\) provides the
  \enquote{missing source of randomness}
  for~\(X_{1} \setunion X_{3}\) when conditioning on
  both~\(\mathbf{B}_{\ell}\) and~\(\mathbf{B}'_{\ell}\),
  where the edges between~\(\mathbf{D}_{\ell}\) and~\(\mathbf{S}'_{\ell-1}\)
  are fixed,
  compared to conditioning~\(\mathbf{B}_{\ell+1}\) only
  on~\(\mathbf{B}_{\ell}\),
  where these edges are random.
  Then on~\(\mathbf{I}_{\ell}\) the law of~\(\mathbf{\tilde{B}}_{\ell+1}\)
  conditionally on~\(\mathbf{B}_{\ell},\mathbf{B}'_{\ell}\)
  is the law of~\(\tau_{\ell+1}(\mathbf{B}_{\ell+1})\)
  given~\(\mathbf{B}_{\ell}\).
  An analogous result holds for~\(\mathbf{\tilde{B}}'_{\ell+1}\).

  It remains to verify that~\(\mathbf{\tilde{B}}_{\ell+1}\)
  differs from~\(\tau_{\ell+1}(\mathbf{B}_{\ell+1})\)
  with small probability on~\(\mathbf{I}_{\ell}\)
  conditionally on~\(\mathbf{B}_{\ell}\)
  and~\(\mathbf{B}'_{\ell}\).
  Write
  \[
  \widetilde{X}_{1}
  = \sset{(e,\widetilde{X}_{e},\widetilde{X}'_{e},
    \widetilde{w}_{e},\widetilde{w}'_{e})}{e = \edge{u}{v},
    v \in \mathbf{D}_{\ell},
    u \in \mathbf{S}'_{\ell-1}}.
  \]
  By construction~\(\mathbf{\tilde{B}}_{\ell+1}\)
  and~\(\tau_{\ell+1}(\mathbf{B}_{\ell+1})\) differ only if
  there is an edge~\(e\) in~\(\widetilde{X}_{1}\)
  that is relevant for~\(\Psi\),
  which can only be the case if~\(\widetilde{X}_{e}=1\)
  or~\(\widetilde{X}'_{e}=1\).
  That is the same as saying that there is a path of length~\(1\)
  between the (fixed) sets of vertices~\(\mathbf{D}_{\ell}\)
  and~\(\mathbf{S}'_{\ell}\) in a graph~\(\mathbf{\widetilde{G}}_{n}\)
  or~\(\mathbf{\widetilde{G}}'_{n}\),
  which are based on~\(\widetilde{X}_{e}\) and~\(\widetilde{X}'_{e}\),
  respectively.
  Hence, by \autoref{lem:abpath}
  \begin{align}
    \indfunc_{\mathbf{I}_{\ell}}
    \prob_{n}(\mathbf{\tilde{B}}_{\ell+1} \neq \tau_{\ell+1}(\mathbf{B}_{\ell+1})
      \given \mathbf{B}_{\ell}, \mathbf{B}'_{\ell})
    &\leq \indfunc_{\mathbf{I}_{\ell}}
      (\prob_{n}(\mathbf{D}_{\ell} \pathbetw^{\mathbf{\widetilde{G}}_{n}}_{1} \mathbf{S}'_{\ell})
      +\prob_{n}(\mathbf{D}_{\ell} \pathbetw^{\mathbf{\widetilde{G}}'_{n}}_{1} \mathbf{S}'_{\ell}))\notag\\
    &\leq \indfunc_{\mathbf{I}_{\ell}}2
    \frac{\setweight{\mathbf{D}_{\ell}}\setweight{\mathbf{S}'_{\ell-1}}}{n
      \vartheta}\notag\\
    &\leq \indfunc_{\mathbf{I}_{\ell}}C
    \frac{\setweight{\mathbf{S}_{\ell}}\setweight{\mathbf{S}'_{\ell}}}{n
      \vartheta}\label{eq:proof:btlbtaul:equal}
    .
  \end{align}

  Similarly we want to show that~\(\mathbf{\tilde{B}}'_{\ell+1}\)
  differs from~\(\tau_{\ell+1}(\mathbf{B}'_{\ell+1})\)
  with small probability on~\(\mathbf{I}_{\ell}\)
  conditionally on~\(\mathbf{B}_{\ell}\)
  and~\(\mathbf{B}'_{\ell}\).
  Write
  \begin{align*}
  \widetilde{X}_{2}
  &= \sset{(e,\widetilde{X}_{e},\widetilde{X}'_{e},
    \widetilde{w}_{e},\widetilde{w}'_{e})}{e = \edge{u}{v},
    v \in \mathbf{D}'_{\ell},
    u \in \mathbf{S}_{\ell-1}}
  \shortintertext{and}
  \widetilde{X}_{3}
  &= \sset{(e,\widetilde{X}_{e},\widetilde{X}'_{e},
    \widetilde{w}_{e},\widetilde{w}'_{e})}{e = \edge{u}{v},
    v \in \mathbf{D}_{\ell},
    u \in \mathbf{D}'_{\ell}}.
  \end{align*}
  By construction~\(\mathbf{\tilde{B}}'_{\ell+1}\)
  and~\(\tau_{\ell+1}(\mathbf{B}'_{\ell+1})\) differ
  only if
  \begin{itemize}
    \item there is an edge~\(e\) in~\(\widetilde{X}_{2}\)
    that is relevant for~\(\Psi'\) or
    \item there is an edge~\(e\) in~\(X_{3}\) that differs in a relevant
      way between~\(X_{3}\) and~\(\widetilde{X}_{3}\),

      where an edge~\(e\) \emph{differs in a relevant way
      between~\(X_{3}\) and~\(\widetilde{X}_{3}\)}
      if~\((X_{e},X'_{e},w_{e},w'_{e})\)
      differs from
      \((\widetilde{X}_{e},\widetilde{X}'_{e},\widetilde{w}_{e},\widetilde{w}'_{e})\),
      unless all of~\(X_{e}\), \(X'_{e}\), \(\widetilde{X}_{e}\)
      and~\(\widetilde{X}'_{e}\)
      are equal to zero,
      because that would mean that the edge is not relevant for~\(\Psi'\).
      In particular an edge can only differ in a relevant way if at least
      one
      of~\(X_{e}\), \(X'_{e}\), \(\widetilde{X}_{e}\)
      or~\(\widetilde{X}'_{e}\) is equal to one.
  \end{itemize}

  Hence, as in~\eqref{eq:proof:btlbtaul:equal} \autoref{lem:abpath} shows
  \begin{equation*}
  \indfunc_{\mathbf{I}_{\ell}}
  \prob_{n}(\mathbf{\tilde{B}}_{\ell+1} \neq \tau_{\ell+1}(\mathbf{B}_{\ell+1})
  \given \mathbf{B}_{\ell}, \mathbf{B}'_{\ell})
  \leq \indfunc_{\mathbf{I}_{\ell}}C
  \frac{\setweight{\mathbf{D}_{\ell}}\setweight{\mathbf{S}'_{\ell-1}}}{n
  \vartheta}
  +\indfunc_{\mathbf{I}_{\ell}}C
  \frac{\setweight{\mathbf{D}_{\ell}}\setweight{\mathbf{D}'_{\ell}}}{n
  \vartheta}
  \leq \indfunc_{\mathbf{I}_{\ell}}C
  \frac{\setweight{\mathbf{S}_{\ell}}\setweight{\mathbf{S}'_{\ell}}}{n
  \vartheta}
  .
  \end{equation*}
  This finishes the proof.
\end{proof}

We now add the missing weights to the vertices at level~\(\ell+1\).
Again the new weights are independent if no vertex appears for both
sets of root vertices.
If a vertex is needed for both root vertex sets,
its weight can be rerandomised to still obtain
independent random variables for both sets.
As long as the number of rerandomised vertex weights is not too large,
the independent versions of the neighbourhoods differ from
the original~\(\ell+1\)-neighbourhoods with small enough
probability.
\begin{lemma}\label{lem:covcouplmorecomplex}
  Let~\(\mathbf{B}_{r}\),~\(\mathbf{B}'_{r}\),
  \(\mathbf{S}_{r}\), \(\mathbf{S}'_{r}\),
  \(\mathbf{D}_{r}\), \(\mathbf{D}'_{r}\),
  \(\mathbf{I}_{r}\) and~\(\tau_{r}\) be
  as in \autoref{lem:covcouplmorecomplextilde}.

  Fix any level~\(\ell \in \naturals\), then
  there is a coupling of~\(\mathbf{\bar{B}}_{\ell+1}\)
  with~\(\mathbf{B}_{\ell+1}\)
  and of~\(\mathbf{B}_{\ell+1}'\)
  with~\(\mathbf{\bar{B}}'_{\ell+1}\)
  such that~\(\mathbf{\bar{B}}_{\ell+1}\) and~\(\mathbf{\bar{B}}'_{\ell+1}\)
  are conditionally independent
  given~\(\mathbf{B}_{\ell},\mathbf{B}_{\ell}'\)
  on~\(\mathbf{I}_{\ell}\).
  Furthermore,
  the law of~\(\mathbf{\bar{B}}_{\ell+1}\)
  given~\(\mathbf{B}_{\ell},\mathbf{B}'_{\ell}\)
  is equal to the law of~\(\mathbf{B}_{\ell+1}\) given~\(\mathbf{B}_{\ell}\)
  and analogously for~\(\mathbf{\bar{B}}_{\ell+1}'\).

  In formulas, for all functions~\(g\) and~\(g'\)
  we have almost surely
  \begin{gather*}
  \indfunc_{\mathbf{I}_{\ell}}
  \cov_{n}(g(\mathbf{\bar{B}}_{\ell+1}),g'(\mathbf{\bar{B}}'_{\ell+1})
  \given \mathbf{B}_{\ell}, \mathbf{B}'_{\ell}) = 0,\\
  \indfunc_{\mathbf{I}_{\ell}}
  \expe_{n}[g(\mathbf{\bar{B}}_{\ell+1}) \given \mathbf{B}_{\ell},
  \mathbf{B}'_{\ell}]
  = \indfunc_{\mathbf{I}_{\ell}}
  \expe_{n}[g(\mathbf{B}_{\ell+1}) \given \mathbf{B}_{\ell}],\\
  \indfunc_{\mathbf{I}_{\ell}}
  \expe_{n}[g'(\mathbf{\bar{B}}_{\ell+1}) \given \mathbf{B}_{\ell},
  \mathbf{B}'_{\ell}]
  = \indfunc_{\mathbf{I}_{\ell}}
  \expe_{n}[g'(\mathbf{B}'_{\ell+1}) \given \mathbf{B}'_{\ell}].
  \end{gather*}

  Moreover,
  \begin{equation*}
  \begin{split}
  \indfunc_{\mathbf{I}_{\ell}}
  \prob_{n}(\mathbf{\bar{B}}_{\ell+1} \neq \mathbf{B}_{\ell+1}
  \given \mathbf{B}_{\ell}, \mathbf{B}'_{\ell})
  &\leq \indfunc_{\mathbf{I}_{\ell}} C (1+\Gamma_{2,n})
  \frac{\setweight{\mathbf{S}_{\ell}}\setweight{\mathbf{S}'_{\ell}}}
  {n\vartheta},\\
  \indfunc_{\mathbf{I}_{\ell}}
  \prob_{n}(\mathbf{\bar{B}}'_{\ell+1} \neq \mathbf{B}'_{\ell+1}
  \given \mathbf{B}_{\ell}, \mathbf{B}'_{\ell})
  &\leq \indfunc_{\mathbf{I}_{\ell}} C (1+\Gamma_{2,n})
  \frac{\setweight{\mathbf{S}_{\ell}}\setweight{\mathbf{S}'_{\ell}}}
  {n\vartheta}.
  \end{split}
  \end{equation*}
\end{lemma}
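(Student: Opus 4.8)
Here is how I would go about proving \autoref{lem:covcouplmorecomplex}.

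\textbf{Strategy.} The idea is to bootstrap the coupling of \autoref{lem:covcouplmorecomplextilde}, which already hands us conditionally independent copies $\mathbf{\tilde{B}}_{\ell+1}$, $\mathbf{\tilde{B}}'_{\ell+1}$ of the level-$(\ell+1)$ neighbourhoods with the \emph{weights of their level-$(\ell+1)$ vertices stripped off} (that is what $\tau_{\ell+1}$ does); only those missing weights need to be restored. The starting observation is that $\mathbf{B}_{\ell+1}$ is recovered from $\tau_{\ell+1}(\mathbf{B}_{\ell+1})$ by attaching to each level-$(\ell+1)$ vertex $v$ the pair $(w_v,w'_v)$ (the weight of $v$ in the $\mathbf{G}_n$- and $\mathbf{G}_n^F$-components being read off from this pair using $F$), that these pairs are i.i.d., and that they are independent of $\tau_{\ell+1}(\mathbf{B}_{\ell+1})$ and of $\mathbf{B}_\ell$, since a level-$(\ell+1)$ vertex of $\mathbf{B}_{\ell+1}$ lies outside $\mathbf{S}_\ell$. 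Hence the conditional law of $\mathbf{B}_{\ell+1}$ given $\mathbf{B}_\ell$ equals that of $\tau_{\ell+1}(\mathbf{B}_{\ell+1})$ given $\mathbf{B}_\ell$ with fresh i.i.d.\ weight pairs appended to its level-$(\ell+1)$ vertices, and likewise for the primed neighbourhood.

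\textbf{The construction.} I would fix a fresh independent copy $(\widehat{w}_v,\widehat{w}'_v)_{v\in V_n}$ of the weight pairs for each side (enlarging the probability space if needed), let $\mathbf{\tilde{D}}_{\ell+1}$, $\mathbf{\tilde{D}}'_{\ell+1}$ be the level-$(\ell+1)$ vertex sets of $\mathbf{\tilde{B}}_{\ell+1}$, $\mathbf{\tilde{B}}'_{\ell+1}$, and then, on $\mathbf{I}_\ell$: define $\mathbf{\bar{B}}_{\ell+1}$ by attaching to $v\in\mathbf{\tilde{D}}_{\ell+1}$ the true pair $(w_v,w'_v)$ if $v\notin\mathbf{S}'_\ell$ and a fresh pair if $v\in\mathbf{S}'_\ell$; define $\mathbf{\bar{B}}'_{\ell+1}$ by attaching to $v\in\mathbf{\tilde{D}}'_{\ell+1}$ the true pair $(w_v,w'_v)$ if $v\notin\mathbf{S}_\ell\cup\mathbf{\tilde{D}}_{\ell+1}$ and a fresh pair otherwise (off $\mathbf{I}_\ell$ one may set everything arbitrarily). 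The asymmetry — the primed side yields to the unprimed side on vertices sitting at level $\ell+1$ on both — guarantees that no true pair $(w_v,w'_v)$ is used on both sides, and excluding $\mathbf{S}'_\ell$ (resp.\ $\mathbf{S}_\ell$) guarantees that the true pairs actually attached involve vertices outside $\mathbf{S}_\ell\cup\mathbf{S}'_\ell$, hence are i.i.d.\ fresh draws given $\mathbf{B}_\ell,\mathbf{B}'_\ell$. Combined with the first paragraph this immediately gives the two conditional-law identities, since in each case the tilde-object carries the correct conditional law and is decorated with fresh i.i.d.\ weight pairs by a kernel whose extra randomness is independent of it.

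\textbf{Conditional independence.} For the covariance identity I would condition on $\mathcal{G}=\sigma(\mathbf{B}_\ell,\mathbf{B}'_\ell,\mathbf{\tilde{B}}_{\ell+1})$. Given $\mathcal{G}$ and on $\mathbf{I}_\ell$, the object $\mathbf{\tilde{B}}'_{\ell+1}$ retains the conditional law it had given $\mathbf{B}'_\ell$ (by the independence in \autoref{lem:covcouplmorecomplextilde}), while $\mathbf{\bar{B}}_{\ell+1}$ is a measurable function of $\mathcal{G}$ and a block of independent weight pairs that is disjoint from, and conditionally independent of, both $\mathbf{\tilde{B}}'_{\ell+1}$ and the weight pairs entering $\mathbf{\bar{B}}'_{\ell+1}$; hence $\mathbf{\bar{B}}_{\ell+1}\perp\mathbf{\bar{B}}'_{\ell+1}$ given $\mathcal{G}$. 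Moreover, since overwriting a weight by a fresh copy of it does not change its distribution, the conditional law of $\mathbf{\bar{B}}'_{\ell+1}$ given $\mathcal{G}$ in fact does not depend on $\mathbf{\tilde{B}}_{\ell+1}$ and equals the conditional law of $\mathbf{B}'_{\ell+1}$ given $\mathbf{B}'_\ell$, so $\expe_n[g'(\mathbf{\bar{B}}'_{\ell+1})\mid\mathcal{G}]=\expe_n[g'(\mathbf{\bar{B}}'_{\ell+1})\mid\mathbf{B}_\ell,\mathbf{B}'_\ell]$ on $\mathbf{I}_\ell$; a short tower-rule computation then upgrades the conditional independence from $\mathcal{G}$ to $\sigma(\mathbf{B}_\ell,\mathbf{B}'_\ell)$ (using that $\mathbf{I}_\ell$ is $\sigma(\mathbf{B}_\ell,\mathbf{B}'_\ell)$-measurable).

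\textbf{Coupling probabilities, and the main obstacle.} On $\mathbf{I}_\ell$ we have $\mathbf{\bar{B}}_{\ell+1}=\mathbf{B}_{\ell+1}$ as soon as $\mathbf{\tilde{B}}_{\ell+1}=\tau_{\ell+1}(\mathbf{B}_{\ell+1})$ (so $\mathbf{\tilde{D}}_{\ell+1}=\mathbf{D}_{\ell+1}$) and $\mathbf{D}_{\ell+1}\cap\mathbf{S}'_\ell=\emptyset$ (so no attached true pair is overwritten). The first event fails with conditional probability $\le C\setweight{\mathbf{S}_\ell}\setweight{\mathbf{S}'_\ell}/(n\vartheta)$ by \eqref{eq:bvteqbv}; since on $\mathbf{I}_\ell$ there is no edge from $\mathbf{D}_\ell$ to $\mathbf{S}'_{\ell-1}$, the second event forces an edge from $\mathbf{D}_\ell$ to $\mathbf{D}'_\ell$, which — these edges being independent of $\mathbf{B}_\ell,\mathbf{B}'_\ell$ — has conditional probability $\le\setweight{\mathbf{D}_\ell}\setweight{\mathbf{D}'_\ell}/(n\vartheta)$ by \autoref{lem:abpath}, and a union bound gives the claim for $\mathbf{\bar{B}}_{\ell+1}$. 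For $\mathbf{\bar{B}}'_{\ell+1}$ the argument is the same except that $\mathbf{\bar{B}}'_{\ell+1}\ne\mathbf{B}'_{\ell+1}$ also requires a vertex of $\mathbf{D}'_{\ell+1}$ to lie in $\mathbf{D}_{\ell+1}$, i.e.\ a length-two path from $\mathbf{D}_\ell$ to $\mathbf{D}'_\ell$; \autoref{lem:abpath} bounds this by $\Gamma_{2,n}\setweight{\mathbf{D}_\ell}\setweight{\mathbf{D}'_\ell}/(n\vartheta)$, which is where the factor $1+\Gamma_{2,n}$ appears. The one genuinely delicate point is the tension between requiring \emph{exact} conditional independence and keeping the coupling close; it is resolved precisely by the asymmetric yielding rule above, together with the remark that rerandomising a weight leaves all conditional laws unchanged.
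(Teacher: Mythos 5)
Your proposal is correct and follows essentially the same route as the paper: the same asymmetric rerandomisation rule for the level-$(\ell+1)$ vertex weights on top of the coupling from \autoref{lem:covcouplmorecomplextilde}, the same observation that replacing a weight by a fresh copy leaves all conditional laws unchanged, and the same decomposition of the failure event into the tilde-coupling failing or a weight being rerandomised (an edge from $\mathbf{D}_\ell$ to $\mathbf{S}'_\ell$, respectively a two-edge path between $\mathbf{D}_\ell$ and $\mathbf{D}'_\ell$ for the primed side, whence the $\Gamma_{2,n}$). The only differences are cosmetic bookkeeping choices in the conditional-independence step.
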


\begin{proof}
  Let~\(\mathbf{B}_{\ell+1}\), \(\mathbf{B}_{\ell+1}'\),
  \(\mathbf{\tilde{B}}_{\ell+1}\) and~\(\mathbf{\tilde{B}}'_{\ell+1})\)
  be as in \autoref{lem:covcouplmorecomplextilde}.
  Let~\(\mathbf{\tilde{S}}_{\ell}\) be the union of the vertex sets of the
  constituent graphs of~\(\mathbf{\tilde{B}}_{\ell}\)
  and similarly~\(\mathbf{\tilde{S}}'_{\ell}\)
  be the corresponding vertex set of~\(\mathbf{\tilde{B}}'_{\ell}\).
  Define~\(\mathbf{\tilde{D}}_{\ell}
  = \mathbf{\tilde{S}}_{\ell} \setminus \mathbf{\tilde{S}}_{\ell-1}\)
  and~\(\mathbf{\tilde{D}}'_{\ell}
  = \mathbf{\tilde{S}}'_{\ell} \setminus \mathbf{\tilde{S}}'_{\ell-1}\).

  Construct~\(\mathbf{\bar{B}}_{\ell+1}\) from~\(\mathbf{\tilde{B}}_{\ell+1}\)
  by adding the remaining vertex weights
  at level~\(\ell+1\) as follows
  \begin{align*}
  (\bar{w}_{u},\bar{w}'_{u})
  &=
  \begin{cases}
  (\tilde{w}_{u},\tilde{w}'_{u}) & u \in \mathbf{\tilde{S}}'_{\ell}\\
  (w_{u},w'_{u}) &  u \notin \mathbf{\tilde{S}}'_{\ell}\\
  \end{cases}
  \quad
  u \in \mathbf{\tilde{D}}_{\ell+1}
  \shortintertext{and}
  (\bar{w}_{u},\bar{w}'_{u})
  &=
  \begin{cases}
  (\tilde{w}_{u},\tilde{w}'_{u})
  & u \in \mathbf{\tilde{S}}_{\ell} \setunion \mathbf{\tilde{D}}_{\ell+1}\\
  (w_{u},w'_{u})
  &  u \notin \mathbf{\tilde{S}}_{\ell}  \setunion
  \mathbf{\tilde{D}}_{\ell+1}\\
  \end{cases}
  \quad
  u \in \mathbf{\tilde{D}}'_{\ell+1},
  \end{align*}
  where~\((\tilde{W},\tilde{W}')\) is an i.i.d.\ copy of~\((W,W')\).

  Let~\(\widebar{\Psi}\) be the function such that
  \begin{equation}\label{eq:bblpsi}
  \mathbf{\bar{B}}_{\ell+1}
  = \widebar{\Psi}(\mathbf{\tilde{B}}_{\ell+1},\mathbf{\tilde{B}}'_{\ell+1},
  \mathbf{\tilde{W}}),
  \end{equation}
  where~\(\mathbf{\tilde{W}}\) is the collection of random
  variables~\((w_{u},w'_{u},\tilde{w}_{u},\tilde{w}'_{u})_{u \in V_{n}
    \setminus (\mathbf{\tilde{S}}_{\ell} \setunion
    \mathbf{\tilde{S}}'_{\ell})}\).
  The function~\(\widebar{\Psi}\) endows~\(\mathbf{\tilde{B}}_{\ell+1}\)
  with weights on the~\(\ell+1\)-level vertices from~\(\mathbf{\tilde{W}}\)
  and chooses~\((w_{u},w'_{u})\) or~\((\tilde{w}_{u},\tilde{w}'_{u})\)
  according to the status of~\(u\) in~\(\mathbf{\tilde{B}}'_{\ell+1}\).
  Because the alternatives~\((w_{u},w'_{u})\)
  and~\((\tilde{w}_{u},\tilde{w}'_{u})\)
  have the same distribution and are both independent
  of~\(\mathbf{\tilde{B}}_{\ell+1}\),
  the realisation of~\(\mathbf{\tilde{B}}'_{\ell+1}\) does
  not matter for the \emph{distribution} of the resulting object.
  This implies that for all realisations~\(\mathbf{b}'\)
  of~\(\mathbf{\tilde{B}}'_{\ell}\)
  we have
  \begin{equation}\label{eq:bblwobbpell}
  \mathbf{\bar{B}}_{\ell+1}
  \eqdist
  \widebar{\Psi}(\mathbf{\tilde{B}}_{\ell+1},\mathbf{b}',
  \mathbf{W}),
  \end{equation}
  where~\(\mathbf{W}\) is a collection of independent random variables
  with the same distribution
  as~\((w_{u},w'_{u},\tilde{w}_{u},\tilde{w}'_{u})\).
  In particular this also holds if~\(\mathbf{b}'\) is empty.
  This equality in distribution also holds conditional
  on~\(\mathbf{\tilde{B}}_{\ell+1}\) and~\(\mathbf{\tilde{B}}'_{\ell+1}\).
  For the same reasons, the function also satisfies
  the following distributional equality
  \begin{equation}\label{eq:blpsi}
  \mathbf{B}_{\ell+1}
  \eqdist
  \widebar{\Psi}(\tau_{\ell+1}(\mathbf{B}_{\ell+1}),\emptyset,
  \mathbf{W}).
  \end{equation}

  The construction of~\(\mathbf{\bar{B}}_{\ell+1}\)
  and~\(\mathbf{\bar{B}}_{\ell+1}\) ensures that on~\(\mathbf{I}_{\ell}\)
  each vertex weight occurs only in
  one of~\(\mathbf{\bar{B}}_{\ell+1}\) or~\(\mathbf{\bar{B}}'_{\ell+1}\)
  and the decision where it occurs
  is deterministic given~\(\mathbf{\tilde{B}}_{\ell+1}\)
  and~\(\mathbf{\tilde{B}}'_{\ell+1}\).
  Thus~\(\mathbf{\bar{B}}_{\ell+1}\) and~\(\mathbf{\bar{B}}'_{\ell+1}\)
  are conditionally independent on~\(\mathbf{I}_{\ell}\)
  given~\(\mathbf{\tilde{B}}_{\ell+1}\)
  and~\(\mathbf{\tilde{B}}'_{\ell+1}\).
  In particular
  \begin{equation}
  \indfunc_{\mathbf{I}_{\ell}}
  \cov_{n}(g(\mathbf{\bar{B}}_{\ell+1}), g'(\mathbf{\bar{B}}'_{\ell+1})
  \given \mathbf{\tilde{B}}_{\ell+1}, \mathbf{\tilde{B}}'_{\ell+1})
  = 0.
  \label{eq:covgbgub0}
  \end{equation}

  Furthermore, the observations~\eqref{eq:bblpsi},
  \eqref{eq:bblwobbpell} and~\eqref{eq:blpsi}
  about~\(\widebar{\Psi}\) show that
  we have for all functions~\(g\)
  almost surely that
  \begin{align}
  \indfunc_{\mathbf{I}_{\ell}}
  \expe_{n}[g(\mathbf{\bar{B}}_{\ell+1})
  \given \mathbf{B}_{\ell},\mathbf{B}'_{\ell}]
  &= \indfunc_{\mathbf{I}_{\ell}}
  \expe_{n}[
  \expe_{n}[g(\widebar{\Psi}(\mathbf{\tilde{B}}_{\ell+1},\mathbf{\tilde{B}}'_{\ell+1},
  \mathbf{\tilde{W}}))
  \given \mathbf{\tilde{B}}_{\ell+1},\mathbf{\tilde{B}}'_{\ell+1}]
  \given \mathbf{B}_{\ell},\mathbf{B}'_{\ell}]\notag\\
  &= \indfunc_{\mathbf{I}_{\ell}}
  \expe_{n}[
  g(\widebar{\Psi}(\tau_{\ell+1}(\mathbf{B}_{\ell+1}),\emptyset,
  \mathbf{W}))
  \given \mathbf{B}_{\ell}]\notag\\
  &= \indfunc_{\mathbf{I}_{\ell}}
  \expe_{n}[
  g(\mathbf{B}_{\ell+1})
  \given \mathbf{B}_{\ell}]\label{eq:expgbvbbvt}
  .
  \end{align}
  Analogous we also have
  \begin{equation}
  \indfunc_{\mathbf{I}_{\ell}}
  \expe_{n}[g'(\mathbf{\bar{B}}'_{\ell+1})
  \given \mathbf{\tilde{B}}_{\ell+1},\mathbf{\tilde{B}}'_{\ell+1}]
  =\indfunc_{\mathbf{I}_{\ell}}
  \expe_{n}[
  g'(\mathbf{B}'_{\ell+1})
  \given \mathbf{B}'_{\ell}].\label{eq:expgbubbut}
  \end{equation}

  It remains to show that the probability that~\(\mathbf{B}_{\ell+1}\)
  and~\(\mathbf{\bar{B}}_{\ell+1}\) differ can be controlled as claimed.
  By construction
  \(\mathbf{B}_{\ell+1}\) and~\(\mathbf{\bar{B}}_{\ell+1}\)
  differ only if the underlying edge
  structures~\(\tau(\mathbf{B}_{\ell+1})\)
  and~\(\mathbf{\tilde{B}}_{\ell+1}\) differ
  or if the underlying edge structures are the same,
  but the vertex weights differ in a relevant way due to
  rerandomisation.
  Vertex weights have to be rerandomised if~\(\mathbf{\tilde{D}}_{\ell+1}\)
  has a non-empty intersection with~\(\mathbf{\tilde{S}}'_{\ell}\).
  Hence,
  \[
    \begin{split}
      &\indfunc_{\mathbf{I}_{\ell}}
      \prob_{n}(\mathbf{B}_{\ell+1} \neq \mathbf{\bar{B}}_{\ell+1}
      \given \mathbf{B}_{\ell},\mathbf{B}'_{\ell})\\
      &\quad\leq \indfunc_{\mathbf{I}_{\ell}}
      \prob_{n}(\mathbf{\tilde{B}}_{\ell+1} \neq \tau_{\ell+1}(\mathbf{B}_{\ell+1})
      \given \mathbf{B}_{\ell},\mathbf{B}'_{\ell})
      + \indfunc_{\mathbf{I}_{\ell}}
      \prob_{n}(\mathbf{\tilde{B}}'_{\ell+1} \neq \tau_{\ell+1}(\mathbf{B}'_{\ell+1})
      \given \mathbf{B}_{\ell},\mathbf{B}'_{\ell})\\
      &\qquad + \indfunc_{\mathbf{I}_{\ell}}
      \prob_{n}(\mathbf{\tilde{B}}_{\ell+1} =
      \tau_{\ell+1}(\mathbf{B}_{\ell+1}),
      \mathbf{\tilde{B}}'_{\ell+1} = \tau_{\ell+1}(\mathbf{B}'_{\ell+1}),
      \mathbf{\tilde{D}}_{\ell+1} \setintersect \mathbf{\tilde{S}}'_{\ell}
      \neq \emptyset
      \given \mathbf{B}_{\ell}, \mathbf{B}'_{\ell}).
    \end{split}
  \]
  The first and second term can be estimated by~\eqref{eq:bvteqbv}
    and~\eqref{eq:buteqbu}
    from \autoref{lem:covcouplmorecomplextilde}.
    In the third term we can replace~\(\mathbf{\tilde{D}}_{\ell+1}\)
    with~\(\mathbf{D}_{\ell+1}\)
    and~\(\mathbf{\tilde{S}}'_{\ell}\) with~\(\mathbf{S}'_{\ell}\)
    because the edge structures of~\(\mathbf{\tilde{B}}_{\ell+1}\)
    and~\(\mathbf{B}_{\ell+1}\) are the same,
    then the probability that~\(\mathbf{D}_{\ell+1}\)
    and~\(\mathbf{S}'_{\ell}\)
    intersect is given by
    the probability that there is an edge between~\(\mathbf{D}_{\ell}\)
    and~\(\mathbf{S}'_{\ell}\) so that by \autoref{lem:abpath}
  \begin{equation*}
  \indfunc_{\mathbf{I}_{\ell}}
      \prob_{n}(\mathbf{B}_{\ell+1} \neq \mathbf{\bar{B}}_{\ell+1}
      \given \mathbf{B}_{\ell},\mathbf{B}'_{\ell})
  \leq \indfunc_{\mathbf{I}_{\ell}}
  C \frac{\setweight{\mathbf{S}_{\ell}} \setweight{\mathbf{S}'_{\ell}}}
  {n\vartheta}
  + \indfunc_{\mathbf{I}_{\ell}}
  C \frac{\setweight{\mathbf{D}_{\ell}} \setweight{\mathbf{S}'_{\ell}}}
  {n\vartheta}
  \leq \indfunc_{\mathbf{I}_{\ell}}
  C \frac{\setweight{\mathbf{S}_{\ell}} \setweight{\mathbf{S}'_{\ell}}}
  {n\vartheta}.
  \end{equation*}

  The probability that~\(\mathbf{\bar{B}}'_{\ell+1}\)
  and~\(\mathbf{B}'_{\ell+1}\) differ
  can be estimated similarly, taking into account that rerandomisation
  of vertex weights happens additionally if~\(\mathbf{\tilde{D}}_{\ell+1}\)
  and~\(\mathbf{\tilde{D}}'_{\ell+1}\) have non-empty intersection,
  which is the case if there is a path consisting of two edges
  connecting~\(\mathbf{D}_{\ell}\) with~\(\mathbf{D}'_{\ell}\).
  Then \autoref{lem:covcouplmorecomplextilde} and \autoref{lem:abpath} imply
  \begin{align*}
  \indfunc_{\mathbf{I}_{\ell}}\prob_{n}(\mathbf{B}'_{\ell+1} \neq
  \mathbf{\bar{B}}'_{\ell+1}
  \given \mathbf{B}_{\ell},\mathbf{B}'_{\ell})
  &\leq \indfunc_{\mathbf{I}_{\ell}}
  C \frac{\setweight{\mathbf{S}_{\ell}}
  \setweight{\mathbf{S}'_{\ell}}}{n\vartheta}
  + \indfunc_{\mathbf{I}_{\ell}}
  C \frac{\setweight{\mathbf{D}'_{\ell}}
  \setweight{\mathbf{S}_{\ell}}}{n\vartheta}
  + \indfunc_{\mathbf{I}_{\ell}}
  C \frac{\setweight{\mathbf{D}'_{\ell}}
  \setweight{\mathbf{D}_{\ell}}}{n\vartheta}
  \Gamma_{2,n}\\
  &\leq \indfunc_{\mathbf{I}_{\ell}} C (1+\Gamma_{2,n})
  \frac{\setweight{\mathbf{S}_{\ell}}
  \setweight{\mathbf{S}'_{\ell}}}{n\vartheta}.
  \end{align*}
  This completes the proof.
\end{proof}

Thanks to the previous constructions we can bound
the covariance between~\(\mathbf{B}_{\ell+1}\)
and~\(\mathbf{B}'_{\ell+1}\)
by a term involving the covariance between~\(\mathbf{B}_{\ell}\)
and~\(\mathbf{B}'_{\ell}\)
and an error term.
\begin{lemma}\label{lem:nbhdcov:iter}
  Let~\(g\) and~\(g'\) be measurable functions that are bounded
  by~\(1\) in absolute value.
  Then
  \[
  \begin{split}
  &\cov_{n}(g(\mathbf{B}_{\ell+1}),g'(\mathbf{B}'_{\ell+1}))\\
  &\quad\leq C\Bigl(\prob_{n}(\setcomplement{\mathbf{I}_{\ell}}) +
  (1+\Gamma_{2,n})
  \frac{\expe_{n}[\setweight{\mathbf{S}_{\ell}}\setweight{\mathbf{S}'_{\ell}}]}
  {n\vartheta}\Bigr)
  + \cov_{n}(
  \expe_{n}[g(\mathbf{B}_{\ell+1}) \given \mathbf{B}_{\ell}],
  \expe_{n}[g'(\mathbf{B}'_{\ell+1}) \given \mathbf{B}'_{\ell}]).
  \end{split}
  \]
\end{lemma}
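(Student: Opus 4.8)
The plan is to run an error-controlled substitution argument whose essential input is the coupling of \autoref{lem:covcouplmorecomplex}. Recall that on the event $\mathbf{I}_{\ell}$ that lemma provides objects $\mathbf{\bar{B}}_{\ell+1}$ and $\mathbf{\bar{B}}'_{\ell+1}$ which are conditionally independent given $\mathbf{B}_{\ell},\mathbf{B}'_{\ell}$, whose conditional laws given $\mathbf{B}_{\ell},\mathbf{B}'_{\ell}$ equal those of $\mathbf{B}_{\ell+1}$ given $\mathbf{B}_{\ell}$ and of $\mathbf{B}'_{\ell+1}$ given $\mathbf{B}'_{\ell}$ respectively, and which differ from the true $(\ell+1)$-neighbourhoods with conditional probability at most $C(1+\Gamma_{2,n})\setweight{\mathbf{S}_{\ell}}\setweight{\mathbf{S}'_{\ell}}/(n\vartheta)$ on $\mathbf{I}_{\ell}$. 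I will use throughout that $\mathbf{I}_{\ell}$ is $\sigma(\mathbf{B}_{\ell},\mathbf{B}'_{\ell})$-measurable, since it depends only on the vertex sets $\mathbf{S}_{\ell},\mathbf{S}'_{\ell}$, so that $\indfunc_{\mathbf{I}_{\ell}}$ may be moved in and out of conditional expectations given $\mathbf{B}_{\ell},\mathbf{B}'_{\ell}$.

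First I would split, using $\abs{g},\abs{g'}\le1$,
\[
  \expe_{n}[g(\mathbf{B}_{\ell+1})g'(\mathbf{B}'_{\ell+1})]
  = \expe_{n}[\indfunc_{\mathbf{I}_{\ell}}\,g(\mathbf{B}_{\ell+1})g'(\mathbf{B}'_{\ell+1})] + E_{0},
  \qquad \abs{E_{0}}\le\prob_{n}(\setcomplement{\mathbf{I}_{\ell}}).
\]
On $\mathbf{I}_{\ell}$ I replace the product by $g(\mathbf{\bar{B}}_{\ell+1})g'(\mathbf{\bar{B}}'_{\ell+1})$: the two agree unless $\mathbf{\bar{B}}_{\ell+1}\neq\mathbf{B}_{\ell+1}$ or $\mathbf{\bar{B}}'_{\ell+1}\neq\mathbf{B}'_{\ell+1}$, an event of conditional probability at most $C(1+\Gamma_{2,n})\setweight{\mathbf{S}_{\ell}}\setweight{\mathbf{S}'_{\ell}}/(n\vartheta)$ on $\mathbf{I}_{\ell}$, and since the products are bounded by $1$ this substitution costs at most $C(1+\Gamma_{2,n})\expe_{n}[\setweight{\mathbf{S}_{\ell}}\setweight{\mathbf{S}'_{\ell}}]/(n\vartheta)$ after taking conditional and then full expectation. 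Next, the conditional independence of $\mathbf{\bar{B}}_{\ell+1},\mathbf{\bar{B}}'_{\ell+1}$ given $\mathbf{B}_{\ell},\mathbf{B}'_{\ell}$ on $\mathbf{I}_{\ell}$ factors the conditional expectation of the product, and the two conditional-law identities of \autoref{lem:covcouplmorecomplex} turn the two factors into $\indfunc_{\mathbf{I}_{\ell}}\phi$ and $\indfunc_{\mathbf{I}_{\ell}}\psi$, where $\phi=\expe_{n}[g(\mathbf{B}_{\ell+1})\given\mathbf{B}_{\ell}]$ and $\psi=\expe_{n}[g'(\mathbf{B}'_{\ell+1})\given\mathbf{B}'_{\ell}]$, both bounded by $1$. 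Removing $\indfunc_{\mathbf{I}_{\ell}}$ a final time costs another $\prob_{n}(\setcomplement{\mathbf{I}_{\ell}})$, so in total
\[
  \expe_{n}[g(\mathbf{B}_{\ell+1})g'(\mathbf{B}'_{\ell+1})]
  = \expe_{n}[\phi\psi] + E,\qquad
  \abs{E}\le C\Bigl(\prob_{n}(\setcomplement{\mathbf{I}_{\ell}})
  + (1+\Gamma_{2,n})\tfrac{\expe_{n}[\setweight{\mathbf{S}_{\ell}}\setweight{\mathbf{S}'_{\ell}}]}{n\vartheta}\Bigr).
\]

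To conclude, I would write $\expe_{n}[\phi\psi]=\cov_{n}(\phi,\psi)+\expe_{n}[\phi]\expe_{n}[\psi]$, note that by the tower property $\expe_{n}[\phi]=\expe_{n}[g(\mathbf{B}_{\ell+1})]$ and $\expe_{n}[\psi]=\expe_{n}[g'(\mathbf{B}'_{\ell+1})]$, and subtract $\expe_{n}[g(\mathbf{B}_{\ell+1})]\expe_{n}[g'(\mathbf{B}'_{\ell+1})]$ from both sides; the left side becomes $\cov_{n}(g(\mathbf{B}_{\ell+1}),g'(\mathbf{B}'_{\ell+1}))$ and the right side becomes $\cov_{n}(\phi,\psi)$ plus the error $E$, which is exactly the claimed inequality.

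The bookkeeping is routine; the only delicate point is ensuring the conditioning $\sigma$-algebras line up correctly when the product is factored — in particular that, on $\mathbf{I}_{\ell}$, conditioning $\mathbf{\bar{B}}_{\ell+1}$ on $\mathbf{B}_{\ell},\mathbf{B}'_{\ell}$ jointly may be replaced by conditioning on $\mathbf{B}_{\ell}$ alone, and symmetrically for the primed object. This is precisely what the conditional-law identities of \autoref{lem:covcouplmorecomplex} provide, so there is no real obstacle beyond invoking that lemma carefully.
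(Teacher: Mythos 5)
Your proposal is correct and follows essentially the same route as the paper: both rest entirely on \autoref{lem:covcouplmorecomplex} (conditional independence, matching conditional laws, and the coupling-failure bound), incur the same two error terms, and use the measurability of \(\indfunc_{\mathbf{I}_{\ell}}\) with respect to \((\mathbf{B}_{\ell},\mathbf{B}'_{\ell})\) in the same way. The only difference is bookkeeping — you track \(\expe_{n}[g\,g']\) directly and convert to a covariance at the very end (using that \(\expe_{n}[\phi]=\expe_{n}[g(\mathbf{B}_{\ell+1})]\) exactly), whereas the paper expands the covariance into a main term plus three cross-covariances and applies the law of total covariance to the main term; your version is marginally tidier but not a different argument.
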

\begin{proof}
  Since~\(g\) and~\(g'\) are bounded by one we have
  \begin{equation}\label{eq:gvgui+ic}
    \cov_{n}(g(\mathbf{B}_{\ell+1}),g'(\mathbf{B}'_{\ell+1}))
    \leq \cov_{n}(\indfunc_{\mathbf{I}_{\ell}}
    g(\mathbf{B}_{\ell+1}),
    \indfunc_{\mathbf{I}_{\ell}}g'(\mathbf{B}_{\ell+1}))
    +C\prob_{n}(\setcomplement{\mathbf{I}_{\ell}}).
  \end{equation}
  On~\(\mathbf{I}_{\ell}\)
  we approximate~\(\mathbf{B}_{\ell+1}\) with~\(\mathbf{\bar{B}}_{\ell+1}\)
  and~\(\mathbf{B}'_{\ell+1}\) with~\(\mathbf{\bar{B}}'_{\ell+1}\)
  The covariance on the right-hand side of~\eqref{eq:gvgui+ic}
  can thus be split into the covariance of~\(g(\mathbf{B}_{\ell+1})\)
  and~\(g'(\mathbf{B}'_{\ell+1})\)
  and three other covariances involving~\(g(\mathbf{B}_{\ell+1})-g(\mathbf{\bar{B}}_{\ell+1})\) or~\(g'(\mathbf{B}'_{\ell+1})
  -g'(\mathbf{\bar{B}}'_{\ell+1})\)

  By the law of total covariance
  and using the fact that~\(\indfunc_{\mathbf{I}_{\ell}}\)
  is \((\mathbf{B}_{\ell}, \mathbf{B}'_{\ell})\)-measurable
  we have
  \[
    \begin{split}
      \cov_{n}(
      \indfunc_{\mathbf{I}_{\ell}} g(\mathbf{\bar{B}}_{\ell+1}),
      \indfunc_{\mathbf{I}_{\ell}} g'(\mathbf{\bar{B}}'_{\ell+1})
      ) &= \expe_{n}[\indfunc_{\mathbf{I}_{\ell}}
      \cov_{n}(g(\mathbf{\bar{B}}_{\ell+1}), g'(\mathbf{\bar{B}}'_{\ell+1})
      \given \mathbf{B}_{\ell},\mathbf{B}'_{\ell})
      ]\notag\\
      &\quad + \cov_{n}(
      \indfunc_{\mathbf{I}_{\ell}} \expe_{n}[g(\mathbf{\bar{B}}_{\ell+1})
      \given \mathbf{B}_{\ell}, \mathbf{B}'_{\ell}],
      \indfunc_{\mathbf{I}_{\ell}} \expe_{n}[g'(\mathbf{\bar{B}}'_{\ell+1})
      \given \mathbf{B}_{\ell}, \mathbf{B}'_{\ell}]).\notag\\
    \end{split}
  \]
  By \autoref{lem:covcouplmorecomplex} the first expectation
  vanishes
  and the conditional expectations in the covariance can be rewritten
  based
  on~\(\mathbf{B}_{\ell+1}\) and~\(\mathbf{B}'_{\ell+1}\) so that we obtain
  \begin{equation}
    \cov_{n}(
    \indfunc_{\mathbf{I}_{\ell}} g(\mathbf{\bar{B}}_{\ell+1}),
    \indfunc_{\mathbf{I}_{\ell}} g'(\mathbf{\bar{B}}'_{\ell+1})
    )
    = \cov_{n}(
    \indfunc_{\mathbf{I}_{\ell}}
    \expe_{n}[g(\mathbf{B}_{\ell+1}) \given \mathbf{B}_{\ell}],
    \indfunc_{\mathbf{I}_{\ell}}
    \expe_{n}[g'(\mathbf{B}'_{\ell+1}) \given \mathbf{B}'_{\ell}]
    ).\label{eq:covbbtob}
  \end{equation}
  The indicator in the covariance can be dropped at the cost of
  adding~\(C \prob_{n}(\setcomplement{\mathbf{I}_{\ell}})\).
  Hence, \eqref{eq:covbbtob} implies
  \begin{equation}\label{eq:covigt}
    \cov_{n}(\indfunc_{\mathbf{I}_{\ell}} g(\mathbf{\bar{B}}_{\ell+1}),
    \indfunc_{\mathbf{I}_{\ell}} g'(\mathbf{\bar{B}}_{\ell+1}))
    \leq \cov_{n}(
    \expe_{n}[g(\mathbf{B}_{\ell+1}) \given \mathbf{B}_{\ell}],
    \expe_{n}[g'(\mathbf{B}_{\ell+1}) \given \mathbf{B}'_{\ell}]
    )
    +C \prob_{n}(\setcomplement{\mathbf{I}_{\ell}}).
  \end{equation}

  For the other terms
  involving~\(g(\mathbf{B}_{\ell+1})-g(\mathbf{\bar{B}}_{\ell+1})\)
  or~\(g'(\mathbf{B}'_{\ell+1})-g'(\mathbf{\bar{B}}'_{\ell+1})\)
  the triangle inequality implies
  \begin{equation}\label{eq:covggtg}
  \begin{split}
  &\abs{\cov_{n}(
    \indfunc_{\mathbf{I}_{\ell}} (g(\mathbf{B}_{\ell+1})
    -g_{v}(\mathbf{\bar{B}}_{\ell+1})),
    \indfunc_{\mathbf{I}_{\ell}} g'(\mathbf{\bar{B}}'_{\ell+1})
    )}\\
  &\leq \abs{
    \expe_{n}[\indfunc_{\mathbf{I}_{\ell}}
    (g(\mathbf{B}_{\ell+1})-g(\mathbf{\bar{B}}_{\ell+1}))
    g'(\mathbf{\bar{B}}'_{\ell+1})]}
  + \abs{\expe_{n}[\indfunc_{\mathbf{I}_{\ell}}
    (g(\mathbf{B}_{\ell+1})-g(\mathbf{\bar{B}}_{\ell+1}))]
    \expe_{n}[\indfunc_{\mathbf{I}_{\ell}}
    g'(\mathbf{\bar{B}}'_{\ell+1})]}.
  \end{split}
  \end{equation}
  Consider the first term and use that~\(g'\) is bounded by~\(1\),
  use the tower property to condition on~\(\mathbf{B}_{\ell},
    \mathbf{B}'_{\ell}\)
    then apply \autoref{lem:covcouplmorecomplex}
  \begin{align*}
  \abs{
    \expe_{n}[
    \indfunc_{\mathbf{I}_{\ell}}(g(\mathbf{B}_{\ell+1})-g(\mathbf{\bar{B}}_{\ell+1}))
    g'(\mathbf{\bar{B}}'_{\ell+1})]}
  &\leq
  \expe_{n}[\indfunc_{\mathbf{I}_{\ell}}
  \abs{g(\mathbf{B}_{\ell+1})-g(\mathbf{\bar{B}}_{\ell+1})}]\\
  &\leq 2  \expe_{n}[
  \indfunc_{\mathbf{I}_{\ell}}
  \prob_{n}(\mathbf{B}_{\ell+1} \neq \mathbf{\bar{B}}_{\ell+1}
  \given \mathbf{B}_{\ell}, \mathbf{B}'_{\ell})
  ]\\
  &\leq C (1+\Gamma_{2,n})
  \frac{\expe_{n}[\setweight{\mathbf{S}_{\ell}}
    \setweight{\mathbf{S}'_{\ell}}]}
  {n\vartheta}.
  \end{align*}
  The second term in~\eqref{eq:covggtg} can be bounded similarly.
  Thus
  \begin{equation}\label{eq:covggti}
  \abs{\cov_{n}(
    \indfunc_{\mathbf{I}_{\ell}} (g(\mathbf{B}_{\ell+1})
    -g(\mathbf{\bar{B}}_{\ell+1})),
    \indfunc_{\mathbf{I}_{\ell}} g'(\mathbf{\bar{B}}_{\ell+1}')
    )}
  \leq C(1+\Gamma_{2,n})
  \frac{\expe_{n}[\setweight{\mathbf{S}_{\ell}}
    \setweight{\mathbf{S}'_{\ell}}]}
  {n\vartheta}.
  \end{equation}
  The remaining terms
  can be bounded analogously.

  Combine~\eqref{eq:gvgui+ic}, \eqref{eq:covigt},
  and~\eqref{eq:covggti} and the analogous results for the remaining
  terms
  to obtain the claimed bound.

\end{proof}

The following lemma establishes simple bounds for the
error terms from \autoref{lem:nbhdcov:iter}.
\begin{lemma}\label{lem:covgvgu:compl:rest}
  For~\(\ell \in \naturals\) we have
  \begin{align}
  \prob_{n}(\setcomplement{\mathbf{I}_{\ell}})
  &\leq \frac{\sum_{i,i'} W_{v_{i}}W_{v'_{i'}}}{n \vartheta}
  2^{2\ell}(1+\Gamma_{2,n})^{2\ell}
  \shortintertext{and}
  \expe_{n}[\setweight{\mathbf{S}_{\ell}}\setweight{\mathbf{S}'_{\ell}}]
    &\leq C
      \sum_{i,i'} (W_{v_{i}}+1)(W_{v'_{i'}}+1)
      (\Gamma_{3,n}+1) (\Gamma_{2,n}+2)^{2\ell}.
  \end{align}
\end{lemma}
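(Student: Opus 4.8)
The plan is to control both quantities through a single \emph{superposition graph}. Let~$\tilde{G}_{n}$ be the graph on~$V_{n}$ in which the possible edge~$e=\edge{u}{w}$ is present if and only if~$X_{e}=1$ or~$X'_{e}=1$. Conditionally on~$\mathcal{F}_{n}$ these events are independent over~$e\in V_{n}^{(2)}$, and since~$1-(1-p_{uw})^{2}\leq 2p_{uw}$ we have~$\prob_{n}(e\in\tilde{G}_{n})\leq 2W_{u}W_{w}/(n\vartheta)$. Recall that the edge indicators of~$\mathbf{G}_{n}^{F}$ are~$X_{e}$ for~$e\notin F$ and~$X'_{e}$ for~$e\in F$, and that resampling vertex weights does not alter the graph structure; hence every edge of~$\mathbf{G}_{n}$, of~$\mathbf{G}_{n}^{F}$ and of~$\mathbf{G}_{n}^{F'}$ is also an edge of~$\tilde{G}_{n}$. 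Consequently every path witnessing membership of a vertex in one of the constituent neighbourhoods of~$\mathbf{B}_{\ell}$ (resp.\ of~$\mathbf{B}'_{\ell}$) is a path in~$\tilde{G}_{n}$, so that
\[
  \mathbf{S}_{\ell}\subseteq S_{\ell}(\set{v_{1},\dots,v_{m}},\tilde{G}_{n})
  \quad\text{and}\quad
  \mathbf{S}'_{\ell}\subseteq S_{\ell}(\set{v'_{1},\dots,v'_{m'}},\tilde{G}_{n}).
\]

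For the first bound, I would argue that on~$\setcomplement{\mathbf{I}_{\ell}}$ there is a vertex lying in both~$S_{\ell}(\set{v_{1},\dots,v_{m}},\tilde{G}_{n})$ and~$S_{\ell}(\set{v'_{1},\dots,v'_{m'}},\tilde{G}_{n})$; concatenating the two witnessing walks (each of length at most~$\ell$) and deleting cycles produces a path in~$\tilde{G}_{n}$ of length at most~$2\ell$ between the disjoint vertex sets~$\set{v_{1},\dots,v_{m}}$ and~$\set{v'_{1},\dots,v'_{m'}}$. Running the path-counting computation from the proof of \autoref{lem:abpath} with the edge probabilities of~$\tilde{G}_{n}$ (bounded by~$2W_{u}W_{w}/(n\vartheta)$) produces one extra factor~$2$ per edge, so a path of length~$r$ contributes at most~$2^{r}(n\vartheta)^{-1}\setweight{\set{v_{1},\dots,v_{m}}}\setweight{\set{v'_{1},\dots,v'_{m'}}}\Gamma_{2,n}^{r-1}$. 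Summing over~$r=1,\dots,2\ell$, bounding~$2^{r}\leq 2^{2\ell}$ and~$\sum_{j=0}^{2\ell-1}\Gamma_{2,n}^{j}\leq(1+\Gamma_{2,n})^{2\ell}$, and using~$\setweight{\set{v_{1},\dots,v_{m}}}\setweight{\set{v'_{1},\dots,v'_{m'}}}=\sum_{i,i'}W_{v_{i}}W_{v'_{i'}}$, yields the first claim.

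For the second bound the superposition graph is too lossy — viewing~$\tilde{G}_{n}$ as a rank-one graph with connectivity weights~$\sqrt2\,W_{v}$ would turn the factor~$(\Gamma_{2,n}+2)^{2\ell}$ into~$2^{2\ell}(\Gamma_{2,n}+1)^{2\ell}$, and the prefactor~$2^{2\ell}$ cannot be absorbed into a constant independent of~$\ell$. Instead I would decompose directly: from~$\mathbf{S}_{\ell}\subseteq\bigsetunion_{i=1}^{m}\bigl(S_{\ell}(v_{i},\mathbf{G}_{n})\setunion S_{\ell}(v_{i},\mathbf{G}_{n}^{F})\bigr)$ and nonnegativity of the weights, $\setweight{\mathbf{S}_{\ell}}\leq\sum_{i=1}^{m}\bigl(\setweight{S_{\ell}(v_{i},\mathbf{G}_{n})}+\setweight{S_{\ell}(v_{i},\mathbf{G}_{n}^{F})}\bigr)$, and analogously for~$\setweight{\mathbf{S}'_{\ell}}$ with the roots~$v'_{i'}$ and the perturbation~$F'$. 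Multiplying the two sums, taking~$\expe_{n}$, and applying Cauchy--Schwarz to each resulting term (there are four for each pair~$(i,i')$) reduces everything to quantities~$\expe_{n}[\setweight{S_{\ell}(v,G)}^{2}]$ with~$G\in\set{\mathbf{G}_{n},\mathbf{G}_{n}^{F},\mathbf{G}_{n}^{F'}}$. Since resampling the edge indicators leaves the law of the underlying graph unchanged, each such~$G$ has the law of~$G_{n}$, so \autoref{lem:bl:weight:squared:raw} in its~$p=1$ form bounds each of these by~$C(W_{v}+1)^{2}(\Gamma_{2,n}+2)^{2\ell}(\Gamma_{3,n}+1)$. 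Taking square roots, multiplying the estimates for~$v_{i}$ and~$v'_{i'}$ (the exponent~$2\ell$ survives intact) and summing over~$i,i'$ and the four graph choices (absorbing the constant~$4$ into~$C$) gives the second claim.

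The main obstacle is precisely this bookkeeping point: the single-superposition-graph argument, which handles the intersection probability cleanly, overcounts the base of the~$\Gamma_{2,n}$-power needed in the second-moment estimate, so one has to first split~$\mathbf{S}_{\ell}$ (and~$\mathbf{S}'_{\ell}$) into their~$2m$ (resp.\ $2m'$) equidistributed constituent neighbourhoods before invoking \autoref{lem:bl:weight:squared:raw}. Everything else is the routine path-counting and Cauchy--Schwarz already used throughout this section.
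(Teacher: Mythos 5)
Your proposal is correct and follows essentially the same route as the paper: for the intersection probability, the paper likewise passes to the graph whose edges are present when $X_{e}$ or its independent copy equals one (so edge probabilities are bounded by $2W_{u}W_{w}/(n\vartheta)$) and counts paths of length at most $2\ell$ between the two root sets, and for the second moment it likewise splits $\setweight{\mathbf{S}_{\ell}}$ into the $2m$ constituent neighbourhood weights and applies Cauchy--Schwarz together with \autoref{lem:bl:weight:squared:raw}. The only cosmetic difference is that the paper runs the path-counting pairwise over $(i,i')$ before summing, whereas you handle the vertex sets in one stroke via \autoref{lem:abpath}.
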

\begin{proof}
  Fix~\(i \in [m], i' \in  [m']\).
  If one of~\(B_{\ell}(v_{i},\mathbf{G}_{n})\)
  or~\(B_{\ell}(v_{i},\mathbf{G}^{F}_{n})\)
  intersects~\(B_{\ell}(v'_{i'},\mathbf{G}_{n})\)
  of~\(B_{\ell}(v'_{i'},\mathbf{G}^{F'}_{n})\),
  then there exists a path of length~\(2\ell\)
  from~\(v_{i}\)
  to~\(v'_{i'}\)
  in a graph~\(\widebar{\mathbf{G}}_{n}\)
  where an edge~\(e\) is present if~\(X_{e}\)
  or an independent copy~\(X'_{e}\) is equal to~\(1\).
  In particular the edge probability for~\(e=\edge{u}{v}\)
  in~\(\widebar{\mathbf{G}}_{n}\)
  can be bounded by~\(2W_{u}W_{v}/(n\vartheta)\).
  Hence, the calculations for
  \autoref{lem:uinbl} imply that the
  probability of intersection is bounded by
  \begin{equation*}
  \frac{W_{v_{i}}W_{v'_{i'}}}{n\vartheta} 2^{2\ell}(\Gamma_{2,n}+1)^{2\ell}.
  \end{equation*}
  Now sum over~\(i \in [m]\) and~\(i' \in [m']\) to obtain the first claim.

  For the second inequality
  let~\(S_{\ell}(v_{i})\) be the vertex set
  of~\(B_{\ell}(v_{i}, \mathbf{G}_{n})\)
  and~\(S_{\ell}^{F}(v_{i})\) the vertex set of~\(B_{\ell}(v_{i},
  \mathbf{G}_{n}^{F})\),
  similarly
  let~\(S_{\ell}(v'_{i})\) be the vertex set of
  of~\(B_{\ell}(v'_{i}, \mathbf{G}_{n})\)
  and~\(S_{\ell}^{F'}(v'_{i})\) the vertex set
  of~\(B_{\ell}(v'_{i}, \mathbf{G}_{n}^{F})\).
  Then
  Cauchy--Schwarz and \autoref{lem:bl:weight:squared:raw} imply
  \begin{equation*}
  \expe_{n}[\setweight{S_{\ell}(v_{i})}\setweight{S_{\ell}(v'_{i'})}]
  \leq C(W_{v_{i}}+1)(W_{v'_{i'}}+1) (\Gamma_{3,n}+1) (\Gamma_{2,n}+2)^{2\ell}
  \end{equation*}
  and the same bound for~\(S^{F}_{\ell}(v_{i})\) instead
  of~\(S_{\ell}(v_{i})\)
  or~\(S^{F'}_{\ell}(v'_{i'})\) instead of~\(S_{\ell}(v'_{i'})\).
  Now use that
  \[
  \setweight{\mathbf{S}_{\ell}}
  \leq \sum_{i=1}^{m} \setweight{S_{\ell}(v_{i})}
  +\sum_{i=1}^{m} \setweight{S^{F}_{\ell}(v_{i})}
  \]
  and the analogous bound for~\(\setweight{\mathbf{S}'_{\ell}}\)
  to obtain the second claim.
\end{proof}

Together the previous results establish a bound
of order~\(n^{-1}\) for the covariance
between~\(\mathbf{B}_{\ell}\) and~\(\mathbf{B}'_{\ell}\).
\begin{lemma}\label{lem:covgbgpbp}
  For any~\(\ell \in \naturals\)
  \[
      \sup_{g,g'}\cov_{n}(g(\mathbf{B}_{\ell}),g'(\mathbf{B}'_{\ell}))\\
      \leq
      \min\set[\Bigg]{\frac{\sum_{i,i'}
          (W_{v_{i}}+1)(W_{v'_{i'}}+1)}{n\vartheta}
        (\Gamma_{3,n}+1)(\Gamma_{2,n}+C)^{2\ell+1},1},
  \]
  where the supremum is taken over all functions~\(g\) and~\(g'\)
  that are bounded by~\(1\).
  If we set~\(\mathcal{V} = \set{v_{1},\dots,v_{m}}\)
  and~\(\mathcal{V}' = \set{v'_{1},\dots,v'_{m'}}\),
  the result can be rewritten as
  \[
      \sup_{g,g'}\cov_{n}(g(\mathbf{B}_{\ell}),g'(\mathbf{B}'_{\ell}))\\
      \leq
      \min\set[\Bigg]{
        \frac{(\setweight{\mathcal{V}}+\setcard{\mathcal{V}})
        (\setweight{\mathcal{V}'}+\setcard{\mathcal{V}'})}{n\vartheta}
        (\Gamma_{3,n}+1)(\Gamma_{2,n}+C)^{2\ell+1},1}.
  \]
\end{lemma}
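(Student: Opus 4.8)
The plan is to prove the estimate by induction on the level~\(\ell\), using the one-step covariance bound of \autoref{lem:nbhdcov:iter} as the recursion and controlling the resulting error terms with \autoref{lem:covgvgu:compl:rest}. For \(\ell \in \naturals\) write
\[
  c_{\ell} = \sup_{g,g'} \cov_{n}(g(\mathbf{B}_{\ell}), g'(\mathbf{B}'_{\ell})),
\]
the supremum ranging over all measurable \(g,g'\) bounded by~\(1\) in absolute value. Since \(\abs{g},\abs{g'} \le 1\), the Cauchy--Schwarz inequality gives \(\abs{\cov_{n}(g(\mathbf{B}_{\ell}), g'(\mathbf{B}'_{\ell}))} \le \var_{n}(g(\mathbf{B}_{\ell}))^{1/2}\var_{n}(g'(\mathbf{B}'_{\ell}))^{1/2} \le 1\), so \(c_{\ell} \le 1\) unconditionally; it therefore suffices to bound \(c_{\ell}\) by the first argument of the minimum, the minimum with~\(1\) being automatic. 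The two displayed formulations are equivalent because \(\sum_{i,i'}(W_{v_{i}}+1)(W_{v'_{i'}}+1) = \bigl(\setweight{\mathcal{V}}+\setcard{\mathcal{V}}\bigr)\bigl(\setweight{\mathcal{V}'}+\setcard{\mathcal{V}'}\bigr)\), using the distinctness of the root vertices.

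\emph{Base case \(\ell = 0\).} Each constituent \(B_{0}(v_{i},\mathbf{G}_{n})\), resp.\ \(B_{0}(v_{i},\mathbf{G}_{n}^{F})\), is just the single rooted vertex~\(v_{i}\) carrying the vertex weight \(w_{v_{i}}\), resp.\ \(w^{F}_{v_{i}} \in \set{w_{v_{i}},w'_{v_{i}}}\). Hence \(g(\mathbf{B}_{0})\) is, given~\(\mathcal{F}_{n}\), a function of the weights \((w_{v_{i}},w'_{v_{i}})_{i}\) only, and \(g'(\mathbf{B}'_{0})\) a function of \((w_{v'_{i'}},w'_{v'_{i'}})_{i'}\) only. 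Since the \(m+m'\) root vertices are distinct and the vertex weights are independent given~\(\mathcal{F}_{n}\), these two families are conditionally independent, so \(\cov_{n}(g(\mathbf{B}_{0}),g'(\mathbf{B}'_{0})) = 0\) and \(c_{0} = 0\).

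\emph{Induction step.} Fix \(g,g'\) bounded by~\(1\) and apply \autoref{lem:nbhdcov:iter} at level~\(\ell\). The two conditional expectations appearing in its ``main'' covariance, \(\expe_{n}[g(\mathbf{B}_{\ell+1}) \given \mathbf{B}_{\ell}]\) and \(\expe_{n}[g'(\mathbf{B}'_{\ell+1}) \given \mathbf{B}'_{\ell}]\), are bounded measurable functions of \(\mathbf{B}_{\ell}\), resp.\ \(\mathbf{B}'_{\ell}\), so that covariance is at most~\(c_{\ell}\); the remaining error term is bounded via \autoref{lem:covgvgu:compl:rest}. Taking the supremum over \(g,g'\) yields
\[
  c_{\ell+1} \le c_{\ell} + C\,\prob_{n}(\setcomplement{\mathbf{I}_{\ell}}) + C(1+\Gamma_{2,n})\frac{\expe_{n}[\setweight{\mathbf{S}_{\ell}}\setweight{\mathbf{S}'_{\ell}}]}{n\vartheta}.
\]
Inserting the two bounds of \autoref{lem:covgvgu:compl:rest} and, by generous estimates, absorbing the factors \(2^{2\ell}(1+\Gamma_{2,n})^{2\ell}\) and \((1+\Gamma_{2,n})(\Gamma_{2,n}+2)^{2\ell}\) into one expression, the per-step error is at most \(\tfrac{C\sum_{i,i'}(W_{v_{i}}+1)(W_{v'_{i'}}+1)}{n\vartheta}(\Gamma_{3,n}+1)(\Gamma_{2,n}+C)^{2\ell}\) for a suitable constant~\(C\). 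Starting from \(c_{0}=0\) and iterating, \(c_{\ell}\) is bounded by the sum of these errors over \(r = 0,\dots,\ell-1\); summing the geometric-type series \(\sum_{r}(\Gamma_{2,n}+C)^{2r}\) and absorbing the number~\(\ell\) of summands into the base of the power at the cost of the extra ``\(+1\)'' in the exponent gives \(c_{\ell} \le \tfrac{C\sum_{i,i'}(W_{v_{i}}+1)(W_{v'_{i'}}+1)}{n\vartheta}(\Gamma_{3,n}+1)(\Gamma_{2,n}+C)^{2\ell+1}\), which with \(c_{\ell}\le 1\) completes the proof.

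The structural part of this argument is routine given the cited lemmas; the one delicate point is the final estimate in the induction step, namely verifying that the two distinct exponential prefactors supplied by \autoref{lem:covgvgu:compl:rest}, together with the \(\ell\)-fold summation, really do collapse into the single expression \((\Gamma_{3,n}+1)(\Gamma_{2,n}+C)^{2\ell+1}\) for one fixed choice of~\(C\). That bookkeeping, which uses the slack in the exponent and the freedom to enlarge~\(C\), is the main obstacle; everything else (the telescoping via \autoref{lem:nbhdcov:iter}, the base case, and the Cauchy--Schwarz bound giving the minimum with~\(1\)) is immediate.
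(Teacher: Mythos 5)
Your argument is essentially identical to the paper's proof: both apply \autoref{lem:nbhdcov:iter}, bound the error terms via \autoref{lem:covgvgu:compl:rest}, observe that \(\expe_{n}[g(\mathbf{B}_{\ell+1})\given\mathbf{B}_{\ell}]\) is again a bounded function of \(\mathbf{B}_{\ell}\), and iterate; your explicit base case \(c_{0}=0\) and the geometric summation are just a slightly more formal rendering of the paper's terse ``take suprema and iterate.'' The proposal is correct.
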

\begin{proof}
  Apply first \autoref{lem:nbhdcov:iter}
  then use \autoref{lem:covgvgu:compl:rest} to estimate the
  non-covariance \enquote{error terms}.
  This gives the bound
  \begin{align*}
  &\cov_{n}(g(\mathbf{B}_{\ell+1}),g'(\mathbf{B}'_{\ell+1}))\\
    &\quad\leq C\biggl(\prob_{n}(\setcomplement{\mathbf{I}_{\ell}}) +
  (1+\Gamma_{2,n})
  \frac{\expe_{n}[\setweight{\mathbf{S}_{\ell}}\setweight{\mathbf{S}'_{\ell}}]}
  {n\vartheta}\biggr)
  + \cov_{n}(
  \expe_{n}[g(\mathbf{B}_{\ell+1})
  \given \mathbf{B}_{\ell}],
  \expe_{n}[g'(\mathbf{B}'_{\ell+1})
  \given \mathbf{B}'_{\ell}])\\
  &\quad\leq C\biggl(
  \frac{\sum_{i,i'} W_{v_{i}}W_{v'_{i'}}}{n \vartheta}
  2^{2\ell}(1+\Gamma_{2,n})^{2\ell}
  + \frac{\sum_{i,i'} (W_{v_{i}}+1)(W_{v'_{i'}}+1)}{n\vartheta}
  (\Gamma_{2,n}+1)^{2\ell}(\Gamma_{3,n}+1)
  \biggr)\\
  &\qquad+ \cov_{n}(
  \expe_{n}[g(\mathbf{B}_{\ell+1}) \given \mathbf{B}_{\ell}],
    \expe_{n}[g'(\mathbf{B}'_{\ell+1}) \given \mathbf{B}'_{\ell}]).
  \end{align*}
  Since~\(\expe[g(\mathbf{B}_{\ell+1})
    \given \mathbf{B}_{\ell}]\)
    can be written as~\(\bar{g}(\mathbf{B}_{\ell})\),
    where~\(\bar{g}_{v}\) is a measurable function that is bounded
    by~\(1\),
    and similarly for~\(g'\) with a function~\(\bar{g}'\)
    the term can be rewritten as a covariance
    of functions applied to~\(\mathbf{B}_{\ell}\)
    and~\(\mathbf{B}'_{\ell}\).
  \begin{align*}
  &\cov_{n}(g(\mathbf{B}_{\ell+1}),g'(\mathbf{B}'_{\ell+1}))\\
  &\quad\leq     C\biggl(
  \frac{\sum_{i,i'} W_{v_{i}}W_{v'_{i'}}}{n \vartheta}
  2^{2\ell}(1+\Gamma_{2,n})^{2\ell}
  + \frac{\sum_{i,i'} (W_{v_{i}}+1)(W_{v'_{i'}}+1)}{n\vartheta}
  (\Gamma_{2,n}+1)^{2\ell}(\Gamma_{3,n}+1)
  \biggr)\\
  &\qquad   +\cov_{n}(\bar{g}(\mathbf{B}_{\ell}),
  \bar{g}'(\mathbf{B}'_{\ell}))\\
  &\quad\leq \frac{\sum_{i,i'}(W_{v_{i}}+1)(W_{v'_{i'}}+1)}{n\vartheta}
  (\Gamma_{3,n}+1)(\Gamma_{2,n}+C)^{2\ell+1}
  +\cov_{n}(\bar{g}(\mathbf{B}_{\ell}),
  \bar{g}'(\mathbf{B}_{\ell})).
  \end{align*}
  The claim follows by
  taking the supremum over all measurable bounded functions
  (first on the right-hand side and then on the left-hand side)
  and iteration.
\end{proof}

\subsection{Graph exploration}\label{sec:graphexpl}

We now define a procedure that allows
us to explore the neighbourhood of a vertex in a graph.
This procedure can be applied to arbitrary graphs, so for the
remainder of this section, we shall not restrict ourselves to
the sparse inhomogeneous graph setting
and will work on a general graph~\(G = (V,E)\).
The presentation of the graph exploration
in this section is based on the formulation in lecture notes by
\citet[§~3.5.1]{bordenave}.
The approach is also discussed by \citet[§~4.1]{vdh}
who draws on work by \citet[§~10.5]{alon}.
In those discussions, however, the focus is on the cardinality of
the connected component of a vertex~\(v\)
and not on the complete neighbourhood structure.

Fix a graph~\(G = (V,E)\) with vertex set~\(V\)
and edge set~\(E \subseteq \sset{\edge{u}{v}}{u,v \in V}\).
The idea of the exploration on~\(G\) is to discover which
of the connections that could possibly be present in a graph
with vertex set~\(V\) are actually present in~\(G\).
To this end let~\(V^{(2)} = \sset{\edge{u}{v}}{u,v \in V}\) be
the set of edges in the complete graph on~\(V\)
and call elements of~\(V^{(2)}\) \emph{possible edges of~\(G\)}.
The graph~\(G\) is then completely determined by
the edge indicators~\((\indfunc_{E}(e))_{e \in V^{(2)}}\)
that tell us whether a possible edge~\(e \in V^{(2)}\)
is present in the edge set~\(E\) of~\(G\).
Because we explore~\(G\) by visiting vertices,
it is slightly more convenient to think of these edge indicators
as being indexed by pairs of vertices
\[
  X_{uv} = \indfunc_{E}(\edge{u}{v})
  \quad\text{for~\(u,v \in V\)}.
\]
To make notation a bit easier we will also define~\(X_{vv} = 0\)
for all~\(v \in V\).

Formally, the graph exploration of~\(G\) started in a vertex~\(v_{0} \in V\)
is given by a sequence of vertices~\(v_{0},v_{1},v_{2},v_{3},\dots \in V\)
along with sets~\(C_{j}\), \(A_{j}\) and~\(U_{j}\)
as well as a function~\(\varphi \colon S \to G\),
where~\(S \subseteq \ulamharris\) is a subtree of the Ulam--Harris tree.

\begin{algorithm}\label{alg:explore}
Start with a fixed vertex~\(v_{0}\) in~\(G\)
and set
\(C_{-1} = \emptyset\), \(A_{-1} = \set{v_{0}}\),
\(U_{-1} = V \setminus \set{v_{0}}\)
and on the Ulam--Harris side with~\(\mathbf{i}_{0} = \treeroot\).
Set~\(\varphi(\mathbf{i}_{0}) = \varphi(\treeroot) = v_{0}\).

For~\(j \in \set{0,1,\dots}\)
given~\(C_{j-1}\), \(A_{j-1}\) and~\(U_{j-1}\)
let~\(v_{j} = \varphi(\mathbf{i}_{j})\) be the smallest element
in~\(A_{j-1}\)
(\enquote{smallest} in the sense that its preimage~\(\mathbf{i}_{j}\)
under~\(\varphi\) is minimal in the order~\(\prec\) on the Ulam--Harris
tree).
Define~\(I_{j} = \sset{u \in U_{j-1}}{X_{v_{j}u}=1}\)
and let
\begin{equation*}
C_{j} = C_{j-1} \setunion \set{v_{j}},\quad
A_{j} = A_{j-1} \setminus \set{v_{j}} \setunion I_{j} \quad\text{and}\quad
U_{j} = U_{j-1} \setminus I_{j}.
\end{equation*}
Then set~\(N_{\mathbf{i}_{j}} = \setcard{I_{j}}\),
enumerate the elements of~\(I_{j}\)
as~\(\set{u_{1},\dots,u_{N_{\mathbf{i}_{j}}}}\)
(if we want the exploration to always yield the same results,
 we need to impose an order on the vertices in the set,
 in our applications we can always assume that~\(V = V_{n} = [n]\)
 and use the natural order on~\(\naturals\))
and extend~\(\varphi\) by setting~\(\varphi((\mathbf{i}_{j},1))=u_{1}, \dots,
\varphi((\mathbf{i}_{j},N_{\mathbf{i}_{j}}))=u_{N_{\mathbf{i}_{j}}}\)
so that the image of~\(\varphi\) now also covers all of~\(I_{j}\).

The exploration stops if~\(A_{j} = \emptyset\).
\end{algorithm}
The set~\(C_{j}\) can be seen as the set of explored vertices for which all
neighbours have been seen,
\(A_{j}\) is the set of active vertices
(i.e.~vertices that have been seen by the exploration,
but whose neighbours may not all have been seen yet)
and~\(U_{j}\) the set of unexplored vertices.

Knowledge of the exploration process up to step~\(j\)
in the form
of~\(C_{\ell}\), \(A_{\ell}\), \(U_{\ell}\)
for all~\(\ell \in \set{-1,\dots,j}\)
and~\(\varphi^{-1}\) on the set~\(C_{j} \setunion A_{j}\)
does not give us a complete picture of the explored graph,
because the exploration does not explicitly keep track
of edges between~\(v_{j}\) and~\(A_{j-1}\).

If we rely not only on the sets~\(C_{j}\), \(A_{j}\), \(U_{j}\)
and the function~\(\varphi\),
but instead keep track of all edges between~\(v_{j}\)
and~\(A_{j-1} \setunion U_{j-1}\), we can recover the entire subgraph
structure
and see each edge in the subgraph only once.
This observation motivates the following definition.
\begin{definition}\label{def:nei:explrel}
  Consider the exploration of the neighbourhood of a vertex~\(v_{0}\)
  in a graph~\(G\) as defined in \autoref{alg:explore}.
  For any~\(j\) so that~\(v_{j}\) is well-defined we call
  the neighbours of~\(v_{j}\)
  that are in~\(A_{j-1} \setunion U_{j-1}\)
  \emph{exploration-relevant neighbours}.
\end{definition}
We can recover all neighbours
of~\(v_{j}\) even if we only ever keep track of exploration-relevant
neighbours.

Let~\(\mathcal{G}_{j}\) be the~\(\sigma\)-algebra generated by the
edge indicators along the exploration sequence~\(X_{v_{\ell},u}\)
for~\(\ell \in \set{0,\dots,j}\)
and~\(u \in V\),
i.e.
\[
\mathcal{G}_{j}
= \sigma(X_{e})_{e \in \mathcal{E}'_{j}},
\quad\text{where}\quad
\mathcal{E}'_{j} = \sset{\edge{v_{\ell}}{u} \in V^{(2)}}
    {\ell \in \set{0,\dots,j}, u \in V \setminus \set{v_{\ell}}}.
\]
Clearly the exploration process as recorded by~\(C_{\ell}\),
\(A_{\ell}\),
\(U_{\ell}\) for~\(\ell \in \set{0,\dots,j}\)
and~\(\varphi^{-1}\) on the set~\(C_{j} \setunion A_{j}\)
is measurable with respect to~\(\mathcal{G}_{j}\).

Observe that given~\(\mathcal{G}_{j}\) the selection of~\(v_{j+1}\)
from~\(A_{j}\) is deterministic,
because we only need to know the preimages of the vertices in~\(A_{j}\)
under~\(\varphi\) in order to pick~\(v_{j+1}\).
This implies that~\(v_{j+1}\) is~\(\mathcal{G}_{j}\)-measurable.

On the other hand,~\(X_{v_{j+1},u}\) for~\(u \in A_{j} \setunion
U_{j}\)
are independent of~\(\mathcal{G}_{j}\),
since the relevant possible edges are not included
in~\(\mathcal{E}'_{j}\).
To see this, note that~\(\mathcal{E}'_{j}\)
only contains edges with at least one endpoint
in~\(C_{j}\).
Since~\(v_{j+1} \notin C_{j}\),
it follows that~\(u\) would have to be in~\(C_{j}\)
for~\(\edge{v_{j+1}}{u}\) to be contained in~\(\mathcal{E}'_{j}\).
But~\(u \in A_{j} \setunion U_{j}\) by definition,
which is disjoint with~\(C_{j}\) by construction.

Note further that the preimages of all vertices from~\(I_{j}\),
i.e., the vertices that are added to~\(A_{j}\) in step~\(j\),
are larger in the order~\(\prec\) on the Ulam--Harris tree than
the preimages of vertices in~\(C_{j-1} \setunion A_{j-1}\).
This implies that those vertices are only up for
selection
once all vertices from~\(A_{j-1}\) have been fully explored.
In particular the sequence~\(v_{0},v_{1},\dots\) contains
the vertices in exactly the order in which they were added to
the active set (i.e., removed from the unexplored set).
This means that the vertex
sequences~\((v_{\varphi(\mathbf{i})})_{\mathbf{i} \in S}\)
(traversed in the order given by~\(\prec\))
and~\((v_{j})_{j \in [\setcard{S}]}\) are exactly the same.

\subsection{Neighbourhood coupling to the
limiting tree}\label{sec:gimt}

We now construct a coupling between the neighbourhood of
a vertex~\(v\) in the unweighted inhomogeneous random graph~\(G_{n}\)
satisfying \autoref{ass:coupling}
and a Galton--Watson tree.
We slightly modify the approach by \citet{olveracravioto}
by combining it with the exploration as described by \citet{bordenave}.
As mentioned before we do not work under the minimal moment assumptions by
\citet{olveracravioto}, instead we will assume
that second and third moments of the connectivity weight distributions
exist.
This allows us to simplify some arguments and prove much more explicit
bounds for the coupling probabilities.
Couplings like this with explicit error bounds are
interesting in the context of the objective method
\citep{aldoussteele}.
A related coupling was used by \citet{fraiman}
to define and analyse stochastic recursions
(similar in principle to the RDE and RTP we briefly mentioned before)
on directed random graphs.

The coupling is found in two steps.
In a first step the neighbourhood is coupled to an intermediate tree
in which the connectivity weights are still dependent on~\(\mathcal{F}_{n}\).
The intermediate tree is then coupled to the desired limiting object
in a second step.

\begin{definition}\label{def:imt}
  Fix a vertex~\(v \in V_{n}\)
  and conditionally on~\(\mathcal{F}_{n}\)
  define the intermediate tree~\(\imt{T}(v)\)
  via a sequence of random
  variables~\(\sset{(\imt{W}_{\mathbf{i}},\imt{N}_{\mathbf{i}})}
  {\mathbf{i} \in \ulamharris}\),
  where~\(\imt{W}_{\mathbf{i}}\) is the type of individual~\(\mathbf{i}\)
  and~\(\imt{N}_{\mathbf{i}}\) is its number of children.
  The distribution of~\(\sset{(\imt{W}_{\mathbf{i}},\imt{N}_{\mathbf{i}})}
  {\mathbf{i} \in \ulamharris}\)
  satisfies
  \begin{itemize}
  \item \(\imt{W}_{\treeroot} = W_{v}\)
    and~\(\imt{N}_{\treeroot} \sim \mathrm{Poi}\bigl(\frac{W_{v} \Lambda_{n}}{n\vartheta}\bigr)\),
  \item all other (non-root) individuals~\(\mathbf{i} \neq \treeroot\)
    have independent types and numbers of
    children~\((\imt{W}_{\mathbf{i}},\imt{N}_{\mathbf{i}})\)
    with distribution
    \[
      \prob_{n}((\imt{W}_{\mathbf{i}},\imt{N}_{\mathbf{i}}) \in \placeholder)
      = \sum_{i=1}^{n} \frac{W_{i}}{\Lambda_{n}}
      \prob((W_{i},D_{i}) \in \placeholder \given W_{i}),
    \]
    where~\(D_{i}\)
    is Poisson distributed
    with mean~\(\Lambda_{n} W_{i} / (n\vartheta)\)
    given~\(W_{i}\).
  \end{itemize}

  The tree structure on~\(\imt{T}(v)\) is then obtained
  recursively from~\(\imt{A}_{0} = \set{\treeroot}\)
  and
  \[
    \imt{A}_{k}
    = \sset{(\mathbf{i},j)}
    {\mathbf{i} \in \imt{A}_{k-1}, 1 \leq j \leq \imt{N}_{\mathbf{i}}}
    \quad\text{for~\(k \in \naturals\), \(k \geq 1\)}.
  \]
\end{definition}

Intuitively, this defines~\(\imt{T}(v)\)
as a multi-type Galton--Watson process with~\(n\) types
corresponding to the vertices of~\(G_{n}\).
Technically, we have defined the tree so that
the type of a vertex is its weight~\(\imt{W}_{\mathbf{i}}\).
If the weights are all different, we can immediately
infer which vertex~\(v\) gave rise to this weight
and call~\(v\) the type.
If some of the~\(W_{v}\) are the same,
we may assume that we sample~\(\imt{W}_{\mathbf{i}}\)
by partitioning the unit interval into~\(n\)
subintervals of length~\(W_{i}/\Lambda_{n}\),
drawing a uniform random variable from~\(\intervaloo{0}{1}\)
and choosing~\(\imt{W}_{\mathbf{i}}\)
equal to the~\(W_{i}\) into whose interval the uniform
random variable falls.

We now use the exploration introduced in \autoref{sec:graphexpl}
to explore the neighbourhood of~\(v\) in~\(G_{n}\)
and build the intermediate tree~\(\imt{T}(v)\) at the same
time coupling the two in the process.
Broadly speaking the graph exploration is driven by Bernoulli
random variables, which we couple to Poisson
random variables in order to arrive at the Poisson
structure of~\(\imt{T}(v)\).

To simplify notation, set
\[
  p'_{vu} = \frac{W_{v} W_{u}}{n\vartheta},
\]
such that~\(p_{vu} = p'_{vu} \imin 1\).

Let~\(X_{vu} \sim \mathrm{Bin}(1,p_{vu})\)
be the edge indicators in~\(G_{n}\).
Couple~\(Z_{vu} \sim \mathrm{Poi}(p'_{vu})\)
to~\(X_{vu}\)
(this coupling is constructed more explicitly in \autoref{lem:couplexz}).
Let~\(Z^{*}_{vu}\) be i.i.d.~copies of~\(Z_{vu}\) that are independent
of
both~\(Z_{vu}\) and~\(X_{vu}\).

Start the exploration in~\(G_{n}\)
in the vertex~\(v_{0} = v\) (set~\(C_{-1} = \emptyset\),
\(A_{-1} = \set{v}\), \(U_{-1} = V_{n} \setminus \set{v}\)).
In~\(\imt{T}(v)\) give~\(\treeroot\) the type~\(v\)
and let~\(\mathbf{i}_{0} = \treeroot\) be the first individual
we visit
(we keep track of the types of individuals we see
via~\(\imt{C}_{-1} = \emptyset\),
\(\imt{A}_{-1} = \set{v}\), \(\imt{U}_{-1} = V_{n} \setminus \set{v}\)).

In~\(G_{n}\)
the exploration process
is governed by the random variables~\(X_{v_{j}u}\)
for~\(u \in A_{j-1} \setunion U_{j-1}\).
In particular given~\(C_{j-1}\), \(A_{j-1}\), \(U_{j-1}\),
we can select~\(v_{j}\)
and obtain~\(I_{j}\) by collecting those
vertices~\(u \in U_{j-1}\) for which~\(X_{v_{j}u} = 1\).
We also record those vertices in~\(A_{j-1}\)
that satisfy~\(X_{v_{j}u}=1\), which then allows us to
recover all exploration-relevant neighbours
and thus the complete graph structure.
(Assume that we order elements in~\(I_{j}\) by their vertex label.)

In~\(\imt{T}(v)\) we
have the analogous sets of types~\(\imt{C}_{j-1}\), \(\imt{A}_{j-1}\),
\(\imt{U}_{j-1}\)
and obtain the children of the type-\(v_{j}\) vertex~\(\mathbf{i}_{j}\)
by collecting~\(Z_{v_{j}u}\) children of type~\(u\)
for~\(u \in \imt{U}_{j-1} \setunion \imt{A}_{j-1}\)
and~\(Z^{*}_{v_{j}u}\) children of type~\(u\) for~\(u \in \imt{C}_{j}\).
(As written, this procedure might suggest a certain ordering for the children
that depends on the types in~\(\imt{U}_{j-1}\), \(\imt{A}_{j-1}\)
and~\(\imt{C}_{j-1}\).
The label of the children should not carry any information
about its type, so we relabel the children of~\(\mathbf{i}_{j}\)
randomly and add them to~\(\imt{T}(v_{0})\) as~\((\mathbf{i}_{j},t)\).)
Let~\(\imt{I}_{j}\) be the set of types of children of~\(v_{j}\).

Assuming that~\(C_{j-1} = \imt{C}_{j-1}\),
\(A_{j-1}=\imt{A}_{j-1}\) and~\(U_{j-1} = \imt{U}_{j-1}\),
the exploration-relevant neighbours of~\(v_{j}\) in~\(G\)
can be identified with the children of the type-\(v_{j}\)
vertex in~\(\imt{T}(v_{0})\) if
\begin{enumerate}
  \item\label{itm:coupl:xz}
    \(X_{v_{j}u} = Z_{v_{j}u}\)
    for all~\(u \in U_{j-1} \setunion A_{j-1}\),
  \item\label{itm:coupl:za0}
    \(Z_{v_{j}u} = 0\)
    for all~\(u \in A_{j-1}\) and
  \item\label{itm:coupl:zc0}
    \(Z^{*}_{v_{j}u} = 0\) for all~\(u \in C_{j-1}\).
\end{enumerate}

Conditions~\ref{itm:coupl:za0} and~\ref{itm:coupl:zc0}
imply that~\(\imt{I}_{j}\) only contains types from~\(U_{j-1}\).
Together with condition~\ref{itm:coupl:xz}
this implies~\(I_{j} = \imt{I}_{j}\),
so that we can conclude
that also~\(C_{j} = \imt{C}_{j}\),
\(A_{j}=\imt{A}_{j}\) and~\(U_{j} = \imt{U}_{j}\).
Then condition~\ref{itm:coupl:xz} ensures that~\(u\)
is an exploration-relevant neighbour of~\(v_{j}\)
if and only if the type-\(v_{j}\) vertex
in~\(\imt{T}(v_{0})\) has a unique child of type~\(u\).
But this implies that the subgraph of~\(G\)
induced by~\(C_{j}\) has the same graph structure
as the~\(\imt{T}(v_{0})\) constructed so far.

In order to continue exploring~\(G_{n}\) and building~\(\imt{T}(v)\)
at the same time, we now reorder the children of~\(\mathbf{i}_{j}\)
so that their order matches the order in~\(I_{j}\).
(This does not change the graph structure modulo graph isomorphism,
which is all we are concerned with.
It just ensures that the next exploration step continues
with an individual of the correct type.)

We now verify that the procedure to generate children of~\(\mathbf{i}_{j}\)
actually yields~\(\imt{T}(v_{0})\) as defined in \autoref{def:imt}.

\begin{lemma}
  The tree generated via this coupling procedure
  has the distribution of the intermediate tree
  as defined in \autoref{def:imt}.
\end{lemma}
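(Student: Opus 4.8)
The plan is to recognise the sequential procedure as a breadth‑first construction of the multi‑type Galton--Watson tree of \autoref{def:imt} and then verify, step by step, that at each pop it attaches exactly the prescribed offspring distribution using fresh randomness. Working throughout conditionally on $\mathcal{F}_n$ (i.e.\ under $\prob_n$), let $\mathcal{H}_{j-1}$ denote the $\sigma$‑algebra generated by the exploration records $C_\ell, A_\ell, U_\ell$ for $\ell \le j-1$, by the preimage map $\varphi^{-1}$ on $C_{j-1}\setunion A_{j-1}$, and by all relabelling randomness used in steps $0,\dots,j-1$; note that $\mathcal{G}_{j-1}\subseteq\mathcal{H}_{j-1}$. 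I would prove by induction on $j$ that (a) the type $v_j$ of the individual $\mathbf{i}_j$ processed at step $j$ is $\mathcal{H}_{j-1}$‑measurable, and (b) the family $\sset{Z_{v_j u}}{u \in U_{j-1}\setunion A_{j-1}} \setunion \sset{Z^*_{v_j u}}{u \in C_{j-1}}$ consists of independent $\mathrm{Poi}(p'_{v_j u})$ variables that are independent of $\mathcal{H}_{j-1}$. Claim (a) is just the measurability of $v_j$ already observed for the exploration. For (b): $\mathcal{G}_{j-1}=\sigma(X_e)_{e\in\mathcal{E}'_{j-1}}$ involves only edge indicators incident to $C_{j-1}=\set{v_0,\dots,v_{j-1}}$, and since $v_j\notin C_{j-1}$ while $U_{j-1}\setunion A_{j-1}$ is disjoint from $C_{j-1}$, none of the $X_{v_j u}$ with $u\in U_{j-1}\setunion A_{j-1}$ has been examined, so the coupled $Z_{v_j u}$ (together with their coupling randomness) are fresh; and the $Z^*_{v_j u}$ with $u\in C_{j-1}$ are independent copies indexed by pairs $(v_j,u)$ not used before, because the edge $\set{v_j,u}$ was processed only from $u$'s side at the earlier step where $v_\ell=u$.

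Granting (a)--(b), the offspring law at step $j$ falls out. Since $U_{j-1}, A_{j-1}, C_{j-1}$ partition $V_n$, the number of children of $\mathbf{i}_j$ is a sum of independent $\mathrm{Poi}(p'_{v_j u})$ over all $u\in V_n$, hence $\mathrm{Poi}\bigl(\sum_{u\in V_n} p'_{v_j u}\bigr)=\mathrm{Poi}\bigl(W_{v_j}\Lambda_n/(n\vartheta)\bigr)$; for $j=0$ this is the root distribution $\mathrm{Poi}(W_v\Lambda_n/(n\vartheta))$, and for $j\ge 1$ it is $\mathrm{Poi}(\Lambda_n W_{v_j}/(n\vartheta))$, i.e.\ the law $\mathrm{Poi}(\Lambda_n\imt{W}_{\mathbf{i}_j}/(n\vartheta))$ given the type. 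By Poisson thinning, conditionally on the total number of children $N$ the vector of type‑counts is $\mathrm{Mult}(N;(W_u/\Lambda_n)_u)$, so after the uniform relabelling each child independently receives type $i$ with probability $W_i/\Lambda_n$; this is exactly the offspring‑type law of a non‑root individual in \autoref{def:imt}, and since that law is exchangeable the relabelling does not alter the distribution. Finally, because distinct individuals are processed using disjoint batches of $Z$/$Z^*$ variables, the pairs $(\imt{W}_{\mathbf{i}},\imt{N}_{\mathbf{i}})$ assigned to non‑root individuals are i.i.d.\ with the joint law of \autoref{def:imt}, while the root carries the prescribed fixed type and offspring count; hence the constructed tree has the law of $\imt{T}(v)$.

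The step I expect to be delicate is (b): the bookkeeping showing that no $Z_{vu}$ or $Z^*_{vu}$ is ever reused and that each used variable is independent of everything generated so far. This rests on the structural properties of the exploration from \autoref{sec:graphexpl} --- namely that $C_\ell, A_\ell, U_\ell$ always partition $V_n$, that $v_j$ is $\mathcal{G}_{j-1}$‑measurable, and that $\mathcal{E}'_{j-1}$ contains only edges with an endpoint in $C_{j-1}$ --- together with the point, already flagged in the construction, that for $u\in C_{j-1}$ the indicator $X_{v_j u}$ has already been examined (from $u$'s side), which is precisely why a fresh copy $Z^*_{v_j u}$ must be substituted there. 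One should also note that the construction is consistent as $j$ grows: it builds $\imt{T}(v)$ level by level with nested partial trees, so the identification of laws passes to the limit and covers the case where $\imt{T}(v)$ is infinite.
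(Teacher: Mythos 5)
Your proof is correct and follows essentially the same route as the paper's: both arguments rest on the Poisson superposition/thinning equivalence, identifying the offspring law of a type-$u$ individual as independent $\mathrm{Poi}(p'_{ui})$ counts of each type $i$, which sum to $\mathrm{Poi}(\Lambda_n W_u/(n\vartheta))$ with types drawn from $\sizebias{\nu}_n$. The only difference is that you make explicit the bookkeeping the paper leaves implicit — the induction showing that each batch of $Z$/$Z^*$ variables is fresh and independent of the exploration so far — which is a welcome addition but not a different method.
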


\begin{proof}
  The distribution of the tree in \autoref{def:imt} is fully characterised
  by the offspring distribution for each individual.

  From the definition it follows that
  the number of children of
  an individual
  of type~\(u\)
  is Poisson distributed
  with mean~\(\Lambda_{n}W_{u} (n\vartheta)^{-1}\).
  The types of these children can be obtained by a thinning with
  the type distribution for non-root individuals:
  Each child has type~\(W_{i}\) independently
  with probability~\(W_{i}\Lambda_{n}^{-1}\)
  (i.e.~the type is chosen according to~\(\sizebias{\nu}_{n}\)).
  This implies that the numbers of children of
  type~\(i\) of a type-\(u\) individual
  are independently Poisson distributed with parameter
  \[
    \frac{W_{i}}{\Lambda_{n}}
    \frac{\Lambda_{n}W_{u}}{n\vartheta}
    = \frac{W_{u} W_{i}}{n\vartheta}
    = p'_{ui}.
  \]
  This coincides exactly with the offspring distribution
  induced by the coupling procedure,
  where a type-\(u\) individual has~\(\mathrm{Poi}(p'_{ui})\)
  many children of type~\(i\)
  (depending on the status of the type~\(i\) in the exploration so far
   this is either the random variable~\(Z_{ui}\) or~\(Z^{*}_{ui}\)).
\end{proof}

In order to estimate how long the coupling procedure can continue
to produce isomorphic structures,
we we first estimate the probability that
for a fixed~\(v\)
the coupled random variables~\(X_{vu}\) and~\(Z_{vu}\) differ for any~\(u\).
\begin{lemma}\label{lem:couplexz}
  Let~\(v \in V_{n}\), then
  we can couple~\(X_{vu} \sim \mathrm{Bin}(1,p_{vu})\)
  and~\(Z_{vu} \sim \mathrm{Poi}(p'_{vu})\)
  for~\(u \in V_{n} \setminus \set{v}\)
  such that for any~\(J \subseteq V_{n} \setminus \set{v}\)
  \begin{equation*}
    \prob_{n}
    (
    \max_{u \in J}
    \abs{X_{vu}-Z_{vu}} \geq 1)
    \leq\sum_{u \in V_{n} \setminus \set{v}}
    ((p'_{vu})^{2}
    +p'_{vu}\indfunc_{\set{p'_{vu} \geq 1}}).
  \end{equation*}
\end{lemma}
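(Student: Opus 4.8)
The plan is to construct the coupling one coordinate at a time, using a maximal coupling of $X_{vu}$ with $Z_{vu}$ for each $u$, and then to control the failure probability by a union bound over $u \in J$.

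First I would, for each $u \in V_{n} \setminus \set{v}$, take a maximal coupling of $X_{vu} \sim \mathrm{Bin}(1,p_{vu})$ and $Z_{vu} \sim \mathrm{Poi}(p'_{vu})$: a pair of random variables with the prescribed (conditional) marginals for which $\prob_{n}(X_{vu} \ne Z_{vu})$ equals the total variation distance between the two laws. Such a coupling always exists for discrete distributions, and I would realise all of them on $(\Omega,\mathcal{F}_{n},\prob_{n})$ using mutually independent coupling randomness that is independent of $\mathcal{F}_{n}$; this keeps the $X_{vu}$ independent $\mathrm{Bin}(1,p_{vu})$ and the $Z_{vu}$ independent $\mathrm{Poi}(p'_{vu})$, as is needed both to drive the exploration of $G_{n}$ and to build the intermediate tree $\imt{T}(v)$ of \autoref{def:imt}.

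Next I would estimate the total variation distance for a single $u$, splitting according to the truncation in $p_{vu} = p'_{vu}\imin 1$. When $p'_{vu} < 1$ one has $p_{vu} = p'_{vu}$, and the elementary Le~Cam bound $d_{\mathrm{TV}}(\mathrm{Bin}(1,q),\mathrm{Poi}(q)) = q(1-e^{-q}) \le q^{2}$ applies with $q = p'_{vu}$ (a one-line computation of the relevant point masses). When $p'_{vu} \ge 1$ one has $p_{vu} = 1$, so $X_{vu} \equiv 1$ and the distance is $1 - \prob(\mathrm{Poi}(p'_{vu}) = 1) \le 1 \le p'_{vu}$. Combining the two cases gives $\prob_{n}(X_{vu}\ne Z_{vu}) \le (p'_{vu})^{2} + p'_{vu}\indfunc_{\set{p'_{vu}\ge 1}}$ for every $u$.

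Finally I would note that $\set{\max_{u\in J}\abs{X_{vu}-Z_{vu}}\ge 1} = \bigsetunion_{u\in J}\set{X_{vu}\ne Z_{vu}}$ and apply the union bound, then enlarge the sum from $J$ to all of $V_{n}\setminus\set{v}$ since every summand is nonnegative, which is exactly the claimed inequality. I do not expect a genuine obstacle here: this is a routine Poisson-approximation argument, and the only slightly delicate points are the case split forced by the truncation $p'_{vu}\imin 1$ and keeping the coupling randomness independent of $\mathcal{F}_{n}$ so that the Poisson offspring structure of $\imt{T}(v)$ is preserved.
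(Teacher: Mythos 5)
Your proposal is correct and follows essentially the same route as the paper: couple each coordinate separately and apply a union bound over $u \in J$, then enlarge the sum. The only cosmetic difference is in the single-coordinate step, where you use a direct maximal coupling with a case split on whether the truncation $p_{vu} = p'_{vu} \imin 1$ binds, whereas the paper goes through an auxiliary $Y_{vu} \sim \mathrm{Poi}(p_{vu})$ in two steps (Bernoulli to Poisson with the same parameter, then Poisson to Poisson); both yield the same per-coordinate bound $(p'_{vu})^{2} + p'_{vu}\indfunc_{\set{p'_{vu}\ge 1}}$.
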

\begin{proof}
  In order to couple~\(X_{vu} \sim \mathrm{Bin}(1,p_{vu})\)
  and~\(Z_{vu} \sim \mathrm{Poi}(p'_{vu})\)
  consider auxiliary random variables~\(Y_{vu} \sim
  \mathrm{Poi}(p'_{vu})\).
  First we couple the Bernoulli random variable~\(X_{vu}\)
  to a Poisson random variable~\(Y_{vu}\)
  with the same parameter~\(p_{uv}\)
  such that
  \begin{equation*}
    \prob_{n}(X_{vu} \neq Y_{vu})
    \leq p_{vu}^{2}
    \leq (p'_{vu})^{2}.
  \end{equation*}
  In a second step we couple
  the two Poisson random variables~\(Y_{vu}\) and~\(Z_{vu}\)
  with parameters~\(p_{vu}\) and~\(p'_{vu}\)
  such that
  \begin{equation*}
    \prob_{n}(Y_{vu} \neq Z_{vu})
    = p'_{vu} - p'_{vu} \imin 1
    \leq p'_{vu}\indfunc_{\set{p'_{vu} \geq 1}}.
  \end{equation*}
  Summing over the vertices in~\(J \subseteq V_{n} \setminus \set{v}\)
  we thus obtain
  \begin{equation*}
    \prob_{n}
      (
      \max_{u \in J}
      \abs{X_{vu}-Z_{vu}} \geq 1)
    \leq \sum_{u \in J} (
      \prob_{n}(X_{vu} \neq Y_{vu})
      +\prob_{n}(Y_{vu} \neq Z_{vu}))
    \leq\!\!\sum_{u \in V_{n} \setminus \set{v}}
      ((p'_{vu})^{2}
      +p'_{vu}\indfunc_{\set{p'_{vu} \geq 1}})
  \end{equation*}
  as claimed.
\end{proof}

The main result of this section is
\begin{prop}\label{prop:blttl}
  For any vertex~\(v \in V_{n}\) of~\(G_{n} = (V_{n},E_{n})\)
  and any level~\(\ell \in \naturals\)
  it is possible to couple the neighbourhood of a vertex~\(v\)
  to an intermediate tree~\(\imt{T}(v)\)
  such that for any sequence~\((k_{n})_{n\in\naturals} \subseteq \intervaloo{0}{\infty}\)
  \[
    \prob_{n}(B_{\ell}(v) \ncong \imt{T}_{\ell}(v))
    \leq \expe_{n}[\setweight{S_{\ell}(v)}_{2} ]
    \frac{\Gamma_{2,n}}{n\vartheta}
    + \expe_{n}[\setweight{S_{\ell}(v)}_{+}]\Gamma_{1,n}
    + \expe_{n}[\setweight{S_{\ell}(v)}]
    \Bigl(\kappa_{1,n}+\frac{1}{k_{n}}+\frac{k_{n}}{n\vartheta}\Biggr),
  \]
  where~\(\imt{T}_{\ell}(v)\) is~\(\imt{T}(v)\)
  truncated at level~\(\ell\).
\end{prop}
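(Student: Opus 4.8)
The plan is to reduce the claim to a union bound over the vertices explored within the first $\ell-1$ levels. Run \autoref{alg:explore} on $G_{n}$ started at $v_{0}=v$ and build $\imt{T}(v)$ alongside it as in the construction above. By the inductive argument preceding the statement, the identifications $C_{j-1}=\imt{C}_{j-1}$, $A_{j-1}=\imt{A}_{j-1}$, $U_{j-1}=\imt{U}_{j-1}$ persist — and hence $B_{\ell}(v)\cong\imt{T}_{\ell}(v)$ — as soon as conditions \ref{itm:coupl:xz}--\ref{itm:coupl:zc0} hold at every explored vertex $v_{j}$ of depth at most $\ell-1$, that is, at every $v_{j}\in S_{\ell-1}(v)$. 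Therefore
\[
  \set{B_{\ell}(v)\ncong\imt{T}_{\ell}(v)}
  \subseteq \bigsetunion_{v_{j}\in S_{\ell-1}(v)}
  \set{\text{one of \ref{itm:coupl:xz}--\ref{itm:coupl:zc0} fails at }v_{j}}.
\]
Since $v_{j}$ and its depth are $\mathcal{G}_{j-1}$-measurable, while the edge indicators and Poisson variables attached to $v_{j}$ that are relevant for \ref{itm:coupl:xz}--\ref{itm:coupl:zc0} are independent of $\mathcal{G}_{j-1}$ (as observed in \autoref{sec:graphexpl}), I would estimate the right-hand side by $\expe_{n}$ of a conditional union bound, i.e.\ by $\expe_{n}$ of a sum over $w\in S_{\ell-1}(v)$ of the conditional probability that some condition fails at $w$; as $S_{\ell-1}(v)\subseteq S_{\ell}(v)$, the weight quantities may be replaced by those of $S_{\ell}(v)$ at the end.

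For condition \ref{itm:coupl:xz}, \autoref{lem:couplexz} (applied with $J=U_{j-1}\setunion A_{j-1}$) bounds the conditional probability of failure at $w$ by $\sum_{u}\bigl((p'_{wu})^{2}+p'_{wu}\indfunc_{\set{p'_{wu}\ge1}}\bigr)$. Writing $p'_{wu}=W_{w}W_{u}/(n\vartheta)$ and using \eqref{eq:gamman} and \eqref{eq:kappan}, the first sum equals $W_{w}^{2}\Gamma_{2,n}/(n\vartheta)$; and since $p'_{wu}\ge1$ entails $W_{w}>\sqrt{n\vartheta}$ or $W_{u}>\sqrt{n\vartheta}$, the second sum is at most $W_{w}\Gamma_{1,n}\indfunc_{\set{W_{w}>\sqrt{n\vartheta}}}+W_{w}\kappa_{1,n}$. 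Summing over $w\in S_{\ell-1}(v)$ and taking $\expe_{n}$ gives a contribution bounded by $\expe_{n}[\setweight{S_{\ell}(v)}_{2}]\,\Gamma_{2,n}/(n\vartheta)+\expe_{n}[\setweight{S_{\ell}(v)}_{+}]\,\Gamma_{1,n}+\expe_{n}[\setweight{S_{\ell}(v)}]\,\kappa_{1,n}$, which already yields the first two terms of the claim and the $\kappa_{1,n}$-part of the third.

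For conditions \ref{itm:coupl:za0} and \ref{itm:coupl:zc0}, use $\prob_{n}(Z_{wu}\ge1)\le p'_{wu}$ and $\prob_{n}(Z^{*}_{wu}\ge1)\le p'_{wu}$, so the combined conditional failure probability at $w=v_{j}$ is at most $\frac{W_{w}}{n\vartheta}(\setweight{A_{j-1}}+\setweight{C_{j-1}})$, where $A_{j-1}$ and $C_{j-1}$ are disjoint and, by the breadth-first order of \autoref{alg:explore}, both contained in $S_{\ell}(v)$ whenever $v_{j}\in S_{\ell-1}(v)$. Summing this directly would be quadratic in $\setweight{S_{\ell}(v)}$, so I would truncate with the adapted stopping time $\sigma=\inf\set{j:\setweight{A_{j}\setunion C_{j}}>k_{n}}$: one checks $\set{\sigma\le j_{\ell}}=\set{\setweight{S_{\ell}(v)}>k_{n}}$, where $j_{\ell}$ is the step exhausting $S_{\ell-1}(v)$, which by Markov's inequality has probability at most $\expe_{n}[\setweight{S_{\ell}(v)}]/k_{n}$ (this is the $1/k_{n}$ term); and before $\sigma$ one has $\setweight{A_{j-1}}+\setweight{C_{j-1}}\le k_{n}$, so summing $W_{v_{j}}k_{n}/(n\vartheta)$ over $v_{j}\in S_{\ell-1}(v)$ and taking $\expe_{n}$ contributes at most $\frac{k_{n}}{n\vartheta}\expe_{n}[\setweight{S_{\ell}(v)}]$. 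Adding the three contributions gives exactly the claimed bound.

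The main obstacle is the measurability bookkeeping in the conditional union bound: one must keep careful track of which $\sigma$-algebra each object is measurable with respect to (that $v_{j}$, its depth, and the truncation at $\sigma$ are adapted to $(\mathcal{G}_{j})$, while the variables governing \ref{itm:coupl:xz}--\ref{itm:coupl:zc0} at $v_{j}$ are independent of $\mathcal{G}_{j-1}$), and one must be sure that a failure of the coupling at a depth-$(\le\ell-1)$ vertex really obstructs $B_{\ell}(v)\cong\imt{T}_{\ell}(v)$ — for which the one-sided inclusion displayed above suffices, and it follows from the inductive identification of the two explorations. Once this is set up, the remaining estimates are elementary.
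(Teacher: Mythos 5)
Your proposal is correct and follows essentially the same route as the paper's proof: a level-by-level union bound over the explored vertices in $S_{\ell-1}(v)$, \autoref{lem:couplexz} for the Bernoulli--Poisson discrepancy (yielding the $\Gamma_{2,n}$, $\Gamma_{1,n}$ and $\kappa_{1,n}$ terms), a Poisson tail bound of size $\frac{W_{w}}{n\vartheta}\setweight{A\setunion C}$ for the spurious-children conditions, and a truncation at total weight $k_{n}$ handled by Markov's inequality. Your stopping-time formulation of the truncation and your individual union bound $\prob_{n}(Z_{wu}\geq 1)\leq p'_{wu}$ are only cosmetic variants of the paper's conditioning on $\set{\setweight{S_{j}(v)}\leq k_{n}}$ and its use of $1-e^{-x}\leq x$ on the exact Poisson sum.
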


\begin{proof}
  Recall that~\(S_{\ell}(v)\) is the set of vertices
  in the~\(\ell\)-neighbourhood~\(B_{\ell}(v)\) of~\(v\) in~\(G_{n}\)
  and that we set~\(D_{\ell}(v) = S_{\ell}(v) \setminus S_{\ell-1}(v)\).
  Explore~\(S_{\ell}(v)\)
  with the exploration defined in \autoref{alg:explore}.

  Let~\(\mathcal{G}_{j}\) be the~\(\sigma\)-algebra generated
  by the edge indicators~\(X_{e}\)
  in the exploration process on~\(G_{n}\) up to step~\(j\)
  along with the coupled Poisson random variables~\(Z_{e}\)
  (cf.~\autoref{lem:couplexz}) and an independent copy~\(Z^{*}_{e}\)
  of~\(Z_{e}\)
  \[
    \mathcal{G}_{j}
    = \sigma(X_{e},Z_{e},Z^{*}_{e})_{e \in \mathcal{E}'_{j}}.
  \]
  The Poisson random variables~\(Z_{e}\) and~\(Z^{*}_{e}\)
  will be used to construct~\(\imt{T}(v)\).

  For any~\(u \in  S_{\ell}(v)\)
  let~\(C(u)\), \(A(u)\) and~\(U(u)\)
  be the sets~\(C_{j-1}\), \(A_{j-1}\) and~\(U_{j-1}\), respectively,
  when~\(u\)'s neighbours are being explored,
  i.e.~when~\(u = v_{j}\) for some~\(j \in \naturals\).
  Additionally we also define~\(\mathcal{G}(u) = \mathcal{G}_{j-1}\)
  and~\(\mathcal{G}^{+}(u) = \mathcal{G}_{j}\).

  With this setup the step in which~\(u\)'s unexplored neighbours
  are explored is measurable with respect to~\(\mathcal{G}^{+}(u)\),
  but conditionally independent of~\(\mathcal{G}(u)\)
  given the sets~\(C(u)\), \(A(u)\) and~\(U(u)\).

  We say that the coupling between
  the neighbourhoods of~\(v\) in~\(G_{n}\) and~\(\imt{T}(v)\)
  breaks in level~\(\ell\) if there is
  a~\(u \in D_{\ell-1}(v)\), i.e.~a vertex at level~\(\ell-1\),
  satisfying
  \begin{enumerate}
  \item \(X_{uu'} \neq Z_{uu'}\) for some~\(u' \in U(u) \setunion
    A(u)\),
  \item \(Z_{uu'}\neq 0\) for some~\(u' \in A(u)\) or
  \item \(Z^{*}_{uu'} \neq 0\) for some~\(u' \in C(u)\).
  \end{enumerate}
  For a fixed vertex~\(u \in D_{\ell-1}(v)\) let~\(N(u)\) event that at
  least one
  of these three conditions is true for~\(u\).
  Let~\(M_{\ell}\) be the event that the coupling \emph{holds} up to
  level~\(\ell\),
  i.e.~that the coupling has not yet broken up to level~\(\ell\).
  Note that if the coupling holds up to level~\(\ell\),
  the graphs are isomorphic as rooted graphs up to level~\(\ell\).

  Let~\(\mathcal{G}'_{\ell}\) be the~\(\sigma\)-algebra generated by the
  exploration
  process for all vertices up to level~\(\ell-1\)
  \begin{equation}\label{eq:salglvlv}
    \mathcal{G}'_{\ell}
    = \sigma\Bigl(\bigsetunion_{u \in S_{\ell-1}(v)}
    \mathcal{G}^{+}(u)\Bigr).
  \end{equation}
  This~\(\sigma\)-algebra already contains information about the vertices
  at level~\(\ell\) in~\(D_{\ell}(v)\)
  since
  \[
    D_{\ell}(v)
    = V_{n} \setminus \Bigl(
    \bigsetunion_{u \in S_{\ell-1}(v)} C(u)
    \setunion \bigsetintersect_{u \in S_{\ell-1}(v)} U(u)
    \Bigr),
  \]
  such that all of~\(B_{\ell}(v)\) is~\(\mathcal{G}'_{\ell}\)-measurable.
  But the edges going from~\(D_{\ell}(v)\)
  to the as of yet not fully explored vertices~\(V_{n} \setminus
  S_{\ell-1}(v)\)
  are independent of~\(\mathcal{G}'_{\ell}\).
  Clearly,~\(M_{\ell-1}\) is~\(\mathcal{G}'_{\ell-1}\)-measurable.

  Additionally we have~\(\mathcal{G}(u) \supseteq \mathcal{G}'_{\ell-1}\)
  for all~\(u \in D_{\ell-1}(v)\).

  Ultimately we want to estimate the probability
  \(
  \prob_{n}(M_{\ell}^{c})
  \).
  We will do this by noting that
  \(M_{\ell}^{c} = M_{\ell-1}^{c} \setunion (M_{\ell-1} \setintersect
  M_{\ell}^{c})\)
  so that
  \begin{equation}
    \prob_{n}(M_{\ell}^{c})
    \leq \prob_{n}(\setweight{S_{\ell}(v)} > k_{n})
      + \sum_{j=1}^{\ell} \prob_{n}(M_{j-1} \setintersect M_{j}^{c},
      \setweight{S_{j}(v)} \leq k_{n}).\label{eq:breakuptolvl:sumat}
  \end{equation}
  Conditionally on~\(\mathcal{G}'_{\ell-1}\)
  the summands of the second sum can be split further by noting
  that~\(M_{\ell-1} \setintersect M_{\ell}^{c}\)
  can be written as a union over~\(N(u)\) for~\(u \in D_{\ell-1}(v)\).
  Additionally,
  we have that~\(\setweight{C(u) \setunion A(u)} \leq
  \setweight{S_{\ell}(v)}\),
  because all vertices in~\(C(u) \setunion A(u)\)
  must be elements of~\(S_{\ell}(v)\)
  since they are neighbours of vertices in~\(S_{\ell-1}(v)\).
  Hence,
  \begin{equation}\label{eq:breakatlvl}
      \prob_{n}(\setweight{S_{\ell}(v)} \leq k_{n},
      M_{\ell-1} \setintersect M_{\ell}^{c}\given \mathcal{G}'_{\ell-1})
      \leq
      \indfunc_{M_{\ell-1}}
      \sum_{u \in D_{\ell-1}(v)}
      \prob_{n}(\setweight{C(u) \setunion A(u)} \leq k_{n},
      N(u)\given \mathcal{G}'_{\ell-1}).
  \end{equation}
  For the individual summands for~\(u \in D_{\ell-1}(v)\)
  recall the definition of~\(N(u)\)
  and additionally condition
  on~\(\mathcal{G}(u) \supseteq \mathcal{G}'_{\ell-1}\)
  such that
  \begin{align*}
    &\prob_{n}(\setweight{C(u) \setunion A(u)} \leq k_{n},
      N(u) \given \mathcal{G}'_{\ell-1})\notag\\
    &\quad\leq
      \expe_{n}\Bigl[
      \prob_{n}\Bigl(
      \max_{u' \in U(u) \setunion A(u) \setminus \set{u}}
      \abs{X_{uu'}-Z_{uu'}} \geq 1
      \given[\Big] \mathcal{G}(u)\Bigr)
      \given[\Big] \mathcal{G}'_{\ell-1}\Bigr]\notag\\
    &\qquad  +
      \expe_{n}\Bigl[
      \indfunc_{\set{\setweight{C(u) \setunion A(u)} \leq k_{n}}}
      \prob_{n}\Bigl(\sum_{u' \in A(u)} Z_{uu'}
      + \sum_{u' \in C(u)} Z^{*}_{uu'} \geq 1
      \given[\Big] \mathcal{G}(u)\Bigr)
      \given[\Big] \mathcal{G}'_{\ell-1}\Bigr]
  \end{align*}
  \autoref{lem:couplexz}
    still holds conditionally on~\(\mathcal{G}(u)\)
    for the~\(\mathcal{G}(u)\)-measurable
    set~\(J = U(u) \setunion A(u)  \setminus \set{u}\),
    because~\(X_{uu'}\) and~\(Z_{uu'}\) are independent
    of~\(\mathcal{G}(u)\),
    furthermore
    by independence of the Poisson random variables~\(Z_{uu'}\)
    and~\(Z^{*}_{uu'}\) from each other (even conditional
    on~\(\mathcal{G}(u)\))
    and the previous steps in the exploration the
    sum of~\(\sum_{u' \in A(u)} Z_{uu'}\) and~\(\sum_{u' \in C(u)}
    Z^{*}_{uu'}\)
    has distribution \(\mathrm{Poi}(\sum_{u' \in A(u) \setunion C(u)}
    p'_{uu'})\).
  Hence, we have
  \begin{align}
    &\prob_{n}(\setweight{C(u) \setunion A(u)} \leq k_{n},
      N(u) \given \mathcal{G}'_{\ell-1})\notag\\
    &\quad\leq
      \expe_{n}\Bigl[
      \sum_{u'\in V_{n} \setminus \set{u}} ((p'_{uu'})^{2}
      +p'_{uu'}\indfunc_{\set{p'_{uu'} \geq 1}})
     + \indfunc_{\set{\setweight{C(u) \setunion A(u)} \leq k_{n}}}
      (1-e^{-\sum_{u'\in A(u) \setunion C(u)} p'_{uu'}})
      \given[\Big] \mathcal{G}'_{\ell-1}\Bigr]\notag\\
      &\quad\leq
      \expe_{n}\Bigl[
      \sum_{u' \in V_{n} \setminus \set{u}}
      ((p'_{uu'})^{2}+p'_{uu'}\indfunc_{\set{p'_{uu'} \geq 1}})
      \indfunc_{\set{\setweight{C(u) \setunion A(u)} \leq k_{n}}}
      \sum_{u' \in A(u) \setunion C(u)} p'_{uu'}
      \given[\Big]\mathcal{G}'_{\ell-1}\Bigr],
    \label{eq:coupbreak:twosums}
  \end{align}
  where the last inequality follows from~\(1-e^{-x} \leq x\) for~\(x >
  -1\).

  For the first inner sum in~\eqref{eq:coupbreak:twosums} we get
  \begin{align*}
    &\expe_{n}\Bigl[
      \sum_{u' \in V_{n} \setminus \set{u}}
      ((p'_{uu'})^{2} +p'_{uu'}\indfunc_{\set{p'_{uu'} \geq 1}})
      \given[\Big] \mathcal{G}'_{j-1}\Bigr]\notag\\
    &\quad\leq \expe_{n}\Bigl[
      \sum_{u' \in V_{n} \setminus \set{u}} \frac{W_{u}^{2}W^{2}_{u'}}{\vartheta^{2} n^{2}}
      +\sum_{u' \in V_{n} \setminus \set{u}} \frac{W_{u}W_{u'}}{n \vartheta}
      \indfunc_{\set{W_{u}W_{u'} \geq n\vartheta}}
      \given[\Big] \mathcal{G}'_{\ell-1}\Bigr].
\end{align*}
    The indicator function can be split by
    by noting that if~\(W_{u}W_{u'} \geq n\vartheta\), then
    \(W_{u} \geq \sqrt{n\vartheta}\) or \(W_{u'} \geq \sqrt{n\vartheta}\),
    in particular~\(\indfunc_{\set{W_{u}W_{u'} \geq n\vartheta}}
    \leq \indfunc_{\set{W_{u} \geq \sqrt{n\vartheta}}}
      +\indfunc_{\set{W_{u'} \geq \sqrt{n\vartheta}}}\)
    so that the second sum splits into two sums with
    different indicators each.
    We additionally factor out all terms that do not depend on~\(u'\)
    and
    recall the definitions of~\(\Gamma_{p,n}\)
    and~\(\kappa_{p,n}\) to obtain
    \begin{align}
          &\expe_{n}\Bigl[
      \sum_{u' \in V_{n} \setminus \set{u}}
      ((p'_{uu'})^{2} +p'_{uu'}\indfunc_{\set{p'_{uu'} \geq 1}})
      \given[\Big] \mathcal{G}'_{j-1}\Bigr]\notag\\
    &\quad\leq \expe_{n}\Bigl[
      \frac{W_{u}^{2}}{n\vartheta} \frac{1}{n\vartheta}\sum_{u \in V_{n}} W_{u'}^{2}
    +W_{u}\indfunc_{\set{W_{u} \geq \sqrt{n\vartheta}}}
      \frac{1}{n \vartheta}
      \sum_{u' \in V_{n}} W_{u'}
      +W_{u}
      \frac{1}{n \vartheta}
      \sum_{u' \in V_{n}} W_{u'} \indfunc_{\set{W_{u'} \geq \sqrt{n\vartheta}}}
     \given[\Big] \mathcal{G}'_{\ell-1}\Bigr].\notag\\
    &\quad\leq \frac{\Gamma_{2,n}}{n\vartheta}
      \expe_{n}[W_{u}^{2} \given\mathcal{G}'_{\ell-1}]
      + \Gamma_{1,n}
      \expe_{n}[W_{u} \indfunc_{\set{W_{u} > \sqrt{n\vartheta}}}
      \given\mathcal{G}'_{\ell-1}]
      + \kappa_{1,n}
      \expe_{n}[W_{u} \given\mathcal{G}'_{\ell-1}].
      \label{eq:pqqbsum}
  \end{align}

  For the second inner sum in~\eqref{eq:coupbreak:twosums} we find
  \begin{align}
    \expe_{n}\Bigl[
      \indfunc_{\set{\setweight{C(u) \setunion A(u)} \leq k_{n}}}
      \sum_{u' \in A(u) \setunion C(u)} p'_{uu'}
      \given[\Big] \mathcal{G}'_{\ell-1}\Bigr]
    &\leq
      \expe_{n}\Bigl[
      \indfunc_{\set{\setweight{C(u) \setunion A(u)} \leq k_{n}}}
      \frac{W_{u}}{n\vartheta}
      \setweight{A(u) \setunion C(u)}
      \given[\Big] \mathcal{G}'_{\ell-1}\Bigr]\notag\\
    &\leq \frac{\expe_{n}[W_{u} \given \mathcal{G}'_{\ell-1}]}{n\vartheta}
      k_{n}.\label{eq:acsum}
  \end{align}

  Then~\eqref{eq:coupbreak:twosums} together with~\eqref{eq:pqqbsum}
  and~\eqref{eq:acsum}
  implies
  \begin{align}
      &\prob_{n}(\setweight{C(u) \setunion A(u)} \leq k_{n},
      N(u) \given[] \mathcal{G}'_{\ell-1})\notag\\
    \begin{split}
      &\quad\leq
        \expe_{n}[W_{u}^{2} \given\mathcal{G}'_{\ell-1}]
        \frac{\Gamma_{2,n}}{n\vartheta}
      +\Gamma_{1,n}
      \expe_{n}[W_{u} \indfunc_{\set{W_{u} > \sqrt{n\vartheta}}}
      \given\mathcal{G}'_{\ell-1}]
        + \expe_{n}[W_{u} \given\mathcal{G}'_{\ell-1}]
        \Bigl(\kappa_{1,n}+\frac{k_{n}}{n\vartheta}\Bigr)
      .\label{eq:breakstep:end}
    \end{split}
  \end{align}

  Hence, by~\eqref{eq:breakatlvl},
  \eqref{eq:breakstep:end} and~\eqref{eq:coupbreak:twosums}
  \begin{align*}
     & \sum_{j=1}^{\ell}
      \prob_{n}(M_{j-1} \setintersect M_{j}^{c}, \setweight{S_{j}(v)}\leq
      k_{n})\notag\\
    &\leq
      \sum_{j=1}^{\ell}
      \expe_{n}[
      \prob_{n}(\setweight{C(u) \setunion A(u)} \leq k_{n},
      N(u) \given \mathcal{G}'_{j-1})
      ]\notag\\
      &\leq\sum_{j=1}^{\ell} \expe_{n}\Biggl[
      \sum_{u \in D_{j-1}(v)}\!\!
      \expe_{n}[W_{u}^{2} \given\mathcal{G}'_{\ell-1}]
        \frac{\Gamma_{2,n}}{n\vartheta}
              +\Gamma_{1,n}
      \expe_{n}[W_{u} \indfunc_{\set{W_{u} > \sqrt{n\vartheta}}}
      \given\mathcal{G}'_{\ell-1}]
     + \expe_{n}[W_{u} \given\mathcal{G}'_{\ell-1}]
        \Bigl(\kappa_{1,n}+\frac{k_{n}}{n\vartheta}\Bigr)\Biggr]\!.
  \end{align*}
  Recall the definitions of~\(\setweight{\placeholder}_{p}\)
    and~\(\setweight{\placeholder}_{+}\).
    Then use that the disjoint union of the~\(D_{j-1}(v)\)
    from~\(j=1\) to~\(\ell\) is equal to~\(S_{\ell-1}(v)\)
    to estimate the sums over~\(W_{u}^{2}\) and~\(W_{u}\)
    with~\(\setweight{S_{\ell}(v)}_{2}\) and~\(\setweight{S_{\ell}(v)}\),
    respectively.
    We obtain the bound
    \begin{equation}   \label{eq:breaklevell:uncoupled}
      \begin{split}
     & \sum_{j=1}^{\ell}
      \prob_{n}(M_{j-1} \setintersect M_{j}^{c}, \setweight{S_{j}(v)}\leq
      k_{n})\\
     &\quad\leq
      \expe_{n}[\setweight{S_{\ell}(v)}_{2} ]
       \frac{\Gamma_{2,n}}{n\vartheta}
       + \expe_{n}[\setweight{S_{\ell}(v)}_{+}]
         \Gamma_{1,n}
        + \expe_{n}[\setweight{S_{\ell}(v)}]
        \Bigl(\kappa_{1,n}+\frac{k_{n}}{n\vartheta}\Bigr).
 \end{split}
  \end{equation}

  Hence, by~\eqref{eq:breakuptolvl:sumat},
  \eqref{eq:breaklevell:uncoupled}
  and Markov's inequality the probability that the coupling
  breaks can be bounded as follows
  \begin{align*}
    \prob_{n}(B_{\ell}(v) \ncong \imt{T}_{\ell}(v))
    &\leq
      \prob_{n}(\setweight{S_{\ell}(v)} > k_{n})
      +\sum_{j=1}^{\ell}
      \prob_{n}(M_{j-1} \setintersect M_{j}^{c}, \setweight{S_{j}(v)}\leq
      k_{n})\notag\\
    &\leq
      \expe_{n}\setweight{S_{\ell}(v)}_{2}]
        \frac{\Gamma_{2,n}}{n\vartheta}
      +\expe_{n}[\setweight{S_{\ell}(v)}_{+}]
         \Gamma_{1,n}
     + \expe_{n}[\setweight{S_{\ell}(v)}]
      \Bigl(\kappa_{1,n}+\frac{1}{k_{n}}+\frac{k_{n}}{n\vartheta}\Bigr).
  \end{align*}
  This concludes the proof.
\end{proof}

The coupling in \autoref{prop:blttl} holds for a single~\(\ell\)-neighbourhood,
but we would like to be able to couple the neighbourhoods
of several distinct vertices to independent intermediate trees.
The neighbourhoods~\(B_{\ell}(v)\)
of~\(m = \setcard{\mathcal{V}}\) distinct vertices~\(v \in \mathcal{V}\)
to trees~\(T_{\ell}(v)\) are not guaranteed
to be independent,
because the same vertex and with it the same edges
may appear in several neighbourhoods.
By construction, the same holds for the coupled intermediate trees,
since the edges in those trees are coupled to the original edges
in~\(G_{n}\).
In \autoref{sec:corr} we have, however, already
shown that the~\(\ell\)-neighbourhoods are
asymptotically only weakly correlated.
In much the same vein we can show that the intermediate trees
can be altered (with sufficiently small probability)
to make them independent.
This then implies that we may assume that the coupled intermediate trees
are independent at only a small additional penalty to the
coupling probability.

\begin{prop}\label{prop:manycouplings}
  Let~\(\mathcal{V} \subseteq V_{n}\) be a set of vertices
  from~\(G_{n} = (V_{n},E_{n})\).
  Then for all~\(\ell \in \naturals\)
  the neighbourhoods~\(B_{\ell}(v)\) around~\(v \in \mathcal{V}\)
  can be coupled to independent intermediate trees~\(\imt{T}(v)\)
  such that
  \begin{align*}
    &\prob_{n}\biggl(
      \bigsetunion_{v \in \mathcal{V}} \set[\big]{
      B_{\ell}(v) \ncong \imt{T}_{\ell}(v)
      }
      \biggr)\\
    &\leq \setweight{\mathcal{V}}_{2} \frac{\Gamma_{2,n}}{n\vartheta}
    +\setweight{\mathcal{V}}(\Gamma_{2,n}+1)^{\ell}
    \biggl(
      \frac{\Gamma_{3,n}}{n\vartheta}+\kappa_{1,n}+\kappa_{2,n}+
      \frac{2+\Gamma_{1,n}}{k_{n}}+\frac{k_{n}}{n\vartheta}\biggr)
  +\setweight{V}_{+}\Gamma_{1,n} + \setcard{\mathcal{V}}\frac{1}{k_{n}}
    +\frac{k_{n}^{2}}{n\vartheta\Gamma_{1,n}}
  \end{align*}
  for all sequences~\((k_{n})_{n \in\naturals} \subseteq \intervaloo{0}{\infty}\).
\end{prop}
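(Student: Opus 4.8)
The plan is to combine the single-root coupling of \autoref{prop:blttl} with a re-randomisation step, in the spirit of the constructions in \autoref{sec:corr}, that turns the coupled intermediate trees into mutually independent ones at a controlled extra cost.

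First I would run the exploration of \autoref{alg:explore} started from every root $v \in \mathcal{V}$ on the common graph $G_n$, together with the coupled Poisson variables $Z_e,Z^{*}_e$ of \autoref{lem:couplexz}, so that all the intermediate trees $\imt{T}(v)$, $v\in\mathcal{V}$, live on one probability space. Since the proof of \autoref{prop:blttl} only inspects the exploration local to each individual root, the per-root bound there is still valid, and a union bound gives
\[
  \prob_{n}\Bigl(\bigsetunion_{v\in\mathcal{V}}\{B_\ell(v)\ncong\imt{T}_\ell(v)\}\Bigr)
  \leq \sum_{v\in\mathcal{V}}\Bigl(\expe_{n}[\setweight{S_\ell(v)}_{2}]\tfrac{\Gamma_{2,n}}{n\vartheta}
  + \expe_{n}[\setweight{S_\ell(v)}_{+}]\Gamma_{1,n}
  + \expe_{n}[\setweight{S_\ell(v)}]\bigl(\kappa_{1,n}+\tfrac{1}{k_n}+\tfrac{k_n}{n\vartheta}\bigr)\Bigr).
\]
Inserting the moment estimates from \autoref{lem:bl:weight:raw} and \autoref{lem:bl:excessweight:raw} and summing over $v$ then reproduces, after elementary simplifications and absorbing powers of $\Gamma_{2,n}+1$, the ``single-root'' part of the claimed bound, namely $\setweight{\mathcal{V}}_{2}\tfrac{\Gamma_{2,n}}{n\vartheta}+\setweight{\mathcal{V}}_{+}\Gamma_{1,n}$ together with $\setweight{\mathcal{V}}(\Gamma_{2,n}+1)^\ell\bigl(\tfrac{\Gamma_{3,n}}{n\vartheta}+\kappa_{1,n}+\kappa_{2,n}+\tfrac{1}{k_n}+\tfrac{k_n}{n\vartheta}\bigr)$.

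Second, I would make the trees independent. The point is that $\imt{T}(v)$ is built solely from the edge indicators emanating from $S_{\ell-1}(v)$ and from the Poisson variables attached to those edges, so two trees $\imt{T}(v)$ and $\imt{T}(v')$ use disjoint collections of randomness as soon as the explored neighbourhoods of $v$ and $v'$ are disjoint. I would therefore give each root its own private independent copy of the Poisson variables (each still coupled to the common edge indicators exactly as in \autoref{lem:couplexz}), and whenever two explorations would otherwise share an edge variable, re-run one of them with its private copy, exactly in the style of \autoref{lem:covcouplmorecomplextilde} and \autoref{lem:covcouplmorecomplex}. This produces trees $\imt{T}'(v)$ that (i) carry the correct conditional law given $\mathcal{F}_{n}$, (ii) are conditionally independent, and (iii) agree with $\imt{T}(v)$ on the event that no two explored neighbourhoods meet; hence $\prob_{n}\bigl(\bigsetunion_{v}\{B_\ell(v)\ncong\imt{T}'_\ell(v)\}\bigr)$ is at most the bound of the previous step plus the probability that some pair of explored neighbourhoods intersects.

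Finally I would bound that intersection probability by the remaining terms $\setcard{\mathcal{V}}\tfrac{1}{k_n}+\tfrac{k_n^{2}}{n\vartheta\Gamma_{1,n}}$ (together with a contribution that is absorbed into the $\tfrac{2+\Gamma_{1,n}}{k_n}$ refinement). The crude union bound $\sum_{v\neq v'}\prob_{n}(v\pathbetw_{\leq 2\ell}v')$ via \autoref{cor:abpathupto} would cost an extra factor $(\Gamma_{2,n}+1)^{2\ell}$, which is too large; instead I would first discard, by Markov's inequality and the cardinality and weight bounds of \autoref{lem:bl:weight:raw}, the event that some $\setcard{S_\ell(v)}$ or $\setweight{S_\ell(v)}$ exceeds $k_n$ --- this is the source of the clean $\setcard{\mathcal{V}}/k_n$ term and of the $(2+\Gamma_{1,n})/k_n$ sharpening --- and then, on the truncated event, estimate the overlap by a single application of the path bound of \autoref{lem:abpath}/\autoref{cor:abpathupto} against a revealed vertex set whose total weight is capped at $k_n$, which yields $\tfrac{k_n^{2}}{n\vartheta\Gamma_{1,n}}$ while keeping only a single power of $(\Gamma_{2,n}+1)^{\ell}$. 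The main obstacle I expect is precisely this last step: arranging the re-randomisation so that the modified trees genuinely have the right marginals and are independent --- which requires handling the circular dependence between ``which variables are shared'' and ``the realisation of the trees'', just as in \autoref{sec:corr} --- while simultaneously keeping the overlap cost free of the extra $(\Gamma_{2,n}+1)^{\ell}$ factor.
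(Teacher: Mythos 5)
Your first step (union bound over \autoref{prop:blttl} plus \autoref{lem:bl:weight:raw} and \autoref{lem:bl:excessweight:raw}) is exactly what the paper does and is fine. The gap is in your independence step. You assert that two intermediate trees ``use disjoint collections of randomness as soon as the explored neighbourhoods of $v$ and $v'$ are disjoint'', and you therefore reduce the problem to bounding the probability that two explored neighbourhoods intersect, patched up by edge-variable re-randomisation in the style of \autoref{lem:covcouplmorecomplextilde}. That premise is false for the intermediate trees: by \autoref{def:imt} the offspring count of a type-$a$ individual in $\imt{T}(v)$ is assembled from the Poisson variables $Z_{ab}$ (or $Z^{*}_{ab}$) for \emph{all} $b\in V_{n}$, so the explorations of $v$ and $v'$ query every potential edge between their explored sets (already $Z_{vv'}$ at step $0$), and the same variable $Z_{ab}$ with $a\in S_{\ell-1}(v)$, $b\in S_{\ell-1}(v')$ enters both trees' offspring counts whether or not the realised neighbourhoods meet. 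So ``no shared randomness'' is an event that essentially never occurs, and the quantity you propose to bound (neighbourhood intersection) is not the event that obstructs independence. You correctly identify the resulting circularity as the main obstacle, but you do not resolve it.

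The paper avoids this entirely by working on the tree side rather than the graph side: it explores all coupled trees level by level, keeps a running record of the \emph{types} seen so far, and whenever a newly generated individual has a type already seen, replaces that individual and its descendants by a fresh independent Galton--Watson subtree with types drawn from $\sizebias{\nu}_{n}$. After this surgery each type occurs in at most one tree, so the offspring variables used by different trees are genuinely disjoint and independence is immediate. The cost is the probability of a type collision: a non-root individual collides with the set of seen types with probability at most $\setweight{\imt{S}_{\ell}(\mathcal{V})}/\Lambda_{n}$, so on the event $\setcard{\imt{S}_{\ell}(\mathcal{V})}\leq k_{n}$, $\setweight{\imt{S}_{\ell}(\mathcal{V})}\leq k_{n}$ the number of collisions is dominated by $\mathrm{Bin}(k_{n},k_{n}/\Lambda_{n})$, giving $k_{n}^{2}/\Lambda_{n}=k_{n}^{2}/(n\vartheta\Gamma_{1,n})$ by Markov; the two truncation events cost $\bigl(\setcard{\mathcal{V}}+\setweight{\mathcal{V}}\Gamma_{1,n}(\Gamma_{2,n}+1)^{\ell-1}+\setweight{\mathcal{V}}(\Gamma_{2,n}+1)^{\ell}\bigr)/k_{n}$ via the tree-side analogues of \autoref{lem:bl:weight:raw}, which is precisely where the $\setcard{\mathcal{V}}/k_{n}$ term and the $(2+\Gamma_{1,n})/k_{n}$ inside the bracket come from. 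If you want to salvage your graph-side route you would have to (i) re-randomise not shared \emph{edges} but repeated \emph{types}, and (ii) bound the repeated-type probability conditionally on the already revealed exploration, which is exactly the paper's argument in disguise.
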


\begin{proof}

Let~\(\imt{S}_{\ell}(v)\) the set of individuals
in the intermediate tree~\(\imt{T}(v)\) up to level~\(\ell\)
and~\(\imt{D}_{r}(v) = \imt{S}_{r}(v)\setminus \imt{S}_{r-1}(v)\).
As in \autoref{sec:nbhdsize} we calculate the expected number of individuals
in~\(\imt{S}_{\ell}(v)\) and their total weight.

Note that in the tree
\[
  \expe_{n}[\imt{N}_{\treeroot}] = \frac{W_{v} \Lambda_{n}}{n\vartheta}
  \leq W_{v}\Gamma_{1,n}
\]
and for~\(\mathbf{i} \neq \treeroot\)
\[
\expe_{n}[\imt{N}_{\mathbf{i}}]
= \sum_{i=1}^{n} \frac{W_{i}}{\Lambda_{n}}
\expe_{n}[D_{i}]
= \sum_{i=1}^{n} \frac{W_{i}}{\Lambda_{n}}
\frac{\Lambda_{n} W_{i}}{n\vartheta}
\leq \frac{\expe_{n}[(W^{(n)})^{2}]}{\vartheta}
= \Gamma_{2,n},
\]
where~\(D_{i} \sim \mathrm{Poi}(\Lambda_{n}W_{i}/(\vartheta
n))\)
given~\(W_{i}\).
Then by construction
\[
\setcard{\imt{D}_{0}(v)} = 1
\quad\text{and}\quad
\expe_{n}[\setcard{\imt{D}_{1}(v)}] = \expe_{n}[\imt{N}_{\treeroot}] \leq
W_{v}\Gamma_{1,n}
\]
and furthermore by standard arguments for Galton--Watson trees
\[
\expe_{n}[\setcard{\imt{D}_{r}(v)}] \leq W_{v} \Gamma_{1,n}\Gamma_{2,n}^{r-1}.
\]
Hence,
\begin{equation}\label{eq:imt:l:size}
\expe_{n}[\setcard{\imt{S}_{\ell}(v)}] \leq 1+
W_{v}\Gamma_{1,n}(\Gamma_{2,n}+1)^{\ell-1},
\end{equation}
which coincides with the bound \autoref{lem:bl:weight:raw}
for the analogous quantity in~\(G_{n}\).

Similarly, by a standard argument for multi-type Galton--Watson
processes
\begin{equation}\label{eq:imt:l:weight}
\expe_{n}[\setweight{\imt{S}_{\ell}(v)}] \leq
W_{v}(\Gamma_{2,n}+1)^{\ell},
\end{equation}
which coincides with the bound from \autoref{lem:bl:weight:raw}
for~\(G_{n}\).

For a fixed set of vertices~\(\mathcal{V}\)
we can use \autoref{prop:blttl} to couple the neighbourhood
of~\(v\) in~\(G_{n}\) to an intermediate tree~\(\imt{T}'(v)\)
up to level~\(\ell\)
of each~\(v \in \mathcal{V}\).
Together these couplings satisfy
\begin{align*}
     &\prob_{n}\biggl(\bigsetunion_{v \in \mathcal{V}} \set[]{B_{\ell}(v)
       \ncong \imt{T}'_{\ell}(v)}\biggr)\notag\\
      &\quad
        \leq \sum_{v \in \mathcal{V}} \Bigl(
        \expe_{n}[\setweight{S_{\ell}(v)}_{2}]
        \frac{\Gamma_{2,n}}{n\vartheta}
        + \expe_{n}[\setweight{S_{\ell}(v)}_{+}]\Gamma_{1,n}
     + \expe_{n}[\setweight{S_{\ell}(v)}]
        \Bigl(\kappa_{1,n}+\frac{1}{k_{n}}+\frac{k_{n}}{n\vartheta}\Bigr)\Bigr).
  \end{align*}
  Use \autoref{lem:bl:weight:raw} to bound the
  expectations of~\(\setweight{S_{\ell}(v)}\) and~\(\setweight{S_{\ell}(v)}_{2}\)
  and \autoref{lem:bl:excessweight:raw} to bound
  the expectation of~\(\setweight{S_{\ell}(v)}_{+}\)
  to further estimate the term by
  \begin{align}
         &\prob_{n}\biggl(\bigsetunion_{v \in \mathcal{V}} \set[]{B_{\ell}(v)
       \ncong \imt{T}'_{\ell}(v)}\biggr)\notag\\
      &\quad\leq
      \sum_{v \in \mathcal{V}} (W_{v}^{2}+W_{v}(\Gamma_{2,n}+1)^{\ell-1}\Gamma_{3,n}
      )
      \frac{\Gamma_{2,n}}{n\vartheta}
      +\sum_{v \in \mathcal{V}} W_{v}\indfunc_{\set{W_{v} > \sqrt{n\vartheta}}}
      +W_{v}(\Gamma_{2,n}+1)^{\ell-1}\kappa_{2,n}\notag\\
     &\qquad +
      \sum_{v \in \mathcal{V}} W_{v}(\Gamma_{2,n}+1)^{\ell}
      \Bigl(\kappa_{1,n}+\frac{1}{k_{n}}+\frac{k_{n}}{n\vartheta}\Bigr)\notag\\
     &\quad\leq \setweight{\mathcal{V}}_{2} \frac{\Gamma_{2,n}}{n\vartheta}
       +\setweight{\mathcal{V}}_{+} \Gamma_{1,n}
     + \setweight{\mathcal{V}} (\Gamma_{2,n}+1)^{\ell}\Bigl(
      \frac{\Gamma_{3,n}}{n\vartheta}+\kappa_{1,n}+\kappa_{2,n}+
      \frac{1}{k_{n}}+\frac{k_{n}}{n\vartheta}\Bigr).
      \label{eq:manytreescoup}
\end{align}

The definition of these coupled trees does not ensure that
the trees are independent,
because the same (coupled) individual may appear in several trees.
But given a family of intermediate trees the following procedure
can generate independent trees~\((\imt{T}_{\ell}(v))_{v \in
\mathcal{V}}\).

We construct level~\(r \in \set{0,\dots,\ell}\)
of all trees~\((\imt{T}_{\ell}(v))_{v \in \mathcal{V}}\) in the same step.
During our procedure we need to keep track of the individuals that
we have seen so far.

Start by setting level~\(0\) of each tree~\(\imt{T}_{\ell}(v)\)
to just~\(v\).

Assuming that we have already explored all vertices at level~\(r-1\)
in all trees,
we now use breadth-first search to completely explore level~\(r\)
of each~\(\imt{T}'_{\ell}(v)\).
Whenever we encounter an individual in~\(\imt{T}'_{\ell}(v)\)
that has not been seen before,
it is copied over to the appropriate~\(\imt{T}_{\ell}(v)\)
and added to the set of individuals that have been seen.
If the individual has been seen before,
an independent Galton--Watson tree
of appropriate height
with offspring distribution~\(\sizebias{\nu}\)
is added in its position to~\(\imt{T}_{\ell}(v)\).

This procedure terminates with independent intermediate trees
and the probability that~\(\imt{T}'_{\ell}(v) \neq
\imt{T}_{\ell}(v)\)
for any~\(v \in \mathcal{V}\) can be estimated by the probability
that any individual added during the process was seen before.
Let~\(\imt{S}(v)\)
be the set of individuals in the intermediate
tree~\(\imt{T}'_{\ell}(v)\) for~\(v \in \mathcal{V}\)
and set~\(\imt{S}_{\ell}(\mathcal{V}) = \bigsetunion_{v \in \mathcal{V}}
\imt{S}(v)\).
Then~\(\setcard{\imt{S}_{\ell}(\mathcal{V})}\) is the total number of individuals
in all trees and~\(\setweight{\imt{S}_{\ell}(\mathcal{V})}\) their total
connectivity weight.
The type of a non-root individual has distribution~\(\sizebias{\nu}\).
Hence, the probability that during the breadth-first search
a particular non-root individual has a type
that has been seen before is bounded above
by (cf.~\eqref{eq:nunnunhat})
\[
\sizebias{\nu}_{n}(
\sset{W_{i}}
{i \in \imt{S}_{\ell}(\mathcal{V})}
)
\leq \frac{\setweight[\big]{\imt{S}_{\ell}(\mathcal{V})}}{\Lambda_{n}}.
\]
If we have that~\(\setcard{\imt{S}_{\ell}(\mathcal{V})} \leq k_{n}\)
and~\(\setweight{\imt{S}_{\ell}(\mathcal{V})} \leq k_{n}\)
then
the number of vertices whose type was already seen is dominated by
a binomial distribution with parameters~\(k_{n}\)
and~\(k_{n}\Lambda_{n}^{-1}\).
Let
\(
Z \sim \mathrm{Bin}(k_{n}, k_{n}\Lambda_{n}^{-1})
\)
such that by Markov's inequality
\[
\prob_{n}(Z \geq 1) \leq \frac{k_{n}^{2}}{\Lambda_{n}}.
\]
With~\eqref{eq:imt:l:weight}, \eqref{eq:imt:l:size}
and~\(\sum_{v\in\mathcal{V}} W_{v} = \setweight{\mathcal{V}}\)
it follows that
\begin{align}
  &\prob_{n}\Bigl(\bigsetunion_{v \in \mathcal{V}}
    \set[\big]{\imt{T}'_{\ell}(v)
    \ncong \imt{T}_{\ell}(v) }\Bigr)\notag\\
  &\quad\leq
    \prob_{n}(Z \geq 1, \setcard{\imt{S}_{\ell}(\mathcal{V})}\leq k_{n},
    \setweight{\imt{S}_{\ell}(\mathcal{V})} \leq k_{n})
    +\prob_{n}(\setcard{\imt{S}_{\ell}(\mathcal{V})} > k_{n})
    +\prob_{n}(\setweight{\imt{S}_{\ell}(\mathcal{V})} > k_{n})\notag\\
  &\quad\leq \frac{k_{n}^{2}}{\Lambda_{n}}
    + \frac{\sum_{v \in \mathcal{V}}\expe_{n}[
    \setcard{\imt{S}_{\ell}(v)}]}{k_{n}}
    + \frac{\sum_{v \in \mathcal{V}} \expe_{n}[
    \setweight{\imt{S}_{\ell}(v)}]}{k_{n}}.\notag\\
   &\quad\leq
    \frac{k_{n}^{2}}{\Lambda_{n}}
    + \frac{\setcard{\mathcal{V}}
      +\setweight{\mathcal{V}}\Gamma_{1,n}(\Gamma_{2,n}+1)^{\ell-1}
    +\setweight{\mathcal{V}}(\Gamma_{2,n}+1)^{\ell}}{k_{n}}
    \label{eq:manytreesindep}
\end{align}

This shows that we can turn a family of intermediate
trees~\((\imt{T}'(v))_{v \in \mathcal{V}}\)
into independent intermediate trees~\((\imt{T}(v))_{v \in \mathcal{V}}\)
at small cost by replacing a repeated individual and all its descendants
by independent draws from~\(\sizebias{\nu}_{n}\).

In particular we can switch the trees~\(\imt{T}'(v)\)
coupled to the neighbourhoods
of~\(v\) to independent trees~\(\imt{T}(v)\).
By~\eqref{eq:manytreescoup} and~\eqref{eq:manytreesindep}
the couplings between~\(B_{\ell}(v)\) and the independent~\(\imt{T}(v)\)
then satisfy
\begin{align*}
  \prob_{n}\Bigl(
    \bigsetunion_{v \in \mathcal{V}} \set[\big]{
    B_{\ell}(v) \ncong \imt{T}_{\ell}(v)
    }
    \Bigr)
  &\leq
    \prob_{n}\Bigl(\bigsetunion_{v \in \mathcal{V}} \set[\big]{B_{\ell}(v)
    \neq \imt{T}'_{\ell}(v)}\Bigr)
    +\prob_{n}\Bigl(\bigsetunion_{v \in \mathcal{V}}
    \set[\big]{\imt{T}'_{\ell}(v)
    \ncong \imt{T}_{\ell}(v) }\Bigr)\\
  &\leq
    \setweight{\mathcal{V}}_{2} \frac{\Gamma_{2,n}}{n\vartheta}
    + \setweight{\mathcal{V}}_{+}\Gamma_{1,n}
    + \setweight{\mathcal{V}} (\Gamma_{2,n}+1)^{\ell}
    \Bigl(
      \frac{\Gamma_{3,n}}{n\vartheta}+\kappa_{1,n}+\kappa_{2,n}+
      \frac{1}{k_{n}}+\frac{k_{n}}{n\vartheta}\Bigr)\\
   &\quad +\frac{k_{n}^{2}}{\Lambda_{n}}
     + \frac{\setcard{\mathcal{V}}
     +\setweight{\mathcal{V}}\Gamma_{1,n}(\Gamma_{2,n}+1)^{\ell-1}
     +\setweight{\mathcal{V}}(\Gamma_{2,n}+1)^{\ell}}{k_{n}}.
\end{align*}
Recall that~\(\Lambda_{n} = n\vartheta\Gamma_{1,n}\),
then collect the terms for~\(\setweight{\mathcal{V}}_{2}\),
  \(\setweight{\mathcal{V}}\) and~\(\setcard{\mathcal{V}}\)
  and estimate terms very generously to obtain the claimed bound.
\end{proof}

The construction of the intermediate trees relies
heavily on the connectivity weights of vertices in~\(G_{n}\).
Since the empirical distribution of those connectivity weights converges
to a limiting distribution by assumption we now define
(limiting) trees that draw from this limiting distribution
and show that those trees can be coupled to
the intermediate trees.

\begin{definition}\label{def:tree}
  Fix a vertex~\(v \in V_{n}\)
  and define a tree~\(\mathcal{T}(v)\)
  via a sequence of random
  variables~\(\sset{(W_{\mathbf{i}},N_{\mathbf{i}})}
  {\mathbf{i} \in \ulamharris}\),
  where~\(W_{\mathbf{i}}\) is the type of individual~\(\mathbf{i}\)
  and~\(N_{\mathbf{i}}\) is its number of children.
  The distribution of~\(\sset{(\imt{W}_{\mathbf{i}},\imt{N}_{\mathbf{i}})}
  {\mathbf{i} \in \ulamharris}\)
  satisfies
  \begin{itemize}
  \item \(W_{\treeroot} = W_{v}\)
    and~\(N_{\treeroot} \sim \mathrm{Poi}(W_{v})\),
  \item all other (non-root) individuals~\(\mathbf{i} \neq \treeroot\)
    have independent types and numbers of
    children~\((W_{\mathbf{i}},N_{\mathbf{i}})\)
    with distribution
    \[
      \prob((W_{\mathbf{i}},N_{\mathbf{i}}) \in \placeholder)
      = \prob((\sizebias{W},N) \in \placeholder),
    \]
    where~\(\sizebias{W} \sim \sizebias{\nu}\)
    and~\(N \sim \mathrm{MPoi}(\sizebias{W})\).
  \end{itemize}

  The tree structure on~\(\mathcal{T}(v)\) is then obtained
  recursively from~\(\mathcal{A}_{0} = \set{\treeroot}\)
  and
  \[
    \mathcal{A}_{k}
    = \sset{(\mathbf{i},j)}
    {\mathbf{i} \in \mathcal{A}_{k-1}, 1 \leq j \leq N_{\mathbf{i}}}
    \quad\text{for~\(k \in \naturals\), \(k \geq 1\)}.
  \]
\end{definition}
Ignoring the root, which differs from all other individuals,
the structure of this tree is given by a single-type branching
process with a mixed Poisson offspring distribution.

Note also that a underlying structure of the tree~\(\mathcal{T}(v)\)
constructed as described in \autoref{def:tree}
has exactly the distribution~\(T(W_{v},\nu)\)
defined in \autoref{def:limtree:noweight}.

We now show that~\(\imt{T}(v)\)
can be coupled to~\(\mathcal{T}(v)\).
This coupling relies on the coupling between~\(\nu_{n}\)
and~\(\nu\) that can be obtained because
the Wasserstein distance between the two measures
is bounded by~\(\alpha_{n}\).
Furthermore, the Poisson random variables can easily be coupled
once the vertex attributes are known.

\begin{lemma}\label{lem:treecoup}
  Let~\((G_{n})_{n \in \naturals}\) be a sequence of rank-one inhomogeneous
  random graphs that satisfies \autoref{ass:coupling}.
  Fix any~\(n\in naturals\).
  Let~\(\mathcal{V} \subseteq V_{n}\) be a set of vertices
  from~\(G_{n} = (V_{n},E_{n})\).
  We can couple the intermediate trees~\((\imt{T}(v))_{v \in \mathcal{V}}\)
  to the Poisson trees~\((\mathcal{T}(v))_{v\in\mathcal{V}}\) defined
  as in \autoref{def:tree} such that
  \[
    \prob_{n}\Bigl(
    \bigsetunion_{v \in \mathcal{V}}
    \set[\big]{ \imt{T}_{\ell}(v) \ncong \mathcal{T}_{\ell}(v)   }
    \Bigr)
    \leq \setweight{\mathcal{V}}\alpha_{n}
    \Bigl(
      \frac{1}{\vartheta}
      + (\Gamma_{2}+1)^{\ell-1}
        \Bigl(\frac{\Gamma_{2,n}}{\vartheta\Gamma_{1,n}}+1\Bigr)
    \Bigr).
\]
\end{lemma}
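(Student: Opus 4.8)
The plan is to couple, for each $v\in\mathcal{V}$ \emph{independently}, the intermediate tree $\imt{T}(v)$ of \autoref{def:imt} with a limiting tree $\mathcal{T}(v)$ of \autoref{def:tree}, by generating the two trees simultaneously on a common subtree of the Ulam--Harris tree and tracking the first level at which their shapes disagree. For the root the offspring counts are $\mathrm{Poi}\bigl(W_v\Lambda_n/(n\vartheta)\bigr)=\mathrm{Poi}(W_v\Gamma_{1,n})$ and $\mathrm{Poi}(W_v)$; since two Poisson variables with means $\lambda\ge\mu$ can be coupled to differ only with probability at most $1-e^{-(\lambda-\mu)}\le\lambda-\mu$, and since $\abs{\Gamma_{1,n}-1}=\vartheta^{-1}\abs{n^{-1}\Lambda_n-\vartheta}\le\vartheta^{-1}\alpha_n$ by the optimal coupling of $\nu_n$ and $\nu$, the root counts can be made equal except on an event of $\prob_n$-probability at most $W_v\alpha_n/\vartheta$. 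For a non-root individual the types $\imt{W}\sim\sizebias{\nu}_n$ and $W\sim\sizebias{\nu}$ are coupled by the $\wasserstein$-optimal coupling, so $\expe_n[\abs{\imt{W}-W}]\le\alpha_n$ by \autoref{ass:coupling}\ref{itm:nuhat}, and then the offspring counts $\mathrm{Poi}(\Gamma_{1,n}\imt{W})$ and $\mathrm{Poi}(W)$ are coupled conditionally on the types so that they differ with probability at most $\abs{\Gamma_{1,n}\imt{W}-W}\wedge1$. This is precisely the "coupling relies on $\wasserstein(\nu_n,\nu)\le\alpha_n$" idea flagged before the statement.

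Next I would bound, for a single $v$, the probability $\prob_n(\imt{T}_\ell(v)\ncong\mathcal{T}_\ell(v))$ by decomposing over the first level of disagreement. Writing $M_r$ for the event that the two trees agree (as rooted graphs) up to level $r$, so that $M_0$ is certain, the shapes fail to agree at level $r+1$ while agreeing at level $r$ exactly when some level-$r$ vertex has mismatched offspring counts; on $M_r$ the level-$r$ vertex sets coincide, and the level-$r$ type pairs are fresh i.i.d.\ draws from the coupling of $(\sizebias{\nu}_n,\sizebias{\nu})$ (they are independent of $M_r$ and of the level-$r$ vertex set, which are determined by the offspring counts at levels $<r$, revealed earlier). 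Conditioning on that information but not yet on the level-$r$ counts, the per-vertex estimate above yields $\prob_n(M_r\cap M_{r+1}^c)\le\expe_n[\setcard{\mathcal{D}_r(v)}]\,\expe_n[\abs{\Gamma_{1,n}\imt{W}-W}]$, where $\mathcal{D}_r(v)$ denotes the level-$r$ vertices of $\mathcal{T}(v)$. The triangle inequality gives $\expe_n[\abs{\Gamma_{1,n}\imt{W}-W}]\le\abs{\Gamma_{1,n}-1}\,\expe_n[\imt{W}]+\expe_n[\abs{\imt{W}-W}]\le(\alpha_n/\vartheta)(\Gamma_{2,n}/\Gamma_{1,n})+\alpha_n=\alpha_n\bigl(\Gamma_{2,n}/(\vartheta\Gamma_{1,n})+1\bigr)$, using $\expe_n[\imt{W}]=\Lambda_n^{-1}\sum_uW_u^2=\Gamma_{2,n}/\Gamma_{1,n}$. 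A standard Galton--Watson first-moment computation for $\mathcal{T}(v)$, whose non-root mean offspring is $\expe[\sizebias{W}]=\Gamma_2$, gives $\expe_n[\setcard{\mathcal{D}_r(v)}]\le W_v\Gamma_2^{r-1}$ for $r\ge1$, so summing $r$ from $1$ to $\ell-1$ with $\sum_{r=1}^{\ell-1}\Gamma_2^{r-1}\le(\Gamma_2+1)^{\ell-1}$ and adding the root term yields
\[
  \prob_n\bigl(\imt{T}_\ell(v)\ncong\mathcal{T}_\ell(v)\bigr)
  \le W_v\alpha_n\Bigl(\tfrac1\vartheta+(\Gamma_2+1)^{\ell-1}\bigl(\tfrac{\Gamma_{2,n}}{\vartheta\Gamma_{1,n}}+1\bigr)\Bigr).
\]
A union bound over $v\in\mathcal{V}$, together with $\sum_{v\in\mathcal{V}}W_v=\setweight{\mathcal{V}}$ and the fact that the couplings are run with independent randomness (so the $\mathcal{T}(v)$ are independent, as required downstream in \autoref{prop:maincoup}), then finishes the proof.

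The main obstacle is the bookkeeping in this level-by-level argument: one must fix an exploration/filtration of the joint construction so that on $M_r$ the level-$r$ type pairs are genuinely independent of both $M_r$ and the identity of the level-$r$ vertex set, and so that "the coupling breaks at level $r+1$" is captured exactly by the event that some level-$r$ offspring-count pair disagrees. This is structurally the same device as in the proof of \autoref{prop:blttl} (and the independence-restoration step of \autoref{prop:manycouplings}), so once the filtration is set up the remaining estimates are routine.
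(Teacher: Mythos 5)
Your proposal is correct and follows essentially the same route as the paper: couple the root offspring counts at cost $W_v\abs{\Gamma_{1,n}-1}\le W_v\alpha_n/\vartheta$, couple non-root types via the $\wasserstein$-optimal coupling of $\sizebias{\nu}_n$ and $\sizebias{\nu}$ and then the Poisson counts at cost $\abs{\Gamma_{1,n}-1}\Gamma_{2,n}/\Gamma_{1,n}+\alpha_n$ per individual, decompose over the first level of disagreement, and control the level sizes by the first-moment bound $\expe[\setcard{\mathcal{S}_\ell(v)}]\le 1+W_v(\Gamma_2+1)^{\ell}$ before taking a union bound over $\mathcal{V}$. No gaps.
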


\begin{proof}
  We couple the types and number of children for each individual separately.
  Since we can bound the expected number of individuals
  in the relevant trees, we can then give a bound
  for the probability that the trees have a different structure.

Clearly~\(\imt{W}_{\treeroot} = W_{\treeroot} = W_{v}\) by construction,
so the type at the root is the same in both trees.
We can couple the numbers of children of the root
such that
\begin{equation*}
  \prob_{n}(\imt{N}_{\treeroot} \neq N_{\treeroot})
  \leq \expe_{n}\Bigl[
    \abs[\Big]{\frac{\Lambda_{n}}{n\vartheta}-1}W_{v}
    \Bigr]
  = W_{v} \abs{\Gamma_{1,n}-1},
\end{equation*}
where we used~\(\Lambda_{n} = n\vartheta\Gamma_{1,n}\)
and~\(\mathcal{F}_{n}\)-measurability of all involved terms
in the last line.

For~\(\mathbf{i} \neq \treeroot\)
first couple~\(\imt{W}_{\mathbf{i}} \sim \sizebias{\nu}_{n}\)
to~\(W_{\mathbf{i}} \sim \sizebias{\nu}\)
optimally according to the Wasserstein distance,
i.e.
\[
  \expe_{n}[\abs{\imt{W}_{\mathbf{i}}-W_{\mathbf{i}}}]
  \leq \alpha_{n},
\]
where~\(\alpha_{n}\) is~\(\mathcal{F}_{n}\)-measurable
and~\(\alpha_{n} \overset{\prob}{\to} 0\).
Then couple~\(\imt{N}_{\mathbf{i}}\) and~\(N_{\mathbf{i}}\)
such that
\begin{equation*}
  \prob_{n}(\imt{N}_{\mathbf{i}} \neq N_{\mathbf{i}})
  \leq \expe_{n}\Bigl[
      \abs[\Big]{\frac{\Lambda_{n}}{n\vartheta}-1}
      \imt{W}_{\mathbf{i}}
    \Bigr]
    + \expe_{n}[\abs{\imt{W}_{\mathbf{i}}-W_{\mathbf{i}}}]\\
  \leq \abs{\Gamma_{1,n}-1} \expe_{n}[\imt{W}_{\mathbf{i}}]+\alpha_{n}.
\end{equation*}
Since
\[
  \expe_{n}[\imt{W}_{\mathbf{i}}]
  = \sum_{i=1}^{n} W_{i} \frac{W_{i}}{\Lambda_{n}}
  = \sum_{i=1}^{n} \frac{W_{i}^{2}}{n\vartheta\Gamma_{1,n}}
  = \frac{1}{\Gamma_{1,n}} \Gamma_{2,n}
\]
this implies
\begin{equation*}
  \prob_{n}(\imt{N}_{\mathbf{i}} \neq N_{\mathbf{i}})
  \leq \abs{\Gamma_{1,n}-1} \frac{\Gamma_{2,n}}{\Gamma_{1,n}}
  +\alpha_{n}.
\end{equation*}

Recall that~\(\Gamma_{1,n}=\vartheta^{-1} \expe_{n}[W^{(n)}]\)
and assume that~\(W^{(n)} \sim \nu_{n}\) is coupled
optimally to~\(W \sim \nu\) according to the Wasserstein distance.
Then we have~\(\expe_{n}[W]=\vartheta\) and
\[
  \abs{\Gamma_{1,n}-1}
  = \vartheta^{-1} \abs{\expe_{n}[W^{(n)}]-\expe_{n}[W]}
  \leq \vartheta^{-1} \expe_{n}[\abs{W^{(n)}-W}]
  \leq \vartheta^{-1} \alpha_{n}.
\]

The tree structure of~\(\imt{T}(v)\)
and~\(\mathcal{T}(v)\)
is determined only by~\(\imt{N}_{\mathbf{i}}\)
and~\(N_{\mathbf{i}}\),
respectively.
In particular~\(\imt{T}_{\ell}(v)\) and~\(\mathcal{T}_{\ell}(v)\)
can only disagree if there is an individual~\(\mathbf{i}\)
at level~\(\ell-1\)
in~\(\mathcal{T}_{\ell}(v)\) whose number of children~\(N_{\mathbf{i}}\)
is different from~\(\imt{N}_{\mathbf{i}}\).
If we can control the number of vertices in~\(\mathcal{T}_{\ell}(v)\),
a simple union bound
and the fact that the distribution of~\(\imt{N}_{\mathbf{i}}\)
  and~\(N_{\mathbf{i}}\) is the same for all~\(\mathbf{i} \neq \treeroot\)
can be used to bound the probability
that the coupling generates different tree structures.
As in the proof of \autoref{prop:blttl} we will work conditionally
on the previous level.

Let~\(\mathcal{S}_{r}(v)\) be the set of individuals
in~\(\mathcal{T}_{r}(v)\)
and~\(\mathcal{D}_{r}(v) = \mathcal{S}_{r}(v) \setminus \mathcal{S}_{r-1}(v)\)
the individuals at level~\(r \in \naturals\).
Let~\(\mathcal{G}_{\ell}\) be the~\(\sigma\)-algebra generated
by~\(\imt{W}_{\mathbf{i}}, W_{\mathbf{i}}, \imt{N}_{\mathbf{i}},
N_{\mathbf{i}}\) up to level~\(\ell\).

For~\(\ell=1\)
we have
\[
  \prob_{n}(\imt{T}_{1}(v) \ncong \mathcal{T}_{1}(v))
  \leq \prob_{n}(\imt{N}_{\treeroot} \neq N_{\treeroot})
  \leq W_{v}\abs{\Gamma_{1,n}-1}
  \leq W_{v} \vartheta^{-1}\alpha_{n}.
\]
For~\(\ell \geq 2\) the coupling breaks if
any of the individuals at level~\(\ell-1\)
creates a different number of children.
Since all individuals apart from~\(\treeroot\)
have the same distribution that is furthermore
independent of other individuals
(in particular independent from those in a lower level),
that probability of generating different tree structures via this
coupling is bounded by
\begin{equation*}
  \indfunc_{\set{\imt{T}_{\ell-1}(v) \cong \mathcal{T}_{\ell-1}(v)}}
    \prob_{n}(\imt{T}_{\ell}(v) \ncong \mathcal{T}_{\ell}(v)
    \given \mathcal{G}_{\ell-1})
  \leq
    \sum_{\mathbf{i} \in \mathcal{D}_{\ell}(v)}
    \prob_{n}(\imt{N}_{\mathbf{i}} \neq N_{\mathbf{i}})
  \leq \setcard{\mathcal{D}_{\ell-1}(v)}
    \alpha_{n} \biggl(\frac{\Gamma_{2,n}}{\vartheta \Gamma_{1,n}}+1\biggr).
\end{equation*}

Now sum over the probabilities that the tree structure is different
for the first time at a specific level to find
\begin{align*}
  \prob_{n}(\imt{T}_{\ell}(v) \ncong \mathcal{T}_{\ell}(v))
  &\leq \prob_{n}(\imt{T}_{1}(v) \ncong \mathcal{T}_{1}(v))
  + \sum_{r=2}^{\ell}\expe_{n}\bigl[
    \indfunc_{\set{\imt{T}_{r-1}(v) \cong \mathcal{T}_{r-1}(v)}}
    \prob_{n}(\imt{T}_{r}(v) \ncong \mathcal{T}_{r}(v)
    \given \mathcal{G}_{r-1})
    \bigr]\\
  &\leq W_{v}\vartheta^{-1} \alpha_{n}
    + \expe_{n}[\setcard{S_{\ell-1}(v)}-1]\alpha_{n}
    \Bigl(\frac{\Gamma_{2,n}}{\vartheta \Gamma_{1,n}}+1\Bigr)\\
  &\leq W_{v}\vartheta^{-1}\alpha_{n}
    + W_{v}(\Gamma_{2}+1)^{\ell-1}
    \alpha_{n}\Bigl(\frac{\Gamma_{2,n}}{\vartheta \Gamma_{1,n}}+1\Bigr).
\end{align*}
In the second to last step we used
that~\(\expe[N_{\treeroot}] = W_{v}\)
and~\(\expe[N_{\mathbf{i}}] = \expe[\sizebias{W}]
= \expe[W^{2}]/\expe[W] = \Gamma_{2}\)
for~\(\mathbf{i} \neq \treeroot\)
to conclude
\[
  \expe[\setcard{\mathcal{S}_{\ell}(v)}]
  \leq 1+W_{v}(\Gamma_{2}+1)^{\ell}.
\]

Sum these bounds over~\(v \in \mathcal{V}\) to finish the proof.
\end{proof}

With this lemma it is now possible to couple the neighbourhoods
to the limiting trees
as claimed in \autoref{prop:maincoup}.
We restate the proposition in the notation of this section.
\begin{repprop}{prop:maincoup}
  Let~\((G_{n})_{n \in \naturals}\) be a sequence of
  rank-one inhomogeneous random graphs satisfying \autoref{ass:coupling}.
  Fix any~\(n \in \naturals\).
  Let~\(\mathcal{V} \subseteq V_{n}\) be a subset of vertices
  of~\(G_{n}  = (V_{n},E_{n})\).
  Then for all~\(\ell \in \naturals\)
  the neighbourhoods around~\(B_{\ell}(v)\)
  can be coupled to independent limiting trees~\(\mathcal{T}(v)\)
  as defined in \autoref{def:tree}
  such that
  \begin{align*}
    &\prob_{n}\Bigl(
    \bigsetunion_{v \in \mathcal{V}} \set[\big]{
    B_{\ell}(v) \ncong \mathcal{T}_{\ell}(v)
    }
    \Bigr)\\
  &\quad\leq
    \setweight{\mathcal{V}}_{2} \frac{\Gamma_{2,n}}{n\vartheta}
    +\setweight{\mathcal{V}}_{+}\Gamma_{1,n}
    +\setweight{\mathcal{V}}(\Gamma_{2,n}+1)^{\ell}
    \Bigl(
      \frac{\Gamma_{3,n}}{n\vartheta}+\kappa_{1,n}+\kappa_{2,n}+
      \frac{2+\Gamma_{1,n}}{k_{n}}+\frac{k_{n}}{n\vartheta}\Bigr)\\
  &\qquad + \setcard{\mathcal{V}}\frac{1}{k_{n}}
    +\frac{k_{n}^{2}}{n\vartheta\Gamma_{1,n}}
    +\setweight{\mathcal{V}}\alpha_{n}
    \Bigl(
      \frac{1}{\vartheta}
      + (\Gamma_{2}+1)^{\ell-1}
        \Bigl(\frac{\Gamma_{2,n}}{\vartheta\Gamma_{1,n}}+1\Bigr)
    \Bigr)
  \end{align*}
  for all sequences~\((k_{n})_{n \in\naturals} \subseteq \intervaloo{0}{\infty}\).
\end{repprop}

\begin{proof}
  This is a direct consequence of \autoref{prop:manycouplings}
  and \autoref{lem:treecoup}.
\end{proof}

\subsection{More complex couplings}
\label{sec:compcoup}

This section collects a few additional more specialised coupling results
for the weighted inhomogeneous random graph~\(\mathbf{G}_{n}\)
that follow directly from \autoref{prop:maincoup}
for the unweighted model~\(G_{n}\).
These results are extensions of lemmas shown by \citet[§~6]{cao}
for Erdős--Rényi graphs.

The following is just a reformulation of \autoref{prop:maincoup}.
\begin{lemma}\label{lem:nbhdcoupl:noweights}
  Fix~\(\ell \in \naturals\) and let~\(\mathcal{V} \subseteq V_{n}\)
  be a set of vertices vertices.
  Then there is a
  coupling~\(((B_{\ell}(v,G_{n}),
  \mathcal{T}_{\ell}(v))_{v \in \mathcal{V}})\)
  such
  that the~\(\mathcal{T}_{\ell}(v) \sim T_{\ell}(W_{v},\nu)\)
  are independent limiting trees
  with
  \begin{equation*}
  \prob_{n}(\text{\(B_{\ell}(v,G_{n}) \cong \mathcal{T}_{\ell}(v)\)
    for all~\(v \in \mathcal{V}\)})
  \geq 1-\eta_{n,\ell}(\mathcal{V}),
  \end{equation*}
  where
  \[
    \begin{split}
    \eta_{n,\ell}(\mathcal{V})
      &=
    \setweight{\mathcal{V}}_{2} \frac{\Gamma_{2,n}}{n\vartheta}
    +\setweight{\mathcal{V}}_{+}\Gamma_{1,n}
    +\setweight{\mathcal{V}}(\Gamma_{2,n}+1)^{\ell}
    \Bigl(
      \frac{\Gamma_{3,n}}{n\vartheta}+\kappa_{1,n}+\kappa_{2,n}+
      \frac{2+\Gamma_{1,n}}{k_{n}}+\frac{k_{n}}{n\vartheta}\Bigr)\\
  &\quad + \setcard{\mathcal{V}}\frac{1}{k_{n}}
    +\frac{k_{n}^{2}}{n\vartheta\Gamma_{1,n}}
    +\setweight{\mathcal{V}}\alpha_{n}
    \Bigl(
      \frac{1}{\vartheta}
      + (\Gamma_{2}+1)^{\ell-1}
        \Bigl(\frac{\Gamma_{2,n}}{\vartheta\Gamma_{1,n}}+1\Bigr)
    \Bigr).
  \end{split}
  \]
\end{lemma}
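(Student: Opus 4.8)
The plan is to recognise that the assertion is merely \autoref{prop:maincoup} rewritten in complementary form, so that no new argument is required. First I would take the coupling of the neighbourhoods $B_{\ell}(v,G_{n})$, $v \in \mathcal{V}$, to the independent limiting trees $\mathcal{T}(v) \sim T(W_{v},\nu)$ furnished by \autoref{prop:maincoup}, and recall that $\mathcal{T}_{\ell}(v)$ denotes the truncation of $\mathcal{T}(v)$ at level $\ell$, which has law $T_{\ell}(W_{v},\nu)$ (cf.~\autoref{def:limtree:noweight} and \autoref{def:tree}).

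Second I would observe that the success event appearing in the statement is exactly the complement of the failure event bounded in \autoref{prop:maincoup}, i.e.
\[
  \Bigl\{\text{$B_{\ell}(v,G_{n}) \cong \mathcal{T}_{\ell}(v)$ for all $v \in \mathcal{V}$}\Bigr\}
  = \biggl(\bigsetunion_{v \in \mathcal{V}} \bigl\{B_{\ell}(v) \ncong \mathcal{T}_{\ell}(v)\bigr\}\biggr)^{c},
\]
so that
\[
  \prob_{n}\Bigl(\text{$B_{\ell}(v,G_{n}) \cong \mathcal{T}_{\ell}(v)$ for all $v \in \mathcal{V}$}\Bigr)
  = 1 - \prob_{n}\biggl(\bigsetunion_{v \in \mathcal{V}} \bigl\{B_{\ell}(v) \ncong \mathcal{T}_{\ell}(v)\bigr\}\biggr).
\]
Third I would bound the union probability on the right-hand side by invoking \autoref{prop:maincoup} directly. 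Matching notation with \autoref{def:dl} and the definitions of $\setweight{\placeholder}_{p}$, $\setweight{\placeholder}_{+}$ and $\setcard{\placeholder}$ (so $\setweight{\mathcal{V}} = \setweight{\mathcal{V}}_{1} = \sum_{v \in \mathcal{V}} W_{v}$, $\setweight{\mathcal{V}}_{2} = \sum_{v \in \mathcal{V}} W_{v}^{2}$ and $\setweight{\mathcal{V}}_{+} = \sum_{v \in \mathcal{V}} W_{v}\indfunc_{\set{W_{v} > \sqrt{n\vartheta}}}$), the bound supplied by \autoref{prop:maincoup} is term-for-term equal to $\eta_{n,\ell}(\mathcal{V})$ as displayed in the statement. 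Combining this with the identity above yields $\prob_{n}(\cdots) \geq 1 - \eta_{n,\ell}(\mathcal{V})$, which is the claim.

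There is essentially no obstacle here: the substance was already carried out in \autoref{sec:gimt}, where \autoref{prop:maincoup} was obtained as a consequence of \autoref{prop:manycouplings} and \autoref{lem:treecoup}, and all that remains is the bookkeeping of rewriting the bound in complementary form, together with the harmless relabelling of ``cut at height $\ell$'' as the level-$\ell$ truncation and of $\mathbf{G}_{n}$ as $G_{n}$. Since no edge or vertex weights enter at this stage, nothing further is needed.
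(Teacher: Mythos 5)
Your proposal is correct and matches the paper exactly: the paper gives no separate proof of this lemma, prefacing it only with the remark that it is ``just a reformulation of'' \autoref{prop:maincoup}, and your complementation argument together with the term-by-term identification of the bound with \(\eta_{n,\ell}(\mathcal{V})\) is precisely that reformulation.
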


\begin{lemma}\label{lem:bvbucoupl}
  Fix~\(\ell \in \naturals\) and let~\(\mathcal{V} \subseteq V_{n}\)
  be a set of vertices vertices.
  Then there is a
  coupling~\(((B_{\ell}(v,\mathbf{G}_{n}),
  \mathbf{T}_{\ell}(v))_{v \in \mathcal{V}})\)
  such
  that the~\(\mathbf{T}_{\ell}(v)\)
  are independent with
  distribution~\(\mathbf{T}_{\ell}(W_{v},\nu,\mu_{E},\mu_{V})\)
  and
  \begin{equation*}
    \prob_{n}(\text{\(B_{\ell}(v,\mathbf{G}_{n})
        \cong \mathbf{T}_{\ell}(v)\)
        for all~\(v \in \mathcal{V}\)})
      \geq 1-\varepsilon_{n,\ell}(\mathcal{V}),
  \end{equation*}
  where
  \[
    \varepsilon_{n,\ell}(\mathcal{V})
    = \eta_{n,\ell}(\mathcal{V})
    +  (\setcard{\mathcal{V}}+\setweight{\mathcal{V}}(\Gamma_{2}+1)^{\ell})
    (d_{\mathrm{TV}}(\mu_{E,n},\mu_{E})
    + d_{\mathrm{TV}}(\mu_{V,n},\mu_{V})
    )
  \]
  with~\(\eta_{n,\ell}(\mathcal{V})\) as in \autoref{lem:nbhdcoupl:noweights}.
\end{lemma}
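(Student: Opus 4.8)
The plan is to derive the weighted coupling directly from the unweighted one in \autoref{lem:nbhdcoupl:noweights} (equivalently, \autoref{prop:maincoup}) by the standard two-step recipe: first couple the underlying graph structures, then couple the i.i.d.\ weights, and bound the probability of a weight mismatch by a union bound over the sites of the limiting trees.

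First I would realise, on one probability space, the unweighted neighbourhoods $B_{\ell}(v,G_{n})$ together with the independent unweighted limiting trees $\mathcal{T}_{\ell}(v) \sim T_{\ell}(W_{v},\nu)$ via the coupling constructed in the proof of \autoref{prop:maincoup}. Let $\mathcal{G}$ be the good event of that construction; on $\mathcal{G}$ we have $B_{\ell}(v,G_{n}) \cong \mathcal{T}_{\ell}(v)$ for all $v \in \mathcal{V}$, and moreover---since the construction in \autoref{prop:manycouplings} re-randomises any repeated individual, so that on $\mathcal{G}$ no re-randomisation occurs---the neighbourhoods $B_{\ell}(v,G_{n})$ are pairwise vertex-disjoint and each carries a canonical, type-preserving bijection $\varphi_{v}$ onto $\mathcal{T}_{\ell}(v)$. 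By \autoref{lem:nbhdcoupl:noweights} we have $\prob_{n}(\mathcal{G}^{c}) \le \eta_{n,\ell}(\mathcal{V})$. Define $\mathbf{T}_{\ell}(v)$ by equipping $\mathcal{T}_{\ell}(v)$ with i.i.d.\ edge weights from $\mu_{E}$ and vertex weights from $\mu_{V}$; these are added independently of everything else and independently across the trees, so $\mathbf{T}_{\ell}(v) \sim \mathbf{T}_{\ell}(W_{v},\nu,\mu_{E},\mu_{V})$ and the $\mathbf{T}_{\ell}(v)$ are mutually independent, and because of the disjointness on $\mathcal{G}$ the weight of each site of $\bigcup_{v} B_{\ell}(v,G_{n})$ can be coupled unambiguously to the weight of its tree-partner under the appropriate $\varphi_{v}$: use a maximal (total-variation-optimal) coupling of $\mu_{V,n}$ with $\mu_{V}$ for vertex sites and of $\mu_{E,n}$ with $\mu_{E}$ for edge sites; off $\mathcal{G}$, or for sites not lying in any neighbourhood, let the relevant weights be independent with their prescribed laws.

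On $\mathcal{G}$ the weighted neighbourhood $B_{\ell}(v,\mathbf{G}_{n})$ is then isomorphic to $\mathbf{T}_{\ell}(v)$ as a weighted rooted graph unless one of the finitely many weights inside some $B_{\ell}(v,G_{n})$ disagrees with its partner. A union bound (conditionally on $\mathcal{F}_{n}$) bounds the probability of such a disagreement by $d_{\mathrm{TV}}(\mu_{V,n},\mu_{V})$ times the expected total number of vertices of the trees $\mathcal{T}_{\ell}(v)$, plus $d_{\mathrm{TV}}(\mu_{E,n},\mu_{E})$ times the expected total number of edges. Since each $\mathcal{T}_{\ell}(v)$ is a tree its edge count is at most its vertex count, and $\expe_{n}[\setweight{\mathcal{S}_{\ell}(v)}_{0}] = \expe_{n}[\setcard{\mathcal{S}_{\ell}(v)}] \le 1 + W_{v}(\Gamma_{2}+1)^{\ell}$ by the estimate established in the proof of \autoref{lem:treecoup}; summing over $v \in \mathcal{V}$ bounds both expected counts by $\setcard{\mathcal{V}} + \setweight{\mathcal{V}}(\Gamma_{2}+1)^{\ell}$. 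This yields the additional term $(\setcard{\mathcal{V}} + \setweight{\mathcal{V}}(\Gamma_{2}+1)^{\ell})(d_{\mathrm{TV}}(\mu_{E,n},\mu_{E}) + d_{\mathrm{TV}}(\mu_{V,n},\mu_{V}))$, which together with $\prob_{n}(\mathcal{G}^{c}) \le \eta_{n,\ell}(\mathcal{V})$ gives the claimed bound $\varepsilon_{n,\ell}(\mathcal{V})$.

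Essentially all the difficulty is already absorbed into \autoref{prop:maincoup}, so the only real work here is bookkeeping: formulating the simultaneous weight coupling so that the three requirements---mutual independence of the $\mathbf{T}_{\ell}(v)$, correct marginal laws, and smallness of the mismatch probability---hold at once, which is exactly where the disjointness of neighbourhoods on the good event is needed, and matching the site counts to the stated constant by treating edge and vertex weights separately and invoking the tree-growth bound rather than the graph-growth bound of \autoref{lem:bl:weight:raw}.
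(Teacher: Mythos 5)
Your proposal is correct and follows essentially the same route as the paper: couple the unweighted structures via \autoref{lem:nbhdcoupl:noweights}, add weights through total-variation-optimal couplings site by site, and union-bound the mismatch probability using the tree-size estimate $\expe_{n}[\setcard{\mathcal{S}_{\ell}(v)}] \leq 1+W_{v}(\Gamma_{2}+1)^{\ell}$ from \autoref{lem:treecoup}. Your explicit remark that disjointness of the neighbourhoods on the good event is what secures mutual independence of the weighted trees is a point the paper leaves implicit, but it does not change the argument.
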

\begin{proof}
  Apply \autoref{lem:nbhdcoupl:noweights} to
  couple~\(((B_{\ell}(v,G_{n}),
  \mathcal{T}_{\ell}(v))_{v \in \mathcal{V}})\)
  such that the~\(\mathcal{T}_{\ell}(v)\) are independent
  with distribution~\(T_{\ell}(W_{v},\nu)\)
  and
  \[
  \prob_{n}(A)
  =
  \prob_{n}(\text{\(B_{\ell}(v,G_{n}) \cong \mathcal{T}_{\ell}(v)\)
    for all~\(v \in \mathcal{V}\)})
  \geq 1-\eta_{n,\ell}(\mathcal{V}),
  \]
  where~\(A\) is the event that~\(B_{\ell}(v,G_{n}) \cong \mathcal{T}_{\ell}(v)\)
  for all~\(v \in \mathcal{V}\).
  This provides the coupling of the underlying graph structure.
  It remains to also couple the edge and vertex weights.

  For each edge~\(e \in V_{n}^{(2)}\) couple~\(w_{e}\),
  the weight in~\(\mathbf{G}_{n}\),
  to~\(\tilde{w}_{e}\),
  such that~\(\prob_{n}(w_{e} \neq \tilde{w}_{e}) =
  d_{\mathrm{TV}}(\mu_{E,n},\mu_{E})\).
  Similarly, for each each vertex~\(v \in V_{n}\) introduce a
  coupling~\((w_{v},\tilde{w}_{v})\) such
  that~\(\prob_{n}(w_{v} \neq \tilde{w}_{v}) = d_{\mathrm{TV}}(\mu_{V,n},\mu_{V})\).

  Let~\(E^{E}\) be the event that there is an edge~\(e\)
  in any of the~\(B_{\ell}(v,\mathbf{G}_{n})\)
  such that~\(w_{e} \neq \tilde{w}_{e}\).
  Similarly let~\(E^{V}\) be the event that there is
  a vertex~\(u\) in any of the~\(B_{\ell}(v,\mathbf{G}_{n})\)
  such that~\(w_{v} \neq \tilde{w}_{v}\).
  On the event~\(A\) we have that~\(B_{\ell}(v,G_{n})\)
  and~\(\mathcal{T}_{\ell}(v)\) are isomorphic,
  so that we can reformulate~\(E^{E}\) and~\(E^{V}\)
  in terms of edges or vertices in~\(\mathcal{T}_{\ell}(v)\).
  Let~\(\mathcal{S}_{\ell}(v)\) be the set of vertices
  in~\(\mathcal{T}_{\ell}(v)\).
  Since a tree with~\(n\) vertices has~\(n-1\) edges
  and the~\(\mathcal{T}_{\ell}(v)\) are trees,
  the number of edges
  and vertices that are relevant for~\(E^{E}\) and~\(E^{V}\)
  can be bounded
  by~\(\sum_{v \in \mathcal{V}} \setcard{\mathcal{S}_{\ell}(v)}\).

  In particular the same calculation
  as in \autoref{lem:treecoup} implies
  \begin{align*}
  \prob_{n}(A \setintersect E^{E})
    \leq
      \sum_{v \in \mathcal{V}} \expe_{n}[\setcard{\mathcal{S}_{\ell}(v)}]
      d_{\mathrm{TV}}(\mu_{E,n},\mu_{E})
     \leq
      (\setcard{\mathcal{V}}+\setweight{\mathcal{V}}(\Gamma_{2}+1)^{\ell})
      d_{\mathrm{TV}}(\mu_{E,n},\mu_{E})
  \shortintertext{and}
  \prob_{n}(A  \setintersect E^{V})
  \leq
    \sum_{v \in \mathcal{V}} \expe_{n}[\setcard{\mathcal{S}_{\ell}(v)}]
    d_{\mathrm{TV}}(\mu_{V,n},\mu_{V})
  \leq
    (\setcard{\mathcal{V}}+\setweight{\mathcal{V}}(\Gamma_{2}+1)^{\ell})
    d_{\mathrm{TV}}(\mu_{V,n},\mu_{V}).
  \end{align*}

  On the set~\(A\) couple~\(B_{\ell}(v,\mathbf{G}_{n})\)
  to~\(\mathbf{T}_{\ell}(v)\)
  by assigning edge weight~\(\tilde{w}_{e}\)
  to the edge isomorphic to~\(e\)
  and vertex weight~\(\tilde{w}_{v}\) to the vertex isomorphic to~\(v\)
  in~\(\mathcal{T}_{\ell}(v)\)
  resulting
  in~\(\mathbf{T}_{\ell}(v) \sim
  \mathbf{T}_{\ell}(W_{v},\nu,\mu_{E},\mu_{V})\).
  This coupling satisfies
  \begin{align*}
  &\prob_{n}(\text{%
    \(B_{\ell}(v,\mathbf{G}_{n}) \cong \mathbf{T}_{\ell}(v)\)
     for all~\(v \in \mathcal{V}\)})\\
  &\quad\geq \prob_{n}(A)-\prob_{n}(A \setintersect (E^{E} \setunion E^{V}))\\
  &\quad\geq 1-\eta_{n,\ell}(\mathcal{V})
  -(\setcard{\mathcal{V}}+\setweight{\mathcal{V}}(\Gamma_{2}+1)^{\ell})
    (d_{\mathrm{TV}}(\mu_{E,n},\mu_{E})+d_{\mathrm{TV}}(\mu_{V,n},\mu_{V})).
  \end{align*}
  This proves the claim.
\end{proof}

Indeed the coupled tree structure of the neighbourhood of a vertex~\(v\)
can be manipulated slightly to be independent of
a vertex~\(e'=\edge{u'}{v'}\)
which does not emanate from~\(v\).
In essence we rerandomise the relevant edge and
bound the probability that it actually occurs in the neighbourhood.
\begin{lemma}\label{lem:bkvindepofep}
  Let~\(v \in V_{n}\) be a vertex in~\(\mathbf{G}_{n}\)
  and let~\(e'=\edge{u'}{v'} \in V_{n}^{(2)}\)
  be an edge with endpoints distinct from~\(v\).
  Given a coupling
  \[
  (
  B_{\ell}(v,\mathbf{G}_{n}),
  \mathbf{T}_{\ell}(v)
  )
  \]
  of~\(B_{\ell}(v,\mathbf{G}_{n})\)
  with~\(\mathbf{T}_{\ell}(v) \sim \mathbf{T}_{\ell}(W_{v},\nu,\mu_{E},\mu_{V})\)
  that satisfies
  \[
  \varepsilon_{n,\ell}(\set{v})
  \geq 1-\prob_{n}(
  B_{\ell}(v,\mathbf{G}_{n}) \cong \mathbf{T}_{\ell}(v)
  )
  \]
  it is is possible to
  couple~\((B_{\ell}(v,\mathbf{G}_{n}), \mathbf{\widetilde{T}}_{\ell}(v))\)
  such that~\(\mathbf{\widetilde{T}}_{\ell}(v)\) is independent
  of~\(Y_{e'}\),
  where~\(Y_{e'} = (X_{e'},X'_{e'})\),
  and
  \[
  \prob_{n}(
  B_{\ell}(v,\mathbf{G}_{n})
  \cong \mathbf{\widetilde{T}}_{\ell}(v))
  \given
  Y_{e'}
  )\geq
  1-\Bigl(
  \varepsilon_{n,\ell}( \set{v})
  + C\frac{W_{v}(W_{v'}+W_{u'})}{n\vartheta}
  (\Gamma_{2,n}+1)^{\ell+1}
  \Bigr).
  \]
\end{lemma}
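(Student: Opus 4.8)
The plan is to decouple the neighbourhood of $v$ from the edge $e'$ by re-randomising the data attached to $e'$, and then to control the small probability that this re-randomisation changes anything visible from $v$.

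First I would introduce an independent copy $\widetilde{X}_{e'}$ of $X_{e'}$ together with an independent copy $\widetilde{w}_{e'}$ of $w_{e'}$, both independent (conditionally on $\mathcal{F}_{n}$) of all other edge indicators and weights and of the primed copies. Let $\widetilde{\mathbf{G}}_{n}$ be the weighted graph obtained from $\mathbf{G}_{n}$ by replacing $(X_{e'},w_{e'})$ with $(\widetilde{X}_{e'},\widetilde{w}_{e'})$. Then $\widetilde{\mathbf{G}}_{n} \eqdist \mathbf{G}_{n}$ conditionally on $\mathcal{F}_{n}$ (only edge data at $e'$ was changed, and the connectivity weights are untouched, so $\varepsilon_{n,\ell}(\set{v})$ is the same random variable for both graphs) and, crucially, $\widetilde{\mathbf{G}}_{n}$ is independent of $Y_{e'} = (X_{e'},X'_{e'})$. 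Applying the coupling supplied in the hypothesis — viewed as a coupling kernel driven by the graph together with auxiliary independent randomness — with $\widetilde{\mathbf{G}}_{n}$ in place of $\mathbf{G}_{n}$ yields a tree $\widetilde{\mathbf{T}}_{\ell}(v) \sim \mathbf{T}_{\ell}(W_{v},\nu,\mu_{E},\mu_{V})$ coupled to $B_{\ell}(v,\widetilde{\mathbf{G}}_{n})$ with $\prob_{n}(B_{\ell}(v,\widetilde{\mathbf{G}}_{n}) \ncong \widetilde{\mathbf{T}}_{\ell}(v)) \le \varepsilon_{n,\ell}(\set{v})$. Since this construction uses only $\widetilde{\mathbf{G}}_{n}$ and randomness independent of $Y_{e'}$, the tree $\widetilde{\mathbf{T}}_{\ell}(v)$ is independent of $Y_{e'}$ and the conditional estimate $\prob_{n}(B_{\ell}(v,\widetilde{\mathbf{G}}_{n}) \ncong \widetilde{\mathbf{T}}_{\ell}(v) \given Y_{e'}) \le \varepsilon_{n,\ell}(\set{v})$ holds almost surely.

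Next I would compare $B_{\ell}(v,\mathbf{G}_{n})$ with $B_{\ell}(v,\widetilde{\mathbf{G}}_{n})$. These graphs differ only in the data attached to $e'$, so writing $H$ for the common graph obtained by deleting $e'$: if $e' \notin B_{\ell}(v,\mathbf{G}_{n})$ then no $\le\ell$-path from $v$ in $\mathbf{G}_{n}$ uses $e'$, whence $B_{\ell}(v,\mathbf{G}_{n}) = B_{\ell}(v,H)$, and similarly $e' \notin B_{\ell}(v,\widetilde{\mathbf{G}}_{n})$ gives $B_{\ell}(v,\widetilde{\mathbf{G}}_{n}) = B_{\ell}(v,H)$. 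Hence $\set{B_{\ell}(v,\mathbf{G}_{n}) \neq B_{\ell}(v,\widetilde{\mathbf{G}}_{n})} \subseteq \set{e' \in B_{\ell}(v,\mathbf{G}_{n})} \cup \set{e' \in B_{\ell}(v,\widetilde{\mathbf{G}}_{n})}$, and consequently, conditionally on $Y_{e'}$,
\[
  \prob_{n}(B_{\ell}(v,\mathbf{G}_{n}) \ncong \widetilde{\mathbf{T}}_{\ell}(v) \given Y_{e'})
  \le \varepsilon_{n,\ell}(\set{v})
  + \prob_{n}(e' \in B_{\ell}(v,\mathbf{G}_{n}) \given Y_{e'})
  + \prob_{n}(e' \in B_{\ell}(v,\widetilde{\mathbf{G}}_{n}) \given Y_{e'}).
\]
For the last term, independence of $\widetilde{\mathbf{G}}_{n}$ from $Y_{e'}$ removes the conditioning and \autoref{lem:einbl} bounds it by $\tfrac{W_{v}(W_{u'}+W_{v'})}{n\vartheta}(\Gamma_{2,n}+1)^{\ell-1}$. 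For the middle term, $e' \in B_{\ell}(v,\mathbf{G}_{n})$ forces $X_{e'} = 1$ and the reachability of $u'$ or $v'$ within $\ell-1$ steps in $\mathbf{G}_{n}-e'$ (a path through $e'$ reaches one endpoint beforehand along a prefix that avoids $e'$); these reachability events are measurable with respect to $\mathbf{G}_{n}-e'$ and hence independent of $Y_{e'}$, so \autoref{lem:uinbl} applied in $\mathbf{G}_{n}-e'$ yields a bound of the same form with a power of $(\Gamma_{2,n}+1)$ that is at most $\ell-1$.

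Collecting the three contributions gives the bound $\varepsilon_{n,\ell}(\set{v}) + 2\tfrac{W_{v}(W_{u'}+W_{v'})}{n\vartheta}(\Gamma_{2,n}+1)^{\ell-1}$, which is at most $\varepsilon_{n,\ell}(\set{v}) + C\tfrac{W_{v}(W_{v'}+W_{u'})}{n\vartheta}(\Gamma_{2,n}+1)^{\ell+1}$ for a suitable absolute constant $C$, as claimed. I expect the main subtlety to be bookkeeping rather than substance: making precise that applying ``the same coupling'' to $\widetilde{\mathbf{G}}_{n}$ genuinely produces a tree independent of $Y_{e'}$ and that the failure-probability bounds survive the conditioning on $Y_{e'}$ — that is, formalising the coupling as a kernel and tracking which pieces of randomness it consumes. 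The graph comparison and the path-counting estimates are routine given \autoref{lem:einbl} and \autoref{lem:uinbl}.
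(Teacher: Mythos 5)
Your proposal is correct and follows essentially the same route as the paper: re-randomise the data at~\(e'\) to obtain a graph independent of~\(Y_{e'}\), transfer the given coupling to that graph, and bound the discrepancy by the probability that~\(e'\) appears in either neighbourhood via \autoref{lem:einbl} and \autoref{lem:uinbl}. The only cosmetic differences are that the paper resamples only~\(X_{e'}\) (not the edge weight) and states the inclusion bounds with exponent~\(\ell\) rather than~\(\ell-1\); both versions are absorbed by the~\((\Gamma_{2,n}+1)^{\ell+1}\) in the claim.
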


\begin{proof}
  Let~\(X''\) be a copy of~\(X\)
  that is independent of everything else,
  in particular independent of~\((X,W)\) and~\((X',W')\).
  Let~\(\mathbf{G}_{n}''\) be the weighted graph obtained
  from~\(\mathbf{G}_{n}\) by replacing~\(X_{e'}\)
  with~\(X''_{e'}\).
  Based on the initial coupling,
  couple~\((B_{\ell}(v,\mathbf{G}_{n}''),\mathbf{\widetilde{T}}_{\ell}(v))\),
  where~\(\mathbf{\widetilde{T}}_{\ell}(v)
  \sim \mathbf{T}_{\ell}(W_{v},\nu,\mu_{E},\mu_{V})\).
  By construction~\(B_{\ell}(v,\mathbf{G}_{n}'')\)
  is independent of~\(Y_{e'}\),
  so we may pick this coupling in a way
  such that~\(\mathbf{\widetilde{T}}_{\ell}(v)\)
  is independent of~\(Y_{e'}\) as well.

  Moreover,
  \[
  \prob_{n}(B_{\ell}(v,\mathbf{G}_{n}) \cong \mathbf{\widetilde{T}}_{\ell}(v)
  \given Y_{e'})
  \geq
  1-
  (
  \prob_{n}(B_{\ell}(v,\mathbf{G}_{n}) \notcong B_{\ell}(v,\mathbf{G}''_{n})
  \given Y_{e'})
  + \prob_{n}(B_{\ell}(v,\mathbf{G}''_{n}) \notcong
  \mathbf{\widetilde{T}}_{\ell}(v) \given Y_{e'})
  )
  \]
  By construction~\(B_{\ell}(v,\mathbf{G}_{n}'')\)
  and~\(\mathbf{\widetilde{T}}_{\ell}(v)\)
  are independent of~\(Y_{e'}\),
  so by assumption
  \begin{equation*}
  \prob_{n}(B_{\ell}(v,\mathbf{G}_{n}'') \notcong
  \mathbf{\widetilde{T}}_{\ell}(v)
  \given Y_{e'})
  =\prob_{n}(B_{\ell}(v,\mathbf{G}_{n}'') \notcong
  \mathbf{\widetilde{T}}_{\ell}(v))
  \leq \varepsilon_{n,\ell}( \set{v}).
  \end{equation*}
  Finally,~\(\mathbf{G}_{n}''\) is independent of~\(Y_{e'}\)
  and differs from~\(\mathbf{G}_{n}\) only in~\(e'\).
  Hence,~\(B_{\ell}(v,\mathbf{G}_{n}'')\)
  can differ from~\(B_{\ell}(v,\mathbf{G}_{n})\)
  only if~\(e'\) is present in
  one and not the other.
  Thus a rough estimate yields
  \[
  \prob_{n}(B_{\ell}(v,\mathbf{G}_{n}'') \ncong B_{\ell}(v,\mathbf{G}_{n})
  \given Y_{e'})
  \leq \prob_{n}(e' \in B_{\ell}(v,\mathbf{G}_{n}''))
  + \prob_{n}(e' \in B_{\ell}(v,\mathbf{G}_{n}) \given X_{e'}).
  \]
  By \autoref{lem:einbl} the first term can be bounded as follows:
  \begin{equation*}
  \prob_{n}(e' \in B_{\ell}(v,\mathbf{G}_{n}''))
  \leq \frac{W_{v}(W_{u'}+W_{v'})}{n\vartheta}
  (\Gamma_{2,n}+1)^{\ell}.
  \end{equation*}

  For the second probability note that
  since~\(e'\) can only be present in the neighbourhood
  if one of its vertices~\(u'\) or~\(v'\) is
  present in the neighbourhood in its own right,
  i.e.~when~\(e'\) itself is ignored,
  we have by \autoref{lem:uinbl} that
  \begin{align*}
  \prob_{n}(e' \in B_{\ell}(v,\mathbf{G}_{n}) \given X_{e'})
  &\leq \prob_{n}(v' \in B_{\ell}(v,\mathbf{G}_{n} - e'))
  + \prob_{n}(u' \in B_{\ell}(v,\mathbf{G}_{n} - e'))\\
  &\leq \frac{W_{v}(W_{v'}+W_{u'})}{n\vartheta}
  (\Gamma_{2,n}+1)^{\ell}.
  \end{align*}

  Together the last inequalities show the claim.
\end{proof}

Similarly we can find a coupling
for the effect of flipping the edge~\(e=\edge{v}{u}\) between~\(v\) and~\(u\)
and another edge~\(e' = \edge{v'}{u'}\)
that does not have any vertex in common with~\(e\)
on the neighbourhood of~\(v\).
\begin{lemma}\label{lem:couplegngne}
  Fix two vertices~\(v\) and~\(u\) and set~\(e = \edge{v}{u}\).
  Let~\(e' = \edge{v'}{u'}\) be another edge with vertices distinct
  from~\(u\) and~\(v\).
  Given a coupling
  \[
  (
  B_{\ell}(v,\mathbf{G}_{n}),
  B_{\ell}(u,\mathbf{G}_{n}),
  \mathbf{T}_{\ell}(v),
  \mathbf{T}_{\ell}(u)
  )
  \]
  with
  independent~\(\mathbf{T}_{\ell}(v)\sim
  \mathbf{T}_{\ell}(W_{v},\nu,\mu_{E},\mu_{V})\)
  and~\(\mathbf{T}_{\ell}(u)
  \sim \mathbf{T}_{\ell}(W_{u},\nu,\mu_{E},\mu_{V})\)
  that satisfies
  \[
  \varepsilon_{n,\ell}( \set{u,v})
  \geq 1-\prob_{n}(
  B_{\ell}(v,\mathbf{G}_{n}) \cong \mathbf{T}_{\ell}(v),
  B_{\ell}(u,\mathbf{G}_{n}) \cong \mathbf{T}_{\ell}(u)
  )
  \]
  it is possible to
  couple~\((B_{\ell}(v,\mathbf{G}_{n}), B_{\ell}(v,\mathbf{G}_{n}^{e}),
  \mathbf{\widetilde{T}}_{\ell}(v), \mathbf{\widetilde{T}}^{e}_{\ell}(v))\)
  such that
  \begin{align*}
  (\mathbf{\widetilde{T}}_{\ell}(v), \mathbf{\widetilde{T}}^{e}_{\ell}(v))
  \given (Y_{e} = (1,0), Y_{e'})
  &\eqdist (\mathbf{\tilde{T}}_{\ell},\mathbf{T}_{\ell}),\\
  (\mathbf{\widetilde{T}}_{\ell}(v), \mathbf{\widetilde{T}}^{e}_{\ell}(v))
  \given (Y_{e} = (0,1), Y_{e'})
  &\eqdist (\mathbf{T}_{\ell},\mathbf{\tilde{T}}_{\ell}),
  \end{align*}
  where~\(Y_{e} = (X_{e},X'_{e})\), \(Y_{e'} = (X_{e'},X'_{e'})\)
  and~\(\mathbf{T}_{\ell}
  \sim \mathbf{T}_{\ell}(W_{v},\nu,\mu_{E},\mu_{V})\),
  \(\mathbf{\tilde{T}}_{\ell}
  \sim \mathbf{\tilde{T}}_{\ell}(W_{v},W_{u},\nu,\mu_{E},\mu_{V})\).
  Furthermore,
  \[
  \begin{split}
  &\prob_{n}(
  (B_{\ell}(v,\mathbf{G}_{n}),B_{\ell}(v,\mathbf{G}_{n}^{e})
  \cong (\mathbf{\widetilde{T}}_{\ell}(v),
  \mathbf{\widetilde{T}}^{e}_{\ell}(v)))
  \given
  Y_{e},Y_{e'}
  )\\
  &\quad\geq
  1-\Bigl(
  \varepsilon_{n,\ell}( \set{u,v})
  + 2d_{\mathrm{TV}}(\mu_{E,n},\mu_{E})
  + C\frac{W_{u}W_{v}+(W_{u}+W_{v})(W_{u'}+W_{v'})}{n\vartheta}
  (\Gamma_{2,n}+1)^{2\ell}
  \Bigr).
  \end{split}
  \]
\end{lemma}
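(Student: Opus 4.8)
The plan is to delete the edge $e$ first, couple the neighbourhoods of $v$ and $u$ in the $e$-free graph to independent limiting trees once, and then rebuild $\mathbf{\widetilde{T}}_\ell(v)$ and $\mathbf{\widetilde{T}}^e_\ell(v)$ by grafting a copy of the $u$-tree on according to the value of $Y_e$. Let $\mathbf{H}$ be $\mathbf{G}_n$ with $e$ deleted (i.e.\ $X_e$ forced to $0$) and $(X_{e'},w_{e'})$ replaced by a fresh independent copy; then $\mathbf{H}$ is independent of $Y_e$ and of $Y_{e'}$, and since it is, in law, a rank-one inhomogeneous random graph with the connection probability of $e$ lowered to $0$, \autoref{lem:bvbucoupl} applies to it (deleting an edge only shrinks the neighbourhoods entering its bounds and, through the root offspring at $v$, adds at most a term $C W_uW_v/(n\vartheta)$ to the coupling error). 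With $\mathcal{V}=\set{v,u}$ this yields independent $\mathbf{T}_\ell(v)\sim\mathbf{T}_\ell(W_v,\nu,\mu_E,\mu_V)$ and $\mathbf{T}_\ell(u)\sim\mathbf{T}_\ell(W_u,\nu,\mu_E,\mu_V)$ coupled to $B_\ell(v,\mathbf{H})$ and $B_\ell(u,\mathbf{H})$ with joint failure probability at most $\varepsilon_{n,\ell}(\set{u,v})+C W_uW_v/(n\vartheta)$; write $\mathbf{T}_{\ell-1}(u)$ for the depth-$(\ell-1)$ restriction of $\mathbf{T}_\ell(u)$. Finally couple $w_e$, $w'_e$ to $\mu_E$-samples $\tilde w_e$, $\tilde w'_e$ with $\prob_n(\tilde w_e\ne w_e)=\prob_n(\tilde w'_e\ne w'_e)=d_{\mathrm{TV}}(\mu_{E,n},\mu_E)$ using fresh randomness, and set $\mathbf{\widetilde{T}}_\ell(v)$ equal to $\mathbf{\tilde{T}}_\ell$ constructed from $(\mathbf{T}_\ell(v),\mathbf{T}_{\ell-1}(u),\treeroot,\treeroot',\tilde w_e)$ when $X_e=1$ and $\mathbf{\widetilde{T}}_\ell(v)=\mathbf{T}_\ell(v)$ when $X_e=0$, and likewise $\mathbf{\widetilde{T}}^e_\ell(v)$ using $X'_e$ and $\tilde w'_e$. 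I would not try to use the coupling handed to us by the hypothesis: conditioning on $Y_e\in\set{(1,0),(0,1)}$ forces the probability-$\Theta(1/n)$ event $\set{X_e=1}$, on which a coupling of $\mathbf{G}_n$ that is only good unconditionally gives no control, so passing to the $Y_e$-independent graph $\mathbf{H}$ is essential. The two distributional identities are then immediate from the alternative construction in \autoref{def:ttilde}: conditionally on $\set{Y_e=(1,0)}$ and $Y_{e'}$ the objects $\mathbf{T}_\ell(v)$, $\mathbf{T}_{\ell-1}(u)$, $\tilde w_e$ are independent, have the prescribed marginals, and are independent of $(Y_e,Y_{e'})$, so $(\mathbf{\widetilde{T}}_\ell(v),\mathbf{\widetilde{T}}^e_\ell(v))\eqdist(\mathbf{\tilde{T}}_\ell,\mathbf{T}_\ell)$; the case $\set{Y_e=(0,1)}$ is symmetric.

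For the isomorphism I would work on the event $G$ that (i) both isomorphisms of \autoref{lem:bvbucoupl} for $\mathbf{H}$ hold, (ii) $\tilde w_e=w_e$ and $\tilde w'_e=w'_e$, (iii) $S_\ell(v,\mathbf{H})$ and $S_{\ell-1}(u,\mathbf{H})$ are disjoint, and (iv) $e'$ lies in none of $B_\ell(v,\mathbf{G}_n-e)$, $B_\ell(v,\mathbf{H})$, $B_{\ell-1}(u,\mathbf{G}_n-e)$, $B_{\ell-1}(u,\mathbf{H})$. On (iv) the $v$- and $u$-neighbourhoods agree in $\mathbf{G}_n-e$ and in $\mathbf{H}$, and because $\mathbf{G}_n^e-e=\mathbf{G}_n-e$ one checks, running through the four values of $Y_e$, that $B_\ell(v,\mathbf{G}_n)$ and $B_\ell(v,\mathbf{G}_n^e)$ are each either $B_\ell(v,\mathbf{G}_n-e)$ (when $X_e$, resp.\ $X'_e$, vanishes) or the graft of $B_{\ell-1}(u,\mathbf{G}_n-e)$ onto $B_\ell(v,\mathbf{G}_n-e)$ via the edge $\edge{v}{u}$ of weight $w_e$, resp.\ $w'_e$ (when it is $1$); in the grafting case (i) and (iii) force the two pieces to be vertex-disjoint trees, so the graft is again a tree, and any $\mathbf{G}_n-e$-edge that could close a cycle inside $B_\ell(v,\mathbf{G}_n)$ after re-inserting $e$ would have to place a vertex of $S_{\ell-1}(u,\mathbf{H})$ within distance $\ell$ of $v$ in $\mathbf{G}_n-e$, contradicting (iii), so the graft is exactly $B_\ell(v,\mathbf{G}_n)$. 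Combined with (i) and (ii), this yields $(B_\ell(v,\mathbf{G}_n),B_\ell(v,\mathbf{G}_n^e))\cong(\mathbf{\widetilde{T}}_\ell(v),\mathbf{\widetilde{T}}^e_\ell(v))$ on $G$, for every value of $Y_e$.

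It remains to bound $\prob_n(\setcomplement{G}\given Y_e,Y_{e'})$. Since $\mathbf{H}$, the \autoref{lem:bvbucoupl}-coupling and the weight couplings are built from randomness independent of $(Y_e,Y_{e'})$, events (i), (ii), (iii) contribute respectively $\varepsilon_{n,\ell}(\set{u,v})$ (plus the extra $C W_uW_v/(n\vartheta)$ from the previous paragraph), $2d_{\mathrm{TV}}(\mu_{E,n},\mu_E)$, and (by \autoref{lem:uinbl} for $\mathbf{H}$, whose edge probabilities are dominated by those of $\mathbf{G}_n$) at most $\tfrac{W_uW_v}{n\vartheta}(\Gamma_{2,n}+1)^{2\ell-2}$. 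For (iv), $\mathbf{G}_n-e$ and $\mathbf{H}$ are independent of $Y_e$, so one conditions only on $X_{e'}$ and argues exactly as in the proofs of \autoref{lem:bkvindepofep} and \autoref{lem:einbl}: $e'$ can lie in one of the four neighbourhoods only if $v'$ or $u'$ is reachable within $\ell-1$ steps from $v$, resp.\ $u$, in the $e'$-free graph, which costs at most $C\tfrac{(W_u+W_v)(W_{u'}+W_{v'})}{n\vartheta}(\Gamma_{2,n}+1)^{\ell-1}$. Summing the four contributions and enlarging all exponents to $2\ell$ gives the claimed estimate. I expect the only real work to be the case analysis of the previous paragraph, i.e.\ verifying that on event (iv) the neighbourhoods in $\mathbf{G}_n-e$, $\mathbf{G}_n$, $\mathbf{G}_n^e$ and $\mathbf{H}$ genuinely line up and that the grafting reproduces $B_\ell(v,\mathbf{G}_n)$ with no spurious cycle; note that, in contrast to \autoref{lem:probnottree:eep}, event (iii) already delivers the tree property for free, which is why the final bound carries no term of the shape $\tfrac{(W_v+1)^2}{n\vartheta}(\Gamma_{3,n}+1)(\Gamma_{2,n}+1)^{2\ell+1}$.
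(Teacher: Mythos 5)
Your proposal is correct and follows essentially the same route as the paper: both pass to an auxiliary graph in which $e$ and $e'$ carry fresh (or suppressed) randomness so that it is independent of $(Y_e,Y_{e'})$, couple its $v$- and $u$-neighbourhoods to independent limiting trees, graft according to $X_e$ and $X'_e$ with TV-coupled edge weights, and charge the error to non-intersection, the coupling failure, and the events that $e$ or $e'$ enters the relevant neighbourhoods. The only cosmetic difference is that you force $X_e=0$ in $\mathbf{H}$ and absorb the resulting distributional mismatch into an extra $CW_uW_v/(n\vartheta)$ term, whereas the paper resamples $X_e$ to an independent copy so the auxiliary graph has exactly the law of $\mathbf{G}_n$ and the same term appears as the probability that $e$ lies in the resampled neighbourhood.
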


\begin{proof}
  Let~\((W'',X'')\) be a copy of~\((W,X)\)
  that is independent of everything else.
  Let~\(\mathbf{G}_{n}''\) be the weighted graph obtained
  from~\(\mathbf{G}_{n}\) by replacing~\(X_{e}\)
  with~\(X''_{e}\)
  and~\(X_{e'}\) with~\(X''_{e'}\).
  Based on the initial coupling,
  couple~\(B_{\ell}(v,\mathbf{G}_{n}'')\)
  with~\(\mathbf{T}''_{\ell}(v)\)
  and~\(B_{k-1}(u,\mathbf{G}_{n}'')\)
  with~\(\mathbf{T}''_{\ell-1}(u)\),
  where~\(\mathbf{T}''_{\ell}(v)\)
  and~\(\mathbf{T}''_{\ell-1}(u)\) are independent
  with
  distributions~\(\mathbf{T}''_{\ell}(v) \sim
  \mathbf{T}_{\ell}(W_{v},\nu,\mu_{E},\mu_{V})\)
  and~\(\mathbf{T}''_{\ell-1}(u) \sim
  \mathbf{T}_{\ell}(W_{u},\nu,\mu_{E},\mu_{V})\).
  By construction~\(B_{\ell}(v,\mathbf{G}_{n}'')\)
  and~\(B_{\ell-1}(u,\mathbf{G}_{n}'')\) are
  independent of~\(Y_{e},Y_{e'}\),
  so we may pick this coupling in a way
  that~\(\mathbf{T}''_{\ell}(v)\) and~\(\mathbf{T}''_{\ell-1}(v)\)
  are independent of~\(Y_{e},Y_{e'}\) as well.

  For brevity write~\(B_{\ell} = B_{\ell}(v,\mathbf{G}_{n})\)
  and~\(B_{\ell}' = B_{\ell}(v,\mathbf{G}_{n}^{e})\).
  These neighbourhoods can be constructed from the smaller
  neighbourhoods of~\(v\) and~\(u\)
  on~\(\mathbf{G}_{n}-e\) if we take into account~\(X_{e}\)
  and~\(w_{e}\)
  or~\(X'_{e}\) and~\(w'_{e}\) as required.
  That is to say there is a function~\(\Psi\) that describes the
  procedure of possibly \enquote{gluing together}
  the smaller neighbourhoods  (see \autoref{fig:glue}) such that
  \begin{align*}
  B_{\ell}
  &= \Psi(
  B_{\ell}(v,\mathbf{G}_{n}-e),B_{\ell-1}(u,\mathbf{G}_{n}-e),
  X_{e}, w_{e}
  )\\
  B'_{\ell}
  &= \Psi(
  B_{\ell}(v,\mathbf{G}_{n}-e),B_{\ell-1}(u,\mathbf{G}_{n}-e),
  X'_{e}, w'_{e}
  ).
  \end{align*}

  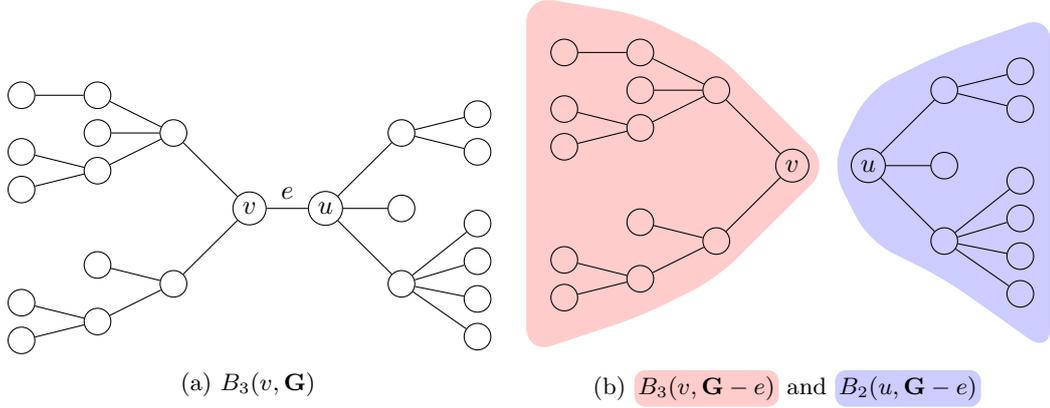
\begin{figure}[htbp]
    \centering
    \subcaptionbox{\(B_{3}(v,\mathbf{G})\)}[0.45\linewidth]{%
      \centering
      \begin{tikzpicture}[x=1cm,y=1cm,
        mynode/.style={draw, circle, inner sep=2pt, minimum size=1em}]

        \node[mynode] (v) at (0,0)    {\(v\)};
        \node[mynode] (u) at (1,0)    {\(u\)};

        \node[mynode] (v1) at (-1,1)  {};
        \node[mynode] (v2) at (-1,-1)  {};

        \node[mynode] (v11) at (-2,1.5)  {};
        \node[mynode] (v12) at (-2,1)  {};
        \node[mynode] (v13) at (-2,.5)  {};

        \node[mynode] (v111) at (-3,1.5)  {};

        \node[mynode] (v131) at (-3,.75)  {};
        \node[mynode] (v132) at (-3,.25)  {};

        \node[mynode] (v21) at (-2,-.75)  {};
        \node[mynode] (v22) at (-2,-1.5)  {};

        \node[mynode] (v221) at (-3,-1.25)  {};
        \node[mynode] (v222) at (-3,-1.75)  {};

        \node[mynode] (u1) at (2,1)  {};
        \node[mynode] (u2) at (2,0)  {};
        \node[mynode] (u3) at (2,-1)  {};

        \node[mynode] (u11) at (3,1.25)  {};
        \node[mynode] (u12) at (3,0.75)  {};

        \node[mynode] (u31) at (3,-.2)  {};
        \node[mynode] (u32) at (3,-.7)  {};
        \node[mynode] (u33) at (3,-1.2)  {};
        \node[mynode] (u34) at (3,-1.7)  {};

        \draw (v)-- node[above] {\(e\)} (u);
        \draw (v)--(v1)--(v11)--(v111);
        \draw (v1)--(v12);
        \draw (v1)--(v13)--(v131);
        \draw (v13)--(v132);
        \draw (v)--(v2)--(v21);
        \draw (v2)--(v22)--(v221);
        \draw (v22)--(v222);
        \draw (u)--(u1)--(u11);
        \draw (u1)--(u12);
        \draw (u)--(u2);
        \draw (u)--(u3)--(u31);
        \draw (u3)--(u32);
        \draw (u3)--(u33);
        \draw (u3)--(u34);
      \end{tikzpicture}}
    \subcaptionbox{\(\tcboxmath[colframe=red!20, colback=red!20,
    boxsep=0pt,
    left=0pt,right=0pt,top=1pt,bottom=1pt]{B_{3}(v,\mathbf{G}-e)}\)
    and~\(\tcboxmath[colframe=blue!20, colback=blue!20,
    boxsep=0pt,
    left=0pt,right=0pt,top=1pt,bottom=1pt]{B_{2}(u,\mathbf{G}-e)}\)}[0.45\linewidth]{%
      \centering
      \begin{tikzpicture}[x=1cm,y=1cm,
        mynode/.style={draw, circle, inner sep=2pt, minimum size=1em}]

        \path [rounded corners=10pt, fill=red!20]
        (0.5,0)--(-1,1.5)--(-2,2)--(-3.5,2.25)--(-3.5,-2.5)--(-2,-2)--(-1,-1.5)--cycle;

        \path [rounded corners=10pt, fill=blue!20]
        (0.5,0)--(1,1)--(2,1.5)--(3.4,2)--(3.4,-2.5)--(2,-1.5)--(1,-1)--cycle;

        \node[mynode] (v) at (0,0)    {\(v\)};
        \node[mynode] (u) at (1,0)    {\(u\)};

        \node[mynode] (v1) at (-1,1)  {};
        \node[mynode] (v2) at (-1,-1)  {};

        \node[mynode] (v11) at (-2,1.5)  {};
        \node[mynode] (v12) at (-2,1)  {};
        \node[mynode] (v13) at (-2,.5)  {};

        \node[mynode] (v111) at (-3,1.5)  {};

        \node[mynode] (v131) at (-3,.75)  {};
        \node[mynode] (v132) at (-3,.25)  {};

        \node[mynode] (v21) at (-2,-.75)  {};
        \node[mynode] (v22) at (-2,-1.5)  {};

        \node[mynode] (v221) at (-3,-1.25)  {};
        \node[mynode] (v222) at (-3,-1.75)  {};

        \node[mynode] (u1) at (2,1)  {};
        \node[mynode] (u2) at (2,0)  {};
        \node[mynode] (u3) at (2,-1)  {};

        \node[mynode] (u11) at (3,1.25)  {};
        \node[mynode] (u12) at (3,0.75)  {};

        \node[mynode] (u31) at (3,-.2)  {};
        \node[mynode] (u32) at (3,-.7)  {};
        \node[mynode] (u33) at (3,-1.2)  {};
        \node[mynode] (u34) at (3,-1.7)  {};

        \draw (v)--(v1)--(v11)--(v111);
        \draw (v1)--(v12);
        \draw (v1)--(v13)--(v131);
        \draw (v13)--(v132);
        \draw (v)--(v2)--(v21);
        \draw (v2)--(v22)--(v221);
        \draw (v22)--(v222);
        \draw (u)--(u1)--(u11);
        \draw (u1)--(u12);
        \draw (u)--(u2);
        \draw (u)--(u3)--(u31);
        \draw (u3)--(u32);
        \draw (u3)--(u33);
        \draw (u3)--(u34);
      \end{tikzpicture}}
    \caption{Illustration of the gluing procedure.
      The neighbourhood~\(B_{3}(v,\mathbf{G})\)
      can be obtained by combining
      the two neighbourhoods~\(B_{3}(v,\mathbf{G}-e)\)
      and~\(B_{2}(u,\mathbf{G}-e)\)
      that cannot use~\(e\)
      and information about~\(e\).}
    \label{fig:glue}
  \end{figure}

  Let~\((w_{e},w'_{e},\widetilde{w}_{e},\widetilde{w}'_{e})\)
  be a coupling independent
  of everything else that
  satisfies~\(w_{e},w'_{e} \sim \mu_{E,n}\),
  \(\widetilde{w}_{e},\widetilde{w}'_{e} \sim \mu_{E}\)
  as well as
  \[
  \prob_{n}(w_{e} \neq \widetilde{w}_{e})
  \leq d_{\mathrm{TV}}(\mu_{E,n},\mu_{E})
  \quad\text{and}\quad
  \prob_{n}(w'_{e} \neq \widetilde{w}'_{e})
  \leq d_{\mathrm{TV}}(\mu_{E,n},\mu_{E}).
  \]
  Then define
  \begin{equation*}
  \mathbf{\widetilde{T}}_{\ell}(v)
  =
    \Psi(\mathbf{T}''_{\ell}(v),\mathbf{T}''_{\ell-1}(u),X_{e},\widetilde{w}_{e})
    \quad\text{and}\quad
  \mathbf{\widetilde{T}}^{e}_{\ell}(v)
  =
  \Psi(\mathbf{T}''_{\ell}(v),\mathbf{T}''_{\ell-1}(u),X'_{e},\widetilde{w}'_{e}).
  \end{equation*}
  Conditionally on~\(Y_{e},Y_{e'}\)
  the
  pair~\((\mathbf{\widetilde{T}}_{\ell}(v),\mathbf{\widetilde{T}}^{e}_{\ell}(v))\)
  has the desired distribution.

  Let~\(E_{0}\) be the event that~\(B_{\ell}(v,\mathbf{G}_{n}'')\)
  and~\(B_{\ell-1}(u,\mathbf{G}_{n}'')\)
  share a vertex.
  On the complement of~\(E_{0}\) it is possible
  to join~\(B_{\ell}(v,\mathbf{G}_{n}'')\)
  and~\(B_{\ell-1}(u,\mathbf{G}_{n}'')\)
  at the edge~\(e\) to obtain a tree
  provided both~\(B_{\ell}(v,\mathbf{G}_{n}'')\)
  and~\(B_{\ell-1}(u,\mathbf{G}_{n}'')\)
  are trees.
  Then
  \begin{equation}\label{eq:g''t}
    \begin{split}
      &\prob_{n}(
      (B_{\ell},B'_{\ell})
      \cong
      (\mathbf{\widetilde{T}}_{\ell}(v),\mathbf{\widetilde{T}}^{e}_{\ell}(v))
      \given Y_{e},Y_{e'}
      )\\
      &\quad\geq
      \prob_{n}(B_{\ell}(v,\mathbf{G}_{n}'') \cong \mathbf{T}''_{\ell}(v),
      B_{\ell-1}(u,\mathbf{G}_{n}'') \cong \mathbf{T}''_{\ell-1}(u))\\
      &\qquad -\prob_{n}(E_{0})\\
      &\qquad - \prob_{n}(B_{\ell}(v,\mathbf{G}_{n}'') \neq
      B_{\ell}(v,\mathbf{G}_{n}-e)
      \given Y_{e},Y_{e'})\\
      &\qquad - \prob_{n}(B_{\ell-1}(u,\mathbf{G}_{n}'') \neq
      B_{\ell-1}(u,\mathbf{G}_{n}-e)
      \given Y_{e},Y_{e'})\\
      &\qquad - \prob_{n}(\widetilde{w}_{e} \neq w_{e})
      -\prob_{n}(\widetilde{w}'_{e} \neq w'_{e}).
    \end{split}
  \end{equation}

  By assumption
  \[
  \prob_{n}(B_{\ell}(v,\mathbf{G}_{n}'') \cong \mathbf{T}_{\ell}(v),
  B_{\ell-1}(u,\mathbf{G}_{n}'') \cong \mathbf{T}_{\ell-1}(u))
  \geq 1-\varepsilon_{n,\ell}( \set{u,v}).
  \]

  Furthermore,~\(B_{\ell}(v,\mathbf{G}''_{n})\)
  and~\(B_{\ell-1}(u,\mathbf{G}''_{n})\)
  share a vertex only if there is a path from~\(v\) to~\(u\)
  of length at most~\(2\ell-1\).
  Hence, \autoref{cor:abpathupto} implies
  \begin{equation*}
  \prob_{n}(E_{0})
  \leq \prob_{n}(v \pathbetw_{2\ell-1} v)
  \leq \frac{W_{v}W_{u}}{n\vartheta}
  (\Gamma_{2,n}+1)^{2\ell-1}.
  \end{equation*}

  Finally, \(\mathbf{G}_{n}''\) is independent of~\(Y_{e},Y_{e'}\)
  and differs from~\(\mathbf{G}_{n}\) only in~\(e\) and~\(e'\).
  Hence, the neighbourhood~\(B_{\ell}(v,\mathbf{G}_{n}'')\) can differ
  from~\(B_{\ell}(v,\mathbf{G}_{n}-e)\)
  only if~\(e\) is present in the former or if~\(e'\) is present in
  one and not the other.
  Thus
  \begin{equation*}
  \begin{split}
  &\prob_{n}(B_{\ell}(v,\mathbf{G}_{n}'') \neq B_{\ell}(v,\mathbf{G}_{n}-e)
  \given Y_{e},Y_{e'})\\
  &\quad\leq \prob_{n}(e \in B_{\ell}(v,\mathbf{G}_{n}''))
   + \prob_{n}(e' \in B_{\ell}(v,\mathbf{G}_{n}''))
   + \prob_{n}(e' \in B_{\ell}(v,\mathbf{G}_{n}-e) \given X_{e'}).
  \end{split}
  \end{equation*}

  For the first term note that~\(e\) is present
  in~\(B_{\ell}(v,\mathbf{G}''_{n})\)
  if and only if~\(v\) is connected to~\(u\) via~\(e\),
  i.e.
  \[
  \prob_{n}(e \in B_{\ell}(v,\mathbf{G}_{n}''))
  = \expe_{n}[X''_{e}]
  = \frac{W_{u}W_{v}}{n \vartheta}.
  \]
  For the second term we we can apply \autoref{lem:einbl}
  \begin{equation*}
  \prob_{n}(e' \in B_{\ell}(v,\mathbf{G}_{n}''))
  \leq \frac{W_{v}(W_{u'}+W_{v'})}{n\vartheta}
  (\Gamma_{2,n}+1)^{\ell}.
  \end{equation*}
  The third term contains a conditioning which can be removed as follows.
  Since~\(e'\) can only be present in the neighbourhood
  if one of its vertices~\(u'\) or~\(v'\) is
  present in the neighbourhood in its own right,
  i.e.~when~\(e'\) itself is ignored,
  we can drop the conditioning on~\(X_{e'}\),
  so that by \autoref{lem:uinbl}
  \begin{align*}
  \prob_{n}(e' \in B_{\ell}(v,\mathbf{G}_{n}-e) \given X_{e'})
  &\leq \prob_{n}(v' \in B_{\ell}(v,\mathbf{G}_{n} -
  \set{e,e'}))
  + \prob_{n}(u' \in B_{\ell}(v,\mathbf{G}_{n} - \set{e,e'}))\\
  &\leq \prob_{n}(v' \in B_{k}(v,\mathbf{G}_{n}))
  + \prob_{n}(u' \in B_{\ell}(v,\mathbf{G}_{n}))\\
  &\leq \frac{W_{v}(W_{u'}+W_{v'})}{n \vartheta}
  (\Gamma_{2,n}+1)^{\ell}.
  \end{align*}

  The remaining terms can be treated similarly.
  Putting everything together, this shows the claim.
\end{proof}

The result from \autoref{lem:couplegngne} can be slightly simplified
by dropping the conditioning on~\(e'\).
\begin{lemma}\label{lem:couplegngn}
  Fix two vertices~\(v\) and~\(u\) and set~\(e = \edge{v}{u}\).
  Given a coupling
  \[
  (
  B_{\ell}(v,\mathbf{G}_{n}),
  B_{\ell}(u,\mathbf{G}_{n}),
  \mathbf{T}_{\ell}(v),
  \mathbf{T}_{\ell}(u)
  )
  \]
  with
  independent~\(\mathbf{T}_{\ell}(v)\sim
  \mathbf{T}_{\ell}(W_{v},\nu,\mu_{E},\mu_{V})\)
  and~\(\mathbf{T}_{\ell}(u)
  \sim \mathbf{T}_{\ell}(W_{u},\nu,\mu_{E},\mu_{V})\)
  that satisfies
  \[
  \varepsilon_{n,\ell}( \set{u,v})
  \geq 1-\prob_{n}(
  B_{\ell}(v,\mathbf{G}_{n}) \cong \mathbf{T}_{\ell}(v),
  B_{\ell}(u,\mathbf{G}_{n}) \cong \mathbf{T}_{\ell}(u)
  )
  \]
  it is possible to
  couple~\((B_{\ell}(v,\mathbf{G}_{n}), B_{\ell}(v,\mathbf{G}_{n}^{e}),
  \mathbf{\widetilde{T}}_{\ell}(v), \mathbf{\widetilde{T}}^{e}_{\ell}(v))\)
  such that
  \begin{align*}
  (\mathbf{\widetilde{T}}_{\ell}(v), \mathbf{\widetilde{T}}^{e}_{\ell}(v))
  \given Y_{e} = (1,0)
  &\eqdist (\mathbf{\tilde{T}}_{\ell},\mathbf{T}_{\ell}),\\
  (\mathbf{\widetilde{T}}_{\ell}(v), \mathbf{\widetilde{T}}^{e}_{\ell}(v))
  \given Y_{e} = (0,1)
  &\eqdist (\mathbf{T}_{\ell},\mathbf{\tilde{T}}_{\ell}),
  \end{align*}
  where~\(Y_{e} = (X_{e},X'_{e})\)
  as well as~\(\mathbf{T}_{\ell}
  \sim \mathbf{T}_{\ell}(W_{v},\nu,\mu_{E},\mu_{V})\)
  and~\(\mathbf{\tilde{T}}_{\ell}
  \sim \mathbf{\tilde{T}}_{\ell}(W_{v},W_{u},\nu,\mu_{E},\mu_{V})\).
  Furthermore,
  \[
  \begin{split}
  &\prob_{n}(
  (B_{\ell}(v,\mathbf{G}_{n}),B_{\ell}(v,\mathbf{G}_{n}^{e})
  \cong (\mathbf{\widetilde{T}}_{\ell}(v),
  \mathbf{\widetilde{T}}^{e}_{\ell}(v)))
  \given
  Y_{e}
  )\\
  &\quad\geq
  1-\Bigl(
  \varepsilon_{n,\ell}(\set{u,v})
  + 2d_{\mathrm{TV}}(\mu_{E,n},\mu_{E})+ C\frac{W_{u}W_{v}}{n\vartheta}
  (\Gamma_{2,n}+1)^{2\ell}
  \Bigr).
  \end{split}
  \]
\end{lemma}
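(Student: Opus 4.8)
The plan is to mirror the construction in the proof of \autoref{lem:couplegngne}, but without keeping track of a second edge $e'$, so that every error term involving $W_{u'}$ or $W_{v'}$ simply disappears. Concretely, I would let $(W'',X'')$ be a copy of $(W,X)$ that is independent of everything else and let $\mathbf{G}_{n}''$ be the weighted graph obtained from $\mathbf{G}_{n}$ by replacing $X_{e}$ with $X''_{e}$ (leaving every other edge untouched). Based on the given coupling one couples $B_{\ell}(v,\mathbf{G}_{n}'')$ to an independent tree $\mathbf{T}''_{\ell}(v)\sim\mathbf{T}_{\ell}(W_{v},\nu,\mu_{E},\mu_{V})$ and $B_{\ell-1}(u,\mathbf{G}_{n}'')$ to an independent tree $\mathbf{T}''_{\ell-1}(u)\sim\mathbf{T}_{\ell-1}(W_{u},\nu,\mu_{E},\mu_{V})$; since $\mathbf{G}_{n}''$ is independent of $Y_{e}=(X_{e},X'_{e})$, this coupling can be chosen so that $\mathbf{T}''_{\ell}(v)$ and $\mathbf{T}''_{\ell-1}(u)$ are independent of $Y_{e}$ as well.

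Next I would reuse the gluing function $\Psi$ from the proof of \autoref{lem:couplegngne}, which reconstructs $B_{\ell}(v,\mathbf{G}_{n})$ from $B_{\ell}(v,\mathbf{G}_{n}-e)$, $B_{\ell-1}(u,\mathbf{G}_{n}-e)$ and the data $(X_{e},w_{e})$ of the edge $e$ (and likewise $B_{\ell}(v,\mathbf{G}_{n}^{e})$ from the same two neighbourhoods together with $(X'_{e},w'_{e})$). Introducing a coupling $(w_{e},w'_{e},\widetilde{w}_{e},\widetilde{w}'_{e})$ independent of everything else with $w_{e},w'_{e}\sim\mu_{E,n}$, $\widetilde{w}_{e},\widetilde{w}'_{e}\sim\mu_{E}$ and $\prob_{n}(w_{e}\neq\widetilde{w}_{e}),\prob_{n}(w'_{e}\neq\widetilde{w}'_{e})\leq d_{\mathrm{TV}}(\mu_{E,n},\mu_{E})$, I would set $\mathbf{\widetilde{T}}_{\ell}(v)=\Psi(\mathbf{T}''_{\ell}(v),\mathbf{T}''_{\ell-1}(u),X_{e},\widetilde{w}_{e})$ and $\mathbf{\widetilde{T}}^{e}_{\ell}(v)=\Psi(\mathbf{T}''_{\ell}(v),\mathbf{T}''_{\ell-1}(u),X'_{e},\widetilde{w}'_{e})$. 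Unwinding the definition of $\Psi$ and of $\mathbf{\widetilde{T}}$ in \autoref{def:ttilde} shows that conditionally on $Y_{e}=(1,0)$ this pair equals $(\mathbf{\widetilde{T}}_{\ell},\mathbf{T}_{\ell})$ in distribution (gluing $\mathbf{T}''_{\ell-1}(u)$ onto $\mathbf{T}''_{\ell}(v)$ for the first component, no gluing for the second) and conditionally on $Y_{e}=(0,1)$ it equals $(\mathbf{T}_{\ell},\mathbf{\widetilde{T}}_{\ell})$, which is exactly what the lemma demands.

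Finally the error bound is assembled as in \autoref{lem:couplegngne}, just with fewer terms. The coupled pair can fail to be isomorphic to $(B_{\ell}(v,\mathbf{G}_{n}),B_{\ell}(v,\mathbf{G}_{n}^{e}))$ only if one of the following occurs: the $\mathbf{G}_{n}''$-neighbourhoods are not isomorphic to $(\mathbf{T}''_{\ell}(v),\mathbf{T}''_{\ell-1}(u))$, which by hypothesis has probability at most $\varepsilon_{n,\ell}(\set{u,v})$; the neighbourhoods $B_{\ell}(v,\mathbf{G}_{n}'')$ and $B_{\ell-1}(u,\mathbf{G}_{n}'')$ share a vertex, which forces a path of length at most $2\ell-1$ from $v$ to $u$ and so by \autoref{cor:abpathupto} has probability at most $(n\vartheta)^{-1}W_{v}W_{u}(\Gamma_{2,n}+1)^{2\ell-1}$; $B_{\ell}(v,\mathbf{G}_{n}'')$ differs from $B_{\ell}(v,\mathbf{G}_{n}-e)$ (respectively $B_{\ell-1}(u,\mathbf{G}_{n}'')$ from $B_{\ell-1}(u,\mathbf{G}_{n}-e)$), which—since $\mathbf{G}_{n}''$ now differs from $\mathbf{G}_{n}$ only in $e$—can only happen if $e\in B_{\ell}(v,\mathbf{G}_{n}'')$ (resp.\ $e\in B_{\ell-1}(u,\mathbf{G}_{n}'')$), an event of probability $\expe_{n}[X''_{e}]=W_{u}W_{v}/(n\vartheta)$; or one of the two edge-weight couplings fails, contributing at most $2d_{\mathrm{TV}}(\mu_{E,n},\mu_{E})$. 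Adding these contributions and estimating $(\Gamma_{2,n}+1)^{2\ell-1}\leq(\Gamma_{2,n}+1)^{2\ell}$ generously yields the claimed bound for a suitable constant $C$. The routine parts are precisely those already handled for \autoref{lem:couplegngne}; the only step needing genuine care—and where I would concentrate the argument—is verifying that $\Psi$ applied to the independent trees $(\mathbf{T}''_{\ell}(v),\mathbf{T}''_{\ell-1}(u))$ really produces the conditional laws $(\mathbf{\widetilde{T}}_{\ell},\mathbf{T}_{\ell})$ and $(\mathbf{T}_{\ell},\mathbf{\widetilde{T}}_{\ell})$ of \autoref{def:ttilde}, i.e.\ that ``gluing on $\mathbf{G}_{n}-e$'' commutes with the tree coupling on the event that both smaller neighbourhoods are trees and are vertex-disjoint.
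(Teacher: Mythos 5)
Your proposal is correct and follows essentially the same route as the paper: the paper's own proof simply says to replicate the construction of \autoref{lem:couplegngne} without modifying or conditioning on a second edge $e'$, so that the terms $\prob_{n}(B_{\ell}(v,\mathbf{G}_{n}'')\neq B_{\ell}(v,\mathbf{G}_{n}-e)\given Y_{e})$ and its analogue for $u$ reduce to $\prob_{n}(e\in B_{\ell}(\cdot,\mathbf{G}_{n}''))\leq W_{u}W_{v}/(n\vartheta)$, exactly as you compute. Your decomposition of the failure event (coupling failure, vertex intersection via \autoref{cor:abpathupto}, presence of $e$, weight-coupling failure) and the verification that $\Psi$ applied to the independent trees yields the conditional laws of \autoref{def:ttilde} match the paper's argument.
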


\begin{proof}
  Replicate the steps of the proof of \autoref{lem:couplegngn},
  but do not modify the graph at~\(e'\)
  and do not condition on~\(Y_{e'}\).

  In particular the probabilities in~\eqref{eq:g''t}
  are only conditioned on~\(Y_{e}\) and not on~\(Y_{e'}\).
  The relevant conditional probabilities on the right-hand side
  can then be estimated as
  \begin{equation*}
  \prob_{n}(B_{\ell}(v,\mathbf{G}_{n}'') \neq B_{\ell}(v,\mathbf{G}_{n}-e)
  \given Y_{e})
  \leq \prob_{n}(e \in B_{\ell}(v,\mathbf{G}_{n}''))
  \leq \expe_{n}[X''_{e}]
  \leq \frac{W_{u}W_{v}}{n \vartheta}
  \end{equation*}
  and
  \begin{equation*}
  \prob_{n}(B_{\ell-1}(u,\mathbf{G}_{n}'') \neq B_{\ell-1}(u,\mathbf{G}_{n}-e)
  \given Y_{e})
  \leq \prob_{n}(e \in B_{\ell-1}(u,\mathbf{G}_{n}''))
  \leq \expe_{n}[X''_{e}]
  \leq \frac{W_{u}W_{v}}{n \vartheta}.
  \end{equation*}

  Putting these results together we obtain the claimed bound.
\end{proof}

\section{Proof of the central limit theorem}
\label{chap:proof}

The proof of \autoref{thm:fgnconv} is based on the perturbative Stein's method.
We will follow the approach used by \citet[§~5]{cao}
for the case of the (homogeneous) Erdős--Rényi graph.
The same  general strategy of combining
the perturbative Stein's method
with local approximations of the problem
was also used by \citeauthor{chatterjee-sen}
\citetext{\citealp{chatterjee-sen}, \citealp[see also][§~4]{chatterjee}}
to show a central limit theorem for the minimal spanning tree
in Euclidean space~\(\reals^{d}\) and the lattice~\(\integers^{d}\)
for dimensions~\(d \geq 2\).

We will build our proof around
the following reformulation of
\citeauthor{chatterjee}'s perturbative Stein's method
\citep[Cor.~3.2]{chatterjee}.
Recall that~\(\sigma_{n}^{2} = \var_{n}(f(\mathbf{G}_{n}))\).
\begin{prop}\label{lem:steinappl}
  If there exists a function~\(f \colon V_{n}^{(2)} \setunion V_{n}
  \to \reals\)
  such that for all~\(x, x' \in V_{n}^{(2)} \setunion V_{n}\) we have
  \[
    \sigma_{n}^{-4}\cov(\Delta_{x}f \Delta_{x}f^{F},\Delta_{x'}f \Delta_{x'}f^{F'})
    \leq c(x,x')
  \]
  for all~\(F \subseteq (E_{n} \setunion V_{n}) \setminus\set{x}\),
  \(F' \subseteq (E_{n} \setunion V_{n}) \setminus \set{x'}\).
  Then we have
  \begin{equation}\label{eq:znwdist}
    \sup_{t \in\reals}\abs{\prob_{n}(Z_{n} \leq t)-\Phi(t)}
    \leq \sqrt{2} \biggl( \sum_{x,x' \in E_{n} \setunion V_{n}} c(x,x')
    \biggr)^{\!\!1/4}
    +
    \sigma_{n}^{-3/2}
    \biggl(
    \sum_{x\in E_{n} \setunion V_{n}} \expe\bigl[\abs{\Delta_{x}f}^{3}\bigr]
    \biggr)^{\!\!1/2}\!.
  \end{equation}
\end{prop}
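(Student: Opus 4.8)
The plan is to deduce \autoref{lem:steinappl} directly from Chatterjee's perturbative normal approximation theorem \citep[Cor.~3.2]{chatterjee}, which applies to a function of a finite collection of independent random variables. First I would set up the index set $\mathcal{I} = V_{n}^{(2)} \setunion V_{n}$ and observe that, conditionally on $\mathcal{F}_{n}$, the weighted graph $\mathbf{G}_{n}$ is a deterministic function of the independent family $(\mathbf{X}, \mathbf{w}) = ((X_{e})_{e \in V_{n}^{(2)}}, (w_{x})_{x \in \mathcal{I}})$; note the edge indicators $X_{e}$ and the weights $w_{e}$, $w_{v}$ are all mutually independent under $\prob_{n}$. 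Hence $f(\mathbf{G}_{n})$ is a function $W = \psi((X_e,w_e)_{e},(w_v)_v)$ of these independent coordinates, and the resampling operation $\mathbf{G}_{n} \mapsto \mathbf{G}_{n}^{x}$ is exactly the operation of replacing the $x$-th coordinate (the pair $(X_e,w_e)$ if $x$ is an edge, or $w_v$ if $x$ is a vertex) by an independent copy. Thus $\Delta_{x}f = f(\mathbf{G}_{n}) - f(\mathbf{G}_{n}^{x})$ and $\Delta_{x}f^{F} = f(\mathbf{G}_{n}^{F}) - f(\mathbf{G}_{n}^{F \setunion x})$ are precisely the discrete derivative and the $F$-resampled discrete derivative appearing in Chatterjee's bound.

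Next I would recall the precise statement of \citep[Cor.~3.2]{chatterjee}: for $W$ a function of independent coordinates indexed by $\mathcal{I}$ with $\expe W = 0$, $\operatorname{Var} W = 1$, writing $\Delta_i W$ for the resampling increment and $\Delta_i W^{A}$ for the increment after resampling the coordinates in $A$,
\[
  \sup_{t}\abs{\prob(W \le t) - \Phi(t)}
  \le \sqrt{2}\,\Bigl(\sum_{i,j \in \mathcal{I}} \sup_{A,B} \operatorname{Cov}(\Delta_i W\,\Delta_i W^{A},\ \Delta_j W\,\Delta_j W^{B})\Bigr)^{1/4}
  + \Bigl(\sum_{i \in \mathcal{I}} \expe\abs{\Delta_i W}^{3}\Bigr)^{1/2},
\]
where the suprema run over $A \subseteq \mathcal{I}\setminus\set{i}$ and $B \subseteq \mathcal{I}\setminus\set{j}$. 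Applying this with $W = Z_{n} = \sigma_{n}^{-1}(f(\mathbf{G}_{n}) - \expe_{n}[f(\mathbf{G}_{n})])$ (so that $\expe_{n} W = 0$, $\operatorname{Var}_{n} W = 1$), the increments scale as $\Delta_x Z_n = \sigma_n^{-1}\Delta_x f$, so $\operatorname{Cov}_n(\Delta_x Z_n \Delta_x Z_n^{F}, \Delta_{x'} Z_n \Delta_{x'} Z_n^{F'}) = \sigma_n^{-4}\operatorname{Cov}_n(\Delta_x f \Delta_x f^{F}, \Delta_{x'} f \Delta_{x'} f^{F'})$ and $\expe_n\abs{\Delta_x Z_n}^{3} = \sigma_n^{-3}\expe_n\abs{\Delta_x f}^{3}$. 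Substituting the hypothesised bound $\sigma_n^{-4}\operatorname{Cov}_n(\cdots) \le c(x,x')$ (which is assumed to hold for \emph{all} admissible $F, F'$, hence dominates the supremum over $A,B$) and the identity for the third-moment term yields exactly the right-hand side of \eqref{eq:znwdist}, since $\sigma_n^{-3/2}$ comes out of $(\sum_x \sigma_n^{-3}\expe_n\abs{\Delta_x f}^3)^{1/2}$.

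The steps in order: (i) identify the independent coordinate family encoding $\mathbf{G}_{n}$ given $\mathcal{F}_{n}$ and check that $\mathbf{G}_{n}^{x}$, $\mathbf{G}_{n}^{F}$ are genuine resamplings of subsets of coordinates; (ii) quote \citep[Cor.~3.2]{chatterjee} verbatim for this coordinate family; (iii) normalise by taking $W = Z_{n}$ and track the powers of $\sigma_{n}$ through the covariance and third-moment terms; (iv) bound the supremum over resampling sets by the uniform hypothesis on $c(x,x')$ and conclude. I expect the only genuine subtlety — rather than a true obstacle — to be bookkeeping around the indexing conventions: Chatterjee's corollary is stated for resampling increments with a specific sign and normalisation, and one must make sure that $\Delta_x f^{F}$ as defined in the excerpt (namely $f(\mathbf{G}_{n}^{F}) - f(\mathbf{G}_{n}^{F\setunion x})$, the derivative \emph{after} resampling $F$) matches the quantity that appears in \citeauthor{chatterjee}'s covariance term, and that the constraint $F \subseteq (E_{n}\setunion V_{n})\setminus\set{x}$ is exactly the ``$A \subseteq \mathcal{I}\setminus\set{i}$'' constraint there. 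Everything else is a direct substitution, so the proof is short.
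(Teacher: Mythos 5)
Your proposal is correct and matches the paper's proof, which likewise consists of applying Chatterjee's Corollary~3.2 to $\mathbf{G}_{n}$ viewed as the independent family $((X_{e},w_{e})_{e},(w_{v})_{v})$ and the normalised function $\sigma_{n}^{-1}(f(\cdot)-\expe_{n}[f(\mathbf{G}_{n})])$. The bookkeeping you flag (matching $\Delta_{x}f^{F}$ and the constraint on $F$ to Chatterjee's conventions, and tracking the powers of $\sigma_{n}$) is exactly what the paper leaves implicit.
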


\begin{proof}
  Apply \citep[Cor.~3.2]{chatterjee}
  to~\(\mathbf{G}_{n}\)
  interpreted as a sequence of independent random
  variables~\(((X_{e},w_{e})_{e\in E_{n}}, (w_{v})_{v \in V_{n}})\)
  and the function~\(\sigma_{n}^{-1}(f(x)-\expe[f(\mathbf{G}_{n})])\).
\end{proof}

\autoref{thm:fgnconv} can now be proved by finding a suitable function~\(c\)
and bounding the two sums in~\eqref{eq:znwdist}.
In particular we need to identify bounds for~\(c(x,x')\)
for all possible combinations of edges and vertices.
This is what we set out to do in the remainder of this chapter,
which can broadly be split into three parts.
The first part is dedicated to identifying straightforward
bounds for~\(c\) and~\(\Delta_{x}f\).
The second part of the chapter examines bounds for~\(c\),
where we make use of the fact that edges and vertices that are not
incident to each other are weakly correlated.
Finally, we show that the sum of the bounds we identified are of the order claimed in the theorem.

The second sum on the right-hand side of~\eqref{eq:znwdist}
does not involve~\(c\) and is easily bounded
after separation into vertex and edge sums
\[
  \sum_{x\in E_{n} \setunion V_{n}} \expe_{n}[\abs{\Delta_{x}f}^{3}]
  =\sum_{e\in E_{n}} \expe_{n}[\abs{\Delta_{e}f}^{3}]
  + \sum_{v \in V_{n}} \expe_{n}[\abs{\Delta_{v}f}^{3}].
\]

\begin{lemma}\label{lem:sumdelthree:e}
  Under the conditions of \autoref{thm:fgnconv}
  \[
    \sum_{e\in E_{n}} \expe_{n}[\abs{\Delta_{e}f}^{3}]
    \leq 2 n \vartheta \Gamma_{1,n}^{2} J_{E}^{1/2}.
  \]
\end{lemma}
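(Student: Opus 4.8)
The plan is entirely elementary: bound $\abs{\Delta_{e}f}^{3}$ pointwise by the simple integrability estimate \eqref{eq:deltahbound:e}, use that the edge indicators are independent of the edge and vertex weights to factor the conditional expectation $\expe_{n}[\,\cdot\,]$, and then evaluate the resulting sum over pairs of vertices, substituting $\Lambda_{n}=n\vartheta\Gamma_{1,n}$ at the end. Since every summand is non-negative and $E_{n}\subseteq V_{n}^{(2)}$, I would first replace the sum over $E_{n}$ by the (larger) sum over all possible edges $V_{n}^{(2)}$.

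For a fixed possible edge $e=\edge{u}{v}\in V_{n}^{(2)}$, cubing \eqref{eq:deltahbound:e} gives
\[
  \abs{\Delta_{e}f}^{3}
  \leq \indfunc_{\set{\max\set{X_{e},X'_{e}}=1}}\,H_{E}(w_{e},w'_{e},w_{v},w_{u})^{3}.
\]
The indicator is $\sigma(X_{e},X'_{e})$-measurable while $H_{E}$ depends only on the weights, which (even conditionally on $\mathcal{F}_{n}$) are independent of $X_{e},X'_{e}$; hence
\[
  \expe_{n}[\abs{\Delta_{e}f}^{3}]
  \leq \prob_{n}(\max\set{X_{e},X'_{e}}=1)\,
       \expe_{n}[H_{E}(w_{e},w'_{e},w_{v},w_{u})^{3}].
\]
For the second factor, $w_{e},w'_{e}\sim\mu_{E,n}$ and $w_{u},w_{v}\sim\mu_{V,n}$ do not depend on $\mathcal{F}_{n}$, so it equals $\expe[H_{E}^{3}]$, which by the power-mean inequality is at most $(\expe[H_{E}^{6}])^{1/2}\leq J_{E}^{1/2}$ by \eqref{eq:jbound:e}. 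For the first factor, since $X_{e},X'_{e}$ are independent $\mathrm{Bin}(1,p_{uv})$ and $p_{uv}\leq W_{u}W_{v}/(n\vartheta)$ by \eqref{eq:puvdef}, we get $\prob_{n}(\max\set{X_{e},X'_{e}}=1)=1-(1-p_{uv})^{2}\leq 2p_{uv}\leq 2W_{u}W_{v}/(n\vartheta)$.

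Combining these and summing,
\[
  \sum_{e=\edge{u}{v}\in V_{n}^{(2)}}\expe_{n}[\abs{\Delta_{e}f}^{3}]
  \leq \frac{2J_{E}^{1/2}}{n\vartheta}\sum_{\substack{u,v\in V_{n}\\u<v}}W_{u}W_{v}
  \leq \frac{J_{E}^{1/2}}{n\vartheta}\Bigl(\sum_{u\in V_{n}}W_{u}\Bigr)^{2}
  = \frac{J_{E}^{1/2}\Lambda_{n}^{2}}{n\vartheta},
\]
and substituting $\Lambda_{n}=n\vartheta\Gamma_{1,n}$ from \eqref{eq:gamman} yields $J_{E}^{1/2}n\vartheta\Gamma_{1,n}^{2}\leq 2J_{E}^{1/2}n\vartheta\Gamma_{1,n}^{2}$, which is the claim. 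There is no real obstacle; the only points needing any care are justifying the factorisation of $\expe_{n}$ via independence of the edge indicators and the weights, the Jensen step passing from a sixth to a third moment, and noting that the enlargement of the index set from $E_{n}$ to $V_{n}^{(2)}$ is harmless. The bound even has a factor $2$ to spare, so the crude estimates $1-(1-p)^{2}\leq 2p$ and $\sum_{u<v}W_{u}W_{v}\leq\tfrac12\Lambda_{n}^{2}$ suffice.
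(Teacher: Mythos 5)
Your proposal is correct and follows essentially the same route as the paper: bound $\abs{\Delta_{e}f}^{3}$ via \eqref{eq:deltahbound:e}, use independence of the edge indicators from the weights to split off $\prob_{n}(\max\set{X_{e},X'_{e}}=1)\leq 2W_{u}W_{v}/(n\vartheta)$, control $\expe[H_{E}^{3}]$ by $J_{E}^{1/2}$ through the sixth moment, and sum over pairs of vertices. The only cosmetic differences are your use of $1-(1-p)^{2}\leq 2p$ in place of the paper's union bound and your restriction to unordered pairs, which even saves a factor of $2$.
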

\begin{proof}
  By the estimate~\eqref{eq:deltahbound:e} for~\(\abs{\Delta_{e}f}\),
  independence of edge indicators~\(\mathbf{X}\) and~\(\mathbf{X}'\)
  from the weights~\(\mathbf{w}\) and~\(\mathbf{w}'\)
  and the moment bound~\eqref{eq:jbound:e} for~\(H_{E}\)
  we have
  \begin{align*}
    \expe_{n}[\abs{\Delta_{e}f}^{3}]
    &\leq \expe_{n}[
      \indfunc_{\set{\max\set{X_{e},X'_{e}} = 1}} \abs{H_{E}(w_{e},w'_{e},w_{v},w_{u})}^{3}
      ]\\
    &\leq ( \prob_{n}(X_{e}=1)+\prob_{n}(X'_{e}=1))
      \expe_{n}[\abs{H_{E}(w_{e},w'_{e},w_{v},w_{u})}^{6}]^{\!1/2}\\
    &\leq 2 \frac{W_{u}W_{v}}{n\vartheta} J_{E}^{1/2}.
  \end{align*}
  Hence, summing over all edges or combinations of two vertices
  we obtain
  \begin{equation*}
    \sum_{e\in E_{n}} \expe_{n}[\abs{\Delta_{e}f}^{3}]
    \leq \sum_{u,v \in V_{n}} 2 \frac{W_{u}W_{v}}{n\vartheta} J_{E}^{1/2}
    \leq 2 n\vartheta \Gamma_{1,n}^{2} J_{E}^{1/2}.
  \end{equation*}
  The claim follows.
\end{proof}

The vertex sum can be treated analogously.
\begin{lemma}\label{lem:sumdelthree:v}
  Under the conditions of \autoref{thm:fgnconv}
  \[
    \sum_{v\in V_{n}} \expe_{n}[\abs{\Delta_{v}f}^{3}]
    \leq J_{V}^{1/2} \sum_{v \in V_{n}} \zeta(v)^{3/4}.
  \]
\end{lemma}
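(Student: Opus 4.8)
The plan is to imitate the proof of \autoref{lem:sumdelthree:e}, using the vertex bound \eqref{eq:deltahbound:v} in place of the edge bound \eqref{eq:deltahbound:e}. First I would start from \eqref{eq:deltahbound:v}, which reads $\abs{\Delta_{v}f} \leq h(\setcard{D_{1}(v)})\,H_{V}(w_{v},w'_{v})$; since the left-hand side is non-negative, cubing gives $\abs{\Delta_{v}f}^{3} \leq h(\setcard{D_{1}(v)})^{3}\,H_{V}(w_{v},w'_{v})^{3}$, the right-hand side again being non-negative.

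Next I would take $\expe_{n}$ and exploit independence to factor. Conditionally on $\mathcal{F}_{n}$ the degree $\setcard{D_{1}(v)} = \sum_{u} X_{vu}$ is a function of the edge indicators alone, while the vertex weights $w_{v}, w'_{v} \sim \mu_{V,n}$ are independent of the edge indicators and of $\mathcal{F}_{n}$; hence the two cubes are $\prob_{n}$-independent and
\[
  \expe_{n}[\abs{\Delta_{v}f}^{3}]
  \leq \expe_{n}[h(\setcard{D_{1}(v)})^{3}]\,\expe_{n}[H_{V}(w_{v},w'_{v})^{3}].
\]
For the weight factor, Cauchy--Schwarz yields $\expe_{n}[H_{V}(w_{v},w'_{v})^{3}] \leq \expe_{n}[H_{V}(w_{v},w'_{v})^{6}]^{1/2}$, and since the weights do not depend on $\mathcal{F}_{n}$ this equals $\expe[H_{V}(w_{v},w'_{v})^{6}]^{1/2} \leq J_{V}^{1/2}$ by the definition \eqref{eq:jbound:v} of $J_{V}$.

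For the degree factor I would use that $h$ is non-decreasing, so $h(\setcard{D_{1}(v)})^{3} \leq h(\setcard{D_{1}(v)}+4)^{3} \leq \abs{h(\setcard{D_{1}(v)}+4)}^{3}$, together with Jensen's inequality (the power-mean inequality $\expe_{n}[\abs{Y}^{3}] \leq \expe_{n}[\abs{Y}^{4}]^{3/4}$) and the definition \eqref{eq:nubound} of $\zeta_{n}$, to obtain $\expe_{n}[h(\setcard{D_{1}(v)})^{3}] \leq \expe_{n}[h(\setcard{D_{1}(v)}+4)^{4}]^{3/4} = \zeta_{n}(v)^{3/4}$. Combining the two bounds gives $\expe_{n}[\abs{\Delta_{v}f}^{3}] \leq J_{V}^{1/2}\,\zeta_{n}(v)^{3/4}$, and summing over $v \in V_{n}$ produces the claim.

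There is no genuine obstacle here; the statement is a routine counterpart of \autoref{lem:sumdelthree:e}. The only step needing a word of justification is the factoring of the expectation, which rests on the fact that in our model the i.i.d.\ vertex weights are generated independently of the underlying graph, hence of every degree, so that $\setcard{D_{1}(v)}$ and $(w_{v},w'_{v})$ are indeed $\prob_{n}$-independent; everything else is Cauchy--Schwarz, monotonicity of $h$ (which lets the slack "$+4$" in the definition of $\zeta_{n}$ absorb any sign issues), and Jensen.
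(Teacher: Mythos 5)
Your proposal is correct and follows essentially the same route as the paper's proof: bound \(\abs{\Delta_{v}f}\) by \eqref{eq:deltahbound:v}, factor the expectation using independence of the edge indicators and the vertex weights, and then apply \eqref{eq:jbound:v} (via Cauchy--Schwarz) and \eqref{eq:nubound} (via monotonicity of \(h\) and the power-mean inequality) to the two factors before summing. The paper states these steps more tersely, but the argument is the same.
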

\begin{proof}
  By the bound~\eqref{eq:deltahbound:v} on~\(\abs{\Delta_{v}f}\),
  independence of edge indicators and weights
  together with the bounds~\eqref{eq:jbound:v} and~\eqref{eq:nubound}
  for moments of~\(H_{V}\) and~\(\vdegbound\)
  we have
  \begin{equation*}
    \expe_{n}[\abs{\Delta_{v}f}^{3}]
    \leq \expe_{n}[\vdegbound^{3}]
      \expe_{n}[H_{V}(w_{v},w'_{v})^{3}]
    \leq \zeta_{n}(v)^{3/4} J_{V}^{1/2} .
  \end{equation*}
  Summing over~\(v\) we obtain
  \[
    \sum_{v\in V_{n}} \expe_{n}[\abs{\Delta_{v}f}^{3}]
    \leq J_{V}^{1/2} \sum_{v \in V_{n}}\zeta_{n}(v)^{3/4}.
  \]
  The claim follows.
\end{proof}

It remains to identify a suitable function~\(c\)
bounding the covariances of the randomised derivatives
such that the sum over all vertex-edge combinations
has a rate that still allows for the desired convergence.

The~\(O(n^{2})\) \enquote{diagonal terms}~\(c(e,e)\)
are relatively easy to bound;
and a bound of order~\(O(\sigma_{n}^{-4} n^{-1})\)
is sufficient to ensure the desired
convergence.

\begin{lemma}\label{lem:ceebd:same:all}
  Let~\(u,v,v'\) be three different vertices
  and set~\(e= \edge{u}{v}\) and~\(e' = \edge{u}{v'}\).
  Then we may take
  \begin{alignat*}{8}
    c(e,e)
    &= \sigma_{n}^{-4} C J_{E}^{2/3} \frac{W_{u}W_{v}}{n\vartheta},
    &\quad&&
    c(e,e')
    &= \sigma_{n}^{-4} C W_{u}^{2} \frac{W_{v}}{n\vartheta}
      \frac{W_{v'}}{n\vartheta} J_{E}^{2/3},\\
    c(v,v)
    &= \sigma_{n}^{-4} CJ_{V}^{2/3} \zeta_{n}(v),
    &\quad&&
    c(e,v)
    &= \sigma_{n}^{-4} C \frac{W_{u}W_{v}}{n\vartheta}
      J_{E}^{1/3} J_{V}^{1/3} \zeta_{n}(v)^{1/2}.
  \end{alignat*}
\end{lemma}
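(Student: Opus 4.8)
The plan is to bound each of the four "diagonal" covariances by the straightforward route of Cauchy--Schwarz, using only the crude pointwise bounds \eqref{eq:deltahbound:e} and \eqref{eq:deltahbound:v} from \autoref{ass:simpbounds}, together with the moment assumptions \eqref{eq:jbound:e}, \eqref{eq:jbound:v} and \eqref{eq:nubound} and the degree moment estimates from \autoref{lem:higher:deg}. Since $\var_n(f(\mathbf{G}_n)) = \sigma_n^2$, the quantity $c(x,x')$ is meant to dominate $\sigma_n^{-4}\cov_n(\Delta_x f\,\Delta_x f^{F}, \Delta_{x'} f\,\Delta_{x'}f^{F'})$, and since every covariance is at most the product of the square roots of the variances (or just the product of the $L^2$ norms of the two factors, bounding $\cov_n(A,B) \le \expe_n[A^2]^{1/2}\expe_n[B^2]^{1/2}$), it suffices to produce an $\mathcal{F}_n$-measurable bound for $\expe_n[(\Delta_x f\,\Delta_x f^{F})^2]$ that is uniform in $F$.

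For $c(e,e)$: by \eqref{eq:deltahbound:e}, $|\Delta_e f| \le \indfunc_{\set{\max\set{X_e,X'_e}=1}} H_E(w_e,w'_e,w_v,w_u)$, and the same bound holds for $|\Delta_e f^{F}|$ with $X^F, w^F$ in place of $X, w$ (the indicator becomes $\indfunc_{\set{\max\set{X^F_e,X'_e}=1}}$, but in any case $\max\set{X_e,X'_e,X^F_e} = 1$ is needed on the intersection of supports, so the product of the two indicators is dominated by $\indfunc_{\set{\max\set{X_e,X'_e}=1}}$ up to relabelling, giving a factor $\prob_n(X_e=1) + \prob_n(X'_e=1) \le 2 W_u W_v/(n\vartheta)$). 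Then $\expe_n[(\Delta_e f\,\Delta_e f^{F})^2] \le \prob_n(\max\set{X_e,X'_e}=1)\cdot\expe_n[H_E^4]$, and by Hölder applied with exponents $3/2$ and $3$ to split $\expe_n[H_E^4]$ against the sixth-moment bound $J_E$, one gets $\expe_n[H_E^4] \le J_E^{2/3}$. Multiplying by $\sigma_n^{-4}$ and the edge probability gives $c(e,e) = \sigma_n^{-4} C J_E^{2/3} W_u W_v/(n\vartheta)$ as claimed (here $C$ absorbs the factor $2$). The bound for $c(e,e')$ (two edges sharing the vertex $u$) is analogous: the product of indicators forces both $X_e$ (or $X'_e$) and $X_{e'}$ (or $X'_{e'}$) to be relevant, and since $e,e'$ are distinct these indicators are independent given $\mathcal{F}_n$, producing the factor $(W_v/(n\vartheta))(W_{v'}/(n\vartheta))$ times $W_u^2$; the weight moments again contribute $J_E^{2/3}$ after Hölder.

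For $c(v,v)$: by \eqref{eq:deltahbound:v}, $|\Delta_v f| \le h(|D_1(v)|)\,H_V(w_v,w'_v)$ and likewise $|\Delta_v f^{F}| \le h(|D_1^{F}(v)|)\,H_V(w^F_v,w'_v)$; crucially $|D_1^{F}(v)|$ differs from $|D_1(v)|$ by changes only at edges in $F$ incident to $v$, so $|D_1^{F}(v)| \le |D_1(v)| + |D_1'(v)|$ where $D_1'$ refers to the resampled indicators — this is where the "$+4$" slack in \eqref{eq:nubound} comes in, and one bounds $h(|D_1^{F}(v)|) \le h(|D_1(v)| + 4)$ up to the usual relabelling by monotonicity of $h$ (a slight care point: one may need $h(|D_1(v)|) h(|D_1^F(v)|) \le h(|D_1(v)|+4)^2$, using $h$ non-decreasing, which is fine). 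Independence of edges from vertex weights splits the expectation into $\expe_n[h(|D_1(v)|+4)^4]^{1/2}\cdot\expe_n[H_V^4]^{1/2} \le \zeta_n(v)^{1/2}\cdot J_V^{1/3}$ — wait, to match the stated form $c(v,v) = \sigma_n^{-4} C J_V^{2/3}\zeta_n(v)$ one instead uses $\expe_n[(\Delta_v f\,\Delta_v f^F)^2] \le \expe_n[h(|D_1(v)|+4)^4]\cdot\expe_n[H_V(w_v,w'_v)^2 H_V(w^F_v,w'_v)^2]$ and bounds the latter vertex-weight factor by $J_V^{2/3}$ via Hölder against $J_V$, and the former by $\zeta_n(v)$ directly. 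The remaining $c(e,v)$ is a genuine product of two different derivative squares: Cauchy--Schwarz gives $\cov_n \le \expe_n[(\Delta_e f\,\Delta_e f^F)^2]^{1/2}\expe_n[(\Delta_v f\,\Delta_v f^{F'})^2]^{1/2}$, and substituting the two bounds just obtained yields $\sigma_n^{-4}C (W_u W_v/(n\vartheta))^{1/2} (J_E^{2/3})^{1/2}\cdot \zeta_n(v)^{1/2}(J_V^{2/3})^{1/2}$; the square root of the edge probability is then crudely bounded by itself times a constant only if $W_u W_v/(n\vartheta) \le 1$, so one should instead keep $(W_u W_v/(n\vartheta))^{1/2}$ — but the stated bound has $W_u W_v/(n\vartheta)$ without a square root, so presumably one exploits that $\Delta_e f = 0$ unless $\max\set{X_e,X'_e}=1$, making $\expe_n[(\Delta_e f \Delta_e f^F)^2]$ itself already carry a full factor $W_u W_v/(n\vartheta)$ and hence its square root carries $(W_u W_v/(n\vartheta))^{1/2}$; the discrepancy is harmless since in the final sum over all $x,x'$ the dominant contribution is elsewhere, but to be safe I would verify that the claimed exponents are exactly those that make $\sum_{x,x'} c(x,x')$ converge at the rate in \autoref{thm:fgnconv}, possibly re-deriving $c(e,v)$ with $J_E^{1/3}J_V^{1/3}\zeta_n(v)^{1/2}W_uW_v/(n\vartheta)$ by a direct argument rather than Cauchy--Schwarz across the two types. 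The main obstacle is precisely this bookkeeping of exponents on $J_E, J_V, \zeta_n$: one must choose the Hölder splits so that the exponents add up to exactly what is claimed and so that the subsequent sum over $O(n^2)$ edge pairs and $O(n)$ vertices has the right order; the analytic content (Cauchy--Schwarz, Hölder, the pointwise bounds, the degree moment lemma) is entirely routine.
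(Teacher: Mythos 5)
Your approach is the right one and, for $c(e,e)$ and $c(v,v)$, matches the paper's: crude pointwise bounds from Assumption~\ref{ass:simpbounds}, independence of edge indicators from weights, and H\"older against the sixth moments to produce the $J_E^{2/3}$, $J_V^{2/3}$ factors. But your \emph{stated} general framework --- bounding $\cov_n(A,B)\le\expe_n[A^2]^{1/2}\expe_n[B^2]^{1/2}$ uniformly in $F$ --- cannot deliver the claimed exponents for the two cross terms, and you have only half-diagnosed this. For $c(e,v)$ you correctly notice that splitting across the two $L^2$ norms leaves $(W_uW_v/(n\vartheta))^{1/2}$ rather than $W_uW_v/(n\vartheta)$; the same loss occurs for $c(e,e')$, where your framework would give $(W_uW_vW_uW_{v'})^{1/2}/(n\vartheta)$ instead of the claimed product of the two edge probabilities --- your parenthetical ``independent indicators produce the factor $(W_v/(n\vartheta))(W_{v'}/(n\vartheta))$'' is in fact a different argument from the one your framework sets up. The repair you tentatively propose is exactly what the paper does, and it is not optional: one writes $\abs{\cov_n(A,B)}\le\expe_n[\abs{AB}]+\expe_n[\abs{A}]\expe_n[\abs{B}]$ and bounds each term directly, so that every indicator $\indfunc_{\set{\max\set{X_e,X'_e}=1}}$ appears to the \emph{first} power inside a single expectation and factors out as the full probability $\le 2W_uW_v/(n\vartheta)$, with Cauchy--Schwarz/H\"older reserved for the weight and degree factors only.

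One further technical point you gloss over in the $c(e,v)$ case: the degree factor $h(\setcard{D_1(v)})$ bounding $\abs{\Delta_v f}$ is \emph{not} independent of the indicator $\indfunc_{\set{\max\set{X_e,X'_e}=1}}$, since $\setcard{D_1(v)}$ contains $X_{uv}=X_e$. Before factoring the expectation you must replace $\setcard{D_1(v)}$ by $\setcard{D_1^{(u)}(v)}+1$ (Definition~\ref{def:dignore}), which ignores the edge $e$ and is therefore independent of $X_e,X'_e$; this is one of the adjustments the $+4$ slack in \eqref{eq:nubound} is designed to absorb, alongside the perturbations from $F$ and $F'$ that you do discuss. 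With the direct bound on $\expe_n[\abs{AB}]$ and this independence fix, your argument reproduces the paper's proof.
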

\begin{proof}
  We will present the proof only for~\(c(e,v)\),
  the other proofs are similar
  \citep[detailed proofs can be found in][Lems.~4.2.3\,--\,4.2.6]{thesis}.
  It is enough to verify
  \[
    \cov_{n}(\Delta_{e}f\Delta_{e}f^{F},\Delta_{v}f\Delta_{v}f^{F'})
    \leq C \frac{W_{u}W_{v}}{n\vartheta}
      J_{E}^{1/3} J_{V}^{1/3} \zeta_{n}(v)^{1/2}.
  \]
  The claim then follows.

  By the bounds~\eqref{eq:deltahbound:e} and~\eqref{eq:deltahbound:v}
  for~\(\Delta_{e}f\) and~\(\Delta_{v} f\) we have
  \begin{align*}
    \expe_{n}[\abs{
      \Delta_{e}f \Delta_{e}f^{F} \Delta_{v}f \Delta_{v}f^{F'}}]
    &\leq\expe_{n}[\abs{
      \indfunc_{\set{\max\set{X_{e},X'_{e}}=1}}
      H_{E}(w_{e},w'_{e},w_{v},w_{u})
      H_{E}(w_{e},w'_{e},w^{F}_{v},w^{F}_{u})\\
     &\quad\qquad  \vdegbound H_{V}(w_{v},w'_{v})
       \vdegbound[F'] H_{V}(w_{v},w^{\prime}_{v})}].
  \end{align*}
  Recall \autoref{def:dignore}
  and estimate~\(\vdegbound\) against~\(h(\setcard{D_{1}^{(u)}(v)+1})\),
  which ignores the edge~\(e=\edge{u}{v}\),
  and similarly for~\(\vdegbound[F']\).
  Collect terms,
  then separate independent terms.
  With~\eqref{eq:jbound:e}, \eqref{eq:jbound:v}, \eqref{eq:nubound}
  and judicious application of Cauchy--Schwarz
  we obtain the bound
  \[
    \expe_{n}[\abs{
      \Delta_{e}f \Delta_{e}f^{F} \Delta_{v}f \Delta_{v}f^{F'}}]
    \leq C \frac{W_{v}W_{u}}{n\vartheta} \zeta_{n}(v)^{1/2}
    J_{E}^{1/3} J_{V}^{1/3}.
  \]
  Similarly we have
  \begin{equation*}
    \expe_{n}[\abs{\Delta_{e}f \Delta_{e}f^{F}}]
    \leq  2 \frac{W_{u}W_{v}}{n\vartheta} J_{E}^{1/3}.
  \end{equation*}
  and
  \begin{equation*}
    \expe_{n}[\abs{\Delta_{v}f \Delta_{v}f^{F}}]
    \leq  J_{V}^{1/3} \zeta_{n}(v)^{1/2}.
  \end{equation*}

  Putting these bounds together we obtain
  \begin{equation*}
    \cov_{n}(\Delta_{e}f\Delta_{e}f^{F},\Delta_{v'}f\Delta_{v'}f^{F'})
     \leq C \frac{W_{u}W_{v}}{n\vartheta}
      J_{E}^{1/3} J_{V}^{1/3} \zeta_{n}(v)^{1/2}.
  \end{equation*}
  This proves the claim.
\end{proof}

We will now find bounds for~\(c\)
for the remaining cases in which the vertices and edges
that are involved are not incident to each other.
The mainly Cauchy--Schwarz-based approach of the previous
proofs would not give the desired convergence rates here.
We rely on property~GLA (\autoref{def:gla})
and the coupling to the limiting Galton--Watson tree
(\autoref{def:limtree:weights})
to bound the effect of a local change on the function
by a local quantity.
Property GLA and the coupling to the limiting tree
structure ensure that the approximation error
of using the local quantity goes to zero.
The sparsity of the underlying graph ensures that
local quantities are only very weakly correlated
(cf.~\autoref{lem:covgbgpbp}).

In our calculations we will use the following
multi-purpose error term that absorbs
the probability of various additional coupling
events and the covariance bound
for local neighbourhoods.
\begin{definition}\label{def:rho}
  Let~\(n,k \in \naturals\).
  For any set of vertices~\(\mathcal{V} \subseteq V_{n}\)
  set
  \[
    \rho_{n,k}(\mathcal{V})
    = \min\set[\bigg]{
        \frac{(\setweight{\mathcal{V}}+\setcard{V})^{2}}{n\vartheta}
        (\Gamma_{1,n}+1)^{2}(\Gamma_{2,n}+C)^{2k+1}(\Gamma_{3,n}+1)^{2},1
    }.
  \]
  Recall the definition of~\(\varepsilon_{n,k}(\mathcal{V})\)
  from \autoref{lem:nbhdcoupl:noweights} and \autoref{lem:bvbucoupl}
  \begin{align*}
      \varepsilon_{n,k}(\mathcal{V})
    &= \setweight{\mathcal{V}}_{2} \frac{\Gamma_{2,n}}{n\vartheta}
    +\setweight{\mathcal{V}}_{+}\Gamma_{1,n}
    +\setweight{\mathcal{V}}(\Gamma_{2,n}+1)^{k}
    \biggl(
      \frac{\Gamma_{3,n}}{n\vartheta}+\kappa_{1,n}+\kappa_{2,n}+
      \frac{2+\Gamma_{1,n}}{k_{n}}+\frac{k_{n}}{n\vartheta}\biggr)\\
  &\quad + \setcard{\mathcal{V}}\frac{1}{k_{n}}
    +\frac{k_{n}^{2}}{n\vartheta\Gamma_{1,n}}
    +\setweight{\mathcal{V}}\alpha_{n}
    \Bigl(
      \frac{1}{\vartheta}
      + (\Gamma_{2}+1)^{k-1}
        \Bigl(\frac{\Gamma_{2,n}}{\vartheta\Gamma_{1,n}}+1\Bigr)
    \Bigr)\\
    &\quad+
       (\setcard{\mathcal{V}}+\setweight{\mathcal{V}}(\Gamma_{2}+1)^{k})
    (d_{\mathrm{TV}}(\mu_{E,n},\mu_{E})
    + d_{\mathrm{TV}}(\mu_{V,n},\mu_{V})
    ).
  \end{align*}
\end{definition}

The remainder of this subsection is dedicated to establishing the
following proposition.
\begin{prop}\label{lem:ceebd:alldiff}
  Let~\(u,v,u',v' \in V_{n}\) be four distinct vertices
  and set~\(e = \edge{u}{v}\) and~\(e' = \edge{u'}{v'}\).
  Then we may take
  \begin{align*}
    \begin{split}
    c(e,e')
    &= \sigma_{n}^{-4} CJ_{E} \frac{W_{u}W_{v}}{n\vartheta}
    \frac{W_{u'}W_{v'}}{n\vartheta}
    (
      (m^{E}_{n}(v,u)\delta_{k}^{E})^{1/2}+
      (m^{E}_{n}(v',u')\delta_{k}^{E})^{1/2}\\
    &\qqquad +\varepsilon_{n,k}(\set{u,v,u',v'})^{1/4}
       +\rho_{n,k}(\set{u,v,u',v'})^{1/4}
    ),
  \end{split}\\
    c(v,v') &= \sigma_{n}^{-4}  CJ_{V} \zeta_{n}(v)\zeta_{n}(v')
    ((m^{V}_{n}(v)\delta_{k}^{V})^{1/2}+(m^{V}_{n}(v')\delta_{k}^{V})^{1/2}+
   \varepsilon_{n,k}(\set{v,v'})^{1/4}
              +\rho_{n,k}(\set{v,v'})^{1/4})\\
    \shortintertext{and}
    \begin{split}
      c(e,v') &= \sigma_{n}^{-4} CJ_{E}J_{V} \frac{W_{u}W_{v}}{n\vartheta}
      \zeta_{n}(v')^{1/2}
      ((m_{n}^{E}(v,u)\delta_{k}^{E})^{1/2}+(m_{n}^{V}(v')\delta_{k}^{V})^{1/2}\\
   &\qqquad +\varepsilon_{n,k}(\set{u,v,v'})^{1/4}
    +\rho_{n,k}(\set{u,v,v'})^{1/4}).
  \end{split}
\end{align*}
\end{prop}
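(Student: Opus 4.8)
The plan is to prove \autoref{lem:ceebd:alldiff} by establishing, for each of the three cases, a bound of the form
\[
  \cov_{n}(\Delta_{x}f\,\Delta_{x}f^{F}, \Delta_{x'}f\,\Delta_{x'}f^{F'})
  \leq (\text{the claimed }\sigma_{n}^{4}c(x,x')),
\]
uniformly over admissible perturbation sets $F, F'$. I will carry out the details only for the edge--edge case $c(e,e')$; the vertex--vertex and edge--vertex cases follow by the same scheme with the obvious substitutions (using \ref{itm:gla:v:delta}--\ref{itm:gla:v:conv} in place of \ref{itm:gla:e:delta}--\ref{itm:gla:e:conv}, $\zeta_{n}$ in place of $W_{u}W_{v}/(n\vartheta)$, and \autoref{lem:bkvindepofep} for the mixed case). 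The first step is to fix $k$ and localise: on the event where the edge $e$ is actually present or resampled present (i.e.\ $\max\{X_{e},X_{e}'\}=1$) and the neighbourhood $B_{k}(v,\mathbf{G}_{n})$ and its perturbation $B_{k}(v,\mathbf{G}_{n}^{e})$ are trees, property~GLA \ref{itm:gla:e:delta} sandwiches $\Delta_{e}f$ between $\mathrm{LA}^{E,L}_{k}(B_{k},B^{e}_{k})$ and $\mathrm{LA}^{E,U}_{k}(B_{k},B^{e}_{k})$. I will write $\Delta_{e}f = \mathrm{LA}^{E,L}_{k} + R_{e}$ where the remainder $R_{e}$ satisfies $0 \le R_{e} \le \mathrm{LA}^{E,U}_{k} - \mathrm{LA}^{E,L}_{k}$ on this good event, and is controlled by \eqref{eq:deltahbound:e} (bounded by $\indfunc_{\{\max\{X_{e},X_{e}'\}=1\}}H_{E}$) off it. The analogous decomposition holds for $\Delta_{e}f^{F}$, $\Delta_{e'}f$, $\Delta_{e'}f^{F'}$, each localised around its own endpoint.

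The second step is a covariance decomposition. Using bilinearity, $\cov_{n}(\Delta_{e}f\,\Delta_{e}f^{F}, \Delta_{e'}f\,\Delta_{e'}f^{F'})$ splits into a \emph{main term} involving only the localised pieces $\mathrm{LA}^{E,L}_{k}$ (and the indicator of the good event) at $e$ against those at $e'$, plus \emph{error terms} each containing at least one factor of a remainder $R$ or the indicator of the bad event. For the main term: since the localised quantities depend only on $(B_{k}(v), B^{e}_{k}(v))$ and $(B_{k}(v'), B^{e'}_{k}(v'))$ — after accounting for the presence/absence of $e$ resp.\ $e'$ via conditioning on $Y_{e}$, $Y_{e'}$ — \ref{itm:gla:e:tree} lets me replace these neighbourhoods by the coupled limiting trees (at cost $\varepsilon_{n,k}$, via \autoref{lem:bvbucoupl}, and using \autoref{lem:couplegngne}/\autoref{lem:couplegngn} to handle the joint law of $(B_{k}(v), B^{e}_{k}(v))$ given $Y_{e}$), on which the neighbourhoods around $v$ and $v'$ are conditionally independent up to the covariance bound $\rho_{n,k}$ from \autoref{lem:covgbgpbp}. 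This makes the main term a covariance of bounded functions of two essentially independent trees, giving the $\varepsilon_{n,k}^{1/4}$ and $\rho_{n,k}^{1/4}$ contributions after a generous Cauchy--Schwarz; the $\tfrac{W_{u}W_{v}}{n\vartheta}\tfrac{W_{u'}W_{v'}}{n\vartheta}$ prefactor comes from the two indicators $\prob_{n}(\max\{X_{e},X_{e}'\}=1)\le 2p'_{uv}$ and similarly for $e'$, and $J_{E}$ from the sixth-moment bound \eqref{eq:jbound:e}.

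The third step bounds the error terms. Each such term, by Cauchy--Schwarz, is a product of an $L^{2}$-norm of a remainder (or bad-event indicator) with an $L^{2}$-norm of a bounded-times-$H_{E}$ factor. The remainder $L^{2}$-norms are exactly what \ref{itm:gla:e:conv} controls: after transferring to the limiting tree $\mathbf{\tilde{T}}_{k}(v,u)$ (again at cost $\varepsilon_{n,k}$ and the not-a-tree probability from \autoref{lem:probnottree:eep}, both of which I fold into the final $\varepsilon_{n,k}^{1/4}$), $\expe_{n}[R_{e}^{2}] \le m^{E}_{n}(v,u)\delta^{E}_{k}$, and the off-good-event contributions are bounded by $\indfunc_{\{\max\{X_{e},X_{e}'\}=1\}}H_{E}$ times the relevant coupling/not-a-tree probabilities. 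Collecting everything, multiplying through by $\sigma_{n}^{-4}$, and absorbing constants yields exactly the stated $c(e,e')$. \textbf{The main obstacle} is the bookkeeping around conditioning on $Y_{e}$ and $Y_{e'}$: the localised quantity $\Delta_{e}f$ is not a function of a single tree but of the \emph{pair} $(B_{k}(v,\mathbf{G}_{n}), B_{k}(v,\mathbf{G}_{n}^{e}))$, whose joint conditional law given $Y_{e}=(1,0)$ or $(0,1)$ is $(\mathbf{\tilde{T}}_{k},\mathbf{T}_{k})$ resp.\ $(\mathbf{T}_{k},\mathbf{\tilde{T}}_{k})$ by \autoref{lem:couplegngne}; threading this through the covariance decomposition while keeping the independence-of-$e'$ structure intact (so that \autoref{lem:covgbgpbp} applies) is the delicate part. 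Everything else is routine Cauchy--Schwarz estimation of the kind already carried out in \autoref{lem:ceebd:same:all}.
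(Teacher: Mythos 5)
Your proposal follows essentially the same route as the paper: decompose each randomised derivative into a truncated local approximation plus a remainder, expand the covariance into terms containing at least one remainder (bounded via \ref{itm:gla:e:delta}/\ref{itm:gla:e:conv} after coupling to the limiting trees with \autoref{lem:couplegngn} and controlling the not-a-tree event with \autoref{lem:probnottree:eep}) and a single pure-local-approximation term (bounded via the neighbourhood decorrelation of \autoref{lem:covgbgpbp}), which is exactly the paper's sixteen-term expansion. The only minor deviation is that for the main term the paper stays in the graph, conditioning on the stars $S_{e}$, $S_{v'}$ and building independent approximations by deleting the other site's vertices before invoking \autoref{lem:covgbgpbp}, rather than passing through the limiting-tree coupling, but this does not change the substance or the resulting bound.
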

We will only prove the result for~\(c(e,v')\) in detail here.
The proof for~\(c(e,e')\) and~\(c(v,v')\)
follow the same general structure
\citep[see][Prop.~4.3.2 in \S\,4.3.1 and Prop.~4.3.7 in \S\,4.3.2]{thesis}.

In order to make use of the coupling between
the local neighbourhood and our limiting tree,
we need to introduce some notation.

Let~\(E_{0}(e)\) be the event that both~\(B_{k}(v,\mathbf{G}_{n})\)
and~\(B_{k}(v,\mathbf{G}_{n}^{e})\) are trees
and let
\[
  A_{e} = \set{(X_{e},X'_{e})=(1,0)\text{ or }(X_{e},X'_{e})=(0,1)}
\]
be the event that~\(e\) is switched by going from~\(X_{e}\) to~\(X'_{e}\).
Let~\(E_{0}(v')\) be the event that~\(B_{k}(v',\mathbf{G}_{n})\)
and thus also~\(B_{k}(v',\mathbf{G}_{n}^{v'})\) is a tree.
Define
\[
  \tilde{L}_{k}^{E}(e)
  = \indfunc_{E_{0}(e)} \indfunc_{A_{e}}
  \mathrm{LA}_{k}^{E,L}(
    B_{k}(v,\mathbf{G}_{n}), B_{k}(v,\mathbf{G}_{n}^{e})
  )
  \;\text{and}\;
  \tilde{L}_{k}^{V}(v')
  = \indfunc_{E_{0}(v')}
  \mathrm{LA}_{k}^{V,L}(B_{k}(v',\mathbf{G}_{n}),B_{k}(v',\mathbf{G}_{n}^{v}))
\]
Let~\(Q(\placeholder, y) \colon \reals \to \intervalcc{-y}{y}\)
be the function truncating~\(x\) to level~\(y \geq 0\), i.e.~\(
  Q(x,y)
  = \max\set{\min\set{x,y},-y}\).
For brevity let
\begin{equation*}
  \tilde{H}_{E}(e) = H_{E}(w_{e},w'_{e},w_{v},w_{u}),
  \quad
  \tilde{H}_{E}(F \setunion e) = H_{E}(w_{e},w'_{e},w^{F}_{v},w^{F}_{u}).
\end{equation*}
Then define a truncated version of~\(\tilde{L}_{k}^{E}(e)\)
and~\(\tilde{L}_{k}^{V}(v')\)
\[
  L_{k}^{E}(e) = Q(\tilde{L}_{k}^{E}(e),\tilde{H}_{E}(e))
  \quad\text{and}\quad
  L_{k}^{V}(v') = Q(\tilde{L}_{k}^{V}(v'),\vdegbound H_{V}(w_{v},w'_{v}))
\]
This will be our local approximation of~\(\Delta_{e}f\) and~\(\Delta_{v'}f\),
respectively, (hence~\enquote{\(L\)}).
The construction ensures
\begin{equation}
  \label{eq:lbound}
  \abs{L_{k}^{E}(e)} \leq \indfunc_{A_{e}} \tilde{H}_{E}(e)
  \quad\text{and}\quad
  \abs{L_{k}^{V}(v')} \leq \vdegbound \tilde{H}_{V}(w_{v},w'_{v})
\end{equation}
Let~\(R_{k}^{E}(e)\) be the difference of~\(L_{k}^{E}(e)\) to~\(\Delta_{e}f\)
and~\(R_{k}^{V}(v')\) be the difference of~\(L_{k}^{V}(v')\) to~\(\Delta_{v'}f\)
(it is the remainder to~\(\Delta_{e}f\), hence~\enquote{\(R\)}),
i.e.~set
\[
  R_{k}^{E}(e) = \Delta_{e}f - L_{k}^{E}(e)
  \quad\text{and}\quad
  R_{k}^{V}(v') = \Delta_{v'}f - L_{k}^{V}(v').
\]
Let~\(\tilde{A}_{e} = \set{\max\set{X_{e},X'_{e}}=1}\),
so that~\(\tilde{A}_{e} = A_{e} \setunion \set{X_{e}=X'_{e}=1}\).
Then by~\eqref{eq:deltahbound:e}, \eqref{eq:deltahbound:v} and~\eqref{eq:lbound}
\begin{equation}
  \label{eq:rbound}
    \abs{R_{k}^{E}(e)}
    \leq 2 \indfunc_{\tilde{A}_{e}} \tilde{H}_{E}(e)
    \quad\text{and}\quad
    \abs{R_{k}^{V}(v')}
    \leq 2 \vdegbound \tilde{H}_{V}(w_{v},w'_{v}).
\end{equation}

For~\(F \subseteq (V_{n}^{(2)} \setunion V_{n}) \setminus \set{e}\)
define~\(L_{k}^{E}(F \setunion e)\)
with~\(B_{k}(v,\mathbf{G}_{n}^{F})\) and~\(B_{k}(v,\mathbf{G}_{n}^{F \setunion e})\)
instead of~\(B_{k}(v,\mathbf{G}_{n})\) and~\(B_{k}(v,\mathbf{G}_{n}^{e})\).
Analogous to~\(R_{k}^{E}(e)\) define the remainder~\(R_{k}^{E}(F \setunion e)\)
\[
  R_{k}^{E}(F \setunion e) = \Delta_{e}f^{F} - L_{k}^{E}(F \setunion e).
\]
For~\(F' \subseteq (V_{n}^{(2)} \setunion V_{n}) \setminus \set{v'}\)
we also define~\(L_{k}^{V}(F' \setunion v')\)
and~\(R_{k}^{V}(F' \setunion v')\)
based on~\(B_{k}(v',\mathbf{G}^{F})\)
and~\(B_{k}(v',\mathbf{G}_{n}^{F' \setunion v'})\)
instead of~\(B_{k}(v',\mathbf{G})\) and~\(B_{k}(v',\mathbf{G}_{n}^{v})\).

With these definitions we want to bound
\begin{equation}
  \label{eq:covrl:ev}
  \begin{split}
    &\cov_{n}(\Delta_{e}f\Delta_{e}f^{F}, \Delta_{v'}f\Delta_{v'}f^{F'})\\
    &\quad=\cov_{n}(
      (R_{k}^{E}(e)+L_{k}^{E}(e))
      (R_{k}^{E}(F \setunion  e)+L_{k}^{E}(F \setunion e)),\\
    &\qqquad\qquad  (R_{k}^{V}(v')+L_{k}^{V}(v'))
      (R_{k}^{V}(F' \setunion v')+L_{k}^{V}(F' \setunion v'))
    ).
  \end{split}
\end{equation}
Expand the expression on the right into sixteen terms of the form
\[
  \cov_{n}(U_{1}^{E}(e)U_{2}^{E}(F \setunion e),
            U_{3}^{V}(v')U_{4}^{V}(F' \setunion v')),
\]
where~\(U^{E}_{i}\) for~\(i \in \set{1,2}\)
may be either~\(L^{E}_{k}\) or~\(R^{E}_{k}\)
and~\(U^{V}_{i}\) for~\(i \in \set{3,4}\)
may be either~\(L^{V}_{k}\) or~\(R^{V}_{k}\)

Recall that~\(R^{E}_{k}\) corresponds to the difference of~\(\Delta_{e}f\)
and~\(L^{E}_{k}\).
In other words,~\(R^{E}_{k}\) is the error of
approximating~\(\Delta_{e}f\) locally.
Hence, the covariances involving at least one~\(R^{E}_{k}\)
term can be bounded by coupling the neighbourhood to
the limiting Galton--Watson tree
and appealing to \ref{itm:gla:e:conv}.
\begin{lemma}\label{lem:covr:ev:ru}
  \[
    \begin{split}
    &\cov_{n}(U_{1}^{E}(e)U_{2}^{E}(F \setunion e),
         U_{3}^{V}(v')U_{4}^{V}(F' \setunion  v'))\\
    &\quad\leq CJ_{E}J_{V} \frac{W_{u}W_{v}}{n\vartheta} \zeta_{n}(v')^{1/2}
    ((m_{n}^{E}(v,u)\delta_{k}^{E})^{1/2}+\varepsilon_{n,k}(\set{u,v,v'})^{1/4}
    +\rho_{n,k}(\set{u,v,v'})^{1/4})
  \end{split}
  \]
  if~\(U^{E}_{1}\) or~\(U^{E}_{2}\) is an~\(R^{E}_{k}\)-term.
\end{lemma}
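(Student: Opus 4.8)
The plan is to reduce the covariance to an $L^{1}$ estimate and then exploit that one of the two factors on the $e$-side is a local-approximation \emph{remainder} $R^{E}_{k}$, which is negligible once the $k$-neighbourhood of $v$ has been coupled to a limiting Galton--Watson tree. Concretely, I would start from $\abs{\cov_{n}(AB,CD)} \leq \expe_{n}[\abs{AB\,CD}] + \expe_{n}[\abs{AB}]\,\expe_{n}[\abs{CD}]$ with $A=U^{E}_{1}(e)$, $B=U^{E}_{2}(F\setunion e)$, $C=U^{V}_{3}(v')$, $D=U^{V}_{4}(F'\setunion v')$, where by hypothesis (and symmetry in $A\leftrightarrow B$) we may assume $A=R^{E}_{k}(e)$. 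For the factors that are not the distinguished remainder I would use the uniform pointwise bounds \eqref{eq:lbound} and \eqref{eq:rbound}: $\abs{B}\leq 2\indfunc_{\tilde{A}_{e}}\tilde{H}_{E}(e)$ and $\abs{C},\abs{D}\leq 2\,h(\setcard{D_{1}(v')})\,H_{V}(w_{v'},w'_{v'})$, so that, after Cauchy--Schwarz, the $v'$-side contributes a bounded moment factor $\expe_{n}[h(\setcard{D_{1}(v')})^{4}]^{1/2}\leq\zeta_{n}(v')^{1/2}$ via \eqref{eq:nubound} together with powers of $J_{V}$ via \eqref{eq:jbound:v}, and $\tilde{H}_{E}(e)$ contributes powers of $J_{E}$ via \eqref{eq:jbound:e}. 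Since $\indfunc_{\tilde{A}_{e}}$ depends only on the single pair $Y_{e}=(X_{e},X'_{e})$, it is (after the coupling below) independent of the tree quantities, and this is what produces the factor $W_{u}W_{v}/(n\vartheta)$ at \emph{first} order, using $\prob_{n}(\tilde{A}_{e})\leq 2W_{u}W_{v}/(n\vartheta)$.

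Next I would split on the \emph{good} event $E$ on which $B_{k}(v,\mathbf{G}_{n})$, $B_{k}(v,\mathbf{G}_{n}^{e})$ and $B_{k}(v',\mathbf{G}_{n})$ are all trees and the couplings of \autoref{lem:couplegngn} (for the flip of $e$) and \autoref{lem:bvbucoupl}, \autoref{lem:bkvindepofep} succeed; on $A_{e}\cap E$ this makes $(B_{k}(v,\mathbf{G}_{n}),B_{k}(v,\mathbf{G}_{n}^{e}))$ isomorphic to $(\mathbf{\tilde{T}}_{k},\mathbf{T}_{k})$ or $(\mathbf{T}_{k},\mathbf{\tilde{T}}_{k})$ with $\mathbf{\tilde{T}}_{k}$ constructed from $(\mathbf{T}_{k}(v),\mathbf{T}_{k-1}(u),\treeroot,\treeroot',w)$ as in \autoref{def:ttilde}, and independent of $Y_{e}$. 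On $E$, \ref{itm:gla:e:delta} gives $\mathrm{LA}^{E,L}_{k}\leq\Delta_{e}f\leq\mathrm{LA}^{E,U}_{k}$, while $L^{E}_{k}(e)$ is the $\tilde{H}_{E}(e)$-truncation of $\mathrm{LA}^{E,L}_{k}$ and $\abs{\Delta_{e}f}\leq\indfunc_{\tilde{A}_{e}}\tilde{H}_{E}(e)$ by \eqref{eq:deltahbound:e}, so the truncation leaves $\Delta_{e}f$ unchanged and only moves $\mathrm{LA}^{E,L}_{k}$ towards it; hence $\abs{R^{E}_{k}(e)}\leq\indfunc_{\tilde{A}_{e}}\abs{\mathrm{LA}^{E,U}_{k}-\mathrm{LA}^{E,L}_{k}}$ evaluated on the coupled pair, which by \ref{itm:gla:e:tree} depends only on the isomorphism type. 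Combining this with the edge-indicator decoupling and \ref{itm:gla:e:conv} (which bounds the expected square of $\mathrm{LA}^{E,U}_{k}-\mathrm{LA}^{E,L}_{k}$ on $\mathbf{\tilde{T}}_{k}(v,u)$ versus $\mathbf{T}_{k}(v)$ --- in \emph{both} orders --- by $m^{E}_{n}(v,u)\delta^{E}_{k}$) yields $\expe_{n}[\indfunc_{E}R^{E}_{k}(e)^{2}]\leq 2\tfrac{W_{u}W_{v}}{n\vartheta}\,m^{E}_{n}(v,u)\delta^{E}_{k}$ (up to a further, smaller term coming from the event $X_{e}=X'_{e}=1$). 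Feeding this into the Cauchy--Schwarz estimates for $\expe_{n}[\abs{AB\,CD}]$ and $\expe_{n}[\abs{AB}]\,\expe_{n}[\abs{CD}]$ produces a contribution of the shape $CJ_{E}J_{V}\tfrac{W_{u}W_{v}}{n\vartheta}\zeta_{n}(v')^{1/2}\bigl(m^{E}_{n}(v,u)\delta^{E}_{k}\bigr)^{1/2}$.

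On the \emph{bad} event $E^{c}$ I would discard all cancellation and use only \eqref{eq:rbound}, \eqref{eq:lbound}, \eqref{eq:deltahbound:e}, \eqref{eq:deltahbound:v}, so that $\abs{AB\,CD}$ and $\abs{AB}$ are dominated by $C\indfunc_{\tilde{A}_{e}}\tilde{H}_{E}(e)^{2}h(\setcard{D_{1}(v')})^{2}H_{V}(w_{v'},w'_{v'})^{2}$; repeated Cauchy--Schwarz and Hölder against $\indfunc_{E^{c}}$ then leave a factor $\prob_{n}(E^{c}\mid\tilde{A}_{e})$ times bounded moment factors ($J_{E}$, $J_{V}$, $\zeta_{n}(v')^{1/2}$) and the decoupled $\prob_{n}(\tilde{A}_{e})\leq 2W_{u}W_{v}/(n\vartheta)$. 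The conditional probability $\prob_{n}(E^{c}\mid\tilde{A}_{e})$ is estimated by summing the not-a-tree bounds of \autoref{lem:probnottree:eep} (for $B_{k}(v,\cdot)$ given $X_{e}$) and \autoref{lem:probnottree} (for $B_{k}(v',\cdot)$) with the coupling-failure bounds of \autoref{lem:couplegngn}, \autoref{lem:bkvindepofep} and \autoref{lem:bvbucoupl}; generous estimation shows all of these are dominated by $\varepsilon_{n,k}(\set{u,v,v'})+\rho_{n,k}(\set{u,v,v'})$ from \autoref{def:rho} (the extra $O(W_{u}W_{v}/(n\vartheta))$-type terms arising from conditioning on $X_{e}$ are themselves already of that form, hence absorbed). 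Since $\varepsilon_{n,k},\rho_{n,k}\leq 1$ in the regime of interest, the resulting $\tfrac12$-powers may be relaxed to the $\tfrac14$-powers $\varepsilon_{n,k}(\set{u,v,v'})^{1/4}+\rho_{n,k}(\set{u,v,v'})^{1/4}$ stated in the lemma (which also keeps the three cases $c(e,e')$, $c(v,v')$, $c(e,v')$ uniform). Adding the good- and bad-event contributions gives the claim.

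The step I expect to be the main obstacle is the careful bookkeeping around the truncation $Q(\,\cdot\,,\tilde{H}_{E}(e))$ and the nested conditioning: one must verify that on $E$ the remainder $R^{E}_{k}(e)$ is genuinely pinched between $\mathrm{LA}^{E,L}_{k}$ and $\mathrm{LA}^{E,U}_{k}$ of the \emph{$Y_{e}$-independent} coupled trees (so that the edge indicator can be pulled out to first order), and that every failure mode --- $B_{k}(v)$ or $B_{k}(v')$ not a tree, the flip-coupling of \autoref{lem:couplegngn} breaking, the weight couplings of \autoref{lem:bvbucoupl} breaking --- is subsumed in $\varepsilon_{n,k}+\rho_{n,k}$ with the correct argument set $\set{u,v,v'}$ and level $k$. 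The distinction between $U^{E}_{1}=R^{E}_{k}$ and $U^{E}_{2}=R^{E}_{k}$, and between $F$- and $F'$-perturbations, is only cosmetic, because the pointwise bounds \eqref{eq:lbound}--\eqref{eq:rbound} are uniform in $F$ and $F'$.
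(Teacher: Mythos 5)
Your proposal is correct and follows essentially the same route as the paper's proof: isolate the $R^{E}_{k}$ factor, use the pointwise bounds \eqref{eq:lbound}--\eqref{eq:rbound} with Cauchy--Schwarz to peel off the $v'$-side and the $J$-moments, bound the (conditional) second moment of $R^{E}_{k}(e)$ on the tree-and-coupling good event via \ref{itm:gla:e:delta}--\ref{itm:gla:e:conv}, and absorb the failure probabilities from \autoref{lem:probnottree:eep}, \autoref{lem:couplegngn} and \autoref{lem:bvbucoupl} into $\varepsilon_{n,k}+\rho_{n,k}$. The only cosmetic difference is that the paper obtains the $1/4$-powers from two nested square roots (rather than relaxing $1/2$-powers using boundedness) and handles the $\set{X_{e}=X'_{e}=1}$ contribution as a separate crude term, but these do not change the argument.
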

\begin{proof}
  Assume that the covariance is of the form
  \[
    \cov_{n}(R_{k}^{E}(e)U_{2}^{E}(F \setunion e),
         U_{3}^{V}(v')U_{4}^{V}(F' \setunion  v'))
  \]
  the remaining cases are analogous.

  Set
  \[
    \tilde{H}_{E}
    = \max\set{\tilde{H}_{E}(e), \tilde{H}_{E}(F \setunion e)}.
  \]
  By~\eqref{eq:lbound} and~\eqref{eq:rbound}
  and noting that~\(\tilde{A}_{e} = A_{e} \setunion \set{X_{e}=X'_{e}=1}\)
  we have
  \begin{equation}
    \abs{R_{k}^{E}(e)U_{2}^{E}(F \setunion e)
               U_{3}^{V}(v')U_{4}^{V}(F' \setunion  v')}
    \leq C\indfunc_{A_{e}}
      \tilde{H}_{E} \tilde{D}(v')^{2}\tilde{H}_{V}^{2}
      \abs{R_{k}^{E}(e)}
      +C\indfunc_{\set{X_{e}=X'_{e}=1}}
      \tilde{H}_{E}^{2} \tilde{D}(v')^{2}\tilde{H}_{V}^{2}.\label{eq:reev}
  \end{equation}

  The expectation of the
  second term in~\eqref{eq:reev} can be bounded using independence
  and then the Cauchy--Schwarz inequality
  \begin{align*}
    C\expe_{n}[\indfunc_{\set{X_{e}=X'_{e}=1}}
      \tilde{H}_{E}^{2} \tilde{D}(v')^{2}\tilde{H}_{V}^{2}]
    &\leq C\prob_{n}(X_{e}=X'_{e}=1) \expe_{n}[\tilde{H}_{E}^{2}]
    \expe_{n}[\tilde{D}(v')^{2}\tilde{H}_{V}^{2}]\\
    &\leq C\min\set[\bigg]{\frac{W_{v}W_{u}}{n\vartheta},1}^{\!\!2}
      J_{E}^{1/3}J_{V}^{1/3} \zeta_{n}(v')^{1/2}\\
    &\leq J_{E}^{1/3}J_{V}^{1/3} \zeta_{n}(v')^{1/2}
      \frac{W_{u}W_{v}}{n\vartheta} \rho_{n,k}(\set{u,v,v'}).
  \end{align*}

  Let~\(Y_{e} = (X_{e},X'_{e})\).
  Then the indicator functions of the first term in~\eqref{eq:reev}
  are~\(Y_{e},Y_{e'}\)-measurable.
  The conditional expectation of the remainder of the term
  can be bounded with the Cauchy--Schwarz inequality
  so that by independence of~\(\mathbf{w}\), \(\mathbf{w}'\)
  and~\(\mathbf{X}\), \(\mathbf{X}'\),
  we get
  \begin{equation*}
    \expe_{n}[\indfunc_{A_{e}}
      \tilde{H}_{E} \tilde{D}(v')^{2}\tilde{H}_{V}^{2}
    \abs{R_{k}^{E}(e)} \given Y_{e}]
    \leq \indfunc_{A_{e}}
      J_{E}^{1/6}J_{V}^{1/3} \zeta_{n}(v')^{1/2}
    \expe_{n}[R_{k}^{E}(e)^{2} \given Y_{e}]^{1/2}.
  \end{equation*}
  It remains to bound the second moment of~\(R_{k}^{E}(e)\)
  conditional on~\(Y_{e}\).
  Recall that~\(R_{k}^{E}(e) = \Delta_{e}f-L_{k}^{E}(e)\).
  Write~\(B_{k} = B_{k}(v,\mathbf{G}_{n})\)
  and~\(B'_{k} = B_{k}(v,\mathbf{G}_{n}^{e})\).
  Then \ref{itm:gla:e:delta} and~\eqref{eq:deltahbound:e}
  ensure that on~\(E_{0}(e)\) we have
  \[
    0
    \leq  \Delta_{e}f-L_{k}^{E}(e)
    \leq  \mathrm{LA}^{E,U}_{k}(B_{k},B'_{k})
         -\mathrm{LA}^{E,L}_{k}(B_{k},B'_{k}).
  \]
  In particular~\(\abs{R_{k}^{E}(e)}
    \leq \mathrm{LA}^{E,U}_{k}(B_{k},B'_{k})
    -\mathrm{LA}^{E,L}_{k}(B_{k},B'_{k})\)
  on~\(E_{0}(e')\) and so
  together with~\eqref{eq:rbound}
  we have
  \begin{equation*}
    \indfunc_{A_{e}}\abs{R_{k}^{E}(e)}
    \leq \indfunc_{A_{e}}\indfunc_{E_{0}(e)} \indfunc_{E_{1}}
    (\mathrm{LA}_{k}^{E,U}(B_{k},B'_{k})-\mathrm{LA}_{k}^{E,L}(B_{k},B'_{k}))
      +\indfunc_{A_{e}}
      C \tilde{H}_{E} \indfunc_{\setcomplement{E_{0}(e)} \setunion \setcomplement{E_{1}}}
  \end{equation*}
  for an arbitrary set~\(E_{1}\) that will be chosen later.
  Taking conditional expectations, applying Cauchy--Schwarz on the second
  term
  and using the moment bound~\eqref{eq:jbound:e} for~\(H_{E}\)
  and then~\((x+y)^{1/2} \leq x^{1/2}+y^{1/2}\)
  we see
   \begin{equation}
    \begin{split}
    \indfunc_{A_{e}}\expe_{n}[(R_{k}^{E}(e))^{2} \given Y_{e}]
    &\leq
      \expe_{n}[\indfunc_{A_{e}}
      \indfunc_{E_{0}(e)} \indfunc_{E_{1}}  (\mathrm{LA}_{k}^{E,U}(B_{k},B'_{k})
      -\mathrm{LA}_{k}^{E,L}(B_{k},B'_{k}))^{2} \given Y_{e}]\\
      &\quad+ \indfunc_{A_{e}}
      CJ_{E}^{1/3} (\prob_{n}(\setcomplement{E_{0}(e)}
          \given Y_{e})^{1/2}
      +\prob_{n}(\setcomplement{E_{1}} \given Y_{e})^{1/2}).
    \end{split}\label{eq:reksquarecond:ev}
  \end{equation}

  Apply \autoref{lem:probnottree:eep} and recall the definition
  of~\(\rho_{n,k}(\set{u,v})\) to obtain
  \[
    \prob_{n}(E_{0}(e)^{c} \given Y_{e})
    \leq C \rho_{n,k}(\set{u,v})
    \leq C \rho_{n,k}(\set{u,v,v'}).
  \]
  For the second probability in the second term of~\eqref{eq:reksquarecond:ev}
  we use \autoref{lem:couplegngn} and~\autoref{lem:bvbucoupl} to
  couple~\((B_{k},B'_{k},\mathbf{T},\mathbf{T}')\)
  and set~\(E_{1} = \set{(B_{k},B'_{k}) \cong (\mathbf{T},\mathbf{T}')}\)
  such that
  \begin{equation*}
      \prob_{n}(E_{1} \given Y_{e})
      \geq
      1-\Bigl(
      \varepsilon_{n,k}(\set{u,v})
      + 2d_{\mathrm{TV}}(\mu_{E,n},\mu_{E})
      + C\frac{W_{u}W_{v}}{n\vartheta}
      (\Gamma_{2,n}+1)^{2k}
      \Bigr).
  \end{equation*}
  Absorb~\(2d_{\mathrm{TV}}(\mu_{E,n},\mu_{E})\)
  into~\(\varepsilon_{n,k}(\set{u,v,u',v})\)
  and recall the definition of~\(\rho_{n,k}(\set{u,v,u',v'})\)
  to conclude
  \begin{equation}\label{eq:e1c}
    \prob_{n}(E_{1}^{c} \given Y_{e})
    \leq C (\varepsilon_{n,k}(\set{u,v,v'})+\rho_{n,k}(\set{u,v,v'})).
  \end{equation}

  \autoref{lem:couplegngn} and \ref{itm:gla:e:conv}
  then give that
  \[
    \expe_{n}[\indfunc_{A_{e}}
    \indfunc_{E_{0}(e)} \indfunc_{E_{1}}  (\mathrm{LA}_{k}^{E,U}(B_{k},B'_{k})
    -\mathrm{LA}_{k}^{E,L}(B_{k},B'_{k}))^{2} \given Y_{e}]
    \leq m^{E}_{n}(v,u)\delta_{k}^{E}.
  \]

  Hence, the expectation of the first part of~\eqref{eq:reev} can be bounded by
  \begin{equation*}
    \begin{split}
    &\expe_{n}[C\indfunc_{A_{e}}
      \tilde{H}_{E}\tilde{H}_{V}^{2}
      \abs{R_{k}^{E}(e)}]\\
      &\quad\leq  \frac{W_{u}W_{v}}{n\vartheta} CJ_{E}J_{V} \zeta_{n}(v')^{1/2}
      ((m_{n}^{E}(v,u)\delta_{k}^{E})^{1/2}+\varepsilon_{n,k}(\set{u,v,v'})^{1/4}
      +\rho_{n,k}(\set{u,v,v'})^{1/4}).
    \end{split}
  \end{equation*}

  Putting this together we obtain
  \[
    \begin{split}
      &\expe_{n}[\abs{
      R_{k}^{E}(e)U_{2}^{E}(F \setunion e)
      U_{3}^{V}(v')U_{4}^{V}(F' \setunion  v')}]\\
    &\quad\leq CJ_{E}J_{V} \frac{W_{u}W_{v}}{n\vartheta} \zeta_{n}(v')^{1/2}
    ((m_{n}^{E}(v,u)\delta_{k}^{E})^{1/2}+\varepsilon_{n,k}(\set{u,v,v'})^{1/4}
    +\rho_{n,k}(\set{u,v,v'})^{1/4}).
  \end{split}
  \]
  The other two terms in the covariance can be bounded similarly,
  which proves the claim.
\end{proof}

The terms involving an~\(R^{V}_{k}\) can be bounded similarly by
appealing to \ref{itm:gla:v:conv}.
\begin{lemma}\label{lem:covr:ev:ur}
  \[
    \begin{split}
    &\cov_{n}(U_{1}^{E}(e)U_{2}^{E}(F \setunion e),
         U_{3}^{V}(v')U_{4}^{V}(F' \setunion  v'))\\
         &\quad\leq C J_{V}J_{E} \frac{W_{u}W_{v}}{n\vartheta} \zeta_{n}(v')^{1/2}
         ((m^{V}_{n}(v)\delta_{k}^{V})^{1/2}+\varepsilon_{n,k}(\set{u,v,v'})^{1/4}
         +\rho_{n,k}(\set{u,v,v'})^{1/4})
    \end{split}
  \]
  if~\(U^{V}_{3}\) or~\(U^{V}_{4}\) is an~\(R^{V}_{k}\)-term.
\end{lemma}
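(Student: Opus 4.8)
The plan is to run the argument of \autoref{lem:covr:ev:ru} with the roles of the edge perturbation at $e$ and the vertex perturbation at $v'$ interchanged, so that the local approximation error now enters through the vertex side. By symmetry of the covariance it is enough to treat the case $U^{V}_{3}(v') = R^{V}_{k}(v')$; the case $U^{V}_{4}(F'\setunion v') = R^{V}_{k}(F'\setunion v')$ is identical after replacing $\mathbf{G}_{n}$ by $\mathbf{G}_{n}^{F'}$. First I would derive a pointwise bound on the integrand: by~\eqref{eq:lbound} and~\eqref{eq:rbound} each edge factor satisfies $\abs{U^{E}_{i}(\cdot)} \le 2\indfunc_{\tilde A_{e}}\tilde H_{E}$ (with $\tilde H_{E}$ the maximum of $\tilde H_{E}(e)$ and $\tilde H_{E}(F\setunion e)$, as in the proof of \autoref{lem:covr:ev:ru}) and the vertex factor $U^{V}_{4}(F'\setunion v')$ is bounded by $C\,h(\setcard{D_{1}^{F'}(v')})\,H_{V}(w_{v'},w'_{v'})$, so that
\[
  \abs{U^{E}_{1}(e)\,U^{E}_{2}(F\setunion e)\,R^{V}_{k}(v')\,U^{V}_{4}(F'\setunion v')}
  \le C\,\indfunc_{\tilde A_{e}}\,\tilde H_{E}^{2}\, h(\setcard{D_{1}^{F'}(v')})\,H_{V}(w_{v'},w'_{v'})\,\abs{R^{V}_{k}(v')}.
\]
Using $\prob_{n}(\tilde A_{e}) \le 2W_{u}W_{v}/(n\vartheta)$, the independence of $(\mathbf{X},\mathbf{X}')$ from $(\mathbf{w},\mathbf{w}')$, and Cauchy--Schwarz together with the moment bounds~\eqref{eq:jbound:e},~\eqref{eq:jbound:v} and~\eqref{eq:nubound}, this reduces the whole estimate to controlling $\expe_{n}[(R^{V}_{k}(v'))^{2}]$ — more precisely a version of it conditioned on the edge information $Y_{e} = (X_{e},X'_{e})$ — with the prefactor $C\,J_{E}J_{V}\,\tfrac{W_{u}W_{v}}{n\vartheta}\,\zeta_{n}(v')^{1/2}$ already of the shape claimed.

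The core of the proof is the bound on this conditional second moment of $R^{V}_{k}(v') = \Delta_{v'}f - L^{V}_{k}(v')$. Write $B_{k} = B_{k}(v',\mathbf{G}_{n})$ and $B_{k}^{v'} = B_{k}(v',\mathbf{G}_{n}^{v'})$, and let $E_{0}(v')$ be the event that both are trees. On $E_{0}(v')$, \ref{itm:gla:v:delta} together with~\eqref{eq:deltahbound:v} gives $0 \le R^{V}_{k}(v') \le \mathrm{LA}^{V,U}_{k}(B_{k},B_{k}^{v'}) - \mathrm{LA}^{V,L}_{k}(B_{k},B_{k}^{v'})$, and off $E_{0}(v')$ we fall back on the crude bound~\eqref{eq:rbound}. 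Since $\mathbf{G}_{n}^{v'}$ differs from $\mathbf{G}_{n}$ only in the weight of $v'$, the pair $(B_{k},B_{k}^{v'})$ shares an underlying graph and differs only in the root weight; hence the coupling of $B_{k}(v',\mathbf{G}_{n})$ with $\mathbf{T}_{k}(v') \sim \mathbf{T}_{k}(W_{v'},\nu,\mu_{E},\mu_{V})$ furnished by \autoref{lem:bvbucoupl} simultaneously couples $(B_{k},B_{k}^{v'})$ to $(\mathbf{T}_{k}(v'),\mathbf{\bar{T}}_{k}(v'))$, with $\mathbf{\bar{T}}_{k}(v')$ the root-resampled tree of \ref{itm:gla:v:conv}. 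On the coupling event $E_{1} = \set{(B_{k},B_{k}^{v'}) \cong (\mathbf{T}_{k}(v'),\mathbf{\bar{T}}_{k}(v'))}$, \ref{itm:gla:v:tree} transports the squared difference $(\mathrm{LA}^{V,U}_{k}-\mathrm{LA}^{V,L}_{k})^{2}$ to the limiting tree and \ref{itm:gla:v:conv} bounds its expectation by $m^{V}_{n}(v')\delta^{V}_{k}$. The complements are absorbed as error terms: \autoref{lem:probnottree:eep} bounds $\prob_{n}(\setcomplement{E_{0}(v')} \given Y_{e})$ by $C\,\rho_{n,k}(\set{u,v,v'})$, while \autoref{lem:bvbucoupl} together with \autoref{lem:probnottree} bounds $\prob_{n}(\setcomplement{E_{1}} \given Y_{e})$ by $C(\varepsilon_{n,k}(\set{u,v,v'}) + \rho_{n,k}(\set{u,v,v'}))$ after absorbing the $d_{\mathrm{TV}}$ terms into $\varepsilon_{n,k}$; on these bad events one uses~\eqref{eq:rbound} and Cauchy--Schwarz with~\eqref{eq:jbound:v}. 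Combining via $(x+y)^{1/2} \le x^{1/2}+y^{1/2}$ gives $\expe_{n}[(R^{V}_{k}(v'))^{2}\given Y_{e}]^{1/2} \le C((m^{V}_{n}(v')\delta^{V}_{k})^{1/2} + \varepsilon_{n,k}(\set{u,v,v'})^{1/4} + \rho_{n,k}(\set{u,v,v'})^{1/4})$, and feeding this back through the reduction of the first paragraph — and applying the same reasoning to the product term $\expe_{n}[\abs{\cdot}]\,\expe_{n}[\abs{\cdot}]$ in the covariance — yields the asserted bound.

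The step I expect to be the main obstacle is the conditioning bookkeeping. Unlike in \autoref{lem:covr:ev:ru}, where it sufficed to condition on $Y_{e}$ once, here the neighbourhood of $v'$ may contain the edge $e$, so the event that $B_{k}(v')$ fails to be a tree and the coupling event $E_{1}$ both depend on $Y_{e}$; one must invoke \autoref{lem:probnottree:eep} and \autoref{lem:bvbucoupl} in their conditional-on-$Y_{e}$ forms and be careful to use the independence of $(\mathbf{X},\mathbf{X}')$ from $(\mathbf{w},\mathbf{w}')$ before, not after, conditioning. A secondary wrinkle is that the edge side carries only generic $U^{E}$-factors — which may themselves be $R^{E}_{k}$-terms — so the finer $A_{e}$ versus $\tilde A_{e}$ distinction exploited in \autoref{lem:covr:ev:ru} is not available; instead the crude indicator $\indfunc_{\tilde A_{e}}$ has to suffice, which it does precisely because the small factor $\delta^{V}_{k}$ is now supplied by the vertex side.
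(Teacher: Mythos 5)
Your proposal is correct and follows essentially the same route as the paper's proof: a Cauchy--Schwarz reduction to the conditional second moment of $R^{V}_{k}(v')$ given $Y_{e}$, the split into the good coupling event (handled via \ref{itm:gla:v:tree} and \ref{itm:gla:v:conv}) and the bad events (handled via \autoref{lem:probnottree:eep} and the coupling error). The only cosmetic difference is that the paper obtains the conditional-on-$Y_{e}$ version of the tree coupling by invoking \autoref{lem:bkvindepofep} (rerandomising $e$ in the construction) rather than a conditional form of \autoref{lem:bvbucoupl}, which is exactly the bookkeeping issue you flag in your last paragraph.
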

\begin{proof}
  Assume that the covariance is of the form
  \[
    \cov_{n}(U_{1}^{E}(e)U_{2}^{E}(F \setunion e),
         R_{k}^{V}(v')U_{4}^{V}(F' \setunion  v'))
  \]
  the remaining cases are analogous.

  By~\eqref{eq:lbound}, \eqref{eq:rbound},
  measurability, independence and Cauchy--Schwarz
  we have
  \begin{align}
    \expe_{n}[\abs{U_{1}^{E}(e)U_{2}^{E}(F \setunion e)
         R_{k}^{V}(v')U_{4}^{V}(F' \setunion  v')}
     \given Y_{e}
      ]
    &\leq C\indfunc_{\tilde{A}_{e}}
      \expe_{n}[\tilde{H}_{E}^{2} \tilde{D}(v') H_{V}(w_{v'},w'_{v'})
      \abs{R_{k}^{V}(v')}
      \given Y_{e}
      ]\notag\\
    &\leq C \indfunc_{\tilde{A}_{e}} J_{E}^{1/3}J_{V}^{1/6} \zeta_{n}(v')^{1/4}
      \expe_{n}[\abs{R_{k}^{V}(v')}^{2}
       \given Y_{e'}
      ]^{1/2}.\label{eq:rvpsq}
  \end{align}
  Let~\(B_{k} = B_{k}(v',\mathbf{G}_{n})\)
  and~\(B'_{k} = B_{k}(v',\mathbf{G}_{n}^{v})\).
  Use \autoref{lem:bvbucoupl}
  to couple~\(B_{k}(v',\mathbf{G}_{n})\)
  and~\(\mathbf{T} \sim \mathbf{T}_{k}(W_{v'},\nu,\mu_{E},\mu_{V})\).
  Exchange the weight of the root of~\(\mathbf{T}\)
  for a random variable~\(\tilde{w}_{v'}\) with distribution~\(\mu_{V}\) coupled
  to the weight of~\(v'\) in~\(\mathbf{G}_{n}^{v'}\)
  such that~\(\tilde{w}_{v'} \neq w_{v'}\)
  with probability at most~\(d_{\mathrm{TV}}(\mu_{V,n},\mu_{V})\)
  and call the resulting weighted tree~\(\mathbf{\tilde{T}}\).

  Let~\(E_{1}\) be the event that~\(B_{k} \cong \mathbf{T}\)
  and~\(B'_{k} \cong \mathbf{\tilde{T}}\).
  Then
  \begin{align*}
    \abs{R_{k}^{V}(v')}
    &\leq \mathrm{LA}_{k}^{V,U}(B_{k},B'_{k})-\mathrm{LA}_{k}^{V,L}(B_{k},B'_{k})\\
    &\leq \indfunc_{E_{0}(v')}\indfunc_{E_{1}}(\mathrm{LA}_{k}^{V,U}(B_{k},B'_{k})
          -\mathrm{LA}_{k}^{V,L}(B_{k},B'_{k}))
      + \indfunc_{E_{0}(v')^{c} \setunion E_{1}^{c}} C\tilde{D}(v')\tilde{H}_{V}.
  \end{align*}
  Square this inequality, take conditional expectations,
  use independence and the Cauchy--Schwarz inequality to find
  \begin{align}
    \expe[(R_{k}^{V}(v'))^{2}
    \given Y_{e}
    ]
    &\leq \expe_{n}[\indfunc_{E_{0}(v')}\indfunc_{E_{1}}
      (\mathrm{LA}_{k}^{V,U}(B_{k},B'_{k})-\mathrm{LA}_{k}^{V,L}(B_{k},B'_{k}))^{2}
       \given Y_{e}
      ]\notag\\
    &\quad  + C \expe_{n}[\tilde{D}(v')^{2}\tilde{H}_{V}^{2}
      \indfunc_{E_{0}(v')^{c} \setunion E_{1}^{c}}
       \given Y_{e'}
      ]\notag\\
    \begin{split}
      &\leq \expe_{n}[
      \indfunc_{E_{0}(v')}\indfunc_{E_{1}}
      (\mathrm{LA}_{k}^{V,U}(B_{k},B'_{k})-\mathrm{LA}_{k}^{V,L}(B_{k},B'_{k}))^{2}
      \given Y_{e}
      ]\\
    &\quad  +C
      \zeta_{n}(v')^{1/2} J_{V}^{1/3}
      (\prob_{n}(E_{0}(v')^{c}
      \given Y_{e}
      )
      + \prob_{n}(E_{1}^{c}
      \given Y_{e}
      ))^{1/2}.
    \end{split}\label{eq:rvps:alone}
  \end{align}

  By \autoref{lem:probnottree:eep} and the definition of~\(\rho_{n,k}\) we have
  \begin{equation}
    \prob_{n}(E_{0}(v')^{c} \given Y_{e})
    \leq \rho_{n,k}(\set{u,v,v'}).\label{eq:e0c:ev}
  \end{equation}
  By construction of the coupling~\(B_{k} \cong \mathbf{T}\)
  implies~\(B'_{k} \cong \bar{\mathbf{T}}\)
  unless~\(\tilde{w}_{v} \neq w_{v}\).
  Then by \autoref{lem:bkvindepofep}
  \begin{align*}
    \prob_{n}(\setcomplement{E_{1}} \given Y_{e})
    &\leq \prob_{n}(B_{k} \ncong \mathbf{T} \given Y_{e})
      +\prob_{n}(\tilde{w}_{v} \neq w_{v})\notag\\
    &\leq   \varepsilon_{n,k}(\set{v'})
  + C\frac{W_{v'}(W_{v}+W_{u})}{n\vartheta}
      (\Gamma_{2,n}+1)^{k+1} + d_{\mathrm{TV}}(\mu_{V,n},\mu_{V}).
   \end{align*}
   Absorb \(d_{\mathrm{TV}}(\mu_{V,n},\mu_{V})\) into~\(
   \varepsilon_{n,k}(\set{v'})
   \leq \varepsilon_{n,k}(\set{u,v,v'})\)
   to obtain
  \begin{equation}
     \prob_{n}(\setcomplement{E_{1}} \given Y_{e})
     \leq C (\varepsilon_{n,k}(\set{u,v,v'})
     +\rho_{n,k}(\set{u,v,v'})).\label{eq:e1c:ev}
   \end{equation}
   In fact the construction of \autoref{lem:bkvindepofep} allows
   us to assume that~\(\mathbf{T}\) does not depend on~\(Y_{e}\) at all.

  On~\(E_{1}\) the neighbourhoods~\(B_{k}\) and~\(B'_{k}\)
  can be replaced with~\(\mathbf{T}\) and~\(\mathbf{\bar{T}}\),
  which are independent of~\(Y_{e}\).
  Then by \ref{itm:gla:v:conv}
  \begin{align}
    \expe_{n}[
      \indfunc_{E_{0}(v')}\indfunc_{E_{1}}
    (\mathrm{LA}_{k}^{V,U}(B_{k},B'_{k})-\mathrm{LA}_{k}^{V,L}(B_{k},B'_{k}))^{2}
    \given Y_{e}
      ]
    &\leq \expe_{n}[
      (\mathrm{LA}_{k}^{V,U}(\mathbf{T},\mathbf{\bar{T}})
    -\mathrm{LA}_{k}^{V,L}(\mathbf{T},\mathbf{\bar{T}}))^{2}]\notag\\
    &\leq m^{V}_{n}(v')\delta_{k}^{V}.\label{eq:vedelta}
  \end{align}

  Putting~\eqref{eq:rvpsq}, \eqref{eq:rvps:alone}, \eqref{eq:e0c:ev},
  \eqref{eq:e1c:ev} and~\eqref{eq:vedelta}
  together we have
  \[
    \begin{split}
    &\expe_{n}[\abs{
      U_{1}^{E}(e)U_{2}^{E}(F \setunion e)
      R_{k}^{V}(v')U_{4}^{V}(F' \setunion  v')}]\\
    &\quad\leq CJ_{E}J_{V} \frac{W_{u}W_{v}}{n\vartheta} \zeta_{n}(v')^{1/2}
    ((m_{n}^{V}(v')\delta_{k}^{V})^{1/2}
    +\varepsilon_{n,k}(\set{u,v,v'})^{1/4}+\rho_{n,k}(\set{u,v,v'})^{1/4}
    ).
  \end{split}
  \]
  The other two terms in the covariance can be bounded similarly,
  which proves the claim.
\end{proof}

The last of the sixteen covariance terms does not
involve any~\(R^{E}_{k}\) and~\(R_{k}^{V}\) terms
and thus has to be bounded differently.
The key idea here is that~\(L^{E}_{k}(e)\) is a function of the
neighbourhood around~\(v\) and~\(L^{V}_{k}(v')\) a function
of the neighbourhood around~\(v'\).
By sparsity
the neighbourhood around~\(v\) should only be very weakly correlated
to the neighbourhood around~\(v'\) (cf.~\autoref{sec:corr}),
so that~\(L^{E}_{k}(e)\) and~\(L^{V}_{k}(v')\) are also only weakly correlated.
The formal proof proceeds by constructing independent approximations
to~\(L^{E}_{k}(e)\) and~\(L^{V}_{k}(v')\).

\begin{lemma}\label{lem:cov:ve:llll}
  We have
  \begin{equation*}
    \cov_{n}(L_{k}^{E}(e)L_{k}^{E}(F \setunion e),
         L_{k}^{V}(v')L_{k}^{V}(F' \setunion v'))
          \leq C J_{V}^{1/3} J_{E}^{1/3} \frac{W_{u}W_{v}}{n\vartheta}
         \zeta_{n}(v')^{1/2}
       \rho_{n,k}(\set{u,v,v'}).
  \end{equation*}
\end{lemma}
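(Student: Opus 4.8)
The plan is to bound the covariance of the product of local approximations by replacing each local approximation with an independent surrogate, where the replacement error is controlled by the probability that the relevant neighbourhoods of $v$ and $v'$ intersect. First I would recall that $L^{E}_{k}(e)$ and $L^{E}_{k}(F\setunion e)$ are $\sigma(B_{k}(v,\mathbf{G}_{n}),B_{k}(v,\mathbf{G}_{n}^{e}),B_{k}(v,\mathbf{G}_{n}^{F}),B_{k}(v,\mathbf{G}_{n}^{F\setunion e}))$-measurable, while $L^{V}_{k}(v')$ and $L^{V}_{k}(F'\setunion v')$ are measurable with respect to the analogous collection of neighbourhoods around $v'$. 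Thus the left-hand side is a covariance of the form $\cov_{n}(g(\mathbf{B}_{k}),g'(\mathbf{B}'_{k}))$ in the notation of \autoref{sec:corr}, except that $g$ and $g'$ are not bounded by $1$; they are bounded by $\indfunc_{A_{e}}\tilde{H}_{E}(e)$ (times $\tilde H_E(F\setunion e)$) and by $\vdegbound[\,]\tilde H_{V}(w_{v'},w'_{v'})$ (times $\vdegbound[F']\tilde H_{V}$), respectively, by \eqref{eq:lbound}. So the strategy is to first strip off the boundedness issue by a truncation/Cauchy--Schwarz argument, then invoke the weak-correlation machinery of \autoref{sec:corr} on truncated, bounded versions.

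Concretely, I would proceed as follows. Using \autoref{lem:covcouplmorecomplex}, construct on the intersection-free event $\mathbf{I}_{k}$ independent copies $\mathbf{\bar B}_{k}$ and $\mathbf{\bar B}'_{k}$ whose conditional laws given $\mathbf{B}_{k-1},\mathbf{B}'_{k-1}$ match those of $\mathbf{B}_{k}$ and $\mathbf{B}'_{k}$, and define $\bar L^{E}_{k}$, $\bar L^{V}_{k}$ as the corresponding functionals evaluated on these copies. The covariance of $\bar L^{E}_{k}(e)\bar L^{E}_{k}(F\setunion e)$ and $\bar L^{V}_{k}(v')\bar L^{V}_{k}(F'\setunion v')$ on $\mathbf{I}_{k}$ vanishes by conditional independence, exactly as in the proof of \autoref{lem:nbhdcov:iter}; the differences between the true functionals and the barred ones contribute error terms controlled by $\prob_{n}(\mathbf{\bar B}_{k}\neq\mathbf{B}_{k}\given \mathbf{B}_{k-1},\mathbf{B}'_{k-1})\leq C(1+\Gamma_{2,n})\setweight{\mathbf{S}_{k-1}}\setweight{\mathbf{S}'_{k-1}}/(n\vartheta)$, and $\prob_{n}(\setcomplement{\mathbf{I}_{k}})$ is bounded by \autoref{lem:covgvgu:compl:rest}. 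In each of these error estimates, whenever a difference of $L$-terms appears it is dominated (via \eqref{eq:lbound}) by the indicator $\indfunc_{A_e}$ — which carries probability $\leq W_uW_v/(n\vartheta)$ and is what produces the $W_{u}W_{v}/(n\vartheta)$ prefactor — times a product of the $\tilde H$ weights, whose moments are bounded by $J_{E}$, $J_{V}$ and $\zeta_{n}(v')$ via \eqref{eq:jbound:e}, \eqref{eq:jbound:v}, \eqref{eq:nubound}; a few applications of Cauchy--Schwarz then split the weight moments from the intersection probabilities. Assembling, every term is bounded by $C J_{V}^{1/3}J_{E}^{1/3}\frac{W_{u}W_{v}}{n\vartheta}\zeta_{n}(v')^{1/2}$ times a factor of order $\frac{(\setweight{\mathcal{V}}+\setcard{\mathcal{V}})^{2}}{n\vartheta}(\Gamma_{1,n}+1)^{2}(\Gamma_{2,n}+C)^{2k+1}(\Gamma_{3,n}+1)^{2}$ for $\mathcal{V}=\set{u,v,v'}$, which is exactly $\rho_{n,k}(\set{u,v,v'})$ (bounded trivially by $1$ when that bound exceeds $1$, which is why the $\min$ with $1$ in \autoref{def:rho} suffices to absorb the $A_{e}$-probability and everything else).

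The main obstacle is handling the unboundedness of the functionals cleanly: the weak-correlation results in \autoref{sec:corr} are stated for functions bounded by $1$, but $L^{E}_{k}(e)$ and $L^{V}_{k}(v')$ are only bounded by random weights. The right way around this is to condition on $Y_{e}=(X_{e},X'_{e})$ (so that the indicator $\indfunc_{A_e}$ becomes deterministic) and on the weight configuration, apply the bounded-functional covariance bound to the normalised, truncated recursion values on the \emph{underlying graph structure} (which is independent of the weights), and then take expectations over the weights using the moment bounds; the truncation operator $Q(\placeholder,\cdot)$ built into the definition of $L^{E}_{k}$ and $L^{V}_{k}$ is precisely what makes this conditioning-and-rescaling argument go through. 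A secondary subtlety is that the covariance must be split over all four barred/unbarred combinations and over the $\setcomplement{\mathbf{I}_{k}}$ event, and one must check that the single $\rho_{n,k}$ term (rather than a sum of distinct error terms) suffices — this follows because each individual error is already of the stated form and there are only finitely many of them, so they can be absorbed into the constant $C$. Once these points are in place, the remaining computations are routine applications of Cauchy--Schwarz and the moment hypotheses, and the bound for the other two terms of the covariance in \eqref{eq:covrl:ev} follows by the same argument with the roles symmetrised.
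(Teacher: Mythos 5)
Your overall strategy---decorrelate the neighbourhoods of $v$ and $v'$ via the machinery of \autoref{sec:corr}, charge the replacement error to intersection/path probabilities, and absorb the weight moments by Cauchy--Schwarz---is the right one, and you correctly identify the unboundedness of $L^{E}_{k}$ and $L^{V}_{k}$ as the crux. But your proposed resolution of that crux does not work, and the missing device is exactly what the paper's proof is built around. Writing $X^{E}(e)=L_{k}^{E}(e)L_{k}^{E}(F\setunion e)$ and $X^{V}(v')=L_{k}^{V}(v')L_{k}^{V}(F'\setunion v')$, the paper first applies the law of total covariance conditioning on $S_{e}$ and $S_{v'}$, where $S_{w}$ collects \emph{all} edge indicators and weights attached to $w$ (so $S_{e}=(S_{v},S_{u})$). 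This conditioning does two things at once: (i) it makes every bounding random variable---$\indfunc_{A_{e}}$, $\tilde{H}_{E}$, $h(\setcard{D_{1}(v')})$ and $\tilde{H}_{V}$---measurable, so that inside the conditional covariance the functionals can be normalised to bounded functions and the bounds pulled out as constants; and (ii) it leaves as residual randomness the neighbourhoods $\mathbf{B}_{k}(-)$ of the first-level neighbours in the graph with $u,v,v'$ deleted, which are \emph{independent} of the conditioning, so that \autoref{lem:covgbgpbp} applies to them. The remaining cross term $\cov_{n}(\expe_{n}[X^{E}(e)\given S_{e},S_{v'}],\expe_{n}[X^{V}(v')\given S_{e},S_{v'}])$ is killed separately (\autoref{lem:covxexv}) by approximating with versions computed on $\mathbf{G}_{n}-v'$ and $\mathbf{G}_{n}-\set{u,v}$, which are genuinely independent functions of $S_{e}$ and of $S_{v'}$ respectively, and whose covariance therefore vanishes exactly.

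Your fix---condition on $Y_{e}$ and \enquote{the weight configuration} and then apply the bounded-functional covariance bound to the underlying graph structure---fails on point (i): the truncation level of $L^{V}_{k}(v')$ is $h(\setcard{D_{1}(v')})H_{V}(w_{v'},w'_{v'})$, and $\setcard{D_{1}(v')}$ is graph-structural, not a weight, so it is not fixed by your conditioning. Consequently the normalising factors remain random variables correlated with the functionals themselves; they cannot be pulled out of an unconditional covariance, and the graph structure is certainly not independent of them. Without the $S_{e},S_{v'}$-conditioning (or an equivalent device) and without the total-covariance split into a conditional-covariance piece and a covariance-of-conditional-expectations piece, the reduction to the bounded-function result \autoref{lem:covgbgpbp} does not go through. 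The rest of your sketch---error terms bounded by intersection probabilities via \autoref{lem:nbhdcontains}, Cauchy--Schwarz on the moments, finitely many terms absorbed into $C\rho_{n,k}(\set{u,v,v'})$---is sound once this is in place.
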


\begin{proof}
  For any vertex~\(v \in V_{n}\) let
  \[
    S_{v} = ((X_{\edge{v}{x}},X'_{\edge{v}{x}},w_{\edge{v}{x}},w'_{\edge{v}{x}})
    _{x \in V_{n}},
    w_{v},w'_{v}).
  \]
  be the collection of random variables of~\(\mathbf{X}\), \(\mathbf{X}'\),
  \(\mathbf{w}\) and~\(\mathbf{w}'\)
  at~\(v\) and edges emanating from~\(v\).
  For an edge~\(e=\edge{u}{v}\) define
  \[
    S_{e} = (S_{v},S_{u})
  \]
  the collection of random variables associated with the two end
  vertices~\(u\) and~\(v\) of the edge~\(e\).
  Let~\(U_{v} = (X_{\edge{v}{x}},X'_{\edge{v}{x}})_{x \in V_{n}}\)
  be the collection of random
  variables that describe the presence or absence of edges emanating from~\(v\).

  Set
  \[
    X^{E}(e) = L_{k}^{E}(e)L_{k}^{E}(F \setunion e)
    \quad\text{and}\quad
    X^{V}(v') = L_{k}^{V}(v')L_{k}^{V}(F' \setunion v')
    .
  \]
  Note that by construction~\(X^{E}(e) = \indfunc_{A_{e}}X^{E}(e)\),
  so that we can add or remove~\(\indfunc_{A_{e}}\) from terms
  involving~\(X^{E}(e)\) at will.

  Apply the law of total variance to obtain
  \begin{align*}
  \cov_{n}(L_{k}^{E}(e)L_{k}^{E}(F \setunion e),
         L_{k}^{V}(v')L_{k}^{V}(F' \setunion v'))
    &=\cov_{n}(X^{E}(e),X^{V}(v'))\\
    &=
    \cov_{n}(\expe_{n}[X^{E}(e) \given S_{e},S_{v'}], \expe_{n}[X^{V}(v')
    \given S_{e},S_{v'}])\\
    &\quad+ \expe_{n}[\cov_{n}(X^{E}(e), X^{V}(v') \given S_{e},S_{v'})]\\
    &=
      \cov_{n}(\indfunc_{A_{e}}\expe_{n}[X^{E}(e) \given S_{e},S_{v'}],
      \expe_{n}[X^{V}(v') \given S_{e},S_{v'}])\\
    &\quad+ \expe_{n}[\indfunc_{A_{e}}
      \cov_{n}(X^{E}(e), X^{V}(v') \given S_{e},S_{v'})].
\end{align*}
  The claim will follow from \autoref{lem:covxexv} and~\autoref{lem:exp:xexv}.
\end{proof}

\begin{lemma}\label{lem:covxexv}
  We have
  \[
      \cov_{n}(\indfunc_{A_{e}}\expe_{n}[X^{E}(e) \given S_{e},S_{v'}],
        \expe_{n}[X^{V}(v') \given S_{e},S_{v'}])
        \leq C J_{V}^{1/3} J_{E}^{1/3}
        \frac{W_{u}W_{v}}{n\vartheta} \zeta_{n}(v')^{1/2}
        \rho_{n,k}(\set{u,v,v'}).
  \]
\end{lemma}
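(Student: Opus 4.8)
The plan is to bound the covariance of the two conditional expectations by showing that, conditionally on $S_e$ and $S_{v'}$, the quantities $X^E(e)$ and $X^V(v')$ depend on \emph{almost} disjoint sources of randomness, so the covariance is controlled by the probability that the neighbourhoods around $v$ and $v'$ interact. First I would observe that $\expe_n[X^E(e)\given S_e,S_{v'}]$ is (essentially) a function of the neighbourhood $B_k(v,\mathbf{G}_n)$ together with the edge indicators $U_v$, while $\expe_n[X^V(v')\given S_e,S_{v'}]$ is a function of $B_k(v',\mathbf{G}_n)$ together with $U_{v'}$; the point of conditioning on $S_e,S_{v'}$ is that the remaining randomness building the two neighbourhoods is independent provided the two $k$-neighbourhoods do not share a vertex. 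Concretely, I would invoke \autoref{lem:covgbgpbp} (the weak-correlation bound for neighbourhoods of disjoint root sets), applied with $\mathcal{V}=\{v\}$ (or $\{u,v\}$ to account for the perturbed edge) and $\mathcal{V}'=\{v'\}$, which gives
\[
  \sup_{g,g'}\cov_n(g(\mathbf{B}_k),g'(\mathbf{B}'_k))
  \leq \min\set[\bigg]{
    \frac{(\norm{\mathcal{V}}+\abs{\mathcal{V}})(\norm{\mathcal{V}'}+\abs{\mathcal{V}'})}{n\vartheta}
    (\Gamma_{3,n}+1)(\Gamma_{2,n}+C)^{2k+1},1},
\]
which is exactly the shape of $\rho_{n,k}(\set{u,v,v'})$ up to constants and the extra weight factors.

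The second step is to control the size of $X^E(e)$ and $X^V(v')$ so that the abstract covariance bound (which is for functions bounded by $1$) can be upgraded to the stated bound with the $J$-moment and $\zeta_n$ factors. Here I would use $\abs{L^E_k(e)}\le \indfunc_{A_e}\tilde H_E(e)$ and $\abs{L^E_k(F\setunion e)}\le \indfunc_{A_e}\tilde H_E(F\setunion e)$ from \eqref{eq:lbound}, so that $\abs{X^E(e)}\le \indfunc_{A_e}\tilde H_E(e)\tilde H_E(F\setunion e)$, and the analogous bound $\abs{X^V(v')}\le \vdegbound[]^2 \tilde H_V(w_{v'},w'_{v'})^2$ for the vertex term. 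Truncating $X^E(e)$ and $X^V(v')$ at deterministic-looking thresholds built from $\tilde H_E$ and $\vdegbound[]\tilde H_V$, applying the bounded-function covariance bound to the truncated pieces, and then using Cauchy--Schwarz with the sixth-moment bounds \eqref{eq:jbound:e}, \eqref{eq:jbound:v} and the fourth-moment bound \eqref{eq:nubound} to handle the truncation remainders, should produce the factor $J_E^{1/3}J_V^{1/3}\zeta_n(v')^{1/2}$ and the extra $\frac{W_uW_v}{n\vartheta}$ coming from $\prob_n(A_e)\le 2\frac{W_uW_v}{n\vartheta}$. The indicator $\indfunc_{A_e}$ is $S_e$-measurable, so it passes freely in and out of the conditional expectation, which is why it can be attached to $\expe_n[X^E(e)\given S_e,S_{v'}]$ without changing anything.

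I expect the main obstacle to be the bookkeeping needed to reduce $\expe_n[X^E(e)\given S_e,S_{v'}]$ and $\expe_n[X^V(v')\given S_e,S_{v'}]$ to genuine functions of the (coupled, decorrelated) neighbourhoods $\mathbf{B}_k$ and $\mathbf{B}'_k$ in the precise sense required by \autoref{lem:covgbgpbp}. The subtlety is that $L^E_k(e)$ depends on $B_k(v,\mathbf{G}_n)$ \emph{and} on $B_k(v,\mathbf{G}_n^e)$ (which is why the lemma statement carries both the perturbed and unperturbed neighbourhood in its tuple $\mathbf{B}_k$), and similarly the $F$- and $F'$-perturbed versions enter; so one must set up $\mathbf{B}_k$ and $\mathbf{B}'_k$ as the full tuples $(B_k(v,\mathbf{G}_n),B_k(v,\mathbf{G}_n^F),B_k(v,\mathbf{G}_n^e),B_k(v,\mathbf{G}_n^{F\setunion e}))$ and the $v'$-analogue, verify that conditioning on $S_e,S_{v'}$ indeed leaves these conditionally independent on the non-intersection event $\mathbf{I}_k$, and then feed the resulting bounded functions into \autoref{lem:covgbgpbp}. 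Once that identification is in place, everything else is routine Cauchy--Schwarz, and combining with the truncation estimates and summing the two error contributions gives the claimed bound on $\cov_n(\indfunc_{A_e}\expe_n[X^E(e)\given S_e,S_{v'}],\expe_n[X^V(v')\given S_e,S_{v'}])$; the companion estimate for the $\expe_n[\cov_n(\cdot,\cdot\given S_e,S_{v'})]$ term is deferred to \autoref{lem:exp:xexv}.
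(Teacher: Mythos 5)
There is a genuine gap in the route you propose for this particular term. After conditioning on $(S_{e},S_{v'})$, the quantities $Z^{E}(e)=\expe_{n}[X^{E}(e)\given S_{e},S_{v'}]$ and $Z^{V}(v')=\expe_{n}[X^{V}(v')\given S_{e},S_{v'}]$ are \emph{not} functions of the neighbourhoods $B_{k}(v,\cdot)$ and $B_{k}(v',\cdot)$ any more --- the neighbourhood randomness has been integrated out, and both are measurable functions of the \emph{same} collection of conditioning variables $(S_{e},S_{v'})$. \autoref{lem:covgbgpbp} bounds covariances of functions of neighbourhoods grown from \emph{disjoint} root sets, where the decorrelation comes from the sparsity of the edges joining the two explorations; it simply does not apply to two functions of a shared finite family of random variables, and there is no non-intersection event $\mathbf{I}_{k}$ to condition on here. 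So the "main obstacle" you flag cannot be resolved in the way you suggest; the neighbourhood-covariance lemma is the right tool for the \emph{other} half of the total-covariance decomposition (the term $\expe_{n}[\cov_{n}(X^{E}(e),X^{V}(v')\given S_{e},S_{v'})]$ treated in \autoref{lem:exp:xexv}), not for the covariance of the conditional expectations.

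The mechanism the paper actually uses is different: one constructs surrogates $\tilde{X}^{E}(e)$ and $\tilde{X}^{V}(v')$ by deleting $v'$ (resp.\ $\{u,v\}$) from all the graphs before forming the relevant neighbourhoods, so that $\tilde{Z}^{E}(e)=\expe_{n}[\tilde{X}^{E}(e)\given S_{e}]$ depends only on $S_{e}$ and $\tilde{Z}^{V}(v')$ only on $S_{v'}$. These two are then \emph{exactly} independent, so the leading covariance term vanishes identically rather than being merely small. The entire bound then comes from the three error terms involving $Z^{E}(e)-\tilde{Z}^{E}(e)$ or $Z^{V}(v')-\tilde{Z}^{V}(v')$, which are controlled by the conditional probability that $v'$ lies in the $k$-neighbourhood of $v$ (or vice versa) via \autoref{lem:nbhdcontains}; this is where the $\rho_{n,k}(\set{u,v,v'})$ factor enters, while your truncation/Cauchy--Schwarz handling of $\indfunc_{A_{e}}$, $\tilde H_{E}$ and $\vdegbound[][v']\tilde H_{V}$ (which is essentially correct and does produce the $J_{E}^{1/3}J_{V}^{1/3}\frac{W_{u}W_{v}}{n\vartheta}\zeta_{n}(v')^{1/2}$ prefactor) takes care of the moments. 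Without the decoupling-by-deletion step your argument has no source for the smallness of the covariance.
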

\begin{proof}
  The random variable~\(X^{E}(e)\) can be written as a function of
  \[
    (
      B_{k}(v,\mathbf{G}_{n}),
      B_{k}(v,\mathbf{G}_{n}^{e}),
      B_{k}(v,\mathbf{G}_{n}^{F}),
      B_{k}(v,\mathbf{G}_{n}^{F \setunion e}).
    )
  \]
  Define an approximation~\(\tilde{X}^{E}(e)\) of~\(X^{E}(e)\)
  as the same function applied
  to
  \[
    (
      B_{k}(v,\mathbf{G}_{n}-v'),
      B_{k}(v,\mathbf{G}_{n}^{e}-v'),
      B_{k}(v,\mathbf{G}_{n}^{F}-v'),
      B_{k}(v,\mathbf{G}_{n}^{F \setunion e}-v')
    ).
  \]
  Similarly,~\(X^{V}(v')\) can be written as a function of
  \[
    (
      B_{k}(v',\mathbf{G}_{n}),
      B_{k}(v',\mathbf{G}_{n}^{v'}),
      B_{k}(v',\mathbf{G}_{n}^{F'}),
      B_{k}(v',\mathbf{G}_{n}^{F' \setunion v'})
    )
  \]
  and can be approximated by~\(\tilde{X}^{V}(v')\) that is  defined
  as the same function, but applied to
  \[
      (
        B_{k}(v',\mathbf{G}_{n}-\set{u,v}),
        B_{k}(v',\mathbf{G}_{n}^{v'}-\set{u,v}),
        B_{k}(v',\mathbf{G}_{n}^{F'}-\set{u,v}),
        B_{k}(v',\mathbf{G}_{n}^{F' \setunion v'}-\set{u,v})
      ).
  \]
  By construction~\(\tilde{X}^{E}(e)\) is independent of~\(S_{v'}\)
  and~\(\tilde{X}^{V}(v')\) is independent of~\(S_{e}\).
  Set
  \[
    Z^{E}(e) = \expe_{n}[X^{E}(e) \given S_{e},S_{v'}]
    \quad\text{and}\quad
    \tilde{Z}^{E}(e) = \expe_{n}[\tilde{X}^{E}(e) \given S_{e},S_{v'}]
    = \expe_{n}[\tilde{X}^{E}(e) \given S_{e}]
  \]
  and analogously
  \[
    Z^{V}(v') = \expe_{n}[X^{V}(v') \given S_{e},S_{v'}]
    \quad\text{and}\quad
    \tilde{Z}^{V}(v') = \expe_{n}[\tilde{X}^{V}(v') \given S_{e},S_{v'}]
    = \expe_{n}[\tilde{X}^{V}(v') \given S_{v'}].
  \]
  The bounds from~\eqref{eq:lbound} and~\eqref{eq:rbound}
  imply
  \[
    \abs{X^{E}(e)}, \abs{\tilde{X}^{E}(e)}
    \leq \indfunc_{A_{e}} \tilde{H}_{E}^{2}
  \quad
  \text{and}
  \quad
    \abs{X^{V}(v')}, \abs{\tilde{X}^{V}(v')}
    \leq \tilde{D}(v')^{2}\tilde{H}_{V}^{2}.
  \]
  The conditional versions then satisfy
  \begin{equation*}
    \abs{Z^{E}(e)}
    = \abs{\expe_{n}[X^{E}(e) \given S_{e},S_{v'}]}
    \leq \expe_{n}[\indfunc_{A_{e}} \tilde{H}_{E}^{2} \given S_{e},S_{v'}]
    \leq \indfunc_{A_{e}} \tilde{H}_{E}^{2}
  \end{equation*}
  by measurability of~\(\tilde{H}_{E}\) and~\(A_{e}\)
  and in exactly the same way
  \[
    \abs{\tilde{Z}^{E}(e)} \leq \indfunc_{A_{e}} \tilde{H}_{E}^{2}
  \]
  as well as
  \begin{equation}\label{eq:zvbound}
    \abs{Z^{V}(v')}
    = \abs{\expe_{n}[X^{V}(v') \given S_{e},S_{v'}]}
    \leq \expe_{n}[\tilde{D}(v')^{2}\tilde{H}_{V}^{2} \given S_{e},S_{v'}]
    \leq \tilde{D}(v')^{2}\tilde{H}_{V}^{2}
  \end{equation}
  by measurability of~\(\tilde{H}_{V}\) and~\(\tilde{D}(v')\)
  and
  \[
    \abs{\tilde{Z}^{V}(v')} \leq \tilde{D}(v')^{2}\tilde{H}_{V}^{2}.
  \]

  Split the relevant covariance
  \begin{equation*}
    \begin{split}
      \cov_{n}(\indfunc_{A_{e}} Z^{E}(e),Z^{V}(v'))
      &=
      \cov_{n}(\indfunc_{A_{e}}\tilde{Z}^{E}(e),\tilde{Z}^{V}(v'))\\
      &\quad
      +\cov_{n}(
        \indfunc_{A_{e}}(Z^{E}(e)-\tilde{Z}^{E}(e)),
        \tilde{Z}^{V}(v')
      )\\
      &\quad+\cov_{n}( \indfunc_{A_{e}}\tilde{Z}^{E}(e),
           Z^{V}(v')-\tilde{Z}^{V}(v'))\\
      &\quad+\cov_{n}(\indfunc_{A_{e}}(Z^{E}(e)-\tilde{Z}^{E}(e)),
              (Z^{V}(v')-\tilde{Z}^{V}(v'))).
    \end{split}
  \end{equation*}

  Since~\(\indfunc_{A_{e}} \tilde{Z}^{E}(e)\) is a function of~\(S_{e}\)
  that is completely independent of~\(S_{v'}\)
  and~\(\tilde{Z}^{V}(v')\) is a function of~\(S_{v'}\)
  that is completely independent of~\(S_{e}\),
  the two approximations are independent and thus the first term vanishes.

  Now bound the three remaining covariances.
  We will only show the argument for
  \[
    \cov_{n}(
        \indfunc_{A_{e}}(Z^{E}(e)-\tilde{Z}^{E}(e)),
        \tilde{Z}^{V}(v')
      ),
  \]
  the argument for the other two covariances is similar.

  By~\eqref{eq:zvbound}
  \[
    \abs{\indfunc_{A_{e}}(Z^{E}(e)-\tilde{Z}^{E}(e)) Z^{V}(v')}
    \leq \indfunc_{A_{e}} \tilde{D}(v')^{2} \tilde{H}_{V}^{2}
      \abs{Z^{E}(e)-\tilde{Z}^{E}(e)}.
  \]
  By construction
  \begin{align*}
    \abs{Z^{E}(e)-\tilde{Z}^{E}(e)}
    &\leq \expe_{n}[\abs{X^{E}(e)-\tilde{X}^{E}(e)}
      \given S_{e},S_{v'}]\\
    &\leq 2 \tilde{H}_{E}^{2}
      \prob_{n}(X^{E}(e) \neq \tilde{X}^{E}(e) \given S_{e},S_{v'}).
  \end{align*}

  Similar to previous proofs~\(X^{E}(e) \neq \tilde{X}^{E}(e)\)
  implies that~\(B_{k}(v,\mathbf{G}-v') \neq B_{k}(v,\mathbf{G})\)
  for at least one~\(\mathbf{G}\)
  of~\(\mathbf{G} = \mathbf{G}_{n},\mathbf{G}_{n}^{e},\mathbf{G}_{n}^{F},
  \mathbf{G}_{n}^{F \setunion e}\).
  Then apply \autoref{lem:nbhdcontains}
  \begin{equation*}
    \prob_{n}(B_{k}(v,\mathbf{G}_{n}-v') \neq B_{k}(v,\mathbf{G}_{n})
      \given S_{e},S_{v'})
    \leq \prob_{n}(v' \in  B_{k}(v,\mathbf{G}_{n}) \given S_{e},S_{v'})
    \leq \xi_{k}(\set{v'},\set{u,v},\emptyset).
  \end{equation*}
  Sum these probabilities for all four graphs to~\(\Xi\) with
  \[
    \prob_{n}(X^{E}(e) \neq \tilde{X}^{E}(e) \given S_{e},S_{v'})
    \leq \Xi
  \]
  and
  \[
    \expe_{n}[\tilde{D}(v')^{2}\Xi]
    \leq C \zeta_{n}(v')^{1/2} \min\set[\bigg]{
      \frac{W_{v'}(W_{u}+W_{v})}{n\vartheta}(\Gamma_{2,n}+1)^{k},1}.
  \]

  Take the expectation and use independence to find
  \begin{align*}
    \expe_{n}[\abs{\indfunc_{A_{e}} (Z^{E}(e)-\tilde{Z}^{E}(e)) Z^{V}(v')}]
    &\leq C \expe_{n}[\indfunc_{A_{e}}
      \tilde{D}(v')^{2}\tilde{H}_{V}^{2}
      \tilde{H}_{E}^{2}
      \Xi
      ]\\
    &\leq
      C \frac{W_{u}W_{v}}{n\vartheta}
      J_{V}^{1/3} J_{E}^{1/3} \zeta_{n}(v')^{1/2}
      \rho_{n,k}(\set{u,v,v'}).
  \end{align*}
  The other terms in the covariance can be bounded similarly.
  Hence,
  \[
     \cov_{n}(
        \indfunc_{A_{e}}(Z^{E}(e)-\tilde{Z}^{E}(e)),
        \tilde{Z}^{V}(v')
      )
     \leq C \frac{W_{u}W_{v}}{n\vartheta}
      J_{V}^{1/3} J_{E}^{1/3} \zeta_{n}(v')^{1/2}
      \rho_{n,k}(\set{u,v,v'}).
  \]

  The same bound holds for the other covariances.
  This finishes the proof.
\end{proof}

\begin{lemma}\label{lem:exp:xexv}
  \[
      \expe_{n}[\indfunc_{A_{e}} \cov_{n}(X^{E}(e),X^{V}(v') \given S_{e},S_{v'})]\\
      \leq C  J_{E}^{1/3}J_{V}^{1/3} \frac{W_{u}W_{v}}{n\vartheta} \zeta_{n}(v')^{1/2}
      \rho_{n,k}(\set{u,v,v'}).
  \]
\end{lemma}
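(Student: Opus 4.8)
The plan is to follow the template of the proof of \autoref{lem:covxexv}, except that here the decoupling has to be performed \emph{inside} the conditional expectation, since we are bounding the expected conditional covariance rather than the covariance of conditional expectations. Recall from \eqref{eq:lbound} and \eqref{eq:rbound} that $\abs{X^{E}(e)} \leq \indfunc_{A_{e}}\tilde{H}_{E}^{2}$ and $\abs{X^{V}(v')} \leq \tilde{D}(v')^{2}\tilde{H}_{V}^{2}$, where $\tilde{H}_{E}^{2}$ is $\sigma(S_{e})$-measurable and $\tilde{D}(v')^{2}\tilde{H}_{V}^{2}$ is $\sigma(S_{v'})$-measurable; thus, once we condition on $S_{e}$ and $S_{v'}$, both quantities become bounded up to these (now deterministic) prefactors, and $\indfunc_{A_{e}}$ is measurable and independent of the edge randomness off $e$, contributing a factor $\prob_{n}(A_{e}) \leq 2\frac{W_{u}W_{v}}{n\vartheta}$ after the outer expectation. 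As in \autoref{lem:covxexv} I would introduce the approximations $\tilde{X}^{E}(e)$ (the same function of the $v$-neighbourhoods in $\mathbf{G}_{n}-v', \mathbf{G}_{n}^{e}-v', \mathbf{G}_{n}^{F}-v', \mathbf{G}_{n}^{F \setunion e}-v'$) and $\tilde{X}^{V}(v')$ (the same function of the $v'$-neighbourhoods in $\mathbf{G}_{n}-\set{u,v}$ and its perturbations), which agree with $X^{E}(e)$, $X^{V}(v')$ unless $v'$ lies in one of the $v$-neighbourhoods, resp.\ $u$ or $v$ lies in one of the $v'$-neighbourhoods.

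First I would decompose $\cov_{n}(X^{E}(e),X^{V}(v')\given S_{e},S_{v'})$ into $\cov_{n}(\tilde{X}^{E}(e),\tilde{X}^{V}(v')\given S_{e},S_{v'})$ plus three \enquote{difference} covariances involving $X^{E}(e)-\tilde{X}^{E}(e)$ or $X^{V}(v')-\tilde{X}^{V}(v')$. The difference terms are dealt with exactly as in \autoref{lem:covxexv}: use $\abs{X^{E}(e)-\tilde{X}^{E}(e)} \leq 2\indfunc_{A_{e}}\tilde{H}_{E}^{2}\indfunc_{\set{v' \in B_{k}(v,\mathbf{G})\text{ for some relevant }\mathbf{G}}}$ and the analogous bound for $X^{V}(v')-\tilde{X}^{V}(v')$, condition with the tower property, and apply \autoref{lem:nbhdcontains} to get $\prob_{n}(v' \in B_{k}(v,\mathbf{G}_{n})\given S_{e},S_{v'}) \leq \xi_{k}(\set{v'},\set{u,v},\emptyset)$ together with $\expe_{n}[\tilde{D}(v')^{2}\xi_{k}(\set{v'},\set{u,v},\emptyset)] \leq C\zeta_{n}(v')^{1/2}\min\set{\frac{W_{v'}(W_{u}+W_{v})}{n\vartheta}(\Gamma_{2,n}+1)^{k},1} \leq C\zeta_{n}(v')^{1/2}\rho_{n,k}(\set{u,v,v'})$; taking the outer expectation with $\indfunc_{A_{e}}$ and using the moment bounds \eqref{eq:jbound:e}, \eqref{eq:jbound:v} and \eqref{eq:nubound} (so that $\expe_{n}[\tilde{H}_{E}^{2}]\leq CJ_{E}^{1/3}$, $\expe_{n}[\tilde{H}_{V}^{2}]\leq CJ_{V}^{1/3}$ and $\tilde{H}_{V}$, $\tilde{D}(v')$ are independent) then bounds each of these three terms by $CJ_{E}^{1/3}J_{V}^{1/3}\frac{W_{u}W_{v}}{n\vartheta}\zeta_{n}(v')^{1/2}\rho_{n,k}(\set{u,v,v'})$.

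The remaining term, $\expe_{n}[\indfunc_{A_{e}}\cov_{n}(\tilde{X}^{E}(e),\tilde{X}^{V}(v')\given S_{e},S_{v'})]$, is the crux, and I expect it to be the main obstacle. Conditionally on $S_{e},S_{v'}$, after dividing by the $\sigma(S_{e})$-measurable factor $\tilde{H}_{E}^{2}$ the variable $\tilde{X}^{E}(e)$ is a bounded function of the $v$-neighbourhoods explored in $\mathbf{G}_{n}-v'$ and its perturbations, and $\tilde{X}^{V}(v')$ is a bounded function of the $v'$-neighbourhoods explored in $\mathbf{G}_{n}-\set{u,v}$; these still share the edges among $V_{n}\setminus\set{u,v,v'}$, so they are \emph{not} conditionally independent. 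I would decouple them by invoking the conditional version of the covariance estimate of \autoref{lem:covgbgpbp} (equivalently, by re-running the re-coupling of \autoref{lem:covcouplmorecomplextilde} and \autoref{lem:covcouplmorecomplex} conditionally on $S_{e},S_{v'}$, with $\mathcal{V}=\set{v}$, $\mathcal{V}'=\set{v'}$, and the four perturbed graph variants in the role of $\mathbf{B}_{r},\mathbf{B}'_{r}$): on the event that the $v$- and $v'$-neighbourhoods do not intersect one obtains conditionally independent copies that differ from $\tilde{X}^{E}(e),\tilde{X}^{V}(v')$ with conditional probability $\leq C(1+\Gamma_{2,n})\frac{\setweight{\mathbf{S}_{k}(v)}\setweight{\mathbf{S}_{k}(v')}}{n\vartheta}$, while the non-intersection event fails with conditional probability controlled by a path probability through \autoref{cor:abpathupto} and \autoref{lem:nbhdcontains}, so that $\cov_{n}(\tilde{X}^{E}(e),\tilde{X}^{V}(v')\given S_{e},S_{v'}) \leq C\tilde{H}_{E}^{2}\tilde{D}(v')^{2}\tilde{H}_{V}^{2}\cdot\min\set{(1+\Gamma_{2,n})\frac{\setweight{\mathbf{S}_{k}(v)}\setweight{\mathbf{S}_{k}(v')}}{n\vartheta},1}$ modulo $\mathcal{F}_{n}$-measurable constants. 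Taking the outer expectation, estimating $\expe_{n}[\setweight{\mathbf{S}_{k}(v)}\setweight{\mathbf{S}_{k}(v')}\cdot(\dots)]$ via \autoref{lem:bl:weight:squared:raw} and \autoref{cor:bvl:weight:squared:raw}, $\expe_{n}[\tilde{D}(v')^{2}(\dots)]$ via \eqref{eq:nubound}, $\expe_{n}[\indfunc_{A_{e}}]\leq 2\frac{W_{u}W_{v}}{n\vartheta}$, and the $\tilde{H}_{E}$, $\tilde{H}_{V}$ moments from \eqref{eq:jbound:e}, \eqref{eq:jbound:v}, produces the bound $CJ_{E}^{1/3}J_{V}^{1/3}\frac{W_{u}W_{v}}{n\vartheta}\zeta_{n}(v')^{1/2}\rho_{n,k}(\set{u,v,v'})$; combining with the difference terms completes the proof. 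The delicate point throughout is keeping the conditioning on $S_{e},S_{v'}$ consistent while simultaneously tracking the four perturbed variants of each neighbourhood and their $\sigma(S_{e})$- and $\sigma(S_{v'})$-fixed first levels; everything else is bookkeeping with the moment bounds already established.
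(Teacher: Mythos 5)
Your overall architecture matches the paper's: approximate $X^{E}(e)$ and $X^{V}(v')$ by versions computed with the opposite root vertices deleted, control the approximation error through containment/path probabilities via \autoref{lem:nbhdcontains} (giving exactly the $\xi_{k}$-bounds and the $\zeta_{n}(v')^{1/2}\rho_{n,k}$ factor you state), and reduce the leading term to the weak-correlation estimate for neighbourhoods. The difference terms are handled exactly as in the paper.

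The one step that does not go through as literally described is your treatment of $\expe_{n}[\indfunc_{A_{e}}\cov_{n}(\tilde{X}^{E}(e),\tilde{X}^{V}(v')\given S_{e},S_{v'})]$. There is no conditional version of \autoref{lem:covgbgpbp} in the paper, and \enquote{re-running} \autoref{lem:covcouplmorecomplextilde} and \autoref{lem:covcouplmorecomplex} conditionally on $S_{e},S_{v'}$ with roots $\mathcal{V}=\set{v}$, $\mathcal{V}'=\set{v'}$ clashes with the conditioning: those constructions resample edges emanating from the level-$\ell$ boundaries starting at level $0$, but conditioning on $S_{e}$ and $S_{v'}$ freezes precisely the first-level edges (all edges incident to $u$, $v$ and $v'$) that the coupling would need to randomise. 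The paper's resolution is to push the deletion of \emph{all three} vertices $u,v,v'$ into both approximations: given $S_{e},S_{v'}$, both $\tilde{X}^{E}(e)$ and $\tilde{X}^{V}(v')$ are functions of the $(k-1)$-neighbourhoods $\mathbf{B}_{k}(-)$, $\mathbf{B}'_{k}(-)$ in $\mathbf{G}_{n}-\set{u,v,v'}$, which are \emph{independent} of the conditioning, rooted at the (conditioning-measurable) first-level neighbour sets $\widebar{D}\supseteq D$ and $\widebar{D}'\supseteq D'$. The conditional covariance is therefore an \emph{unconditional} covariance of functions of these neighbourhoods, and the ordinary \autoref{lem:covgbgpbp} applies with $\mathcal{V}=\widebar{D}$, $\mathcal{V}'=\widebar{D}'$; the resulting bound $(\setweight{\widebar{D}}+\setcard{\widebar{D}})(\setweight{\widebar{D}'}+\setcard{\widebar{D}'})/(n\vartheta)$ is then integrated using \autoref{cor:bvl:weight:squared:raw} and the independence of $\widebar{D},\widebar{D}'$ from $A_{e}$. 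Your own approximations already delete the right vertices on each side, so the repair is small, but you do need to replace the appeal to a nonexistent conditional lemma by this reformulation (and note that the relevant root sets are the first-level neighbour sets, not $\set{v}$ and $\set{v'}$).
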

\begin{proof}
  Define
    \begin{align*}
    \mathbf{B}_{k}(-\set{u,v})
    &= (B_{k-1}(w,\mathbf{G}_{n}-\set{u,v}),
             B_{k-1}(w,\mathbf{G}_{n}^{F}-\set{u,v}))_{w \in V_{n}},\\
    \mathbf{B}_{k}(-v')
    &= (B_{k-1}(w,\mathbf{G}_{n}-\set{v'}),
             B_{k-1}(w,\mathbf{G}_{n}^{F'}-\set{v'}))_{w \in V_{n}},\\
    \mathbf{B}_{k}(-)
    &= (B_{k-1}(w,\mathbf{G}_{n}-\set{u,v,v'}),
            B_{k-1}(w,\mathbf{G}_{n}^{F}-\set{u,v,v'}))_{w \in V_{n}},\\
    \mathbf{B}'_{k}(-)
    &= (B_{k-1}(w,\mathbf{G}_{n}-\set{u,v,v'}),
         B_{k-1}(w,\mathbf{G}_{n}^{F'}-\set{u,v,v'}))_{w \in V_{n}}.
  \end{align*}
  With this notation~\(X^{E}(e)\) can be written as a function
  of~\((\mathbf{B}_{k}(-\set{u,v}),S_{e})\), because
  the entire~\(k\)-neighbourhood
  of~\(u\) and~\(v\) can easily be obtained
  from~\(S_{e}\) and~\(\mathbf{B}_{k}(-\set{u,v})\).

  We can define an approximation~\(\tilde{X}^{E}(e)\) of~\(X^{E}(e)\)
  as the same function applied to~\((\mathbf{B}_{k}(-),S_{e})\).
  Formally the function is defined taking all
  of~\(\mathbf{B}_{k}(-\set{u,v})\) or~\(\mathbf{B}_{k}(-)\)
  as argument, but given~\(S_{e}\)
  it can be recast into a form that only uses neighbourhoods of
  vertices~\(w\) that are connected
  to~\(u\) or~\(v\) by an edge
  in the original graph~\(\mathbf{G}_{n}\)
  or in~\(\mathbf{G}_{n}^{F}\).
  We will call such vertices~\(w\)s~--~and by extension their
  neighbourhoods~--~\emph{relevant}.
  See  \autoref{fig:uvnbhd} for an illustration.

  \begin{figure}[htbp]
    \centering
    \begin{tikzpicture}[y=1.5cm,x=2cm,
      mynode/.style={draw, circle, inner sep=2pt, minimum size=1em}]
      \node[mynode] (v) at (0,0)    {\(v\)};
      \node[mynode] (u) at (1,0)    {\(u\)};
      \node[mynode] (v1) at (-1,1.5)  {};
      \node[mynode] (v2) at (-1,-1.5)  {};
      \node[mynode, gray] (v3) at (-1,0)  {};
      \node[mynode, gray] (v31) at (-2,.3) {};
      \node[mynode, gray] (v32) at (-2,-.3) {};
      \node[mynode] (v11) at (-2,2)  {};
      \node[mynode] (v12) at (-2,1.5)  {};
      \node[mynode] (v13) at (-2,1)  {};
      \node[mynode] (v21) at (-2,-1.25)  {};
      \node[mynode] (v22) at (-2,-2)  {};
      \node[mynode] (u1) at (2,1)  {};
      \node[mynode] (u2) at (2,0)  {};
      \node[mynode, gray] (u3) at (2,-1)  {};
      \node[mynode] (u11) at (3,1.25)  {};
      \node[mynode] (u12) at (3,0.75)  {};

      \node[mynode, gray] (u31) at (3,-.2)  {};
      \node[mynode, gray] (u32) at (3,-.7)  {};
      \node[mynode, gray] (u33) at (3,-1.2)  {};
      \node[mynode, gray] (u34) at (3,-1.7)  {};

      \draw[dashed] (v)--(u);
      \draw[dashed] (v)--(v1);
      \draw (v1)--(v11);
      \draw (v1)--(v12);
      \draw (v1)--(v13);
      \draw[dashed] (v)--(v2);
      \draw (v2)--(v21);
      \draw (v2)--(v22);
      \draw[dashed] (u)--(u1);
      \draw (u1)--(u11);
      \draw (u1)--(u12);
      \draw[dashed] (u)--(u2);
      \draw[gray] (u3)--(u31);
      \draw[gray] (u3)--(u32);
      \draw[gray] (u3)--(u33);
      \draw[gray] (u3)--(u34);
      \draw[gray] (v3)--(v31);
      \draw[gray] (v3)--(v32);
    \end{tikzpicture}
    \caption{The neighbourhood around~\(v\) in a graph~\(G\)
      can be obtained from the information on edges incident to~\(u\)
      and~\(v\) (dashed)
      and the neighbourhoods (in~\(G-\set{v,u}\))
      of vertices that connect to~\(v\) or~\(u\) via dashed edges
      (solid black).
      Neighbourhoods (in~\(G-\set{v,u}\))
      of vertices that do not connect to~\(v\)
      or~\(u\) are irrelevant (shown in grey).}
    \label{fig:uvnbhd}
  \end{figure}
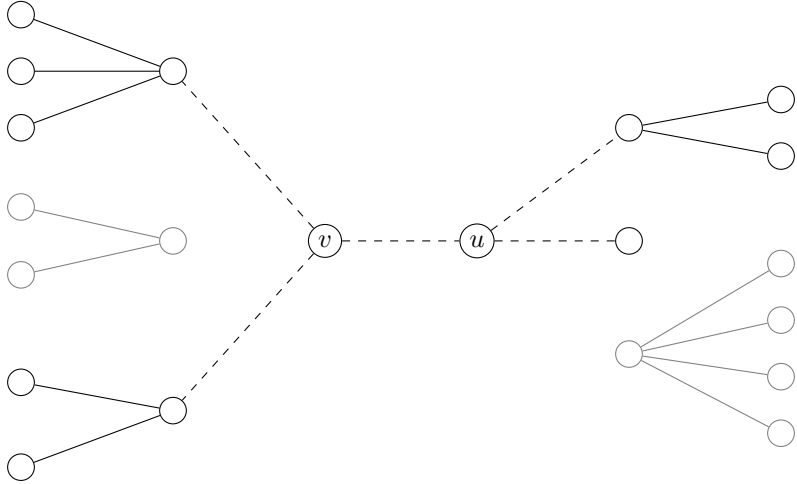

  Similarly~\(X^{V}(v')\) can be written as a function
  of~\((\mathbf{B}_{k}(-v'),S_{v'})\)
  which
  is completely determined by the neighbourhoods~\(B_{k-1}(w,\mathbf{G}_{n})\)
  of vertices~\(w\) which are connected to~\(v'\)
  in~\(\mathbf{G}_{n}\) or~\(\mathbf{G}_{n}^{F}\) via an edge,
  again those vertices are~\emph{relevant}.
  Define an approximation~\(\tilde{X}^{V}(v')\) of~\(X^{V}(v')\)
  as the same function applied to~\((\mathbf{B}_{k}(-),S_{v'})\).

  On the event~\(X^{E}(e) \neq \tilde{X}^{E}(e) \)
  there is at least one relevant vertex~\(w\) connected to~\(u\) or~\(v\)
  such that~\(B_{k-1}(w, \mathbf{G}_{n}-\set{u,v})\) differs
  from~\(B_{k-1}(w, \mathbf{G}_{n}-\set{u,v,v'})\)
  or~\(B_{k-1}(w, \mathbf{G}_{n}^{F}-\set{u,v})\) differs
  from~\(B_{k-1}(w, \mathbf{G}_{n}^{F}-\set{u,v,v'})\).
  That is to say there is a path of length at most~\(k-1\)
  from~\(w\) to~\(v'\) that avoids~\(u\) and~\(v\).
  Since~\(w\) is directly connected to~\(u\) or~\(v\) by an edge,
  it follows that there is a path from~\(u\) or~\(v\)
  to~\(v'\) of no more than~\(k\) steps.
  Thus by \autoref{lem:nbhdcontains}
  \begin{align*}
    \prob_{n}(\tilde{X}^{E}(e) \neq X^{E}(e) \given S_{e},S_{v'})
    &\leq \prob_{n}(\set{u,v} \pathbetw_{\leq k} v')
      +\prob_{n}(\set{u,v} \pathbetw^{F}_{\leq k} v')\\
    &\leq \xi_{k}(\set{u,v},\set{v'},\emptyset)
      +\xi^{F}_{k}(\set{u,v},\set{v'},\emptyset)
  \end{align*}
  Call the right-hand side of the last equation~\(\Xi\)
  and note that
  \[
    \expe_{n}[\tilde{D}(v')^{2}\Xi]
    \leq C \zeta_{n}(v')^{1/2} \frac{(W_{u}+W_{v})W_{v'}}{n\vartheta}
    (\Gamma_{2,n}+1)^{k}.
  \]

  Similarly~\(X^{V}(v')\) differs from~\(\tilde{X}^{V}(v')\)
  only if there is a vertex~\(w\) connected to~\(v'\) by an edge
  such that~\(B_{k-1}(w,\mathbf{G}_{n}-\set{v'})\)
  and~\(B_{k-1}(w,\mathbf{G}_{n}-\set{u,v,v'})\)
  or~\(B_{k-1}(w,\mathbf{G}_{n}^{F'}-\set{v'})\)
  and~\(B_{k-1}(w,\mathbf{G}_{n}^{F'}-\set{u,v,v'})\)
  differ.
  This implies that there is a path in~\(\mathbf{G}_{n}\)
  or~\(\mathbf{G}_{n}^{F'}\) of length at most~\(k-1\)
  from~\(w\) to~\(u\) or~\(v\) that avoids~\(v'\).
  Hence, there is a
  path of length at most~\(k\)
  in~\(\mathbf{G}_{n}\) or~\(\mathbf{G}_{n}^{F'}\)
  from~\(v'\) to~\(u\) or~\(v\).
  Thus by \autoref{lem:nbhdcontains}
  \begin{align*}
    \prob_{n}(\tilde{X}^{V}(v') \neq X^{V}(v') \given S_{e},S_{v'})
    &\leq \prob_{n}(v' \pathbetw_{\leq k} \set{u,v})
      +\prob_{n}(v' \pathbetw^{F}_{\leq k} \set{u,v} )\\
    &\leq \xi_{k}(\set{v'},\set{u,v},\emptyset)
      +\xi^{F}_{k}(\set{v'},\set{u,v},\emptyset)
  \end{align*}
  Call the right-hand side of the last equation~\(\Xi'\)
  and note that
  \[
    \expe_{n}[\tilde{D}(v')^{2}\Xi']
    \leq C \zeta_{n}(v')^{1/2}
    \min\set[\bigg]{\frac{(W_{u}+W_{v})W_{v'}}{n\vartheta}
    (\Gamma_{2,n}+1)^{k},1}.
  \]

  Expand the covariance of interest
  \begin{equation*}
    \begin{split}
      \cov_{n}(X^{E}(e),X^{V}(v') \given S_{e},S_{v'})
      &=
      \cov_{n}(\tilde{X}^{E}(e),\tilde{X}^{V}(v') \given S_{e},S_{v'})\\
      &\quad
      +\cov_{n}(
        X^{E}(e)-\tilde{X}^{E}(e),
        \tilde{X}^{V}(v') \given S_{e},S_{v'}
      )\\
      &\quad+\cov_{n}(\tilde{X}^{E}(e),
            X^{V}(v')-\tilde{X}^{V}(v') \given S_{e},S_{v'})\\
      &\quad+\cov_{n}(X^{E}(e)-\tilde{X}^{E}(e),
            X^{V}(v')-\tilde{X}^{V}(v') \given S_{e},S_{v'}).
    \end{split}
  \end{equation*}

  As previously, bound the covariances involving an approximation error
  by estimating the approximation error by the bounds on~\(X^{E}\)
  and~\(X^{V}\) times the probability that the approximation is different
  from the original function.
  We obtain
  \[
    \abs{
      \cov_{n}(X^{E}(e)-\tilde{X}^{E}(e),X^{V}(v') \given S_{e},S_{v'})
    }
    \leq C \tilde{H}_{E}^{2}\tilde{D}(v')^{2} \tilde{H}_{V}^{2}\Xi
  \]
  and similar results for the other terms.
  Thus
  \[
    \abs{
      \cov_{n}(X^{E}(e),X^{V}(v') \given S_{e},S_{v'})
      -\cov_{n}(\tilde{X}^{E}(e),\tilde{X}^{V}(v') \given S_{e},S_{v'})
    }
    \leq C \tilde{H}_{E}^{2} \tilde{D}(v')^{2} \tilde{H}_{V}^{2}
      (\Xi+\Xi')
  \]
  and therefore also
  \begin{equation}
    \begin{split}
    &\expe_{n}[\indfunc_{A_{e}}\abs{
      \cov_{n}(X^{E}(e),X^{V}(v') \given S_{e},S_{v'})
      -\cov_{n}(\tilde{X}^{E}(e),\tilde{X}^{V}(v') \given S_{e},S_{v'})
    }]\\
    &\quad\leq C J_{E}^{1/3} J_{V}^{1/3}
      \zeta_{n}(v')^{1/2} \rho_{n,k}(\set{u,v,v'}).
    \end{split}
  \end{equation}

  It remains to
  bound~\(\cov_{n}(\tilde{X}^{E}(e),\tilde{X}^{V}(v') \given S_{e},S_{v'})\).
  Recall that the construction ensured that~\(\mathbf{B}_{k}(-)\)
  and~\(\mathbf{B}'_{k}(-)\)
  are independent of~\(S_{e}\) and~\(S_{v'}\).
  Hide the dependence of~\(\tilde{X}^{E}(e)\) on~\(S_{e}\)
  and the dependence of~\(\tilde{X}^{V}(v')\) on~\(S_{v'}\)
  in functions~\(\Psi\) and~\(\Psi'\), respectively,
  so that
  \[
    \cov_{n}(\tilde{X}^{E}(e),\tilde{X}^{V}(v') \given S_{e},S_{v'})
    = \cov_{n}(
      \Psi(\mathbf{B}_{k}(-)),\Psi'(\mathbf{B}_{k}(-))
    ).
  \]
  By definition~\(\abs{\Psi} \leq \tilde{H}_{E}^{2}\)
  and~\(\abs{\Psi'} \leq \tilde{D}(v')^{2}\tilde{H}_{V}^{2}\).

  Recall that~\(\Psi\) and~\(\Psi'\) depend not on
  all~\(w\)-neighbourhoods for~\(w \in V_{n}\),
  but only on the neighbourhoods of relevant~\(w\)s,
  i.e~those~\(w\) for which there is an edge to~\(u\) or~\(v\)
  and~\(v'\), respectively.
  This data is known conditioned on~\(S_{e}\) and~\(S_{v'}\).
  More formally, the relevant vertices for~\(\Psi\) are contained in
  \begin{align*}
    D &= D_{1}(v) \setunion D_{1}(u)
    \setunion D_{1}^{F}(v) \setunion D^{F}_{1}(u),
  \intertext{and the vertices relevant for~\(\Psi'\) are contained in}
    D' &= D_{1}(v')  \setunion D_{1}^{F'}(v').
  \end{align*}
  While~\(D\) is not independent of~\(A_{e}\) and~\(D'\),
  \begin{align*}
    \widebar{D}
    &= S_{1}^{(u,v')}(v) \setunion S_{1}^{(v,v')}(u)
      \setunion S_{1}^{(u,v'),F}(v) \setunion S^{(v,v'),F}_{1}(u)
      \setunion \set{v'}
  \intertext{is independent of~\(A_{e}\) and of}
    \widebar{D}'
    &= S_{1}^{(u,v)}(v') \setunion S_{1}^{(u,v),F'}\!(v') \setunion \set{u,v}.
  \end{align*}
  Furthermore, we clearly have~\(D \subseteq \widebar{D}\)
  and~\(D' \subseteq \widebar{D}'\).
  Independence and \autoref{cor:bvl:weight:squared:raw} imply
  \[
    \expe_{n}[(\setweight{\widebar{D}}+\setcard{\widebar{D}})^{2}
    (\setweight{\widebar{D}'}+\setcard{\widebar{D}'})^{2}]
    \leq C(W_{u}+W_{v}+W_{v'}+2)^{4}(\Gamma_{1,n}+1)^{4}(\Gamma_{2,n}+1)^{4}(\Gamma_{3,n}+1)^{2}.
  \]
  By \autoref{lem:covgbgpbp} and~\(D \subseteq \widebar{D}\),
  \(D' \subseteq \widebar{D}'\)
  we have
  \[
    \begin{split}
    &\cov_{n}(\Psi(\mathbf{B}_{k}(-)),\Psi'(\mathbf{B}_{k}(-)))\\
    &\quad\leq C \tilde{H}_{E}^{2} \tilde{D}(v')^{2}\tilde{H}_{V}
    \min\set[\bigg]{\frac{(\setweight{\widebar{D}}+\setcard{\widebar{D}})
        (\setweight{\widebar{D}'}+\setcard{\widebar{D}'})}{n\vartheta}
      (\Gamma_{3,n}+1)(\Gamma_{2,n}+1)^{2k+1}, 1}.
  \end{split}
  \]
  Take expectations,
  use independence
  and recall the definition of~\(\rho_{n,k}(\set{u,v,v'})\) to obtain
  \begin{equation*}
    \expe_{n}[\indfunc_{A_{e}}
    \cov_{n}(\Psi(\mathbf{B}_{k}(-)),\Psi'(\mathbf{B}_{k}(-)))]
    \leq C J_{V}^{1/3} \frac{W_{u}W_{v}}{n\vartheta} \zeta_{n}(v')^{1/2}
    \rho_{n,k}(\set{u,v,v'}).
  \end{equation*}

  Put these covariance estimates together to conclude the claim.
\end{proof}

\begin{proof}[\proofname\ for the~\(c(e,v')\)-term in \autoref{lem:ceebd:alldiff}]
  Recall~\eqref{eq:covrl:ev}
  \begin{equation*}
    \begin{split}
    \cov_{n}(
      &(R_{k}^{E}(e)+L_{k}^{E}(e))
      (R_{k}^{E}(F \setunion  e)+L_{k}^{E}(F \setunion e)),\\
      &(R_{k}^{V}(v')+L_{k}^{V}(v'))
      (R_{k}^{V}(F' \setunion v')+L_{k}^{V}(F' \setunion v'))
    ).
  \end{split}
  \end{equation*}
  Expand this covariance into sixteen terms,
  then apply \autoref{lem:covr:ev:ru},
  \autoref{lem:covr:ev:ur} and~\autoref{lem:cov:ve:llll}
  to these terms as appropriate.
\end{proof}

We are almost ready to prove our main result.

\begin{lemma}\label{lem:epssum}\label{lem:rhosum}
  Recall~\(\eta_{n,\ell}(\mathcal{V})\),
  \(\varepsilon_{n,\ell}(\mathcal{V})\)
  and~\(\rho_{n,\ell}(\mathcal{V})\)
  from \autoref{lem:nbhdcoupl:noweights}, \autoref{lem:bvbucoupl}
  and~\autoref{def:rho},
  respectively.
  Then
  \[
    \sum_{\substack{u_{1},\dots,u_{m} \in V_{n}\\\text{pairw. diff.}}}
    \varepsilon_{n,\ell}(\set{u_{1},\dots,u_{m}})
    \leq mn^{m} \varepsilon_{n,\ell}
    \quad\text{and}\quad
    \sum_{\substack{u_{1},\dots,u_{m} \in V_{n}\\\text{pairw. diff.}}}
    \rho_{n,\ell}(\set{u_{1},\dots,u_{m}})
    \leq mn^{m} \rho_{n,\ell},
  \]
  where~\(\varepsilon_{n,\ell}\) and~\(\rho_{n,\ell}\)
  are defined as in \autoref{def:epsrho}.

  For any~\(r > 1\) we also have
  \[
    \frac{1}{n^{m}}
    \sum_{\substack{u_{1},\dots,u_{m} \in V_{n}\\\text{pairw. diff.}}}
    \varepsilon_{n,\ell}(\set{u_{1},\dots,u_{m}})^{1/r}
    \leq m^{1/r}\varepsilon_{n,\ell}^{1/r}
  \]
  and
  \[
    \frac{1}{n^{m}}\sum_{\substack{u_{1},\dots,u_{m} \in V_{n}\\\text{pairw. diff.}}}
    \rho_{n,\ell}(\set{u_{1},\dots,u_{m}})^{1/r}
    \leq m^{1/r} \rho_{n,\ell}^{1/r}.
  \]
\end{lemma}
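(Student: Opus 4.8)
The plan is to reduce everything to two elementary inputs. First, the moment identities
$\sum_{u\in V_n}W_u^{p}=n\vartheta\Gamma_{p,n}$ for $p\in\set{1,2,3}$ (from~\eqref{eq:gamman}, recalling $\Lambda_n=n\vartheta\Gamma_{1,n}$) and $\sum_{u\in V_n}W_u\indfunc_{\set{W_u>\sqrt{n\vartheta}}}=n\vartheta\kappa_{1,n}$ (from~\eqref{eq:kappan}). Second, the counting fact that there are at most $n^m$ pairwise distinct $m$-tuples $(u_1,\dots,u_m)$ in total, and that for each fixed index $i$ and fixed value of $u_i$ there are at most $n^{m-1}$ ways to complete the tuple with distinct vertices. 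Combining these gives, for any $p\ge 0$,
\[
  \sum_{\substack{u_1,\dots,u_m\in V_n\\ \text{pairw. diff.}}}\setweight{\set{u_1,\dots,u_m}}_p
  =\sum_{i=1}^m\;\sum_{\substack{u_1,\dots,u_m\in V_n\\ \text{pairw. diff.}}}W_{u_i}^{p}
  \le m\,n^{m-1}\sum_{u\in V_n}W_u^{p}
  = m\,n^{m}\,\vartheta\Gamma_{p,n},
\]
and similarly $\sum_{\text{pairw. diff.}}\setweight{\set{u_1,\dots,u_m}}_+\le m\,n^{m}\vartheta\kappa_{1,n}$, $\sum_{\text{pairw. diff.}}\setcard{\set{u_1,\dots,u_m}}\le m\,n^m$ and $\sum_{\text{pairw. diff.}}1\le n^m\le m\,n^m$.

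For the first $\varepsilon$-bound I would use that $\varepsilon_{n,\ell}(\mathcal{V})$ in \autoref{def:rho} is a finite sum of terms, each a monomial in $\Gamma_{\cdot,n},\kappa_{\cdot,n},\alpha_n,k_n^{\pm1},d_{\mathrm{TV}}(\mu_{E,n},\mu_E),d_{\mathrm{TV}}(\mu_{V,n},\mu_V)$ — none depending on $\mathcal{V}$ — multiplied by exactly one of $\setweight{\mathcal{V}}_2$, $\setweight{\mathcal{V}}_+$, $\setweight{\mathcal{V}}$, $\setcard{\mathcal{V}}$ or $1$. Summing term by term over the pairwise distinct tuples and using the displays above replaces each of those factors by $mn^m\vartheta\Gamma_{2,n}$, $mn^m\vartheta\kappa_{1,n}$, $mn^m\vartheta\Gamma_{1,n}$, $mn^m$, $mn^m$ respectively; a line-by-line comparison with $\varepsilon_{n,\ell}$ of \autoref{def:epsrho} then shows the result is exactly $mn^m\varepsilon_{n,\ell}$. (For instance $\setweight{\mathcal{V}}_2\tfrac{\Gamma_{2,n}}{n\vartheta}$ becomes $mn^m\tfrac{\Gamma_{2,n}^2}{n}$, matching $\tfrac{\Gamma_{2,n}^2}{n}$; $\setweight{\mathcal{V}}_+\Gamma_{1,n}$ becomes $mn^m\vartheta\kappa_{1,n}\Gamma_{1,n}$, matching $\vartheta\kappa_{1,n}\Gamma_{1,n}$; the bracket term $\setweight{\mathcal{V}}(\Gamma_{2,n}+1)^{\ell}(\cdots)$ becomes $mn^m\vartheta\Gamma_{1,n}(\Gamma_{2,n}+1)^{\ell}(\cdots)$; the $\alpha_n$-term and the $d_{\mathrm{TV}}$-term match after using $\vartheta\Gamma_{1,n}\bigl(\tfrac{\Gamma_{2,n}}{\vartheta\Gamma_{1,n}}+1\bigr)=\Gamma_{2,n}+\vartheta\Gamma_{1,n}$.) Thus $\sum_{\text{pairw. diff.}}\varepsilon_{n,\ell}(\mathcal{V})\le mn^m\varepsilon_{n,\ell}$.

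For the first $\rho$-bound I would exploit the outer minimum. If the min defining $\rho_{n,\ell}$ in \autoref{def:epsrho} is attained at $1$, then since $\rho_{n,\ell}(\mathcal{V})\le 1$ and there are at most $n^m$ tuples, $\sum_{\text{pairw. diff.}}\rho_{n,\ell}(\mathcal{V})\le n^m\le mn^m=mn^m\rho_{n,\ell}$. Otherwise drop the min inside $\rho_{n,\ell}(\mathcal{V})$, bound $(\setweight{\mathcal{V}}+\setcard{\mathcal{V}})^2=\bigl(\sum_{i=1}^m(W_{u_i}+1)\bigr)^2\le m\sum_{i=1}^m(W_{u_i}+1)^2$ by the power-mean inequality, and sum with the counting fact and the moment identities:
\[
  \sum_{\substack{u_1,\dots,u_m\in V_n\\ \text{pairw. diff.}}}(\setweight{\set{u_1,\dots,u_m}}+\setcard{\set{u_1,\dots,u_m}})^2
  \le m^2 n^{m-1}\sum_{u\in V_n}(W_u+1)^2
  = m^2 n^{m}\bigl(\vartheta\Gamma_{2,n}+2\vartheta\Gamma_{1,n}+1\bigr).
\]
Multiplying by the $\mathcal{V}$-independent factor $\tfrac1{n\vartheta}(\Gamma_{1,n}+1)^2(\Gamma_{2,n}+C)^{2\ell+1}(\Gamma_{3,n}+1)^2$ and estimating $\vartheta\Gamma_{2,n}+2\vartheta\Gamma_{1,n}+1\le 2(\vartheta\Gamma_{2,n}+\vartheta\Gamma_{1,n}+1)$ yields $\sum_{\text{pairw. diff.}}\rho_{n,\ell}(\mathcal{V})\le 2m^2 n^m\rho_{n,\ell}$, which is the claim once the numerical factor $2m$ is absorbed into the generic constant $C$ inside $\rho_{n,\ell}$ (harmless since $m$ is a fixed, bounded parameter — $m\le 4$ in every application in this paper).

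Finally, the two $1/r$ statements are immediate corollaries via the power-mean/Hölder inequality $\sum_{j=1}^N a_j^{1/r}\le N^{1-1/r}\bigl(\sum_{j=1}^N a_j\bigr)^{1/r}$ for $a_j\ge 0$, $r>1$: taking $N\le n^m$ to be the number of pairwise distinct $m$-tuples and $a_j$ the values $\varepsilon_{n,\ell}(\mathcal{V})$, the first part gives $\sum_{\text{pairw. diff.}}\varepsilon_{n,\ell}(\mathcal{V})^{1/r}\le (n^m)^{1-1/r}(mn^m\varepsilon_{n,\ell})^{1/r}=m^{1/r}n^m\varepsilon_{n,\ell}^{1/r}$, and dividing by $n^m$ is the assertion; the $\rho$-case is identical. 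The main obstacle is simply the bookkeeping: keeping track of all the terms in $\varepsilon_{n,\ell}(\mathcal{V})$ and, for $\rho$, the handling of the squared weight $(\setweight{\mathcal{V}}+\setcard{\mathcal{V}})^2$ together with the min; everything else is the two moment identities and a counting argument.
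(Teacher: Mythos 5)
Your argument is correct and follows essentially the same route as the paper's: reduce \(\varepsilon_{n,\ell}(\mathcal{V})\) to the sums of \(\setweight{\mathcal{V}}_{p}\) and \(\setweight{\mathcal{V}}_{+}\) over pairwise distinct tuples, evaluate those with \(\sum_{u}W_{u}^{p}=n\vartheta\Gamma_{p,n}\) and a counting bound, and obtain the \(1/r\) versions from Hölder; the paper writes out only the \(\varepsilon\) computation and defers \(\rho\) to the thesis, so your explicit handling of \((\setweight{\mathcal{V}}+\setcard{\mathcal{V}})^{2}\) and of the outer minimum fills in exactly the omitted part. One caveat on the \(\rho\) case: your bound comes out as \(2m^{2}n^{m}\) times the unclipped \(\rho_{n,\ell}\) rather than \(mn^{m}\rho_{n,\ell}\), and the proposed fix of absorbing the factor \(2m\) into the \(C\) of \((\Gamma_{2,n}+C)^{2\ell+1}\) does not literally work, since \((\Gamma_{2,n}+C')^{2\ell+1}/(\Gamma_{2,n}+C)^{2\ell+1}\to 1\) as \(\Gamma_{2,n}\to\infty\) for any fixed \(C'\). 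This discrepancy is harmless for every use of the lemma (it only ever enters bounds carrying a generic constant, and \(m\leq 4\) throughout), but it would be cleaner to state the conclusion as \(\leq Cm^{2}n^{m}\rho_{n,\ell}\) than to claim the factor disappears into \(C\).
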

\begin{proof}
  We will only show the calculations for~\(\varepsilon_{n,\ell}\)
  \citep[the calculations for~\(\rho_{n,\ell}\)
  can be found in][Lem.~4.4.2]{thesis}.

  We have
  \begin{align*}
    \varepsilon_{n,\ell}(\mathcal{V})
    &= \setweight{\mathcal{V}}_{2} \frac{\Gamma_{2,n}}{n\vartheta}
    +\setweight{\mathcal{V}}_{+}\Gamma_{1,n}\\
  &\quad  +\setweight{\mathcal{V}}(\Gamma_{2,n}+1)^{\ell}
    \biggl(
      \frac{\Gamma_{3,n}}{n\vartheta}+\kappa_{1,n}+\kappa_{2,n}+
      \frac{2+\Gamma_{1,n}}{k_{n}}+\frac{k_{n}}{n\vartheta}\biggr)\\
  &\quad + \setcard{\mathcal{V}}\frac{1}{k_{n}}
    +\frac{k_{n}^{2}}{n\vartheta\Gamma_{1,n}}
    +\setweight{\mathcal{V}}\alpha_{n}
    \biggl(
      \frac{1}{\vartheta}
      + (\Gamma_{2}+1)^{\ell-1}
        \biggl(\frac{\Gamma_{2,n}}{\vartheta\Gamma_{1,n}}+1\biggr)
    \biggr)\\
    &\quad+
       (\setcard{\mathcal{V}}+\setweight{\mathcal{V}}(\Gamma_{2}+1)^{\ell})
    (d_{\mathrm{TV}}(\mu_{E,n},\mu_{E})
    + d_{\mathrm{TV}}(\mu_{V,n},\mu_{V})
    ).
  \end{align*}
  In order to understand the sum over~\(\varepsilon_{n,\ell}(\mathcal{V})\)
  it is therefore enough to understand the sum over~\(\setcard{\mathcal{V}}\),
  \(\setweight{\mathcal{V}}\), \(\setweight{\mathcal{V}}_{+}\)
  and~\(\setweight{\mathcal{V}}_{2}\).
  Recall that~\(\setcard{\mathcal{V}} = \setweight{\mathcal{V}}_{0}\).

  For~\(p \geq 0\) we have
  \begin{equation*}
    \sum_{\substack{u_{1},\dots,u_{m} \in V_{n}\\\text{pairw. diff.}}}
      \setweight{\set{u_{1},\dots,u_{m}}}_{p}
    = \sum_{\substack{u_{1},\dots,u_{m} \in V_{n}\\\text{pairw. diff.}}}
    \sum_{i=1}^{m}W_{u_{i}}^{p}
    \leq m \sum_{u_{1},\dots,u_{m} \in V_{n}} W_{u_{1}}^{p}
    = mn^{m} \vartheta\Gamma_{p,n}.
  \end{equation*}
  In exactly the same way we also obtain
  \begin{equation*}
    \sum_{\substack{u_{1},\dots,u_{m} \in V_{n}\\\text{pairw. diff.}}}
      \setweight{\set{u_{1},\dots,u_{m}}}_{+}
    = \sum_{\substack{u_{1},\dots,u_{m} \in V_{n}\\\text{pairw. diff.}}}
    \sum_{i=1}^{m}W_{u_{i}} \indfunc_{\set{W_{i} > \sqrt{n\vartheta}}}
    \leq mn^{m} \vartheta\kappa_{1,n}.
  \end{equation*}

  Hence
  we can estimate the sum of~\(\varepsilon_{n,\ell}(\set{u_{1},\dots,u_{m}})\)
  over all subsets
  by formally replacing~\(\setweight{\mathcal{V}}_{p}\)
  in~\(\varepsilon_{n,\ell}(\mathcal{V})\)
  with~\(mn^{m}\vartheta \Gamma_{p,n}\)
  and by replacing~\(\setweight{\mathcal{V}}_{+}\)
  with~\(mn^{m}\vartheta \kappa_{1,n}\).
  We obtain
  \[
    \sum_{\substack{u_{1},\dots,u_{m} \in V_{n}\\\text{pairw. diff.}}}
    \varepsilon_{n,\ell}(\set{u_{1},\dots,u_{m}})
    \leq mn^{m} \varepsilon_{n,\ell}
  \]
  This finishes the proof of the first part of the claim.

  The second part of the claim follows from Hölder's inequality.
\end{proof}

Finally, we can proceed to prove the main result.
\begin{proof}[\proofname\ of \autoref{thm:fgnconv}]
  In the previous sections
  we identified a
  function~\(c\)
  defined on both vertices and edges
  with
  \[
    \sigma_{n}^{-4}
    \cov(\Delta_{x}f\Delta_{x}f^{F},\Delta_{x'}f\Delta_{x'}f^{F'})
    \leq c(x,x')
  \]
  for all~\(x,x' \in V_{n} \setunion V_{n}^{(2)}\)
  and~\(F \subseteq (V_{n} \setunion V_{n}^{(2)})\setminus \set{x}\),
  \(F' \subseteq (V_{n} \setunion V_{n}^{(2)})\setminus \set{x'}\).
  Apply \autoref{lem:steinappl} to obtain that
  \begin{equation*}
    \sup_{t \in\reals}\abs{\prob_{n}(Z_{n} \leq t)-\Phi(t)}
    \leq \sqrt{2} \biggl( \sum_{x,x' \in V_{n} \setunion V_{n}^{(2)}}
      c(x,x') \biggr)^{\!\!1/4}
    + \biggl(
        \sigma_{n}^{-3} \sum_{x\in V_{n} \setunion E_{n}}
          \expe[\abs{\Delta_{x}f}^{3}]
    \biggr)^{\!\!1/2}\!.
  \end{equation*}

  All that is left to do is to calculate these sums.
  We start with the first sum over~\(c(x,x)\)
  and split the sum over all the different cases we handled
  in \autoref{lem:ceebd:same:all} and \autoref{lem:ceebd:alldiff}.

  For the terms from
  \autoref{lem:ceebd:same:all}
  we apply Cauchy--Schwarz to separate
  the sums over different vertices and then apply
  the definition of~\(\Gamma_{p,n}\)
  and~\(\chi_{n}\) from~\eqref{eq:nubound}
  to obtain
  \begin{equation}
    \sum_{e \in E_{n}} c(e,e)
    \leq \sigma_{n}^{-4} C J_{E}^{2/3}
      \sum_{e \in E_{n}} \frac{W_{u}W_{v}}{n\vartheta}
    \leq \sigma_{n}^{-4} C J_{E}^{2/3} n\vartheta
      \biggl(\frac{1}{n\vartheta}\sum_{v \in V_{n}}W_{v} \biggr)^{2}
    \leq \frac{n}{\sigma_{n}^{4}} CJ_{E}^{2/3} \vartheta\Gamma_{1,n}^{2}
      \label{eq:ceess}
  \end{equation}
  and similarly
  \begin{equation}
    \sum_{v \in V_{n}} c(v,v)
    \leq \sigma_{n}^{-4} C J_{V}^{2/3}
      \sum_{v \in V_{n}} \zeta_{n}(v)
    \leq \frac{n}{\sigma_{n}^{4}} CJ_{V}^{2/3} \chi_{n},
      \label{eq:cvvss}
  \end{equation}
  as well as
  \begin{align}
    \sum_{v,u \in V_{n}} c(\edge{v}{u},v)
    &\leq \sigma_{n}^{-4} C J_{E}^{1/3}J_{V}^{1/3}
      \sum_{v,u \in V_{n}} \frac{W_{u}W_{v}}{n\vartheta} \zeta_{n}(v)^{1/2}\notag\\
     &\leq \frac{n}{\sigma_{n}^{4}} C J_{E}^{1/3}J_{V}^{1/3}
      \biggl(\frac{1}{n\vartheta} \sum_{u \in V_{n}} W_{u}\biggr)
       \biggl(\frac{1}{n} \sum_{v \in V_{n}} W_{v}^{2}\biggr)^{1/2}
       \biggl(\frac{1}{n} \sum_{v \in V_{n}} \zeta_{n}(v)\biggr)^{1/2}\notag\\
    &\leq \frac{n}{\sigma_{n}^{4}} C J_{E}^{1/3}J_{V}^{1/3}
      \Gamma_{1,n} \vartheta^{1/2} \Gamma_{2,n}^{1/2} \chi_{n}^{1/2}
      \label{eq:cevs}
  \end{align}
  and
  \begin{align}
    \sum_{u,v,v' \in V_{n}} c(\edge{u}{v},\edge{u}{v'})
    &\leq \sigma_{n}^{-4} C J_{E}^{2/3}
      \sum_{u,v,v' \in V_{n}} W_{u}^{2} \frac{W_{v}}{n\vartheta}
      \frac{W_{v'}}{n\vartheta}\notag\\
    &\leq \frac{n}{\sigma_{n}^{4}} CJ_{E}^{2/3}
      \vartheta \biggl(\frac{1}{n\vartheta}\sum_{u} W_{u}^{2}\biggr)
      \biggl(\frac{1}{n\vartheta}\sum_{v} W_{v}\biggr)
      \biggl(\frac{1}{n\vartheta}\sum_{v'} W_{v'}\biggr)\notag\\
    &\leq   \frac{n}{\sigma_{n}^{4}} CJ_{E}^{2/3} \vartheta \Gamma_{2,n}
      \Gamma_{1,n}^{2}.\label{eq:cuvuvp}
  \end{align}

  The bounds~\eqref{eq:ceess}, \eqref{eq:cvvss},
  \eqref{eq:cuvuvp} and~\eqref{eq:cevs}
  can in turn be bounded above by
  \begin{align}
    \frac{n}{\sigma_{n}^{4}} C(J_{E}+J_{V}+J_{E}J_{V})
      (\vartheta^{1/2}+\chi_{n}^{1/2})^{2} (\Gamma_{1,n}+1)^{2}(\Gamma_{2,n}+1)
    &\leq \frac{n^{2}}{\sigma_{n}^{4}} CJ (\vartheta^{1/2}+\chi_{n}^{1/2})^{2}
      \rho_{n,k}\notag\\
    & \leq \frac{n^{2}}{\sigma_{n}^{4}} CJ (\vartheta^{1/2}+
      \Gamma_{2,n}+ \chi_{n}^{1/2})^{2}
      \rho_{n,k}.\label{eq:ceasybounds}
  \end{align}

  For the more complex bound from \autoref{lem:ceebd:alldiff}
  we apply Cauchy--Schwarz to separate the connectivity weights,
  the~\(\delta\), \(\rho\) and~\(\varepsilon\) terms in the sum.
  Then apply \autoref{lem:rhosum} and \autoref{lem:epssum} and the definition
  of~\(m^{E}_{n}\) and~\(M^{E}_{n}\) to obtain
  \begin{align}
    \sum_{\substack{e,e' \in E_{n}\\e \setintersect e' = \emptyset}} c(e,e')\notag
    &\leq \sigma_{n}^{-4} CJ_{E}
    \sum_{\substack{u,v,u',v' \in E_{n}\\\text{pairwise different}}}
    \frac{W_{u}W_{v}}{n\vartheta}\frac{W_{u'}W_{v'}}{n\vartheta}
    ((\delta_{k}^{E}(v,u)+\delta_{k}^{E}(v',u'))^{1/2}\notag\\
    &\qquad+\varepsilon_{n,k}(\set{u,v,u',v'})^{1/4}
      +\rho_{n,k}(\set{u,v,u',v'})^{1/4})\notag\\
    &\leq \frac{n^{2}}{\sigma_{n}^{4}} CJ_{E}
      \Bigl(\frac{1}{n^{4}\vartheta^{4}}
      \sum_{u,v,u',v'} W_{u}^{2}W_{v}^{2}W_{u'}^{2}W_{v'}^{2}\Bigr)^{1/2}
    \Bigl(\frac{1}{n^{4}}
      \sum_{u,v,u',v'} m_{n}^{E}(v,u)\delta_{k}^{E}+m_{n}^{E}(v',u')\delta_{k}^{E}
      \notag\\
   &\qquad +\varepsilon_{n,k}(\set{u,v,u',v'})^{1/2}
      +\rho_{n,k}(\set{u,v,u',v'})^{1/2}\Bigr)^{1/2}\notag\\
    &\leq \frac{n^{2}}{\sigma_{n}^{4}} CJ_{E} \Gamma_{2,n}^{2}
      \Bigl(\delta_{k}^{E}\frac{2}{n^{2}}\sum_{u,v}m_{n}^{E}(v,u)
      + \frac{1}{n^{4}}
      \sum_{u,v,u',v'} \varepsilon_{n,k}(\set{u,v,u',v'})^{1/2}\notag\\
    &\qquad  + \frac{1}{n^{4}}\sum_{u,v,u',v'} \rho_{n,k}(\set{u,v,u',v'})^{1/2}
      \Bigr)^{1/2}\notag\\
    &\leq \frac{n^{2}}{\sigma_{n}^{4}} CJ_{E}\Gamma_{2,n}^{2}
      ((M^{E}_{n}\delta^{E}_{k})^{1/2}+\varepsilon_{n,k}^{1/4}+\rho_{k,n}^{1/4}).
      \label{eq:cee:diff}
  \end{align}
  Similarly we have
  \begin{align}
    \sum_{\substack{v,v' \in V_{n}\\v \neq v'}} c(v,v')
    &\leq \sigma_{n}^{-4}  CJ_{V}
      \sum_{\substack{v,v' \in V_{n}\\v \neq v'}}
    \zeta_{n}(v)^{1/2}\zeta_{n}(v')^{1/2}
    ((m^{V}_{n}(v)\delta_{k}^{V})^{1/2}+(m^{V}_{n}(v')\delta_{k}^{V})^{1/2}+\notag\\
   &\qquad \varepsilon_{n,k}(\set{v,v'})^{1/4}
     +\rho_{n,k}(\set{v,v'})^{1/4})\notag\\
    & \leq \frac{n^{2}}{\sigma_{n}^{4}} C J_{V}
      \Bigl(
        \frac{1}{n^{2}}\sum_{v, v' \in V_{n}}\zeta_{n}(v)\zeta_{n}(v')
      \Bigr)^{1/2}\notag\\
    &\qquad\Bigl(
      \frac{1}{n^{2}}
      \sum_{v, v' \in V_{n}}  (m^{V}_{n}(v)\delta^{V}_{k})^{1/2}+
      (m^{V}_{n}(v')\delta^{V}_{k})^{1/2}
      +\varepsilon_{n,k}(\set{v,v'})^{1/2}+\rho_{n,k}(\set{v,v'})^{1/2}
      \Bigr)^{1/2}\notag\\
    &\leq \frac{n^{2}}{\sigma_{n}^{4}} CJ_{V} \chi_{n}
      ((M^{V}_{n}\delta^{V}_{k})^{1/2}+\varepsilon_{n,k}^{1/4}+\rho_{k,n}^{1/4})
      \label{eq:cvv:diff}
  \end{align}
  and
  \begin{align}
    \sum_{\substack{u,v,v' \in V_{n}\\\text{pairw. diff.}}} c(\edge{u}{v},v')
    &\leq \sigma_{n}^{-4} CJ_{E}J_{V}
      \sum_{u,v,v'}
    \frac{W_{u}W_{v}}{n\vartheta} \zeta_{n}(v')^{1/2}
      ((m^{E}_{n}(v,u)\delta^{E}_{k})^{1/2}
      +(m^{V}_{n}(v)\delta^{V}_{k})^{1/2}\notag\\
   &\qquad +\varepsilon_{n,k}(\set{u,v,v'})^{1/4}
    +\rho_{n,k}(\set{u,v,v'})^{1/4})\notag\\
    &\leq \frac{n^{2}}{\sigma_{n}^{4}} C J_{E}J_{V}
      \Bigl(
      \frac{1}{n^{3}\vartheta^{2}}
      \sum_{u, v, v' \in V_{n}}W_{u}^{2}W_{v}^{2}\zeta_{n}(v')
      \Bigr)^{1/2}
      \Bigl(
      \frac{1}{n^{3}}
      \sum_{u, v, v' \in V_{n}}
      (m^{E}_{n}(v,u)\delta^{E}_{k})^{1/2}
      \notag\\
    &\qquad
      +(m^{V}_{n}(v')\delta^{V}_{k})^{1/2}
      +\varepsilon_{n,k}(\set{u,v,v'})^{1/2}+\rho_{n,k}(\set{u,v,v'})^{1/2}
      \Bigr)^{1/2}\notag\\
    &\leq \frac{n^{2}}{\sigma_{n}^{4}} C J_{E}J_{V} \Gamma_{2,n} \chi_{n}^{1/2}
      ((M^{E}_{n}\delta^{E}_{k})^{1/2}+(M^{V}_{n}\delta^{V}_{k})^{1/2}
      +\varepsilon_{n,k}^{1/4}+\rho_{k,n}^{1/4}).
      \label{eq:cev:diff}
  \end{align}

  The bounds on the right-hand side of~\eqref{eq:cee:diff},
  \eqref{eq:cvv:diff} and~\eqref{eq:cev:diff}
  can all be estimated by
  \begin{equation}
    \begin{split}
    &\frac{n^{2}}{\sigma_{n}^{4}} C (J_{E}+J_{V}+J_{E}J_{V})
    (\Gamma_{2,n}+\chi_{n}^{1/2})^{2}
    ((M^{E}_{n}\delta^{E}_{k})^{1/2}+(M^{V}_{n}\delta^{V}_{k})^{1/2}
      +\varepsilon_{n,k}^{1/4}+\rho_{k,n}^{1/4})\\
    &\quad\leq\frac{n^{2}}{\sigma_{n}^{4}} C J
    (\vartheta^{1/2}+\Gamma_{2,n}+\chi_{n}^{1/2})^{2}
    ((M^{E}_{n}\delta^{E}_{k})^{1/2}+(M^{V}_{n}\delta^{V}_{k})^{1/2}
      +\varepsilon_{n,k}^{1/4}+\rho_{k,n}^{1/4}).
    \end{split}
      \label{eq:chardbounds}
  \end{equation}

  Putting all these terms together and using the simplified bounds
  from~\eqref{eq:ceasybounds} and~\eqref{eq:chardbounds} we obtain
  \begin{equation*}
    \Bigl(\sum_{x,x'} c(x,x')\Bigr)^{\!1/4}
    \leq CJ^{1/4} \Bigl(\frac{n}{\sigma_{n}^{2}}\Bigr)^{\!1/2}
      (\vartheta^{1/2}+\Gamma_{2,n}+\chi_{n}^{1/2})^{2}
           ((M^{E}_{n}\delta^{E}_{k})^{1/8}+(M^{V}_{n}\delta^{V}_{k})^{1/8}
    +\varepsilon_{n,k}^{1/16}+ \rho_{n,k}^{1/16}).
  \end{equation*}

  Furthermore, by \autoref{lem:sumdelthree:e}, \autoref{lem:sumdelthree:v}
  and from~\(\chi_{n} = n^{-1}\sum_{v \in V_{n}} \zeta_{n}(v)\)
  \begin{align*}
    \Bigl(
      \sigma_{n}^{-3} \sum_{x} \expe[\abs{\Delta_{x}f}^{3}]
    \Bigr)^{\!1/2}
    &=\Bigl(
      \sigma_{n}^{-3} \sum_{e} \expe[\abs{\Delta_{x}f}^{3}]
      +\sigma_{n}^{-3} \sum_{v} \expe[\abs{\Delta_{x}f}^{3}]
    \Bigr)^{\!1/2}\\
    &\leq  (\sigma_{n}^{-3} J_{E}^{1/2}2n\vartheta \Gamma_{1,n}^{2}
      + \sigma_{n}^{-3} J_{V}^{1/2} n \chi_{n})^{\!1/2}\\
    &\leq (J_{E}+J_{V}) \Bigl(\frac{n}{\sigma_{n}^{2}}\Bigr)^{\!3/4}
    \frac{\vartheta\Gamma_{1,n}+\chi_{n}^{1/2}}{n^{1/4}}.
  \end{align*}

  Together these two terms give the required bound.
\end{proof}

With \autoref{thm:fgnconv} shown, we can prove \autoref{cor:fgnconv}.

\begin{proof}[\proofname\ of \autoref{cor:fgnconv}]
  Let~\(e = \edge{u}{v}\).
  When~\(B_{k}(v,\mathbf{G}_{n})\)
  and~\(B_{k}(v,\mathbf{G}_{n}^{e})\)
  are trees, define
  \begin{align*}
    \mathrm{LA}^{E,L}_{k}(B_{k}(v,\mathbf{G}_{n}),B_{k}(v,\mathbf{G}_{n}^{e}))
  &= g^{L}_{k}(B_{k}(v,\mathbf{G}_{n}))-g^{U}_{k}(B_{k}(v,\mathbf{G}_{n}^{e}))\\
  \shortintertext{and}
    \mathrm{LA}^{E,U}_{k}(B_{k}(v,\mathbf{G}_{n}),B_{k}(v,\mathbf{G}_{n}^{e}))
  &= g^{U}_{k}(B_{k}(v,\mathbf{G}_{n}))-g^{L}_{k}(B_{k}(v,\mathbf{G}_{n}^{e})).
  \end{align*}
  When~\(B_{k}(v,\mathbf{G}_{n})\)
  and~\(B_{k}(v,\mathbf{G}_{n}^{v})\)
  are trees, define
  \begin{align*}
    \mathrm{LA}^{V,L}_{k}(B_{k}(v,\mathbf{G}_{n}),B_{k}(v,\mathbf{G}^{v}_{n}))
  &= g^{L}_{k}(B_{k}(v,\mathbf{G}_{n}))-g^{U}_{k}(B_{k}(v,\mathbf{G}^{v}_{n}))\\
  \shortintertext{and}
    \mathrm{LA}^{V,U}_{k}(B_{k}(v,\mathbf{G}_{n}),B_{k}(v,\mathbf{G}^{v}_{n}))
  &= g^{U}_{k}(B_{k}(v,\mathbf{G}_{n}))-g^{L}_{k}(B_{k}(v,\mathbf{G}^{v}_{n})).
  \end{align*}

  We now verify that property~GLA holds
  for this choice of functions~\(\mathrm{LA}^{E,L}_{k}\),
  \(\mathrm{LA}^{E,U}_{k}\),
  \(\mathrm{LA}^{V,L}_{k}\),
  \(\mathrm{LA}^{V,U}_{k}\)
  and then apply \autoref{thm:fgnconv}.
  We will only verify \ref{itm:gla:e:delta},
  \ref{itm:gla:e:tree}
  and \ref{itm:gla:e:conv}
  for the edge perturbation.
  The proof for the vertex perturbation
  \ref{itm:gla:v:delta},
  \ref{itm:gla:v:tree}
  and \ref{itm:gla:v:conv}
  is analogous.

  \ref{itm:gla:e:delta} follows from \ref{eq:fgmv}.
  We
  use that~\(\mathbf{G}_{n}^{e}-v = \mathbf{G}_{n}-v\),
  since the vertex~\(v\) and all edges incident to~\(v\)
  are not present in those graphs, so that rerandomisation at~\(e\),
  which is incident to~\(v\)
  do not have any effect.
  Hence,
  \begin{align*}
  \mathrm{LA}^{E,L}_{k}(B_{k}(v,\mathbf{G}_{n}),B_{k}(v,\mathbf{G}^{e}_{n}))
  &= g^{L}_{k}(B_{k}(v,\mathbf{G}_{n}))-g^{U}_{k}(B_{k}(v,\mathbf{G}^{e}_{n}))\\
  &\leq (f(\mathbf{G}_{n})-f(\mathbf{G}_{n}-v))
  -(f(\mathbf{G}_{n}^{e})-f(\mathbf{G}_{n}^{e}-v))\\
  &= f(\mathbf{G}_{n})-f(\mathbf{G}_{n}^{e})\\
  &= \Delta_{e}f
  \end{align*}
  and similarly
  \[
    \mathrm{LA}^{E,U}_{k}(B_{k}(v,\mathbf{G}_{n}),B_{k}(v,\mathbf{G}^{e}_{n}))
    \geq \Delta_{e}f.
  \]

  \ref{itm:gla:e:tree} follows directly from \ref{eq:gtree}.

  For~\ref{itm:gla:e:conv} observe that
  \begin{align*}
  \abs[\big]{\mathrm{LA}^{E,U}(\mathbf{T},\mathbf{T}')
    -\mathrm{LA}^{E,L}(\mathbf{T},\mathbf{T}')}
  &\leq \abs[\big]{(g^{U}_{k}(\mathbf{T})-g^{L}_{k}(\mathbf{T}'))
    -(g^{L}_{k}(\mathbf{T})-g^{U}_{k}(\mathbf{T}))}\\
  &\leq\abs{g^{U}_{k}(\mathbf{T})-g^{L}_{k}(\mathbf{T})}
  +\abs{g^{U}_{k}(\mathbf{T}')-g^{L}_{k}(\mathbf{T}')}.
  \end{align*}
  Thus \((x+y)^{2} \leq 2x^{2}+2y^{2}\) and \ref{eq:gult}
  yield
  \begin{align*}
  \expe_{n}[\abs{\mathrm{LA}^{E,U}(\mathbf{T},\mathbf{T}')
    -\mathrm{LA}^{E,L}(\mathbf{T},\mathbf{T}')}^{2}]
  &\leq 2\expe_{n}[
  \abs{g^{U}_{k}(\mathbf{T})-g^{L}_{k}(\mathbf{T})}^{2}
  ]
  +2\expe_{n}[
  \abs{g^{U}_{k}(\mathbf{T}')-g^{L}_{k}(\mathbf{T}')}^{2}
    ]\\
    &\leq 2m_{n}(v)\delta_{k}+2\tilde{m}_{n}(v,u)\tilde{\delta}_{k}.
  \end{align*}
  Set~\(\delta^{E}_{k} = \delta_{k}+\tilde{\delta}_{k}\)
  and~\(m^{E}_{n}(v,u) = 2(m_{n}(v)+\tilde{m}(v,u))\).
  Then the previous term can be bounded by \(m^{E}_{n}(v,u) \delta^{E}_{k}\).
  Now
  \[
    M_{n}^{E}
    = \frac{1}{n^{2}} \sum_{v,u \in V_{n}} m_{n}^{E}(v,u)
    = \frac{2}{n^{2}} \sum_{v,u \in V_{n}} (m_{n}(v)+\tilde{m}(v,u))
  \]
  is bounded in probability by \ref{eq:gult}.
  Also by \ref{eq:gult}
  we have~\(\delta^{E}_{k} \to 0\) as~\(k \to \infty\),
  which finally verifies~\ref{itm:gla:e:conv}.

  Now the claim follows with an application of \autoref{thm:fgnconv}.
\end{proof}


\begin{thebibliography}{48}
\providecommand{\natexlab}[1]{#1}
\providecommand{\url}[1]{\texttt{#1}}
\providecommand{\urlprefix}{URL }
\expandafter\ifx\csname urlstyle\endcsname\relax
  \providecommand{\doi}[1]{doi:\discretionary{}{}{}#1}\else
  \providecommand{\doi}{doi:\discretionary{}{}{}\begingroup
  \urlstyle{rm}\Url}\fi
\providecommand{\eprint}[2][]{\href{https://arxiv.org/abs/#2}{{\ttfamily
  arXiv:#2}}}

\bibitem[{Abramowitz and Stegun(1972)}]{abramowitz-stegun}
\textsc{Abramowitz}, Milton and \textsc{Stegun}, Irene~A. (eds.)  (1972):
  \emph{Handbook of Mathematial Functions}.
\newblock National Bureau of Standards.

\bibitem[{Aldous and Steele(2004)}]{aldoussteele}
\textsc{Aldous}, David and \textsc{Steele}, J.~Michael (2004): \enquote{The
  Objective Method.}
\newblock In Harry Kesten (ed.) \emph{Probability on Discrete Structures}, pp.
  1--72. Springer, Berlin.
\newblock \doi{10.1007/978-3-662-09444-0_1}.

\bibitem[{Aldous(2001)}]{aldous:zeta}
\textsc{Aldous}, David~J. (2001): \enquote{The {\(\zeta(2)\)} Limit in the
  Random Assignment Problem.}
\newblock \emph{Random Structures \& Algorithms} 18(4): 381--418.
\newblock \doi{10.1002/rsa.1015}.

\bibitem[{Aldous and Bandyopadhyay(2005)}]{aldous:rtp}
\textsc{Aldous}, David~J. and \textsc{Bandyopadhyay}, Antar (2005): \enquote{A
  Survey of max-Type Recursive Distributional Equations.}
\newblock \emph{Annals of Applied Probability} 15: 1047--1110.
\newblock \doi{10.1214/105051605000000142}.

\bibitem[{Alon and Spencer(2000)}]{alon}
\textsc{Alon}, Noga and \textsc{Spencer}, Joel~H. (2000): \emph{The
  Probabilistic Method}.
\newblock Wiley, 2nd edition.

\bibitem[{Aronson et~al.(1998)Aronson, Frieze and Pittel}]{aronson}
\textsc{Aronson}, Jonathan, \textsc{Frieze}, Alan and \textsc{Pittel}, Boris~G.
  (1998): \enquote{Maximum Matchings in Sparse Random Graphs: {Karp--Sipser}
  Revisited.}
\newblock \emph{Random Structures \& Algorithms} 12(2): 111--177.
\newblock
  \doi{10.1002/(SICI)1098-2418(199803)12:2%3C111::AID-RSA1%3E3.0.CO;2-%23}.

\bibitem[{Arratia et~al.(2019)Arratia, Goldstein and Kochman}]{sizebias}
\textsc{Arratia}, Richard, \textsc{Goldstein}, Larry and \textsc{Kochman}, Fred
  (2019): \enquote{Size Bias for One and All.}
\newblock \emph{Probability Surveys} 16: 1--61.
\newblock \doi{10.1214/13-PS221}.

\bibitem[{Barbour and Röllin(2019)}]{barbour}
\textsc{Barbour}, A.~D. and \textsc{Röllin}, Adrian (2019): \enquote{Central
  Limit Theorems in the Configuration Model.}
\newblock \emph{Annals of Applied Probability} 29(2): 1046--1069.
\newblock \doi{10.1214/18-AAP1425}.

\bibitem[{Barraez et~al.(2000)Barraez, Boucheron and de~la Vega}]{barraez}
\textsc{Barraez}, Daniel, \textsc{Boucheron}, Stéphane and \textsc{de~la
  Vega}, Wenceslas~Fernandez (2000): \enquote{On the Fluctuations of the Giant
  Component.}
\newblock \emph{Combinatorics, Probability and Computing} 9(4): 287–304.
\newblock \doi{10.1017/S0963548300004302}.

\bibitem[{Bayati et~al.(1981)Bayati, Gamarnik and Tetali}]{bayati}
\textsc{Bayati}, Mohsen, \textsc{Gamarnik}, David and \textsc{Tetali}, Prasad
  (1981): \enquote{Combinatorial Approach to the Interpolation Method and
  Scaling Limits in Sparse Random Graphs.}
\newblock In \emph{Proceedings of the Forty-second {ACM} Symposium on Theory of
  Computing}, pp. 105--114.
\newblock \doi{10.1145/1806689.1806706}.

\bibitem[{Benjamini and Schramm(2001)}]{benjamini}
\textsc{Benjamini}, Itai and \textsc{Schramm}, Oded (2001): \enquote{Recurrence
  of Distributional Limits of Finite Planar Graphs.}
\newblock \emph{Electronic Journal of Probability} 6: 1--13.
\newblock \doi{10.1214/EJP.v6-96}.

\bibitem[{Bollobás et~al.(2007)Bollobás, Janson and Riordan}]{bollobas}
\textsc{Bollobás}, Béla, \textsc{Janson}, Svante and \textsc{Riordan}, Oliver
  (2007): \enquote{The Phase Transition in Inhomogeneous Random Graphs.}
\newblock \emph{Random Structures \& Algorithms} 31(1): 3--122.
\newblock \doi{10.1002/rsa.20168}.

\bibitem[{Bollobás and Riordan(2012)}]{bollobasriordan}
\textsc{Bollobás}, Béla and \textsc{Riordan}, Oliver (2012):
  \enquote{Asymptotic Normality of the Size of the Giant Component Via a Random
  Walk.}
\newblock \emph{Journal of Combinatorial Theory, Series B} 102(1): 53--61.
\newblock \doi{10.1016/j.jctb.2011.04.003}.

\bibitem[{Bordenave(2016)}]{bordenave}
\textsc{Bordenave}, Charles (2016): \enquote{Lecture Notes on Random Graphs and
  Probabilistic Combinatorial Optimization.}
\newblock \url{https://www.math.univ-toulouse.fr/~bordenave/coursRG.pdf}.

\bibitem[{Bordenave et~al.(2013)Bordenave, Lelarge and Salez}]{bord}
\textsc{Bordenave}, Charles, \textsc{Lelarge}, Marc and \textsc{Salez}, Justin
  (2013): \enquote{Matchings on Infinite Graphs.}
\newblock \emph{Probability Theory and Related Fields} 157: 183--208.
\newblock \doi{10.1007/s00440-012-0453-0}.

\bibitem[{Cao(2021)}]{cao}
\textsc{Cao}, Sky (2021): \enquote{Central Limit Theorems for Combinatorial
  Optimization Problems on Sparse {Erdős-Rényi} Graphs.}
\newblock \emph{Annals of Applied Probability} 31(4): 1687--1723.
\newblock \doi{10.1214/20-AAP1630}.

\bibitem[{Chatterjee(2008)}]{chatterjee:new}
\textsc{Chatterjee}, Sourav (2008): \enquote{A New Method of Normal
  Approximation.}
\newblock \emph{Annals of Probability} 36(4): 1584--1610.

\bibitem[{Chatterjee(2014)}]{chatterjee}
\textsc{Chatterjee}, Sourav (2014): \enquote{A Short Survey of {Stein's}
  Method.}
\newblock In Sun~Young Jang, Young~Rock Kim, Dae-Woong Lee and Ikkwon Yie
  (eds.) \emph{Proceedings of the International Congress of Mathematicians:
  Seoul 2014}, volume~4, pp. 1--24. Kyung Moon SA.

\bibitem[{Chatterjee and Sen(2017)}]{chatterjee-sen}
\textsc{Chatterjee}, Sourav and \textsc{Sen}, Sanchayan (2017):
  \enquote{Minimal Spanning Trees and Stein's Method.}
\newblock \emph{Annals of Applied Probability} 27(3): 1588--1645.
\newblock \doi{10.1214/16-AAP1239}.

\bibitem[{Chung and Lu(2002)}]{chung-lu}
\textsc{Chung}, Fan and \textsc{Lu}, Linyuan (2002): \enquote{Connected
  Components in Random Graphs with Given Expected Degree Sequences.}
\newblock \emph{Annals of Combinatorics} 6: 125–145.
\newblock \doi{10.1007/PL00012580}.

\bibitem[{Dyer et~al.(1993)Dyer, Frieze and Pittel}]{dyer}
\textsc{Dyer}, Martin, \textsc{Frieze}, Alan and \textsc{Pittel}, Boris (1993):
  \enquote{The Average Performance of the Greedy Matching Algorithm.}
\newblock \emph{Annals of Applied Probability} 3(2): 526--552.

\bibitem[{Erdős and Rényi(1960)}]{er:evo}
\textsc{Erdős}, P. and \textsc{Rényi}, A. (1960): \enquote{On the Evolution
  of Random Graphs.}
\newblock \emph{A Magyar Tudományos Akadémia Matematikai Kutató
  Intézetének közleményei} 5: 17--60.

\bibitem[{Fournier and Guillin(2015)}]{fournier}
\textsc{Fournier}, Nicolas and \textsc{Guillin}, Arnaud (2015): \enquote{On the
  Rate of Convergence in Wasserstein Distance of the Empirical Measure.}
\newblock \emph{Probability Theory and Related Fields} 162: 707--738.
\newblock \doi{10.1007/s00440-014-0583-7}.

\bibitem[{Fraiman et~al.(2022)Fraiman, Lin and Olvera-Cravioto}]{fraiman}
\textsc{Fraiman}, Nicolas, \textsc{Lin}, Tzu-Chi and \textsc{Olvera-Cravioto},
  Mariana (2022): \enquote{Stochastic Recursions on Directed Random Graphs.}
\newblock \emph{Stochastic Processes and Their Applications} 166: 104055.
\newblock \doi{10.1016/j.spa.2022.10.007}.

\bibitem[{Gamarnik et~al.(2006)Gamarnik, Nowicki and Swirszcz}]{gamarnik}
\textsc{Gamarnik}, David, \textsc{Nowicki}, Tomasz and \textsc{Swirszcz},
  Grzegorz (2006): \enquote{Maximum Weight Independent Sets and Matchings in
  Sparse Random Graphs.}
\newblock \emph{Random Structures \& Algorithms} 28(1): 76--106.
\newblock \doi{10.1002/rsa.20072}.

\bibitem[{Glasgow et~al.(2023)Glasgow, Kwan, Sah and
  Sawhney}]{glasgow2023exact}
\textsc{Glasgow}, Margalit, \textsc{Kwan}, Matthew, \textsc{Sah}, Ashwin and
  \textsc{Sawhney}, Mehtaab (2023): \enquote{The Exact Rank of Sparse Random
  Graphs.}
\newblock \eprint{2303.05435 [math.CO]}.

\bibitem[{Glasgow et~al.(2024)Glasgow, Kwan, Sah and Sawhney}]{glasgow}
\textsc{Glasgow}, Margalit, \textsc{Kwan}, Matthew, \textsc{Sah}, Ashwin and
  \textsc{Sawhney}, Mehtaab (2024): \enquote{A Central Limit Theorem for the
  Matching Number of a Sparse Random Graph.}
\newblock \eprint{2402.05851 [math.CO]}.

\bibitem[{Karp and Sipser(1981)}]{karp}
\textsc{Karp}, Richard~M. and \textsc{Sipser}, Michael (1981): \enquote{Maximum
  Matching in Sparse Random Graphs.}
\newblock In \emph{22nd Annual Symposium on Foundations of Computer Science},
  pp. 364--375. IEEE Computer Society Press.
\newblock \doi{10.1109/SFCS.1981.21}.

\bibitem[{Kreačić(2017)}]{kreacic}
\textsc{Kreačić}, Eleonora (2017): \emph{Some Problems Related to the
  {Karp--Sipser} Algorithm on Random Graphs}.
\newblock Ph.D. thesis, University of Oxford.
\newblock
  \url{https://ora.ox.ac.uk/objects/uuid:3b2eb52a-98f5-4af8-9614-e4909b8b9ffa}.

\bibitem[{Mandjes et~al.(2019)Mandjes, Starreveld, Bekker and
  Spreij}]{dynamic:er}
\textsc{Mandjes}, Michel, \textsc{Starreveld}, Nicos, \textsc{Bekker}, René
  and \textsc{Spreij}, Peter (2019): \enquote{Dynamic Erdős-Rényi Graphs.}
\newblock In Bernhard Steffen and Gerhard Woeginger (eds.) \emph{Computing and
  Software Science}, pp. 123--140. Springer.
\newblock \doi{10.1007/978-3-319-91908-9_8}.

\bibitem[{Maugis(2020)}]{maugis}
\textsc{Maugis}, P.-A. (2020): \enquote{Central Limit Theorems for Local
  Network Statistics.}
\newblock \eprint{2006.15738 [math.ST]}.

\bibitem[{Norros and Reittu(2006)}]{norrosreittu}
\textsc{Norros}, Ilkka and \textsc{Reittu}, Hannu (2006): \enquote{On a
  Conditionally {Poissonian} Graph Process.}
\newblock \emph{Advances in Applied Probability} 38(1): 59--75.
\newblock \doi{10.1239/aap/1143936140}.

\bibitem[{Olvera-Cravioto(2022)}]{olveracravioto}
\textsc{Olvera-Cravioto}, Mariana (2022): \enquote{Strong Couplings for Static
  Locally Tree-like Random Graphs.}
\newblock \emph{Journal of Applied Probability} 59(4): 1261–1285.
\newblock \doi{10.1017/jpr.2022.17}.
\newblock \eprint{2102.10673v2 [math.PR]}.

\bibitem[{Pittel(1990)}]{pittel}
\textsc{Pittel}, Boris (1990): \enquote{On Tree Census and the Giant Component
  in Sparse Random Graphs.}
\newblock \emph{Random Structures \& Algorithms} 1(3): 311--342.
\newblock \doi{10.1002/rsa.3240010306}.

\bibitem[{Pittel and Wormald(2005)}]{pittelwormald}
\textsc{Pittel}, Boris and \textsc{Wormald}, Nicholas~C. (2005):
  \enquote{Counting Connected Graphs Inside-Out.}
\newblock \emph{Journal of Combinatorial Theory, Series B} 93(2): 127--172.
\newblock \doi{10.1016/j.jctb.2004.09.005}.

\bibitem[{Rennie and Dobson(1969)}]{rennie}
\textsc{Rennie}, B.~C. and \textsc{Dobson}, A.~J. (1969): \enquote{On
  {Stirling} Numbers of the Second Kind.}
\newblock \emph{Journal of Combinatorial Theory} 7(2): 116--121.
\newblock \doi{10.1016/S0021-9800(69)80045-1}.

\bibitem[{Ruciński(1988)}]{rucinski}
\textsc{Ruciński}, Andrzej (1988): \enquote{When Are Small Subgraphs of a
  Random Graph Normally Distributed?}
\newblock \emph{Probability Theory and Related Fields} 78: 1--10.
\newblock \doi{10.1007/BF00718031}.

\bibitem[{Santambrogio(2015)}]{santambrogio}
\textsc{Santambrogio}, Filippo (2015): \emph{Optimal Transport for Applied
  Mathematicians}.
\newblock Birkhäuser.

\bibitem[{Stepanov(1970)}]{stepanov}
\textsc{Stepanov}, V.~E. (1970): \enquote{On the Probability of Connectedness
  of a Random Graph~\(\mathcal{G}_{m}(t)\).}
\newblock \emph{Theory of Probability and its Applications} 15: 55--67.

\bibitem[{Söderberg(2002)}]{soderberg}
\textsc{Söderberg}, Bo (2002): \enquote{General Formalism for Inhomogeneous
  Random Graphs.}
\newblock \emph{Physical Review~E} 66(6): 066121.
\newblock \doi{10.1103/PhysRevE.66.066121}.

\bibitem[{van~den Esker et~al.(2008)van~den Esker, van~der Hofstad and
  Hooghiemstra}]{esker}
\textsc{van~den Esker}, Henri, \textsc{van~der Hofstad}, Remco and
  \textsc{Hooghiemstra}, Gerard (2008): \enquote{Universality for the Distance
  in Finite Variance Random Graphs.}
\newblock \emph{Journal of Statistical Physics} 133: 169–202.
\newblock \doi{10.1007/s10955-008-9594-z}.

\bibitem[{van~der Hofstad(2018)}]{vdh}
\textsc{van~der Hofstad}, Remco (2018): \emph{Random Graphs and Complex
  Networks}, volume~1.
\newblock \url{https://www.win.tue.nl/~rhofstad/NotesRGCN.pdf}.

\bibitem[{van~der Hofstad(2023)}]{vdh:2}
\textsc{van~der Hofstad}, Remco (2023): \emph{Random Graps and Complex
  Networks}, volume~2.
\newblock \url{https://www.win.tue.nl/~rhofstad/NotesRGCNII.pdf}.

\bibitem[{van~der Hofstad et~al.(2022)van~der Hofstad, van~der Hoorn and
  Maitra}]{spatial}
\textsc{van~der Hofstad}, Remco, \textsc{van~der Hoorn}, Pim and
  \textsc{Maitra}, Neeladri (2022): \enquote{Local limits of spatial
  inhomogeneous random graphs.}
\newblock \eprint{2107.08733 [math.PR]}.

\bibitem[{Wemheuer(2023)}]{thesis}
\textsc{Wemheuer}, Moritz (2023): \emph{A Central Limit Theorem for Functions
  on Weighted Sparse Inhomogeneous Random Graphs}.
\newblock Ph.D. thesis, Georg-August-Universität Göttingen.
\newblock \doi{10.53846/goediss-10004}.

\bibitem[{Wästlund(2010)}]{waestlund:tsp}
\textsc{Wästlund}, Johan (2010): \enquote{The Mean Field Traveling Salesman
  and Related Problems.}
\newblock \emph{Acta Mathematica} 204(1): 91--150.
\newblock \doi{10.1007/s11511-010-0046-7}.

\bibitem[{Wästlund(2012)}]{waestlund}
\textsc{Wästlund}, Johan (2012): \enquote{Replica Symmetry of the Minimum
  Matching.}
\newblock \emph{Annals of Mathematics} 175(3): 1061--1091.
\newblock \doi{10.4007/annals.2012.175.3.2}.

\bibitem[{Zhang et~al.(2017)Zhang, Moore and Newman}]{zhang}
\textsc{Zhang}, Xiao, \textsc{Moore}, Cristopher and \textsc{Newman}, Mark
  E.~J. (2017): \enquote{Random Graph Models for Dynamic Networks.}
\newblock \emph{European Physical Journal B} 90: 200.
\newblock \doi{10.1140/epjb/e2017-80122-8}.

\end{thebibliography}

\end{document}